\numberwithin{equation}{section}
\newtheorem{theorem}[equation]{Theorem}
\newtheorem{lemma}[equation]{Lemma}
\newtheorem{cor}[equation]{Corollary}
\theoremstyle{definition}
\newtheorem{defn}[equation]{Definition}
\newtheorem{remark}[equation]{Remark}
\newtheorem{example}[equation]{Example}
\newcommand{\calB}{\mathcal{B}}
\newcommand{\calF}{\mathcal{F}}
\newcommand{\calG}{\mathcal{G}}
\newcommand{\calH}{\mathcal{H}}
\newcommand{\calM}{\mathcal{M}}
\newcommand{\calO}{\mathcal{O}}
\newcommand{\calP}{\mathcal{P}}
\newcommand{\frakm}{\mathfrak{m}}
\newcommand{\frako}{\mathfrak{o}}
\newcommand{\frakp}{\mathfrak{p}}
\newcommand{\frakq}{\mathfrak{q}}
\newcommand{\frakU}{\mathfrak{U}}
\newcommand{\frakV}{\mathfrak{V}}
\newcommand{\FF}{\mathbb{F}}
\newcommand{\QQ}{\mathbb{Q}}
\newcommand{\RR}{\mathbb{R}}
\newcommand{\ZZ}{\mathbb{Z}}
\DeclareMathOperator{\Aut}{Aut}
\DeclareMathOperator{\catPrespec}{\mathbf{Prespec}}
\DeclareMathOperator{\catSpec}{\mathbf{Spec}}
\DeclareMathOperator{\coker}{coker}
\DeclareMathOperator{\Comm}{Comm}
\DeclareMathOperator{\Cont}{Cont}
\DeclareMathOperator{\DLat}{\mathbf{DLat}}
\DeclareMathOperator{\Frac}{Frac}
\DeclareMathOperator{\Gr}{Gr}
\DeclareMathOperator{\Maxspec}{Maxspec}
\DeclareMathOperator{\op}{op}
\DeclareMathOperator{\Spa}{Spa}
\DeclareMathOperator{\Spra}{Spra}
\DeclareMathOperator{\Sprb}{Sprb}
\DeclareMathOperator{\Sprv}{Sprv}
\DeclareMathOperator{\Spv}{Spv}
\DeclareMathOperator{\Spec}{Spec}
\begin{document}

\title{Reified valuations and adic spectra}
\author{Kiran S. Kedlaya}
\thanks{Supported by NSF (grant DMS-1101343) and UC San Diego
(Warschawski Professorship), and by MSRI during fall 2014
(via NSF grant DMS-0932078). Thanks to Antoine Ducros, Roland Huber, Thomas Scanlon, and Michael Temkin for helpful discussions.}
\date{August 4, 2015}

\begin{abstract}
We revisit Huber's theory of continuous valuations, which give rise to the adic spectra used in his theory of adic spaces. We instead consider valuations which have been \emph{reified}, i.e., whose value groups have been forced to contain the real numbers. This yields \emph{reified adic spectra} which provide a framework for an analogue of Huber's theory compatible with Berkovich's construction of nonarchimedean analytic spaces. As an example, we extend the theory of perfectoid spaces to this setting.
\end{abstract}

\maketitle

There are several frameworks for analytic geometry over nonarchimedean fields, which can be classified into roughly three types:
\begin{itemize}
\item
\emph{rigid analytic geometry} (Tate), which can also be obtained via \emph{formal geometry with admissible blowups} (Raynaud);
\item
\emph{nonarchimedean analytic geometry} (Berkovich), which can also be obtained via \emph{tropical geometry} (Payne et al.);
\item
\emph{adic geometry} (Huber), which can also be obtained via formal geometry (Abbes, Fujiwara-Kato).
\end{itemize}
For a comparative discussion (primarily between the first two viewpoints), see \cite{conrad}. Here, we limit ourselves to an instructive analogy: the three frameworks give results analogous to those of the following three constructions.
\begin{itemize}
\item
Consider the rational numbers with the Grothendieck topology of finite unions of closed intervals
with rational endpoints. Let $T_1$ be the resulting topos.
\item
Consider the real numbers. The Grothendieck topology of finite unions of closed intervals with rational endpoints
recovers $T_1$. The natural topology defines a new topos $T_2$.
The Grothendieck topology of finite unions of all closed intervals 
defines a new topos $T_3$.
\item
Consider the real numbers plus some additional points $r \pm \epsilon$ for each rational number $r$. The natural
topology recovers the topos $T_1$.
\end{itemize}
In this paper, we introduce a construction playing the role of the
real numbers plus points $r \pm \epsilon$ for each \emph{real} number $r$, whose natural topology recovers the 
topos $T_3$. This makes it possible to overcome a mismatch between the theories of Berkovich and Huber:
while Huber's theory is based on the classical theory of Krull valuations, Berkovich's theory is based on 
real-valued seminorms. The link comes via the fact that any rank 1 Krull valuation can be interpreted as a
real valuation; however, one can rescale a real valuation
without changing the equivalence class of the underlying Krull valuation. To correct this, we consider \emph{reified valuations}, for which we fix the comparisons between real numbers and elements
of the value group. These also appear in upcoming work of Ducros and Thuillier on the relationship between monomial valuations and skeleta of Berkovich spaces
\cite{ducros-thuillier}.

Using reified valuations, one can simulate much of the analysis of continuous valuations
from \cite{huber1} and the comparison of rigid and adic spaces by Huber \cite[\S 4]{huber1} and van der Put and Schneider \cite{vanderput-schneider}. In fact, in some ways the reified version of the analysis is somewhat simpler. For example,
when working with Banach algebras over an ultrametric field, the case of a trivially valued field can be handled
more uniformly using reified valuations; this is consistent with the corresponding
uniformity in Berkovich's theory. Roughly speaking, reification provides an alternative to the use of topologically nilpotent units, such as in Tate's fundamental theorem on the acyclicity of the structure sheaf.

We also describe the structure presheaf on a reified adic spectrum and carry out some of the local preliminary work to a theory of \emph{reified adic spaces}.
As in Huber's construction of adic spaces, the construction of reified adic spaces involves topological rings plus some auxiliary data. In Huber's construction, the auxiliary datum associated to a topological ring is a certain \emph{subring of integral elements}; the analogous datum in our setting is defined in terms of the graded ring associated to a nonarchimedean Banach space. (The graded ring first appeared prominently in  work of Temkin extending some key properties of rigid analytic spaces to Berkovich spaces \cite{temkin-local2}, so its appearance here is perhaps not surprising.)

For the reified adic space associated to a single ring, we establish a Tate-style acyclicity theorem for the structure sheaf and a Kiehl-style glueing theorem for vector bundles (and for coherent sheaves under a suitable noetherian hypothesis), following \cite[\S 2]{kedlaya-liu1}.
One important point is that in the context of Berkovich spaces, these results apply to coverings for the full G-topology, as shown in \cite{berkovich1}; by contrast, by passing from Berkovich spaces to adic spaces, one only obtains acyclicity and glueing with respects to coverings for the strictly analytic G-topology. For an explicit example, pick $0 < \rho_2 \leq \rho_1$ and consider the disc $\left| T \right| \leq \rho_1$: in the full G-topology this disc admits an admissible covering by the disc $\left| T \right| \leq \rho_2$ and the annulus $\rho_2 \leq \left| T \right| \leq \rho_1$; in the strictly analytic G-topology, this only occurs if $\rho_1$ belongs to the divisible closure of the norm group of the base field.

As an illustration, we describe the spaces associated to perfectoid algebras; this amounts to a fairly faithful translation of certain sections of \cite{kedlaya-liu1}.
In fact, this paper was borne out of the author's frustration with the status quo during the writing of \cite{kedlaya-liu1}: while in many respects it is natural to study perfectoid algebras via their Gel'fand transforms, these cannot be easily glued without promoting them to something like adic spaces, and at the time no such construction was available in the literature.

To conclude this introduction, we mention two related constructions. Instead of fixing comparisons between elements of the value group and arbitrary positive real numbers, one may only fix these comparisons for real numbers in some multiplicative subgroup $H$;
this yields the concept of \emph{$H$-reified valuations}, which interpolates between ordinary valuations and our reified valuations. One can easily modify our arguments to produce statements about such valuations, but we have not done so (despite such valuations making an appearance in \cite{ducros-thuillier}). In a different direction,
Foster and Ranganathan \cite{foster-ranganathan} have described an analogue of tropicalization in which the real numbers are replaced by the value group of a valuation of possibly higher rank; reified valuations provide a natural context for comparing this construction to ordinary tropicalization.

\section{Spectral and prespectral spaces}
\label{sec:prime filters}

We first generalize Hochster's formalism of spectral spaces \cite{hochster} to G-topological spaces. This links the spaces considered by Huber with other types of analytic spaces; see for example
\cite[\S 4]{huber1}.

\begin{defn}
A \emph{bounded distributive lattice} is a partially ordered set $D$ satisfying the following conditions.
\begin{enumerate}
\item[(a)]
The set $D$ has a least element $0$ and a greatest element $1$.
\item[(b)]
For any $x,y \in D$, the set of $z \in D$ for which $z \leq x, z \leq y$ has a greatest element
$x \wedge y$ (the \emph{meet} of $x$ and $y$).
\item[(c)]
For any $x,y \in D$, the set of $z \in D$ for which $z \geq x, z \geq y$ has a unique least element
$x \vee y$ (the \emph{join} of $x$ and $y$).
\item[(d)]
The meet and join operations are distributive over each other.
\end{enumerate}
Let $\DLat$ denote the category whose objects are bounded distributive lattices and whose
morphisms are maps of sets preserving $\leq, 0, 1, \wedge, \vee$.
\end{defn}

\begin{defn} \label{D:filter}
A \emph{filter} on $D \in \DLat$ is a subset $\calF$ of $D$ satisfying the following conditions.
\begin{enumerate}
\item[(a)]
We have $1 \in \calF$ and $0 \notin \calF$.
\item[(b)]
For any $S_1, S_2 \in \calF$, we have $S_1 \wedge S_2 \in \calF$.
\item[(c)]
For any $S \in \calF$, any $T \in D$ with $T \geq S$ is also in $\calF$.
\end{enumerate}
A filter $\calF$ on $D$ is \emph{prime} if it satisfies the following additional condition.
\begin{enumerate}
\item[(d)]
For any $S_1, \dots, S_n \in D$ with $S_1 \vee \cdots \vee S_n \in \calF$,
there exists $i \in \{1,\dots,n\}$ for which $S_i \in \calF$.
\end{enumerate}
Let $\Spec(D)$ be the set of prime filters on $D$ equipped with the topology generated by
the sets $\tilde{S} := \{\calF \in \Spec(D): S \in \calF\}$ for $S \in D$.
\end{defn}

We now recall some relevant properties of G-topological spaces.
\begin{defn}
Let $X$ be a G-topological space in the sense of \cite[Definition~9.1.1/1]{BGR},
i.e., a Grothendieck topology whose underlying category is a family of subsets of $X$ closed under pairwise intersections. (In practice, it is harmless to assume that the family is closed under finite intersections, as this only adds the condition that $X$ itself is an open subset.)
We say $X$ is \emph{$T_0$} if for any $x \neq y \in X$,
there exists an open subset of $X$ containing exactly one of $x,y$.

A nonempty closed subspace $Z$ of $X$ is \emph{irreducible} if
 for any two open subsets $U,V$ such that $U \cap Z$ and $V \cap Z$
are nonempty, $U \cap V \cap Z$ is also nonempty; it is enough to check this for $U,V$ running through a basis.

The closure $Z$ of a point $x \in X$ is irreducible: an open set meets $Z$ if and only
if it contains $x$. We say $X$ is \emph{sober} if conversely any irreducible closed subset of $X$
is the closure of a unique point of $X$; any sober space is $T_0$.
\end{defn}

We now introduce the concepts of spectral and prespectral spaces.
\begin{defn} \label{D:prespectral}
Let $X$ be a G-topological space.
A \emph{basis} of $X$ is a family $\calB$ of open sets such that every (admissible) open subset of $X$
admits an admissible covering by elements of $\calB$. 

We say that $X$ is \emph{quasiseparated} (or \emph{semispectral} in the language of \cite{hochster}) if the intersection of any two quasicompact open subsets of $X$
is again quasicompact. 
We write \emph{qcqs} as shorthand for \emph{quasicompact and quasiseparated}.

We say that $X$ is \emph{prespectral} if 
$X$ is qcqs, any finite union of quasicompact open sets is open, and the quasicompact open sets form a basis. In particular, the quasicompact open subsets of $X$ form a bounded distributive lattice with $0 = \emptyset$, $1 = X$, $\wedge = \cap$, $\vee = \cup$.

We say that $X$ is \emph{spectral} if the G-topology on $X$ is an ordinary topology (that is, any union of open sets is open) and $X$ is both
prespectral and sober. Spectral spaces are called \emph{coherent spaces} in some sources, such as \cite{johnstone}.

A map $f: X \to Y$ between prespectral spaces is \emph{spectral} if the preimage of any quasicompact open subset is a quasicompact open. If $X$ is a topological space, this forces $f$
to be continuous.
Let $\catPrespec$ (resp.\ $\catSpec$) be the category of prespectral (resp.\ spectral) spaces and spectral morphisms.
\end{defn}


The key property of spectral spaces is the following result of topos theory.
\begin{theorem}[Stone duality] \label{T:Stone duality}
There is an equivalence of categories 
\[
\DLat^{\op} \sim \catSpec
\]
which in one direction takes $D \in \DLat^{\op}$ to $\Spec(D)$ and in the other takes
$X \in \catSpec$ to the lattice $D(X)$ of quasicompact open subsets of $X$.
\end{theorem}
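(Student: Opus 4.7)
The plan is to construct explicit quasi-inverse functors and verify they are well-defined and mutually inverse. On objects we have the given assignments; on morphisms, a spectral map $f\colon X\to Y$ induces a lattice map $D(Y)\to D(X)$ by $U\mapsto f^{-1}(U)$ (well-defined since $f$ is spectral), while a lattice morphism $\varphi\colon D\to D'$ induces $\Spec(D')\to\Spec(D)$ by $\calF'\mapsto \varphi^{-1}(\calF')$ (a prime filter since $\varphi$ preserves $0,1,\wedge,\vee$). Functoriality and naturality are routine once the object-level bijection is in place, so the substantive work is to verify that $\Spec(D)$ is actually spectral, that $D(X)$ is actually a bounded distributive lattice, and that the unit $\eta_D\colon D\to D(\Spec(D))$, $S\mapsto\tilde S$, and counit $\epsilon_X\colon X\to\Spec(D(X))$, $x\mapsto\calF_x:=\{U\in D(X):x\in U\}$, are isomorphisms.

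The first package of checks is essentially bookkeeping: the identities $\tilde S\cap\tilde T=\widetilde{S\wedge T}$ and $\tilde S\cup\tilde T=\widetilde{S\vee T}$ follow straight from primality of filters, so the $\tilde S$ form a lattice-stable basis of $\Spec(D)$; for $X\in\catSpec$, quasiseparatedness gives closure of $D(X)$ under $\cap$, and the hypothesis that finite unions of quasicompact opens are open gives closure under $\cup$. The main technical point is quasicompactness of each $\tilde S$, which I would prove via a standard prime ideal argument: given a basic cover $\tilde S=\bigcup_i\tilde{S_i}$ with no finite subcover, the set of $T\in D$ with $T\leq S_{i_1}\vee\cdots\vee S_{i_n}$ for some finite subfamily is an ideal not containing $S$, so by Zorn's lemma it extends to an ideal $\calI$ maximal among those excluding $S$; distributivity forces $\calI$ to be prime, and then $D\setminus\calI$ is a prime filter in $\tilde S$ not lying in any $\tilde{S_i}$, a contradiction. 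I expect this Zorn step to be the main obstacle, in the sense that it is the only place genuine choice-theoretic input is needed.

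For the unit $\eta_D$, surjectivity follows because any quasicompact open in $\Spec(D)$ is a finite union $\tilde{S_1}\cup\cdots\cup\tilde{S_n}=\widetilde{S_1\vee\cdots\vee S_n}$ (using the basis and quasicompactness just established), while injectivity follows from the same Zorn argument: if $S\not\leq T$, the filter $\{U:U\geq S\}$ is disjoint from the ideal $\{U:U\leq T\}$, and extending to a prime filter gives an element of $\tilde S\setminus\tilde T$. For sobriety of $\Spec(D)$, I would observe that for an irreducible closed $Z$ the union $\calF_0:=\bigcup_{\calG\in Z}\calG$ is itself a prime filter (closure under meets uses the irreducibility criterion applied to $\tilde S$ and $\tilde T$ with $S,T\in\calF_0$), and that $\calF_0\in Z$ because every basic neighborhood of $\calF_0$ already meets $Z$; this $\calF_0$ is manifestly the unique generic point.

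For the counit $\epsilon_X$, injectivity follows from $T_0$-ness (implied by sobriety of $X$), and continuity together with openness follow from the identity $\epsilon_X^{-1}(\tilde U)=U$ for $U\in D(X)$, which also shows $\epsilon_X$ is spectral. The core step is surjectivity: given a prime filter $\calF$ on $D(X)$, set $Z_\calF:=X\setminus\bigcup_{V\notin\calF}V$. Quasicompactness of $X$ together with primality of $\calF$ shows $Z_\calF\neq\emptyset$; a short argument using quasicompactness of each $U\in D(X)$ and primality shows $U\in\calF$ iff $U\cap Z_\calF\neq\emptyset$, whence $Z_\calF$ is irreducible closed; sobriety of $X$ then produces a unique generic point $x$ with $\calF_x=\calF$. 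With the object-level equivalence in hand, verifying that $\eta$ and $\epsilon$ are natural and that the two composite constructions on morphisms are mutually inverse is a direct unwinding of definitions.
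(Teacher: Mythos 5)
Your argument is correct and is essentially the standard proof of Stone duality for bounded distributive lattices; the paper itself offers no argument beyond describing the action of $\Spec$ on morphisms (with a typo: the pullback filter should be $\{S \in D_1 : f(S) \in \calF\}$, as you have it) and deferring to Johnstone, Corollary~II.3.4. What you have written out is precisely that cited argument --- Zorn-based prime-filter/ideal separation for quasicompactness of the sets $\tilde{S}$ and injectivity of the unit, unions of filters over irreducible closed sets for sobriety of $\Spec(D)$, and sobriety of $X$ for surjectivity of the counit --- so there is nothing to flag.
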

\begin{proof}
The functor $\DLat^{\op} \to \catSpec$ acts on morphisms via pullback:
for $f: D_1 \to D_2$ a morphism in $\DLat$ and $\calF \in \Spec(D_2)$,
the set $\{S \in D_1: f(S) \in D_2\}$ is a prime filter on $D_1$.
For more, see \cite[Corollary~II.3.4]{johnstone}.
\end{proof}

\begin{cor} \label{C:prespec to spec}
The forgetful functor $\catSpec \to \catPrespec$ admits a left adjoint 
taking $X \in \catPrespec$ to $\Spec(D(X))$ for $D(X)$ the lattice
of quasicompact open subsets of $X$.
\end{cor}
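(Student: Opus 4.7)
My approach is to construct the unit of the adjunction as a canonical spectral map $\eta_X \colon X \to \Spec(D(X))$, then use Stone duality (Theorem~\ref{T:Stone duality}) to translate the required universal property into a statement about distributive lattices.

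For each $x \in X$, set
\[
\calF_x := \{U \in D(X) : x \in U\}.
\]
I would verify Definition~\ref{D:filter} directly: $1 = X \in \calF_x$ and $0 = \emptyset \notin \calF_x$; closure under meets and upward closure are immediate from $\wedge = \cap$ and $\leq = \subseteq$; primeness reduces to the set-theoretic fact that $x \in U_1 \cup \cdots \cup U_n$ iff $x \in U_i$ for some $i$. Thus $\calF_x \in \Spec(D(X))$, giving a set map $\eta_X$. To see it is spectral, note the tautology $\eta_X^{-1}(\tilde{U}) = U$ for each $U \in D(X)$; since $\{\tilde{U} : U \in D(X)\}$ is closed under finite unions and intersections and forms a basis on the spectral space $\Spec(D(X))$, every qc open of $\Spec(D(X))$ is of this form, and every such preimage lies in $D(X)$ as required.

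Next, I would establish for each $Y \in \catSpec$ the bijection
\[
\mathrm{Hom}_{\catSpec}(\Spec(D(X)), Y) \longrightarrow \mathrm{Hom}_{\catPrespec}(X, Y), \qquad g \longmapsto g \circ \eta_X.
\]
By Theorem~\ref{T:Stone duality} applied to $Y$, the left-hand side is in natural bijection with $\mathrm{Hom}_{\DLat}(D(Y), D(X))$ via $g \mapsto g^{-1}$. On the other side, a spectral morphism $f \colon X \to Y$ produces a DLat morphism $f^{-1} \colon D(Y) \to D(X)$, well-defined precisely because $f$ is spectral. The identity $\eta_X^{-1}(\tilde{U}) = U$ ensures these two passages to $\DLat$ coincide, so $g \mapsto g \circ \eta_X$ matches the identity on $\mathrm{Hom}_{\DLat}(D(Y), D(X))$ and is therefore bijective. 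Naturality in $Y$, as well as functoriality of $X \mapsto \Spec(D(X))$ on spectral morphisms, then follow formally from the universal property of $\eta_X$.

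The step I expect to require the most care is verifying the compatibility invoked above: for a spectral morphism $f \colon X \to Y$, the map $\tilde f \colon \Spec(D(X)) \to Y$ obtained from $f^{-1}$ via Theorem~\ref{T:Stone duality} must satisfy $\tilde f \circ \eta_X = f$. Unwinding, $\tilde f(\calF_x)$ is the prime filter $\{V \in D(Y) : x \in f^{-1}(V)\} = \{V : f(x) \in V\} = \calF_{f(x)}$, which corresponds to $f(x) \in Y$ under the Stone isomorphism $Y \cong \Spec(D(Y))$. This computation closes the loop and establishes the adjunction.
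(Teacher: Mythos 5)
Your proposal is correct and follows essentially the same route as the paper: define the unit $\eta_X(x) = \{U \in D(X) : x \in U\}$, check it is a prime filter and that $\eta_X$ is spectral via $\eta_X^{-1}(\tilde U) = U$, and then invoke Stone duality (Theorem~\ref{T:Stone duality}) for the spectral target $Y$ to obtain the universal property. The paper's proof is just a terser version of this, phrasing the second half through the natural isomorphism $\Spec(D(Y)) \cong Y$ rather than spelling out the Hom-set bijection.
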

\begin{proof}
For $X \in \catPrespec$, the adjunction map $X \to \Spec(D(X))$
takes $x \in X$ to the prime filter $\{S \in D(X): x \in S\}$. 
(Note that this map is spectral, but not necessarily continuous if $X$ is not an ordinary topological space.)
On the other side, for $Y \in \catSpec$, Theorem~\ref{T:Stone duality} provides a
natural isomorphism $\Spec(D(Y)) \cong Y$ for which the composition
$Y\to \Spec(D(Y)) \to Y$ is the identity map.
\end{proof}

\begin{defn}
For $X$ a topological space, the \emph{patch topology} (or \emph{constructible topology}) on $X$
is the new topology generated by the open sets and complements of quasicompact open sets
of the original topology. If $X$ is a spectral space, we sometimes call its original topology the \emph{spectral topology} to distinguish it from the patch topology.
\end{defn}

The key property of the patch topology is the following \cite[Theorem~1]{hochster}.
\begin{theorem} \label{T:spectral quasicompact intersections}
Any spectral space is compact under the patch topology.
\end{theorem}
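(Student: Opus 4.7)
The plan is to exhibit $X$, equipped with its patch topology, as a closed subspace of a product of copies of the discrete two-point space $\{0,1\}$, from which Tychonoff's theorem will yield compactness. By Theorem~\ref{T:Stone duality} I may identify $X$ with $\Spec(D)$ for $D := D(X)$ the lattice of quasicompact opens, and under this identification the quasicompact opens of $X$ are precisely the sets $\tilde{S}$ for $S \in D$.

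First I would introduce the injection $\iota \colon \Spec(D) \to \{0,1\}^D$ sending a prime filter $\calF$ to its characteristic function. Giving $\{0,1\}$ the discrete topology and $\{0,1\}^D$ the product topology, the subbasic opens on the right take the form $\{f : f(S) = 1\}$ and $\{f : f(S) = 0\}$ for $S \in D$; their $\iota$-preimages are $\tilde{S}$ and $\Spec(D) \setminus \tilde{S}$ respectively, which together generate exactly the patch topology on $\Spec(D)$. Thus $\iota$ is a topological embedding from the patched $\Spec(D)$ into the product $\{0,1\}^D$.

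Next I would verify that $\iota(\Spec(D))$ is closed in $\{0,1\}^D$. The image consists of those $f$ satisfying the four axioms (a)--(d) of Definition~\ref{D:filter}, and each axiom translates into an intersection, indexed over finite tuples from $D$, of a condition involving only finitely many coordinates of the product and hence clopen. For example, axiom (b) gives, for each $(x,y) \in D^2$, the clopen set of $f$ with $f(x) = 0$ or $f(y) = 0$ or $f(x \wedge y) = 1$; axiom (c) gives, for each $x \le y$, the condition $f(x) = 0$ or $f(y) = 1$; axiom (d) is analogous but indexed by finite tuples $(x_1,\dots,x_n)$; and axiom (a) pins down two coordinates. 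Intersecting all these closed sets yields exactly the image of $\iota$, which is therefore closed.

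Finally, Tychonoff's theorem makes $\{0,1\}^D$ compact (even compact Hausdorff), so the closed subspace $\iota(\Spec(D))$ is compact, and transporting back through $\iota$ gives compactness of $X$ in its patch topology. The only step requiring any bookkeeping is the coordinate-wise translation of the prime-filter axioms in the second paragraph; once that is set up, everything else is a direct application of Stone duality and Tychonoff's theorem, so I do not expect a serious obstacle.
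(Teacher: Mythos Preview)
Your argument is correct, but it takes a genuinely different route from the paper's own proof. The paper argues internally from soberness: after noting that the patch topology is Hausdorff, it considers a family of spectral-closed sets and quasicompact opens that is maximal for the finite intersection property, observes that the intersection of the closed members is irreducible (by maximality), and then invokes the existence of a generic point to produce a point lying in the entire family. Your approach is instead the external Stone--Tychonoff one: realize $X$ as $\Spec(D)$, embed it as a closed subspace of $\{0,1\}^D$, and apply Tychonoff. This is essentially Hochster's original argument, and it is also the template the paper itself adopts later in Lemma~\ref{L:valuative is spectral} and Lemma~\ref{L:reified is spectral} to show that valuative spectra are spectral. The paper's proof has the merit of making the role of soberness completely transparent and avoids the bookkeeping of translating the prime-filter axioms coordinatewise; your proof is more self-contained and sidesteps the somewhat delicate maximality/irreducibility step, at the cost of invoking Tychonoff and Stone duality.
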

\begin{proof}
It is clear that the patch topology is Hausdorff. To check quasicompactness, it suffices to check that
any family of closed and quasicompact open sets for the spectral topology
which is maximal for the finite intersection property has nonempty intersection.
But the intersection of the closed members of such a family is irreducible (by maximality) 
and so has a generic point, which belongs to the full
intersection.
\end{proof}
\begin{cor}
An open subset of a spectral space is quasicompact for the spectral topology if and only if it is closed-open for the patch topology.
\end{cor}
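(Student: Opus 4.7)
The plan is to prove both implications directly from the definition of the patch topology together with Theorem~\ref{T:spectral quasicompact intersections}.

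For the forward direction, suppose $U$ is open and quasicompact in the spectral topology. By definition of the patch topology, every spectral-open set is patch-open, so $U$ is patch-open. Moreover, the patch topology is generated so as to include complements of quasicompact spectral-opens; hence $X \setminus U$ is patch-open, i.e., $U$ is patch-closed. Thus $U$ is closed-open for the patch topology.

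For the reverse direction, suppose $U$ is open in the spectral topology and closed-open in the patch topology. Since $X$ is patch-compact by Theorem~\ref{T:spectral quasicompact intersections} and $U$ is patch-closed, $U$ is itself patch-compact. To check spectral-quasicompactness, let $\{V_i\}_{i \in I}$ be any covering of $U$ by spectral-open sets; since every spectral-open set is patch-open, this is also a patch-open cover of the patch-compact set $U$, so it admits a finite subcover. Hence $U$ is quasicompact in the spectral topology.

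The only mild subtlety is that one might be tempted to cover $U$ only by quasicompact spectral-opens (using that they form a basis) before invoking patch-compactness; but the argument above avoids this, since patch-compactness applies to arbitrary patch-open covers and every spectral-open set is patch-open without further hypothesis. Thus there is really no obstacle: both directions reduce to unwinding the construction of the patch topology and the patch-compactness of $X$.
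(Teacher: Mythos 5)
Your proof is correct and is exactly the argument the paper intends, since it states this as an immediate corollary of Theorem~\ref{T:spectral quasicompact intersections} without further proof: the forward direction unwinds the definition of the patch topology, and the reverse direction uses that a patch-closed subset of the patch-compact space $X$ is patch-compact, hence quasicompact for the coarser spectral topology. Nothing is missing.
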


\begin{cor} \label{C:continuous is spectral}
A continuous map of spectral spaces is spectral if and only if it is continuous for the patch topologies.
\end{cor}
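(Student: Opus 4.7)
The plan is to prove both directions separately, using the immediately preceding corollary (that an open subset of a spectral space is quasicompact for the spectral topology iff it is closed-open for the patch topology) as the main tool. Throughout, let $f : X \to Y$ be a continuous map of spectral spaces, and note that the patch topology on $Y$ is by definition generated by the open and the quasicompact open complements; so patch-continuity of $f$ is equivalent to the condition that preimages of open sets are patch-open and preimages of quasicompact open sets are patch-closed.

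For the forward implication, suppose $f$ is spectral. Then for any open $V \subseteq Y$, $f^{-1}(V)$ is open hence patch-open; and for any quasicompact open $U \subseteq Y$, $f^{-1}(U)$ is quasicompact open by hypothesis, so its complement is closed hence patch-closed. This handles a subbasis for the patch topology on $Y$, so $f$ is patch-continuous.

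For the reverse implication, suppose $f$ is continuous for both the spectral and patch topologies, and let $U \subseteq Y$ be quasicompact open. By the previous corollary, $U$ is closed-open in the patch topology of $Y$, so $f^{-1}(U)$ is closed-open in the patch topology of $X$ by patch-continuity. On the other hand, by spectral continuity, $f^{-1}(U)$ is open in the spectral topology of $X$. Applying the previous corollary in the opposite direction, an open subset of $X$ that is closed-open for the patch topology is quasicompact for the spectral topology, so $f^{-1}(U)$ is quasicompact open. Hence $f$ is spectral.

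There is essentially no obstacle: the content is entirely absorbed into Theorem~\ref{T:spectral quasicompact intersections} and its corollary, and the argument reduces to tracking the definition of the patch topology on each side. The only mild subtlety is remembering that quasicompactness of $f^{-1}(U)$ must be deduced in the spectral topology (not the patch topology, where it would follow trivially from compactness), which is precisely what the preceding corollary delivers.
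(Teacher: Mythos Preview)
Your argument is the natural one and matches what the paper intends (no proof is given there). There is, however, a small slip in the forward direction. After noting that $f^{-1}(U)$ is quasicompact open, you write ``so its complement is closed hence patch-closed''; but that only shows $f^{-1}(U)$ is patch-\emph{open}, whereas your own reformulation requires $f^{-1}(U)$ to be patch-\emph{closed}. The repair is immediate: since $f^{-1}(U)$ is quasicompact open, its complement is by definition a subbasic patch-open set, so $f^{-1}(U)$ is patch-closed---or simply invoke the preceding corollary directly, exactly as you do in the reverse direction. With that correction the proof is complete.
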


\begin{cor} \label{C:quasicompact is spectral}
A topological space which is $T_0$ and prespectral is spectral if and only if its patch topology
is quasicompact.
\end{cor}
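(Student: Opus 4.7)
The forward direction is immediate from Theorem~\ref{T:spectral quasicompact intersections}, which implies compactness (hence quasicompactness) of the patch topology of any spectral space.

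For the converse, let $X$ be a $T_0$ prespectral topological space whose patch topology is quasicompact, and let $\phi: X \to Y := \Spec(D(X))$ be the adjunction map of Corollary~\ref{C:prespec to spec}. The plan is to show $\phi$ is a homeomorphism, so that $X$ inherits sobriety from the spectral space $Y$ and is therefore spectral. For each $S \in D(X)$ we have $\phi^{-1}(\tilde S) = S$ and $\phi(S) = \tilde S \cap \phi(X)$, so $\phi$ is continuous (since $X$ is an ordinary topological space) and open onto its image. It is injective because $X$ is $T_0$ and the quasicompact opens form a basis of $X$ (separate two points by an open set, then refine to a basic open). Thus $\phi$ is a homeomorphism onto its image, and the crux is to prove that $\phi$ is surjective.

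I would argue surjectivity via the patch topologies. Since $\phi^{-1}(\tilde S) = S$ and $\phi^{-1}(Y \setminus \tilde S) = X \setminus S$, the map $\phi$ is continuous between the patch topologies. By hypothesis $X$ is patch-quasicompact, and by the proof of Theorem~\ref{T:spectral quasicompact intersections} the patch topology on $Y$ is Hausdorff, so $\phi(X)$ is patch-closed in $Y$. To finish, I would show that $\phi(X)$ is patch-dense. A basic patch-open of $Y$ takes the form $\tilde S \setminus \tilde T$ for $S, T \in D(X)$ (using $\tilde{T_1} \cup \tilde{T_2} = \widetilde{T_1 \vee T_2}$, which follows from the prime filter condition (d) of Definition~\ref{D:filter}, and $T_1 \vee T_2 = T_1 \cup T_2$ for quasicompact opens by prespectrality). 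If $\tilde S \setminus \tilde T$ is nonempty, then $S \not\leq T$ in $D(X)$ — otherwise filter condition (c) would force $\tilde S \subseteq \tilde T$ — so $S \not\subseteq T$ as subsets of $X$, and any $x \in S \setminus T$ satisfies $\phi(x) \in \tilde S \setminus \tilde T$.

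The main obstacle is the density step, which requires a careful translation between lattice-theoretic inequalities in $D(X)$ and set-theoretic containment of quasicompact opens in $X$, and a correct description of the basic opens of the patch topology on $Y$ in terms of the prime filter structure. Everything else reduces to unwinding the definitions and invoking the already-established duality and the Hausdorffness half of Theorem~\ref{T:spectral quasicompact intersections}.
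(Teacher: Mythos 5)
Your proof is correct, but it takes a genuinely different route from the paper's. The paper verifies sobriety directly: given an irreducible closed subspace $Z$, the family consisting of $Z$ together with all quasicompact open subsets meeting $Z$ is a family of patch-closed sets with the finite intersection property (irreducibility plus quasiseparatedness give the latter), so patch-quasicompactness yields a point in the total intersection, and since the quasicompact opens form a basis that point is generic for $Z$; uniqueness is the $T_0$ hypothesis. You instead prove that the adjunction map $\phi\colon X \to \Spec(D(X))$ of Corollary~\ref{C:prespec to spec} is a homeomorphism, importing sobriety from the target. Both arguments hinge on patch-quasicompactness of $X$ in essentially the same way (your ``patch-compact image in a patch-Hausdorff space is closed'' step is a repackaging of the same finite-intersection-property argument), and your density and injectivity steps are carried out correctly, including the reduction of basic patch-opens of $\Spec(D(X))$ to the form $\tilde S \setminus \tilde T$ via primality. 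The paper's argument is shorter and avoids invoking Stone duality; yours is longer but yields strictly more, namely that under these hypotheses the adjunction map $X \to \Spec(D(X))$ is an isomorphism, which is exactly the content of Remark~\ref{R:adjunction}(a),(c) and of Hochster's characterization of spectralifiable spaces. A small streamlining of your surjectivity step: given a prime filter $\calF$, the sets $S \in \calF$ together with the complements $X \setminus T$ for $T \in D(X) \setminus \calF$ already form a patch-closed family with the finite intersection property (by primality), so patch-quasicompactness directly produces $x$ with $\phi(x) = \calF$, bypassing the closed-plus-dense decomposition.
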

\begin{proof}
For any irreducible closed subspace $Z$, any point in the 
intersection of the quasicompact open subsets of $Z$ is a generic point.
\end{proof}

\begin{remark} \label{R:adjunction}
We collect some additional observations about the adjunction map $X \to \Spec(D(X))$ associated to
$X \in \catPrespec$ via Corollary~\ref{C:prespec to spec}.
\begin{enumerate}
\item[(a)]
This map is the unique (up to unique isomorphism) morphism $f: X \to Y$ in $\catPrespec$ with
$Y \in \catSpec$ such that the image of $f$ is dense under the patch topology
and $X$ admits a basis consisting of inverse images of quasicompact open subsets of $Y$.
(Namely, Corollary~\ref{C:prespec to spec} produces a morphism $\Spec(D(X)) \to Y$ in $\catSpec$.
For the patch topologies, we have a continuous map between compact spaces which is injective with dense image,
hence a homeomorphism.)
\item[(b)]
This map is the unique (up to unique isomorphism) morphism $f: X \to Y$ in $\catPrespec$
with $Y \in \catSpec$ defining an isomorphism of topoi. (This reduces easily to (a).)
\item[(c)]
This map is injective if and only if $X$ is $T_0$.
Consequently, Corollary~\ref{C:prespec to spec} is a refinement of \cite[Theorem~8]{hochster},
which asserts that an ordinary topological space is \emph{spectralifiable} (prespectral and $T_0$) if and only if it can be spectrally embedded into some spectral space.
\end{enumerate}
\end{remark}

\begin{lemma} \label{L:extract spectral}
Let $(X,T)$ be a compact topological space. Let $F \subseteq T$ be a family of closed-open subsets of $X$.
Suppose that the topology $T'$ on $X$ generated by $F$ is $T_0$.
Then $(X,T')$ is a spectral space in which the elements of $F$ are quasicompact open.
\end{lemma}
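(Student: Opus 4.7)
The plan is to verify directly the hypotheses of Corollary~\ref{C:quasicompact is spectral}: that $(X, T')$ is prespectral, $T_0$ (given), and has quasicompact patch topology. The key bookkeeping device will be the family $\calB$ of finite intersections of elements of $F$, which forms a basis for $T'$ since $F$ generates $T'$ as a subbasis.

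First I would observe that every $B \in \calB$ is $T$-closed-open (finite intersections of closed-open sets are closed-open), hence $T$-compact. Since $T' \subseteq T$, any $T'$-open cover of $B$ is also a $T$-open cover, so admits a finite subcover, meaning $B$ is $T'$-quasicompact. Thus $\calB$ consists of $T'$-quasicompact opens, and in particular the elements of $F \subseteq \calB$ are $T'$-quasicompact open.

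Next I would identify the $T'$-quasicompact opens explicitly: if $U$ is $T'$-quasicompact open, then writing $U$ as a union of elements of the basis $\calB$ and extracting a finite subcover, $U$ is a finite union of elements of $\calB$. This immediately makes $\calB$ a basis of $T'$-quasicompact opens. Using this description I would then verify prespectrality: $X$ itself is $T'$-quasicompact (since $T' \subseteq T$ and $(X,T)$ is compact); finite unions of $T'$-quasicompact opens are $T'$-quasicompact open (trivially); and given two $T'$-quasicompact opens $U = \bigcup_{i} B_i$, $V = \bigcup_{j} B'_j$ with $B_i, B'_j \in \calB$ and the unions finite, the intersection $U \cap V = \bigcup_{i,j} (B_i \cap B'_j)$ is again a finite union of elements of $\calB$, hence $T'$-quasicompact, establishing quasiseparatedness.

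Finally, to apply Corollary~\ref{C:quasicompact is spectral}, I would show that the patch topology on $(X,T')$ is coarser than $T$. The patch topology is generated by $T'$-open sets (all of which lie in $T' \subseteq T$) together with complements of $T'$-quasicompact opens; but by the previous paragraph every such quasicompact open is a finite union of $\calB$-elements, hence $T$-closed-open, so its complement is $T$-open. Thus every generator of the patch topology lies in $T$, whence the patch topology is coarser than $T$ and therefore quasicompact. Combined with the assumption that $T'$ is $T_0$ and the prespectrality established above, Corollary~\ref{C:quasicompact is spectral} concludes that $(X, T')$ is spectral. I don't anticipate a serious obstacle here; the only mildly subtle point is the explicit identification of $T'$-quasicompact opens as finite unions of $\calB$-elements, which is what makes both the quasiseparatedness argument and the comparison with $T$ go through.
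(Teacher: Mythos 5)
Your proof is correct. The paper itself gives no argument here, simply citing \cite[Proposition~7]{hochster}, so what you have written is a self-contained replacement for that external reference rather than a divergence from the paper's method; in spirit it is close to Hochster's own argument, routed through the paper's Corollary~\ref{C:quasicompact is spectral} (which rests on the compactness of the patch topology, i.e.\ Hochster's Theorem~1). All the steps check out: the finite intersections $\calB$ of elements of $F$ are $T$-clopen, hence $T$-compact, hence $T'$-quasicompact since $T' \subseteq T$; the extraction of a finite subcover from a basic cover correctly identifies the $T'$-quasicompact opens as exactly the finite unions of elements of $\calB$, which gives both quasiseparatedness (via $U \cap V = \bigcup_{i,j}(B_i \cap B'_j)$) and the fact that every $T'$-quasicompact open is $T$-clopen; and the latter shows the patch topology of $(X,T')$ is coarser than the quasicompact topology $T$, hence quasicompact, so Corollary~\ref{C:quasicompact is spectral} applies. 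The one point worth stating explicitly when you write this up is that $X$ itself is $T'$-quasicompact (you do note this, via $T' \subseteq T$), since prespectrality requires $X$ to be qcqs and not merely to have a basis of quasicompact opens.
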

\begin{proof}
See \cite[Proposition~7]{hochster}.
\end{proof}
\begin{cor} \label{C:closed in spectral}
Any subspace of a spectral space which is closed under the patch topology is a spectral space.
\end{cor}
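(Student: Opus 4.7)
The plan is to apply Lemma~\ref{L:extract spectral} directly to the subspace. Let $X$ be a spectral space and $Y \subseteq X$ a subset which is closed for the patch topology on $X$. By Theorem~\ref{T:spectral quasicompact intersections}, $X$ is compact in its patch topology; the patch topology is moreover Hausdorff, so the patch-closed subset $Y$ is itself compact (and Hausdorff) in the subspace patch topology. This puts us in the setting required by Lemma~\ref{L:extract spectral}, with $(X,T)$ replaced by $Y$ equipped with its patch topology.

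For the family $F$, I take $F = \{U \cap Y : U \text{ is a quasicompact open subset of } X\}$. By the corollary to Theorem~\ref{T:spectral quasicompact intersections}, each such $U$ is closed-open in the patch topology on $X$, so each $U \cap Y$ is closed-open in the subspace patch topology on $Y$, as required by the hypothesis of Lemma~\ref{L:extract spectral}. Let $T'$ be the topology on $Y$ generated by $F$; equivalently, since $X$ is prespectral and so the quasicompact opens form a basis for its spectral topology, $T'$ is the subspace topology inherited from the spectral topology on $X$. To verify the $T_0$ hypothesis of Lemma~\ref{L:extract spectral}, observe that $X$ is sober, hence $T_0$, and the quasicompact opens form a basis, so for any distinct $x, y \in Y$ some quasicompact open $U$ of $X$ contains exactly one of them; then $U \cap Y \in F$ separates $x$ and $y$ in $(Y, T')$.

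Lemma~\ref{L:extract spectral} then yields that $(Y, T')$ is a spectral space, which is precisely the claim: the subspace topology on $Y$ (coming from the spectral topology on $X$) is spectral. The main potential pitfall is simply keeping track of which topology is being used at each step — the patch topology is what supplies compactness via the ambient closedness hypothesis, while the spectral topology on $Y$ is what we are trying to identify as spectral — but once the two roles are separated, the argument is a direct citation.
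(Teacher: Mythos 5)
Your proof is correct and is exactly the argument the paper intends: the corollary is stated as an immediate consequence of Lemma~\ref{L:extract spectral}, applied to the patch-compact subspace $Y$ with $F$ the traces of the quasicompact opens of the ambient spectral space. Your bookkeeping of the two topologies and the verification of the $T_0$ hypothesis are both accurate.
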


\begin{remark}
Although we will not use this fact, it is worth noting that a topological space is spectral if and only if it is isomorphic to the prime
spectrum of a commutative ring \cite[Theorem~6]{hochster}.
\end{remark}

\section{Spaces of valuations}
\label{sec:valuations}

Throughout \S\ref{sec:valuations}, fix a (commutative unital) ring $A$.
We recall the construction and basic properties of the space of valuations on $A$, following \cite{huber1}.
Given our goals, it is natural to write valuations and semivaluations multiplicatively rather than additively;
this is inconsistent with classical literature
on valuation theory, but it is consistent with Huber's papers.

\begin{defn}
By a \emph{value group}, we will mean a totally ordered abelian group written multiplicatively (so that $1$
is its identity element). For $\Gamma$ a value group,
let $\Gamma_0$ denote the pointed commutative monoid $\Gamma \cup \{0\}$
ordered so that $0 < \gamma$ for all $\gamma \in \Gamma$.
\end{defn}

\begin{defn}
A \emph{semivaluation} on the ring $A$ is a function $v: A \to \Gamma_0$
for some value group $\Gamma$ satisfying the following conditions.
\begin{enumerate}
\item[(a)]
We have $v(0) = 0$ and $v(1) = 1$.
\item[(b)]
For all $x,y \in A$, we have $v(x+y) \leq \max\{v(x), v(y)\}$.
\item[(c)]
For all $x,y \in A$, we have $v(xy) = v(x) v(y)$.
\end{enumerate}
For $v$ a semivaluation, the \emph{kernel} of $v$ is the prime ideal $v^{-1}(0)$;
note that $v$ induces a Krull valuation on $\Frac(A/v^{-1}(0))$.

For $v$ a semivaluation, let $\Gamma_{v,0}$ be the image of $v$ and put $\Gamma_v := \Gamma_{v,0} \setminus \{0\}$.
Two semivaluations $v_1, v_2$ on $A$ are \emph{equivalent} if 
there exists an isomorphism $i: \Gamma_{v_1} \cong \Gamma_{v_2}$ of value groups 
(which we also view as an isomorphism $i: \Gamma_{v_1,0} \cong \Gamma_{v_2,0}$)
such that $i \circ v_1 = v_2$. The equivalence classes of semivaluations on $A$ then correspond to pairs $(\frakp, \frako)$ in which
$\frakp$ is a prime ideal of $A$ and $\frako$ is a valuation ring of $\Frac(A/\frakp)$.
\end{defn}

\begin{defn}
The \emph{valuative spectrum} of $A$ is the set
$\Spv(A)$ of equivalence classes of semivaluations on $A$,
equipped with the topology generated by sets of the form
\begin{equation} \label{eq:valuative spectrum basis}
\{v \in \Spv(A): v(a) \leq v(b) \neq 0\} \qquad (a,b \in A).
\end{equation}
Let $\calB$ be the Boolean algebra generated by sets of the form \eqref{eq:valuative spectrum basis};
note that $\calB$ is also generated by the sets
\begin{equation} \label{eq:valuative spectrum patch basis}
\{v \in \Spv(A): v(a) \leq v(b)\} \qquad (a,b \in A).
\end{equation}
\end{defn}

\begin{lemma} \label{L:valuative is spectral}
The space $\Spv(A)$ is spectral
and the elements of $\calB$ are compact for the patch topology.
In particular, any finite intersection of subspaces as in \eqref{eq:valuative spectrum basis}
is quasicompact and open.
\end{lemma}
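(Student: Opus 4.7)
The plan is to embed $\Spv(A)$ as a closed subset of the compact Hausdorff space $X := \{0,1\}^{A \times A}$ (a product of copies of the discrete two-point space) and then invoke Lemma~\ref{L:extract spectral}. Define $\iota : \Spv(A) \to X$ by sending the class of $v$ to the characteristic function $\sigma_v$ with $\sigma_v(a,b) = 1$ iff $v(a) \leq v(b)$. This is well-defined on equivalence classes, and is injective because $v$ can be reconstructed up to equivalence from $\sigma_v$: the kernel $\frakp = \{a : \sigma_v(a, 0) = 1\}$ is visible, and inside $\Frac(A/\frakp)$ the valuation ring $\frako = \{\bar{a}/\bar{b} : \sigma_v(a, b) = 1,\ b \notin \frakp\}$ is visible, pinning down the equivalence class.

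Next I would verify that $Y := \iota(\Spv(A))$ is closed in $X$ by expressing ``$\sigma$ comes from a semivaluation'' as a conjunction of finite-coordinate conditions: reflexivity, transitivity, and totality of the relation ``$\sigma(a, b) = 1$''; the axioms $\sigma(0, a) = 1$ for all $a$ and $\sigma(1, 0) = 0$; the ultrametric condition $\sigma(a, b) = 1 \Rightarrow \sigma(a+b, b) = 1$; and a multiplicative-cancellation axiom encoding $v(ab) = v(a) v(b)$, such as $\sigma(a, b) = \sigma(ac, bc)$ whenever $\sigma(c, 0) = 0$, together with $\sigma(ac, 0) = 1$ whenever $\sigma(a, 0) = 1$ or $\sigma(c, 0) = 1$. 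Each clause involves only finitely many coordinates of $\sigma$ and so cuts out a closed subset of $X$; their intersection $Y$ is therefore closed, and $(Y, T)$ with $T$ the subspace topology from $X$ is compact.

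I then apply Lemma~\ref{L:extract spectral} with this $(Y, T)$ and the family $F := \{Y \cap \{\sigma : \sigma(a, b) = 1,\ \sigma(b, 0) = 0\} : a, b \in A\}$. Each element of $F$ is the intersection of $Y$ with a closed-open cylinder in $X$, and transports across $\iota$ to a basic open of $\Spv(A)$ of the form \eqref{eq:valuative spectrum basis}. The topology $T'$ on $Y$ generated by $F$ is thus the given topology of $\Spv(A)$, and is $T_0$ via a short case analysis on how two inequivalent semivaluations must differ in their preorder. The lemma now delivers that $\Spv(A)$ is spectral with basic opens quasicompact; moreover its proof identifies the patch topology of $T'$ with $T$, so every element of $\calB$, being a Boolean combination of cylinders, is closed in the compact space $(Y, T)$ and hence patch-compact. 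In particular any finite intersection of basic opens lies in $\calB$ and is open, so it is quasicompact open.

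The main obstacle is the second step: writing down axioms that are both necessary and sufficient for $\sigma$ to come from a semivaluation. Necessity of each clause is routine, but sufficiency requires checking that these axioms really force $\frakp$ to be a prime ideal of $A$ and the preorder extended to $\Frac(A/\frakp)$ to cut out a valuation ring --- in particular, that the cancellation clause together with ultrametricity is enough to make the extended preorder compatible with multiplication on the fraction field. After that, the remaining work is topological bookkeeping.
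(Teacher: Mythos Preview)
Your proposal is correct and follows essentially the same approach as the paper: embed $\Spv(A)$ as a closed subspace of $\{0,1\}^{A\times A}$, observe that the basic opens \eqref{eq:valuative spectrum basis} correspond to closed-open cylinders, and invoke Lemma~\ref{L:extract spectral}. The paper defers the verification that the image is cut out by closed conditions to \cite[Proposition~2.2]{huber1} (and to the reified analogue Lemma~\ref{L:reified order relation}), whereas you sketch the relevant axioms directly; your honest flagging of the sufficiency check as the main obstacle is exactly where the work lies, and Lemma~\ref{L:reified order relation} shows the shape such a verification takes.
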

\begin{proof}
We follow the proof of \cite[Proposition~2.2]{huber1}.
We first observe that distinct elements of $\Spv(A)$ can be distinguished by sets of the form
\eqref{eq:valuative spectrum patch basis}, hence also by any collection of generators of $\calB$.
In particular, $\Spv(A)$ is $T_0$.
Next, define a map from $\Spv(A)$ to $\{0,1\}^{A \times A}$ by the formula
\[
v \mapsto (v_{a,b}), \qquad v_{a,b} = \begin{cases} 1 & \mbox{if $v(a) \geq v(b)$,} \\ 0 & \mbox{if $v(a) < v(b)$};
\end{cases}
\]
it is injective with image defined by closed conditions (see \cite[Proposition~2.2]{huber1} and Lemma~\ref{L:reified is spectral}).
Equip $\{0,1\}$ with the discrete topology and $\{0,1\}^{A \times A}$ with the product topology.
By Tikhonov's theorem, the subspace topology on $\Spv(A)$ 
is compact and the elements of $\calB$ form a basis of closed-open subsets.
Since the given topology on $\Spv(A)$ is $T_0$ and is generated
by a set of generators of $\calB$, the claim follows from
Lemma~\ref{L:extract spectral}.
(It also follows that the subspace topology on $\Spv(A)$ coincides with the patch topology.)
\end{proof}

\begin{lemma} \label{L:lift valuations on fields}
For any finite normal extension $\ell/k$ of fields, the fibers of the map $\Spv(\ell) \to \Spv(k)$ are nonempty, finite, and permuted transitively by $\Aut(\ell/k)$.
\end{lemma}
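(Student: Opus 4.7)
The plan is to reduce the statement to classical facts about extensions of Krull valuations to a finite normal algebraic extension. On a field, every semivaluation has trivial kernel, so $\Spv(k)$ (resp.\ $\Spv(\ell)$) parametrizes valuation rings of $k$ (resp.\ of $\ell$). Fix $v \in \Spv(k)$ with valuation ring $\frako \subseteq k$, and let $B \subseteq \ell$ be the integral closure of $\frako$ in $\ell$; then $B$ is stable under $G := \Aut(\ell/k)$, and standard valuation theory identifies the fiber of $\Spv(\ell) \to \Spv(k)$ over $v$ with $\Maxspec(B)$, where $\frakm' \in \Maxspec(B)$ corresponds to the unique valuation ring of $\ell$ dominating the local ring $B_{\frakm'}$ (which is in fact itself a valuation ring, since $B$ is Pr\"ufer, being the integral closure of a valuation ring in an algebraic extension). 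So I reduce to showing that $\Maxspec(B)$ is nonempty, finite, and transitively acted on by $G$.

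Nonemptiness follows from the existence of maximal ideals in $B$ combined with Chevalley's extension theorem, which provides a valuation ring of $\ell$ dominating $B_{\frakm'}$. For transitivity, suppose $\frakm_1, \frakm_2 \in \Maxspec(B)$ satisfy $\sigma(\frakm_1) \neq \frakm_2$ for every $\sigma \in G$. By prime avoidance applied to the finite family $\{\sigma(\frakm_1) : \sigma \in G\}$ (whose members are pairwise incomparable with $\frakm_2$ by maximality), I pick $b \in \frakm_2 \setminus \bigcup_{\sigma \in G} \sigma(\frakm_1)$ and set $c := \prod_{\sigma \in G} \sigma(b) \in B$. Since $c$ is $G$-invariant, it lies in $\ell^G$, which is purely inseparable over $k$ because $\ell/k$ is normal; hence some power $c^{p^n}$ (with $p$ the characteristic, or $n = 0$ in characteristic zero) lies in $k \cap B = \frako$. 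Because $b$ is a factor of $c$ and $b \in \frakm_2$, we have $c \in \frakm_2$, so $c^{p^n} \in \frakm_2 \cap \frako = \frakm$ by lying-over for the integral extension $\frako \subseteq B$. But $\frakm \subseteq \frakm_1$, so $c^{p^n} \in \frakm_1$, forcing $c \in \frakm_1$ by primality; this contradicts the fact that none of the factors $\sigma(b)$ lies in $\frakm_1$.

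Finiteness is then immediate: $G$ is finite since $[\ell:k] < \infty$, and by transitivity the fiber is a single $G$-orbit. The main obstacle is the prime-avoidance-plus-norm trick in the transitivity step, combined with the need to accommodate inseparability via the passage to a $p$-th power so that the argument covers arbitrary finite normal (rather than just Galois) extensions; the remaining ingredients---Chevalley extension, lying-over, and the Pr\"ufer property of $B$---are all classical.
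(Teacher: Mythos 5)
Your proof is correct, and it is essentially the argument the paper is invoking: the paper's proof is just a citation to \cite[Propositions VI.8.9--12]{bourbaki-ac}, and Bourbaki's argument there is precisely your reduction to $\Maxspec$ of the integral closure $B$, followed by the prime-avoidance/norm trick (with the passage to $\ell^{G}$ and a $p$-power to handle inseparability) for transitivity. No gaps; the identification of the fiber with $\Maxspec(B)$ and the facts $B\cap k=\frako$, $\frakm_i\cap\frako=\frakm_{\frako}$ are all used correctly.
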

\begin{proof}
See \cite[Propositions VI.8.9--12]{bourbaki-ac}.
\end{proof}

\begin{lemma} \label{L:valuation product}
Let $k_1/k, k_2/k$ be extensions of fields.
\begin{enumerate}
\item[(a)]
The maps $\Spv(k_1) \to \Spv(k), \Spv(k_2) \to \Spv(k)$ are surjective.
\item[(b)]
The map $\Spv(k_1 \otimes_k k_2) \to \Spv(k_1) \times_{\Spv(k)} \Spv(k_2)$ is surjective
(but typically not injective; see Remark~\ref{R:valuation product not injective}).
\end{enumerate}
\end{lemma}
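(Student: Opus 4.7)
The plan for (a) is to invoke Chevalley's extension theorem: given $v \in \Spv(k)$ with valuation ring $\frako \subseteq k$, there exists a valuation ring $\calO$ of $k_1$ dominating $\frako$ (equivalently, by Zorn's lemma on the poset of local subrings of $k_1$ dominating $\frako$), which yields a lift of $v$ to $\Spv(k_1)$, and symmetrically for $k_2$.

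For (b), given $v_i \in \Spv(k_i)$ with common restriction $v \in \Spv(k)$, write $\frako, \frako_i$ for the valuation rings, $\frakm, \frakm_i$ for the maximal ideals, and $\kappa, \kappa_i$ for the residue fields. I will form the ring $R = \frako_1 \otimes_{\frako} \frako_2$: since each $\frako_i$ is torsion-free and hence flat over $\frako$, the natural maps $\frako_i \hookrightarrow R$ and $R \hookrightarrow k_1 \otimes_k k_2$ are injective. Consider the ideal $J = \frakm_1 R + R \frakm_2$; the quotient $R/J \cong \kappa_1 \otimes_\kappa \kappa_2$ is nonzero as a tensor product of fields over a field, so $J$ lies in some prime $\frakq$ of $R$, and necessarily $\frakq \cap \frako_i = \frakm_i$ for both $i$.

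The main technical step is to find a prime $\frakp \subseteq \frakq$ with $\frakp \cap \frako_i = 0$ for both $i$, which I accomplish by two successive applications of the going-down property for the flat extensions $\frako_i \hookrightarrow R$: first descend from $\frakq$ to some $\frakp_1 \subseteq \frakq$ with $\frakp_1 \cap \frako_1 = 0$, then descend from $\frakp_1$ to $\frakp \subseteq \frakp_1$ with $\frakp \cap \frako_2 = 0$ (and automatically $\frakp \cap \frako_1 = 0$). Then $R/\frakp$ is an integral domain into which each $\frako_i$ embeds, so its fraction field $K$ contains both $k_1, k_2$; a further application of Chevalley produces a valuation ring $\calO$ of $K$ dominating $(R/\frakp)_{\frakq/\frakp}$, forcing $\frakm_\calO \cap \frako_i = \frakm_i$ and hence $\calO \cap k_i = \frako_i$. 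Finally, $\frakp$ is disjoint from each $\frako_i \setminus \{0\}$, so it extends to a prime $\mathfrak{P}$ of $k_1 \otimes_k k_2$ with $\Frac((k_1 \otimes_k k_2)/\mathfrak{P}) = K$, and the valuation on $K$ determined by $\calO$ pulls back to a semivaluation on $k_1 \otimes_k k_2$ restricting to $v_i$ on $k_i$. The main obstacle is coordinating the going-down steps to secure $\frakp \cap \frako_i = 0$ for both $i$ simultaneously; the remaining assembly is routine diagram-chasing around Chevalley.
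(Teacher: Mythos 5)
Your proof is correct and follows essentially the same route as the paper: form $R = \frako_1 \otimes_{\frako} \frako_2$, find a prime over both maximal ideals (you make explicit the residue-field tensor product argument that the paper delegates to the Stacks Project), descend twice via going-down using flatness of torsion-free modules over valuation rings, and dominate the resulting local domain by a valuation ring. The only cosmetic difference is that for (a) you invoke Chevalley's extension theorem directly where the paper cites Vaqui\'e's survey.
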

\begin{proof}
To prove (a), see \cite[\S 5]{vaquie}. 
To prove (b), we follow \cite[Exercice VI.2.2]{bourbaki-ac}.
Let $\frako_v, \frako_{v_1}, \frako_{v_2}$ be the valuation rings of $v, v_1, v_2$;
let $\frakp_v, \frakp_{v_1}, \frakp_{v_2}$ be the maximal ideals of $v,v_1,v_2$;
and put $R = \frako_{v_1} \otimes_{\frako_v} \frako_{v_2}$. 
By a standard argument \cite[Tag 0495]{stacks-project},
there exists a prime ideal $\frakp$ of $R$ lying over $\frakp_{v_1}$ and $\frakp_{v_2}$.
Now recall that a module over a valuation ring is flat if and only if it is torsion-free
\cite[Tag~0539]{stacks-project}; by base extension, the morphisms $\frako_{v_1} \to R$,
$\frako_{v_2} \to R$ are flat and thus satisfy the going-down theorem \cite[Tag 00HS]{stacks-project}.
That is, we may construct first a prime ideal $\frakp_1 \subseteq \frakp$ lying over $(0) \subset \frako_{v_1}$ and then a prime ideal $\frakq \subseteq \frakp_1$ lying over $(0) \subset \frako_{v_2}$. Any valuation ring dominating $(R/\frakq)_{\frakp/\frakq}$ then corresponds to a semivaluation on $k_1 \otimes_k k_2$ restricting to $v_1$ on $k_1$ and to $v_2$ on $k_2$.
\end{proof}

\begin{remark} \label{R:extend two valuations}
Lemma~\ref{L:valuation product}(b) is implicitly invoked several times in Huber's work
(see \cite[Lemma~3.9(i)]{huber2}, \cite[(1.1.14)(e)]{huber-book}); the argument given above was suggested by Huber (private communication). One can also give a proof in the style of \cite[Theorem~4.1]{huber1} using the model theory of algebraically closed valued fields (ACVF), as follows.
(See Remark~\ref{R:ACVF} for a related discusson.)

Fix $v_1 \in \Spv(k_1)$, $v_2 \in \Spv(k_2)$ which both restrict to
$v \in \Spv(k)$; we must exhibit a common overfield $k_3$ of $k_1$ and $k_2$ and an element $v_3 \in \Spv(k_3)$ mapping to $v_1 \in \Spv(k_1)$ and to $v_2 \in \Spv(k_2)$.
Suppose first that $k_1/k$ is a finite extension.
By Lemma~\ref{L:lift valuations on fields}, we are free to first replace $k_1$ and $k_2$ by suitable algebraic extensions; we may thus ensure that $k_1$ is normal over $k$ and that $k_2$ contains a subfield $k_1'$ isomorphic to $k_1$. In this case, Lemma~\ref{L:lift valuations on fields} implies the existence of an isomorphism $k_1 \cong k_1'$ compatible with valuations, proving the claim.

To check the general case, 
by Zorn's lemma, we may assume that $k_2 = k(x)$. By Lemma~\ref{L:lift valuations on fields} and the previous paragraph, we may further assume that both $k$ and $k_1$ are algebraically closed; by adjoining an extra transcendental, we may further assume that the valuations on $k,k_1, k_2$ are all nontrivial.
By quantifier elimination in ACVF
(e.g., see \cite{hhm}), for any rational functions $f_1,\dots,f_n \in k(T)$, there exist a field extension $k_3$ of $k_1$, a Krull extension $v_3$ on $k_3$ restricting to $v_1$ on $k_1$, and an element $y \in k_3$ 
such that for $i=1,\dots,n$, 
we have $v_2(f_i(x)) \leq 1$ if and only if $v_3(f_i(y)) \leq 1$.

By Lemma~\ref{L:valuative is spectral}, $\Spv(k_1(x))$ is a spectral space
and hence is compact for the patch topology.
By the previous paragraph and the finite intersection property, there exists $v_3 \in \Spv(k_1(x))$ restricting to $v_1$ on $k_1$ and to $v_2$ on $k(x) \cong k_2$.
This proves the claim.
\end{remark}

\begin{remark} \label{R:valuation product not injective}
As one may infer from the analogy with schemes, the map in Lemma~\ref{L:valuation product}(b) is not injective. For example, if $k_1 = k(x)$, $k_2 = k(y)$ with $x,y$ transcendental over $k$, then the set
$\Spv(k_1 \otimes_k k_2)$ contains the trivial valuation on $k_1 \otimes_k k_2$, but it also contains many nontrivial semivaluations which restrict trivially to $k_1, k_2$.
One of these may be constructed by restricting the trivial valuation along the map $k_1 \otimes_k k_2 \to k_1$ which acts on $k_1$ as the identity map and on $k_2$ as the $k$-linear identification $k_2 \cong k_1$ mapping $y$ to $x$.
\end{remark}

\section{Ordinary adic spectra}

We next recall Huber's construction of adic spectra and definition of adic spaces.

\begin{defn}
A \emph{linearly topologized ring} (or \emph{LT ring} for short)
is a topological ring $A$ admitting a neighborhood basis of $0$
consisting of additive subgroups. 
For $A$ an LT ring, a subset $B$ of $A$ is \emph{bounded}
if for each neighborhood $U$ of $0$ in $A$, there exists a neighborhood $V$ of $0$ in $A$
with $V \cdot B \subseteq U$. 
An element $a \in A$ is \emph{power-bounded} (resp.\ \emph{topologically nilpotent}) if the sequence $a,a^2, \dots$ is bounded (resp.\ converges to 0).
The set $A^{\circ}$ of power-bounded elements is a subring of $A$;
the set $A^{\circ \circ}$ of topologically nilpotent elements is an ideal of $A^{\circ}$.
\end{defn}

\begin{defn} \label{D:LT tensor product}
For $A \to B, A \to C$ two continuous homomorphisms of LT rings, the tensor product $B \otimes_A C$ may be topologized in such a way that subgroups of the form $U \otimes V$, in which $U,V$ are additive subgroups which are neighborhoods of 0 in $B,C$, form a neighborhood basis of 0. Note that even if $A,B,C$ are all Hausdorff, $B \otimes_A C$ need not be.
\end{defn}

\begin{defn} \label{D:rational subspace LT}
Let $A$ be an LT ring. A \emph{rational subspace} of $\Spv(A)$
is a subset of the form
\begin{equation} \label{eq:rational subspace LT}
\{v \in \Spv(A): v(f_i) \leq v(f_0) \neq 0 \quad (i=1,\dots,n)\}
\end{equation}
for some $f_0,\dots,f_n \in A$ such that $f_1,\dots,f_n$ generate an open ideal of $A$.
By Lemma~\ref{L:valuative is spectral},
any rational subspace is quasicompact and open.
Note that any pairwise intersection of rational subspaces is again a rational subspace:
\begin{gather*}
\{v \in \Spv(A): v(f_i) \leq v (f_0) \neq 0 \quad (i=0,\dots,n)\} \\
\bigcap
\{v \in \Spv(A): v(g_j) \leq v(g_0) \neq 0 \quad (j=0,\dots,m)\} \\
=
\{v \in \Spv(A): v(f_i g_j) \leq v(f_0 g_0) \neq 0 \quad (i=0,\dots,n; j=0,\dots,m)\}.
\end{gather*}
One does not change the definition of a rational subspace if one requires only that $f_0, f_1,\dots,f_n$ generate an open ideal: if $n>0$,
one may add the condition $v(f_0) \leq v(f_0) \neq 0$ for free; if $n=0$, we have the space $\Spv(A)$ itself, and so we may as well take $f_0 = 1$.
\end{defn}

\begin{remark}
The definition of a rational subspace of $\Spv(A)$ we are using is the one from \cite{huber-book}. The definition in \cite{huber2} is formally different, but again can be shown to lead to the same class of subspaces.
\end{remark}

\begin{defn}
An \emph{adic ring} is a topological ring $A$ admitting an ideal $I$
whose powers form a fundamental system of neighborhoods of 0.
Any ideal with this property is called an \emph{ideal of definition} of $A$.

An \emph{f-adic ring} is a topological ring $A$ admitting an open subring $A_0$ which
is adic with a finitely generated ideal of definition. Any such subring $A_0$ is called a \emph{ring of definition} of $A$. Note that any f-adic ring is LT,
and the tensor product of f-adic rings (in the sense of Definition~\ref{D:LT tensor product}) is again f-adic.
\end{defn}

\begin{defn} \label{D:Tate f-adic ring}
An f-adic ring $A$ is \emph{Tate} if it contains a topologically nilpotent unit.
In this case, any open ideal is trivial; that is, if $f_1,\dots,f_n$ generate an open ideal of $A$, then for any $v \in \Spv(A)$, the quantities $v(f_1),\dots,v(f_n)$ cannot all vanish (e.g., see Corollary~\ref{C:units} below).
One consequence of this is that \eqref{eq:rational subspace LT} can be rewritten as
\[
\{v \in \Spv(A): v(f_i) \leq v(f_0) \quad (i=1,\dots,n)\}.
\]
This modification is needed to compare the concept of a rational subspace of $\Spv(A)$ with analogous concepts, such as that of a rational subspace of an affinoid space in rigid analytic geometry (as in \cite{BGR}).
\end{defn}

\begin{defn}
Let $A$ be an f-adic ring. A semivaluation $v \in \Spv(A)$ is \emph{continuous} if for every $\gamma \in \Gamma_v$,
there exists a neighborhood $U$ of $0$ in $A$ such that $v(u) < \gamma$ for all $u \in U$.
Let $\Cont(A)$ be the subspace of $\Spv(A)$ consisting of continuous semivaluations.
\end{defn}

The space $\Cont(A)$ does not naturally embed as a closed subspace for the patch
topology in a known spectral space like $\Spv(A)$. Nonetheless, using the finite generation of an ideal
of definition, one can prove the following.
\begin{theorem} \label{T:continuous valuation}
For any f-adic ring $A$, $\Cont(A)$ is a spectral space.
\end{theorem}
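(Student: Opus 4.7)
The approach is to verify the hypotheses of Corollary~\ref{C:quasicompact is spectral}: since $\Cont(A)$ inherits the $T_0$ property from $\Spv(A)$ (Lemma~\ref{L:valuative is spectral}), it is enough to establish that $\Cont(A)$ is prespectral and that its induced patch topology is quasicompact.

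First I would fix a ring of definition $A_0 \subseteq A$ together with a finite generating set $s_1, \dots, s_n$ for an ideal of definition $I$ of $A_0$. Using that $\{I^m\}_{m \geq 0}$ is a fundamental system of neighborhoods of $0$ and that $v$ is nonarchimedean, continuity is equivalent to: $v(a) \leq 1$ for every $a \in A_0$, and for each $b \in A$ with $v(b) \neq 0$ there exists $m \geq 1$ such that $v(s_{i_1} \cdots s_{i_m}) \leq v(b)$ for every tuple $(i_1, \dots, i_m) \in \{1, \dots, n\}^m$. The finite generation of $I$ is essential here: it reduces each stage of the continuity test to finitely many inequalities of values of $v$.

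For prespectrality, I would verify that the rational subsets in the sense of Definition~\ref{D:rational subspace LT}, intersected with $\Cont(A)$, form a basis of opens closed under finite intersection (the intersection formula in that definition carries over verbatim), and that finite unions of quasicompact opens are open. Quasicompactness of these basis elements will follow from patch-compactness via the argument of Lemma~\ref{L:valuative is spectral}.

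The main obstacle is patch-compactness. As the remark preceding the theorem points out, $\Cont(A)$ is not patch-closed in $\Spv(A)$ --- the universal quantifier over $\gamma \in \Gamma_v$ ranges over a set that depends on $v$, so one cannot invoke Corollary~\ref{C:closed in spectral} directly. I would instead construct an auxiliary patch-compact space $\widetilde{X}$ whose points enrich each $v \in \Spv(A)$ by a convex subgroup $c\Gamma_v$ of $\Gamma_v$ recording the values against which continuity is to be tested, in such a way that $v \in \Cont(A)$ if and only if $v(A_0) \leq 1$ and $c\Gamma_v = \Gamma_v$. Concretely, $\widetilde{X}$ embeds into the product of $\{0,1\}^{A \times A}$ (as in the proof of Lemma~\ref{L:valuative is spectral}) with an extra factor $\{0,1\}^A$ whose $a$-coordinate encodes whether $v(a) \in c\Gamma_v$. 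Finite generation of $I$ is precisely what makes each of the cutting-out conditions closed in the product topology, so Tikhonov's theorem yields patch-compactness of $\widetilde{X}$; the projection $\widetilde{X} \to \Cont(A)$ is then a patch-continuous surjection, whence $\Cont(A)$ is patch-compact. Corollary~\ref{C:quasicompact is spectral} then completes the argument.
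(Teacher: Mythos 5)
Your overall strategy---reducing to patch-quasicompactness via Corollary~\ref{C:quasicompact is spectral} and bringing in convex subgroups of the value group to deal with the failure of patch-closedness---is the right shape, and is essentially that of Huber's proof (the paper simply cites \cite[Corollary~3.2]{huber1}; compare Lemma~\ref{L:comm is spectral}, its reified analogue). But two steps as written would fail. First, your characterization of continuity is incorrect: continuity does \emph{not} imply $v(a) \leq 1$ for all $a \in A_0$. For $A = \QQ_p\{T\}$, $A_0 = \ZZ_p\{T\}$, the rank-$2$ valuation at the Gauss point with $v(T) = \gamma$ infinitesimally greater than $1$ (value group $p^{\ZZ} \times \gamma^{\ZZ}$, lexicographic, with $1 < \gamma^m < p$ for all $m$) is continuous, since $v(p^n A_0) < p^{1-n}$ is cofinal in $\Gamma_v$, yet $v(T) > 1$; this is exactly the phenomenon behind Example~\ref{exa:upper branch}. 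The correct criterion (Huber's Lemma~3.1) is phrased via the convex subgroup $c\Gamma_v$ generated by $\{v(a) : a \in A,\ v(a) \geq 1\}$: roughly, $v$ is continuous iff $v(a)<1$ for all $a \in I$ and either $c\Gamma_v = \Gamma_v$ or $v(I)$ is cofinal in $\Gamma_v$. With your condition (i) you cut out something close to $\Spa(A,A_0)$, not $\Cont(A)$.

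Second, the compactness argument for $\widetilde{X}$ does not go through. The cofinality condition ``for each $b$ there exists $m$ such that $v(s_{i_1}\cdots s_{i_m}) \leq v(b)$ for all tuples'' carries an existential quantifier over $m$; finite generation of $I$ makes each fixed-$m$ stage a finite conjunction of closed conditions, but the union over $m$ is still open, not closed, so $\widetilde{X}$ is not visibly closed in the product and Tikhonov does not apply. Adjoining coordinates recording membership in $c\Gamma_v$ does not remove this quantifier. Moreover the ``projection $\widetilde{X} \to \Cont(A)$'' is never defined: the forgetful map $(v, c\Gamma_v) \mapsto v$ has image much larger than $\Cont(A)$. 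Huber's resolution is a \emph{retraction} rather than a closed embedding: he introduces the intermediate space $\Spv(A,I)$ of valuations with $\Gamma_v = c\Gamma_v$ or $v(I)$ cofinal in $\Gamma_v$, exhibits it as the image of the explicit map $v \mapsto v|_{c\Gamma_v(I)}$ (restriction to a convex subgroup, sending values outside it to $0$), checks this map is continuous for the patch topologies, and deduces patch-compactness of $\Spv(A,I)$ as a continuous image of the compact space $\Spv(A)$; then $\Cont(A)$ is the genuinely patch-closed subset of $\Spv(A,I)$ where $v(a)<1$ for all $a \in I$. That retraction is the missing device, and it is precisely what this paper reproduces in Definition~\ref{D:retract} and Lemma~\ref{L:comm is spectral}.
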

\begin{proof}
See \cite[Corollary~3.2]{huber1}.
\end{proof}

\begin{defn}
For $A$ an f-adic ring, a \emph{ring of integral elements} of $A$ is a subring $B$ of $A^{\circ}$ which is open in $A$
and integrally closed in $A$.
An \emph{affinoid f-adic ring} is a pair $(A^{\rhd}, A^+)$ in which $A^{\rhd}$ is an f-adic ring and $A^+$ 
is a ring of integral elements of $A^{\rhd}$.
A \emph{morphism} $(A^{\rhd}, A^+) \to (B^{\rhd}, B^+)$ of affinoid f-adic rings 
consists of a morphism $A^{\rhd} \to B^{\rhd}$ of topological rings carrying $A^+$ into $B^+$.

For $(A^{\rhd}, A^+) \to (B^{\rhd}, B^+)$, $(A^{\rhd}, A^+) \to (C^{\rhd}, C^+)$ two morphisms of affinoid f-adic rings, we define the \emph{tensor product}
$(B^{\rhd}, B^+) \otimes_{(A^{\rhd}, A^+)} (C^{\rhd}, C^+)$
to be the pair $(D^{\rhd}, D^+)$ in which $D^{\rhd} = B^{\rhd} \otimes_{A^{\rhd}} C^{\rhd}$
(in the sense of Definition~\ref{D:LT tensor product})
and $D^+$ is the integral closure of the image of $B^+ \otimes_{A^+} C^+$ in $D^{\rhd}$.
\end{defn}

\begin{defn}
Let $(A^{\rhd}, A^+)$ be an affinoid f-adic ring. The \emph{adic spectrum} of $(A^{\rhd}, A^+)$
is the subspace $\Spa(A^{\rhd},A^+)$ of $\Cont(A^{\rhd})$ consisting of those $v$ for which
$v(a) \leq 1$ for all $a \in A^+$. A \emph{rational subspace} of $\Spa(A^{\rhd},A^+)$
is the intersection of $\Spa(A^{\rhd}, A^+)$ with a rational subspace of $\Spv(A^{\rhd})$.
Any morphism $\varphi: (A^{\rhd}, A^+) \to (B^{\rhd}, B^+)$ defines a continuous map
$\varphi^*: \Spa(B^{\rhd}, B^+) \to \Spa(A^{\rhd}, A^+)$ by restriction.
\end{defn}

\begin{theorem} \label{T:adic spectrum is spectral}
For any affinoid f-adic ring $(A^{\rhd}, A^+)$, the space 
$\Spa(A^{\rhd},A^+)$ is spectral. Moreover, the rational subspaces  form a basis of the topology
of $\Spa(A^{\rhd},A^+)$ consisting of quasicompact open subsets.
\end{theorem}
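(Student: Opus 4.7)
The plan is to realize $\Spa(A^{\rhd}, A^+)$ as a patch-closed subspace of the spectral space $\Cont(A^{\rhd})$ (spectral by Theorem~\ref{T:continuous valuation}) and then apply Corollary~\ref{C:closed in spectral}. For each $a \in A^+$, the defining condition $v(a) \leq 1 = v(1)$ cuts out a subbasic open of $\Spv(A^{\rhd})$ of the form \eqref{eq:valuative spectrum basis}, which lies in the Boolean algebra $\calB$ and is therefore patch-clopen in $\Spv(A^{\rhd})$ by Lemma~\ref{L:valuative is spectral}; indeed it is itself a rational subspace, obtained by taking $f_0 = 1$, $f_1 = a$ in \eqref{eq:rational subspace LT}. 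Intersecting with $\Cont(A^{\rhd})$ yields a rational subspace of $\Cont(A^{\rhd})$ which, as part of Huber's analysis underlying Theorem~\ref{T:continuous valuation}, is quasicompact open and hence patch-clopen there. Intersecting over all $a \in A^+$ then exhibits $\Spa(A^{\rhd}, A^+)$ as a patch-closed subset of $\Cont(A^{\rhd})$, and Corollary~\ref{C:closed in spectral} furnishes spectrality.

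For the second claim, a rational subspace of $\Spa$ is by definition $R \cap \Spa(A^{\rhd}, A^+)$ for some rational subspace $R$ of $\Spv(A^{\rhd})$. Since $R \in \calB$ is patch-clopen in $\Spv(A^{\rhd})$ (Lemma~\ref{L:valuative is spectral}), its intersection with the patch-closed subspace $\Spa$ is patch-clopen in $\Spa$ for the subspace patch topology, which agrees with its intrinsic patch topology; thus $R \cap \Spa$ is quasicompact open in $\Spa$. To see that the rational subspaces cover every open, it suffices to consider a basic open $U = \bigcap_{i=1}^n \{v \in \Spa : v(a_i) \leq v(b_i) \neq 0\}$ and a point $v_0 \in U$. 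Applying continuity of $v_0$ to each $\gamma = v_0(b_i) \in \Gamma_{v_0}$ produces neighborhoods $W_i$ of $0$ with $v_0|_{W_i} < v_0(b_i)$. Fixing a ring of definition $A_0 \subseteq A^{\rhd}$ with finitely generated ideal of definition, some power of it $I = (g_1, \dots, g_k)$ lies in $\bigcap_i W_i$, so $v_0(g_j) < v_0(b_i)$ for all $i,j$. Then
\[
R = \left\{v \in \Spv(A^{\rhd}) : v(a_i),\, v(g_1), \dots, v(g_k) \leq v(b_i) \neq 0,\ i = 1, \dots, n\right\}
\]
is a finite intersection of rational subspaces of $\Spv(A^{\rhd})$ (each defining ideal contains $I$ and is therefore open) and hence, by Definition~\ref{D:rational subspace LT}, is itself rational; it contains $v_0$, and $R \cap \Spa \subseteq U$.

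The main technical obstacle is not a single step but the careful bookkeeping of three interacting patch topologies, namely those of $\Spv(A^{\rhd})$, $\Cont(A^{\rhd})$, and $\Spa(A^{\rhd}, A^+)$. The crucial input is that on a patch-closed subspace of a spectral space the subspace patch topology coincides with the intrinsic one (implicit in Hochster's proof of Corollary~\ref{C:closed in spectral}), which is precisely what allows the patch-clopen calculations performed in $\Spv(A^{\rhd})$ to descend to quasicompactness statements in $\Spa(A^{\rhd}, A^+)$. Once this identification is granted, the construction of a rational subspace around any point of an open set becomes a routine application of continuity together with finite generation of the ideal of definition.
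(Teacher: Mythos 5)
Your overall strategy---realizing $\Spa(A^{\rhd},A^+)$ as a patch-closed subspace of $\Cont(A^{\rhd})$ and invoking Corollary~\ref{C:closed in spectral}---is exactly the alternative route the paper sketches for the first assertion (otherwise it simply cites \cite[Theorem~3.5(i,ii)]{huber1}), and both your first paragraph and your point-by-point construction of rational neighborhoods are sound. The gap is in your second paragraph: you assert that $\Spa(A^{\rhd},A^+)$ is patch-closed in $\Spv(A^{\rhd})$ and that its subspace patch topology from $\Spv(A^{\rhd})$ therefore agrees with its intrinsic one. This is not available. The paper explicitly warns, just before Theorem~\ref{T:continuous valuation}, that $\Cont(A^{\rhd})$ does \emph{not} embed as a patch-closed subspace of $\Spv(A^{\rhd})$ (the continuity condition is not cut out by patch-closed conditions there), and the same failure afflicts $\Spa(A^{\rhd},A^+) \subseteq \Cont(A^{\rhd})$. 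Hence knowing that a rational subspace $R$ of $\Spv(A^{\rhd})$ lies in $\calB$ and is patch-clopen in $\Spv(A^{\rhd})$ does not by itself yield quasicompactness of $R \cap \Spa(A^{\rhd},A^+)$. The repair is the one you already used in your first paragraph: work inside $\Cont(A^{\rhd})$, where $\Spa(A^{\rhd},A^+)$ \emph{is} patch-closed by your own argument; then $R \cap \Cont(A^{\rhd})$ is quasicompact open, hence patch-clopen, in $\Cont(A^{\rhd})$, and its further intersection with the patch-closed subset $\Spa(A^{\rhd},A^+)$ is quasicompact open there.

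This rerouting puts all the weight on the claim that rational subspaces of $\Cont(A^{\rhd})$ are quasicompact, which you attribute to ``Huber's analysis underlying Theorem~\ref{T:continuous valuation}.'' Note that this is not contained in the statement of Theorem~\ref{T:continuous valuation}, which only asserts spectrality of $\Cont(A^{\rhd})$; it is precisely the hard point of the present theorem, and it is where the finite generation of an ideal of definition does its real work, via Huber's auxiliary spectral space (denoted $\Spv(A,I)$ in \cite{huber1}) inside which the relevant subsets genuinely are patch-closed. So your proof does not avoid citing the substance of \cite[Theorem~3.5]{huber1}; it reorganizes it. With that citation made explicit and the ambient space corrected from $\Spv(A^{\rhd})$ to $\Cont(A^{\rhd})$, the argument is complete: in particular, your construction of a rational subspace around a point of a basic open set, obtained by applying continuity to the values $v_0(b_i)$ and adjoining the generators $g_1,\dots,g_k$ of a power of the ideal of definition as extra parameters (so that each defining ideal contains an open ideal, as required by the sets of the form \eqref{eq:valuative spectrum basis} and Definition~\ref{D:rational subspace LT}), is correct and is essentially Huber's.
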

\begin{proof}
See \cite[Theorem~3.5(i,ii)]{huber1}. The first assertion can also be deduced from
Theorem~\ref{T:continuous valuation} using Corollary~\ref{C:closed in spectral}.
\end{proof}

\begin{lemma} \label{L:nonempty adic spectrum}
For $A = (A^{\rhd}, A^+)$ an affinoid f-adic ring, $\Spa(A)$ is empty if and only if 
$0$ is dense in $A^{\rhd}$. (In particular, this condition does not depend on $A^+$.)
\end{lemma}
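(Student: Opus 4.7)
For the first direction, suppose $0$ is dense in $A^{\rhd}$, so $\overline{\{0\}} = A^{\rhd}$. In a topological abelian group this forces every open neighborhood of $0$ to equal $A^{\rhd}$. For any $v \in \Spa(A) \subseteq \Cont(A^{\rhd})$, applying the continuity condition at $\gamma = 1 \in \Gamma_v$ (which is in $\Gamma_v$ since $\Gamma_v$ is a group) produces an open neighborhood of $0$ on which $v < 1$; that neighborhood must be $A^{\rhd}$, yielding $v(1) < 1$ and contradicting $v(1) = 1$. Hence $\Spa(A)$ is empty, independently of $A^+$.

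For the converse, suppose $0$ is not dense. The plan is to reduce to the complete Hausdorff case and then apply Huber's existence result. First, $N := \overline{\{0\}}$ is a proper closed ideal of $A^{\rhd}$ (proper because $1 \notin N$, else $N = A^{\rhd}$). Any continuous semivaluation $v$ on $A^{\rhd}$ kills $N$: for $n \in N$ and any $\gamma \in \Gamma_v$, continuity yields a neighborhood $U$ of $0$ with $v|_U < \gamma$; since $n \in U$, we get $v(n) < \gamma$ for every $\gamma \in \Gamma_v$, forcing $v(n) = 0$. So $\Spa(A^{\rhd}, A^+)$ is in bijection with $\Spa(B, B^+)$, where $B := A^{\rhd}/N$ is a nonzero Hausdorff f-adic ring and $B^+$ is the induced ring of integral elements. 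Second, the completion $\widehat{B}$ is a nonzero complete Hausdorff f-adic ring (nonzero because $B \hookrightarrow \widehat{B}$ by Hausdorffness), and any point of $\Spa(\widehat{B}, \widehat{B^+})$ restricts, via the continuous inclusion $B \hookrightarrow \widehat{B}$, to a point of $\Spa(B, B^+)$. Thus it suffices to show $\Spa$ is nonempty for every nonzero complete Hausdorff f-adic ring.

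This last existence statement is the crux, and I would cite Huber's construction from \cite[Theorem 3.5]{huber1}. For orientation: in the non-analytic subcase---where the topologically nilpotent elements of $\widehat{B}$ do not generate the unit ideal---any maximal ideal of $\widehat{B}$ containing the ideal they generate automatically contains the ideal of definition (which consists of topologically nilpotent elements) and is therefore open and closed; the trivial valuation with this kernel is continuous, takes values in $\{0,1\}$, and so lies in $\Spa$ for any choice of $\widehat{B^+}$. The main obstacle is the analytic case, which requires producing a genuine rank-one continuous valuation via a Zorn-style construction of a valuation ring of a suitable quotient subject to a continuity constraint; this is the heart of Huber's argument.
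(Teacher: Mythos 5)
Your argument is correct and matches the paper's proof in substance: the paper simply cites Huber \cite[Proposition~3.6]{huber1} (which is the precise reference for the existence statement you defer to, rather than Theorem~3.5), and your easy direction and reduction to the complete Hausdorff case are the standard preliminaries to that result. The paper also notes an alternative route via Theorem~\ref{T:Gel'fand spectrum}, producing a point of $\calM(A^{\rhd})$ directly, which avoids Huber's Zorn-style construction in the analytic case.
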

\begin{proof}
See \cite[Proposition~3.6]{huber1} or Theorem~\ref{T:Gel'fand spectrum} below.
\end{proof}

\begin{theorem} \label{T:tensor product to fiber product}
For $(A^{\rhd}, A^+) \to (B^{\rhd}, B^+)$, $(A^{\rhd}, A^+) \to (C^{\rhd}, C^+)$
two morphisms of affinoid f-adic rings
and $(D^{\rhd}, D^+) = (B^{\rhd}, B^+) \otimes_{(A^{\rhd}, A^+)} (C^{\rhd}, C^+)$,
the map
\[
\Spa(D^{\rhd}, D^+) \to \Spa(B^{\rhd}, B^+) \times_{\Spa(A^{\rhd}, A^+)} \Spa(C^{\rhd}, C^+)
\]
is surjective.
\end{theorem}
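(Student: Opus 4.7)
The plan is to reduce to Lemma~\ref{L:valuation product}(b) at the level of residue fields, lift back to a semivaluation on $D^{\rhd}$, and then verify continuity and the integrality condition that distinguish $\Spa$ from $\Spv$. The continuity check will be the main obstacle.

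Given $v_B \in \Spa(B^{\rhd}, B^+)$ and $v_C \in \Spa(C^{\rhd}, C^+)$ both restricting to $v_A \in \Spa(A^{\rhd}, A^+)$, let $k_A, k_B, k_C$ denote the residue fields carrying their induced Krull valuations. Lemma~\ref{L:valuation product}(b) applied to $k_B/k_A$ and $k_C/k_A$ produces a Krull valuation $w$ on $k_B \otimes_{k_A} k_C$ restricting to $v_B$ and $v_C$. Pulling $w$ back along the natural ring homomorphism $D^{\rhd} \to k_B \otimes_{k_A} k_C$ yields a semivaluation $v_D$ on $D^{\rhd}$ whose restrictions to $B^{\rhd}$ and $C^{\rhd}$ are $v_B$ and $v_C$ by construction, so it suffices to show $v_D$ may be chosen to lie in $\Spa(D^{\rhd}, D^+)$.

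For the integrality condition $v_D(d) \leq 1$ on $d \in D^+$, multiplicativity gives $v_D(b \otimes c) = v_B(b) v_C(c) \leq 1$ on simple tensors with $b \in B^+, c \in C^+$, ultrametricity extends this to the image of $B^+ \otimes_{A^+} C^+$, and the standard integral-equation argument (if $d^n + a_{n-1} d^{n-1} + \dots + a_0 = 0$ with $v_D(a_i) \leq 1$ and $v_D(d) > 1$, the leading term strictly dominates by ultrametricity, a contradiction) extends the bound to the integral closure $D^+$.

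Continuity is the main obstacle. Fix rings of definition $B_0, C_0$ with finitely generated ideals of definition $I, J$; the image $D_0$ of $B_0 \otimes_{A_0} C_0$ in $D^{\rhd}$ is a ring of definition whose ideal of definition $K$ is generated by the finitely many images of $i \otimes 1$ and $1 \otimes j$ for $i, j$ generators of $I, J$. Continuity of $v_B, v_C$ gives $v_B \leq 1$ on $B_0$ and $v_C \leq 1$ on $C_0$, hence $v_D \leq 1$ on $D_0$, and multiplicativity together with the generating set description bounds $v_D(K^n) \leq \max(\mu_B^n, \mu_C^n)$, where $\mu_B = \max v_B(I)$ and $\mu_C = \max v_C(J)$. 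For $\gamma \in \Gamma_{v_D}$ one needs $n$ making this bound less than $\gamma$; the subtlety is that $\Gamma_{v_D}$ may a priori contain elements outside the convex subgroup generated by $\mu_B, \mu_C$, so no such $n$ exists for the $w$ one first constructs. The resolution leverages the flexibility in Lemma~\ref{L:valuation product}(b) to choose $w$ (or to coarsen it by quotienting out an appropriate convex subgroup of its value group) so that the value group of the resulting $v_D$ lies in this convex subgroup, which closes the argument.
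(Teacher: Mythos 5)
Your proposal is correct and follows essentially the same route as the paper: apply Lemma~\ref{L:valuation product}(b) to obtain a semivaluation on $D^{\rhd}$ restricting to the given pair, observe that the bound $v(d)\leq 1$ on $D^+$ follows from multiplicativity on simple tensors plus the ultrametric inequality and integral closure, and then repair continuity, which is indeed the only real issue. The paper's fix is the same one you describe, stated as ``identifying certain infinitesimals with $0$'' (cf.\ Definition~\ref{D:retract} and Huber's Lemma~3.9(i)) --- note that the precise operation is the horizontal specialization sending values outside the convex subgroup generated by the values of the ideal of definition to $0$, rather than forming a quotient by a convex subgroup.
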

\begin{proof}
Given $v_1 \in \Spa(B^{\rhd}, B^+)$, $v_2 \in \Spa(C^{\rhd}, C^+)$ restricting to
$v \in \Spa(A^{\rhd}, A^+)$, Lemma~\ref{L:valuation product}
produces $v_3 \in \Spv(D^{\rhd})$ restricting to $v_1$ on $B^{\rhd}$ and to $v_2$ on $C^{\rhd}$. It is immediate that $v_3(x) \leq 1$ for all $x \in D^{\rhd}$, but not that $v_3$ is continuous. However, as in the proof of \cite[Lemma~3.9(i)]{huber2}, we may modify $v_3$ to obtain a continuous valuation by identifying certain infinitesimals with 0
(or see Definition~\ref{D:retract} below).
\end{proof}

\begin{defn} \label{D:Tate algebra}
Let $A$ be an f-adic ring.
Choose a ring of definition $A_0$ of $A$ and an ideal of definition $I$ of $A_0$.
We may view $A[T_1,\dots,T_n]$ as an f-adic ring with $A_0[T_1,\dots,T_n]$ as a ring of definition and $I A_0[T_1,\dots,T_n]$ as an ideal of definition; this does not depend on the choices of $A_0$ and $I$. 
Taking the completion yields another f-adic ring denoted $A\{T_1,\dots,T_n\}$ and called the \emph{Tate algebra} over $A$ in the variables $T_1,\dots,T_n$.
\end{defn}

\begin{defn} \label{D:rational subspace adic}
Let $(A^{\rhd}, A^+)$ be an affinoid f-adic ring.
Consider a rational subspace $U$ of $\Spv(A^{\rhd})$ defined by parameters $f_0,\dots,f_n \in A^{\rhd}$ as in \eqref{eq:rational subspace LT}.
Let $B^{\rhd}$ be the quotient of $A^{\rhd}\{T_1,\dots,T_n\}$ by the completion of the ideal $(f_0 T_1 - f_1, \dots, f_0 T_n - f_n)$.
Let $B^+$ be the completion of the integral closure of the image of $A^+[T_1,\dots,T_n]$ in $B^{\rhd}$.
We now have a morphism $(A^{\rhd}, A^+) \to (B^{\rhd}, B^+)$ of affinoid f-adic rings;
by Lemma~\ref{L:rational subspace} below, this construction depends only on the original rational subspace $U$ and not on the defining parameters.
\end{defn}

\begin{lemma} \label{L:rational subspace}
Retain notation as in Definition~\ref{D:rational subspace adic}.
\begin{enumerate}
\item[(a)]
The morphism $(A^{\rhd}, A^+) \to (B^{\rhd}, B^+)$ is initial among morphisms
$(A^{\rhd}, A^+) \to (C^{\rhd}, C^+)$ for which $C^{\rhd}$ is complete and the image of $\Spa(C^{\rhd}, C^+)$ in 
$\Spa(A^{\rhd},A^+)$ is contained in $U$.
\item[(b)]
The induced map $\Spa(B^{\rhd}, B^+) \to U$ is a homeomorphism. More precisely, the rational subspaces of $\Spa(B^{\rhd}, B^+)$ correspond to the rational subspaces of $\Spa(A^{\rhd},A^+)$ contained in $U$.
\end{enumerate}
\end{lemma}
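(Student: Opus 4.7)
The plan is to prove (a) by assembling the universal properties of polynomial rings, quotients, completion, and integral closure, and then to deduce (b) from (a) by constructing an inverse map and comparing topologies on each side.

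For (a), the first step is to verify that $f_0$ becomes invertible in $B^{\rhd}$. The relations $f_0 T_i = f_i$ in $B^{\rhd}$ force the image of the open ideal $(f_1,\dots,f_n) \subseteq A^{\rhd}$ into the principal ideal $f_0 B^{\rhd}$, so this principal ideal becomes open in $B^{\rhd}$; combined with completeness of $B^{\rhd}$, a geometric series argument (of the type used to invert $1-\delta$ for topologically nilpotent $\delta$) then produces an inverse to $f_0$. With this in hand, suppose $(A^{\rhd}, A^+) \to (C^{\rhd}, C^+)$ is a morphism with $C^{\rhd}$ complete and $\Spa(C^{\rhd}, C^+) \subseteq U$. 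By the same openness-plus-completeness argument, together with the hypothesis that $v(f_0) \neq 0$ for every $v \in \Spa(C^{\rhd}, C^+)$, the image of $f_0$ is also a unit in $C^{\rhd}$; then $v(f_i/f_0) \leq 1$ everywhere on $\Spa(C^{\rhd}, C^+)$ places each $f_i/f_0$ in $C^+$, using the standard characterization of $C^+$ (valid for complete $C^{\rhd}$) as the integrally closed open subring of $C^{\rhd}$ cut out by pointwise boundedness on the adic spectrum. Sending $T_i \mapsto f_i/f_0$ then yields a continuous $A^{\rhd}$-algebra map $A^{\rhd}[T_1,\dots,T_n] \to C^{\rhd}$ that kills each $f_0 T_i - f_i$, extends uniquely to $B^{\rhd}$ by completeness, and carries $B^+$ into $C^+$; this is the required universal factorization, and its uniqueness is immediate since the factorization is determined on $A^{\rhd}$ and on the $T_i$.

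For (b), surjectivity of $\Spa(B^{\rhd}, B^+) \to U$ follows by applying (a) to the pair $(K_v, K_v^+)$ associated to any $v \in U$ (the completion of the residue field $\Frac(A^{\rhd}/v^{-1}(0))$ paired with the valuation ring of $v$): the induced map $(A^{\rhd}, A^+) \to (K_v, K_v^+)$ has image in $U$ by construction, hence factors through $(B^{\rhd}, B^+)$ and produces a preimage of $v$. Injectivity holds because a continuous semivaluation on $B^{\rhd}$ is determined by its restriction to $A^{\rhd}$ together with its values on the $T_i = f_i/f_0$, which are themselves pinned down by the $f_i$. To promote this bijection to a homeomorphism and obtain the rational subspace correspondence, I would show that any rational subspace of $\Spa(B^{\rhd}, B^+)$ cut out by parameters $g_0, \dots, g_m \in B^{\rhd}$ can, after approximating each $g_j$ by a polynomial in the $T_i$ with coefficients in the dense image of $A^{\rhd}$ and clearing a common power of $f_0$, be realized as the intersection of $U$ with a rational subspace of $\Spa(A^{\rhd}, A^+)$ defined by combining $\{f_0, \dots, f_n\}$ with lifts $h_j \in A^{\rhd}$ of the numerators; openness of the new combined ideal is automatic because $\{f_0, \dots, f_n\}$ already generates an open ideal.

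The hard part will be the invertibility of $f_0$ in both $B^{\rhd}$ and $C^{\rhd}$: the geometric series argument must be carried out inside a carefully chosen ring of definition of the completed quotient, with bookkeeping about how an ideal of definition of $A^{\rhd}$ propagates through the quotient by $(f_0 T_i - f_i)$ and then through completion. A secondary subtlety is the approximation step in the rational subspace comparison, where the new defining parameters must preserve both the open ideal condition and the subset they cut out; combining them with $\{f_0, \dots, f_n\}$, which already carries the relevant openness, will be the mechanism for handling this.
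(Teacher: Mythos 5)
Your overall skeleton --- the universal property via $T_i \mapsto f_i/f_0$, surjectivity onto $U$ via residue fields, and matching of rational subspaces by approximating parameters and clearing powers of $f_0$ --- is the standard one: the paper itself only cites \cite[Lemma~1.5]{huber2} here, and the same outline appears in its own proof of the reified analogue, Lemma~\ref{L:rational subspace readic}. The inputs you label as ``standard'' (that for complete $C^{\rhd}$ one has $C^+ = \{c : v(c) \le 1 \mbox{ for all } v \in \Spa(C^{\rhd},C^+)\}$, and that small perturbations of the parameters do not change a rational subspace) are indeed Huber's Lemma~3.3 and Lemma~3.10 of \cite{huber1} and are fine to quote.

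The gap is exactly where you predicted the difficulty, and the mechanism you propose for it fails. Openness of the principal ideal $f_0 C^{\rhd}$ together with completeness of $C^{\rhd}$ does \emph{not} make $f_0$ a unit: in $\ZZ_p$ the ideal $(p)$ is open and the ring is complete, yet $p$ is not invertible. The geometric series only inverts elements of the form $1-\delta$ with $\delta$ topologically nilpotent; it says nothing about a generator of an open ideal. (It is also unclear why $f_0 C^{\rhd}$ should be open in the first place: the pointwise inequalities $v(f_i) \le v(f_0)$ do not give the ring-theoretic divisibility $f_i \in f_0 C^{\rhd}$.) Your shortcut does work when $C^{\rhd}$ is Tate, because then every open ideal is the unit ideal and $f_0$ divides a power of a topologically nilpotent unit; but the lemma concerns arbitrary affinoid f-adic rings, and avoiding precisely this reliance on topologically nilpotent units is the theme of the paper. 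The correct input is Huber's unit criterion (\cite[Lemma~3.3 and Proposition~3.6]{huber1}, the f-adic analogue of Corollary~\ref{C:units}): in a complete f-adic ring $C^{\rhd}$, an element $s$ with $v(s) \neq 0$ for every $v \in \Spa(C^{\rhd},C^+)$ is a unit; its proof runs through the nonemptiness of adic spectra of nonzero complete rings (Lemma~\ref{L:nonempty adic spectrum}), not through any convergent series. The same issue undermines your first step: with the presentation of $B^{\rhd}$ literally as in Definition~\ref{D:rational subspace adic}, $f_0$ need not become invertible at all outside the Tate case --- take $A^{\rhd} = \ZZ_p$, $f_0 = f_1 = p$, so that $B^{\rhd} = \ZZ_p\{T\}/(pT-p)$, in which $p$ is a zero-divisor and not a unit, and $\Spa(B^{\rhd},B^+)$ even contains points with $v(p)=0$. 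Huber's actual localization is the completion of $A^{\rhd}[1/f_0]$ for the topology with ring of definition $A_0[f_1/f_0,\dots,f_n/f_0]$, and it is for that construction that the lemma holds; your argument, by contrast, would ``prove'' invertibility of $p$ in the ring above, which confirms it cannot be repaired as stated. Once the invertibility step is replaced by the valuation-theoretic argument, the rest of your outline goes through.
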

\begin{proof}
See \cite[Lemma~1.5]{huber2}.
\end{proof}

\begin{defn}
A \emph{locally valuation-ringed space}, or \emph{locally v-ringed space} for short, is a locally ringed space $(X, \calO_X)$ equipped with the additional data of, for each $x \in X$, a valuation $v_x$ on the local ring $\calO_{X,x}$. A morphism of locally v-ringed spaces $f: X \to Y$ is a morphism of locally ringed spaces
with the property that for each $x \in X$ mapping to $y \in Y$, the restriction of $v_x$ along the map $\calO_{Y,y} \to \calO_{X,x}$ is equal to $v_y$.
\end{defn}

\begin{defn}
Let $(A^\rhd, A^+)$ be an affinoid f-adic ring. 
The \emph{structure presheaf} on $X = \Spa(A^{\rhd}, A^+)$ is the presheaf $\calO$ assigning to each open subset $U$ the inverse limit of $B^{\rhd}$ as
$(A^{\rhd}, A^+) \to (B^{\rhd}, B^+)$ runs over all morphisms representing rational subspaces of $X$ contained in $U$.
The stalks of $\calO$ are local rings \cite[Proposition~1.6]{huber2}.

We say that $(A^{\rhd}, A^+)$ is \emph{sheafy} if the presheaf $\calO$ is in fact a sheaf; in particular, $A^{\rhd}$ must be complete.
In this case, $(X, \calO)$ is a locally ringed space, which we promote to a locally v-ringed space as follows: for $x \in X$ corresponding to $v \in \Spv(A)$, let $v_x$ be the continuous extension of $v$ to $\calO_{X,x}$.
Any locally v-ringed space of this form is called an \emph{affinoid adic space}.
A locally v-ringed space which is covered by open subspaces which are affinoid adic spaces is called an \emph{adic space}.
\end{defn}

Unfortunately, the sheafy condition is not always satisfied; see \cite{buzzard-verberkmoes,mihara} for counterexamples. Two important classes where it is satisfied are described by the following results of Huber (in case (a)) and Buzzard--Verberkmoes (in case (b)).

\begin{theorem} \label{T:sheafy conditions}
Suppose that $A^{\rhd}$ is Tate and that at least one of the following conditions holds.
\begin{enumerate}
\item[(a)]
The ring $A^{\rhd}$ is \emph{strongly noetherian}: for each nonnegative integer $n$, the ring
$A^{\rhd}\{T_1,\dots,T_n\}$ is noetherian.
(This case includes classical affinoid algebras; see Example~\ref{exa:affinoid algebras}.)
\item[(b)]
The pair $(A^{\rhd}, A^+)$ is \emph{stably uniform}: for every morphism $(A^{\rhd}, A^+) \to 
(B^{\rhd}, B^+)$ representing a rational subspace of $\Spa(A^{\rhd}, A^+)$,
$B^{\rhd,\circ}$ is open in $B^{\rhd}$.
\end{enumerate}
Then $(A^{\rhd}, A^+)$ is sheafy.
\end{theorem}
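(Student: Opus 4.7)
The plan is to reduce sheafiness of $\calO$ on $X = \Spa(A^{\rhd},A^+)$ to an explicit \v{C}ech-exactness check on rational coverings, and then handle the two cases separately. Since $X$ is spectral by Theorem~\ref{T:adic spectrum is spectral} and the rational subspaces form a basis of its quasicompact opens, every open covering of any rational subspace admits a finite refinement by rational subspaces. Using Lemma~\ref{L:rational subspace}, it therefore suffices to show that for any finite covering $X = U_1 \cup \cdots \cup U_n$ by rational subspaces, the augmented \v{C}ech complex
\[
0 \to \calO(X) \to \prod_i \calO(U_i) \to \prod_{i,j} \calO(U_i \cap U_j) \to \cdots
\]
is exact. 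A Gerritzen--Grauert style refinement, available here because the Tate assumption (Definition~\ref{D:Tate f-adic ring}) removes the open-ideal side condition on parameters, further reduces the problem to checking the \v{C}ech complex for a two-term Laurent covering $X = \{v(f) \leq 1\} \cup \{v(f) \geq 1\}$ with $f \in A^{\rhd}$.

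For case~(a), with $A^{\rhd}$ Tate and strongly noetherian, the two Laurent localizations are realized as quotients of $A^{\rhd}\{T\}$ by the ideals $(T-f)$ and $(fT-1)$. The strongly noetherian hypothesis ensures these ideals are automatically closed, so the completions in Definition~\ref{D:rational subspace adic} are superfluous, and the pair $(A^{\rhd}, A^+)$ behaves analogously to a classical affinoid algebra in rigid geometry. The \v{C}ech exactness then reduces to splitting an element of the overlap $A^{\rhd}\{T, T^{-1}\}$ into its nonnegative- and negative-degree components in $T$, which is Tate's classical acyclicity argument; this is the strategy of \cite[Theorem~2.2]{huber2}.

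For case~(b), stable uniformity propagates to every rational localization appearing in the Laurent covering: each such $B^{\rhd}$ has $B^{\rhd,\circ}$ open (hence bounded), so the topology on $B^{\rhd}$ coincides with the one defined by the power-multiplicative spectral seminorm. This uniform norm control substitutes for noetherianity and lets the same Laurent splitting used in case~(a) be carried out with a bounded operator-norm estimate, which yields the \v{C}ech exactness. The main obstacle is exactly this norm bound: without a noetherian reduction, proving that the decomposition of $B^{\rhd}\{T, T^{-1}\}$ into its plus- and minus-degree Tate subalgebras is continuous requires the stably uniform hypothesis to be in force on every member of the covering, as in \cite{buzzard-verberkmoes}.
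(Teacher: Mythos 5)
The paper offers no argument here: Theorem~\ref{T:sheafy conditions} is recalled as background, and its ``proof'' consists of citations to Huber \cite[Theorem~2.2(b)]{huber2} for case (a) and to Buzzard--Verberkmoes \cite[Theorem~7]{buzzard-verberkmoes} for case (b). Your outline is an accurate summary of the strategy of those sources, and it coincides with the template the paper itself follows when proving the reified analogues in \S\ref{subsec:structure presheaf}--\S\ref{subsec:noetherian} (reduction to simple Laurent coverings via Lemmas~\ref{L:rational and Laurent coverings}--\ref{L:acyclicity template}, the overlap splitting of Lemma~\ref{L:H1 for simple Laurent covering}, and the closed-ideal diagram chase of Theorems~\ref{T:stably uniform sheafy} and~\ref{T:very strongly noetherian}). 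Two corrections. First, the refinement of an arbitrary finite rational covering by Laurent coverings is the elementary argument of \cite[Lemmas~8.2.2/2--4]{BGR} (which does use the Tate hypothesis to get coverings generated by units), not Gerritzen--Grauert; the latter concerns the structure of general affinoid subdomains and is not needed. Second, your sketch only addresses surjectivity onto the overlap term; exactness of the \v{C}ech complex at $\calO(X)$ and at $\prod_i \calO(U_i)$ is the harder half and requires separate input --- in case (a) flatness, indeed faithful flatness, of $A^{\rhd} \to \oplus_i B_i^{\rhd}$ coming from the noetherian hypothesis (the analogue of Corollary~\ref{C:noetherian flat1}), and in case (b) the identity of the spectral seminorm with the supremum over $\calM$ (Theorem~\ref{T:Gel'fand spectrum}) together with closedness of the ideals $(T-f)$, $(1-fU)$ in the uniform setting (the analogue of Lemma~\ref{L:uniform ideal closure}). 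With those steps supplied, your reduction is the standard and correct one.
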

\begin{proof}
For (a), see \cite[Theorem~2.2(b)]{huber2}. For (b), see \cite[Theorem~7]{buzzard-verberkmoes}.
\end{proof}

\begin{remark} \label{R:preadic}
There is a process to attach ``spaces'' to affinoid f-adic rings which are not sheafy, but this requires a more abstract approach as originally described by
Scholze and Weinstein \cite{scholze-weinstein}.
See also \cite[\S 8.2]{kedlaya-liu1}.
\end{remark}

\begin{remark} \label{R:analytic adic}
Huber declares an adic space to be \emph{analytic} if it is covered by the adic spectra of affinoid f-adic rings which are not only sheafy, but also Tate. This extra restriction fails in some natural classes of examples (e.g., adic spaces associated to ordinary schemes or formal schemes), but is needed in order to make many classical arguments of rigid analytic geometry carry over to the setting of adic spaces. One pleasant feature of reified adic spaces is that there admit no analogue of the analytic condition; the role played by topologically nilpotent units is taken over by reifications.
\end{remark}

\section{Gel'fand spectra}
\label{sec:gelfand}

We next introduce the class of normed rings and describe Berkovich's construction of the Gel'fand spectrum of a normed ring.

\begin{defn}
A \emph{seminormed ring} (resp.\ a \emph{normed ring}) is a ring $A$ equipped with a \emph{seminorm} (resp.\ \emph{norm}), i.e., a function $\left| \bullet \right|: A \to [0, +\infty)$
satisfying the following conditions.
\begin{enumerate}
\item[(a)]
We have $|0| = 0$ (resp.\ for all $x \in A$, $x=0$ if and only if $|x| =0$).
\item[(b)]
For all $x,y \in A$, we have $|x+y| \leq |x| + |y|$.
\item[(c)]
For all $x,y \in A$, we have $|xy| \leq |x| |y|$.
\end{enumerate}
We say that a (semi)normed ring $A$ is \emph{nonarchimedean} if the upper bound in (b) can be improved to
$\max\{|x|, |y|\}$. The \emph{trivial norm} on $A$ is the norm for which $\left|x \right| = 1$ for all nonzero $x \in A$.

The \emph{(semi)norm topology} on a nonarchimedean (semi)normed ring $A$ is the metric topology induced by the seminorm. For this topology, $A$ is an LT ring.
\end{defn}

\begin{defn}
A morphism $f: A \to B$ of nonarchimedean seminormed rings is \emph{bounded} if there exists $c >0$ such that for all $a \in A$, we have $\left| f(a) \right| \leq c \left|a \right|$. Any such morphism is continuous (but not conversely).
\end{defn}

\begin{defn}
A \emph{(nonarchimedean commutative) Banach ring} is a nonarchimedean normed ring which is separated and complete for the norm topology. For $A$ a Banach ring, a \emph{Banach algebra} over $A$ is a Banach ring $B$ equipped with a bounded homomorphism $A \to B$.
\end{defn}

\begin{defn} \label{D:ultrametric field}
A \emph{ultrametric field} is a Banach ring $F$ such that the underlying ring $F$ is a field and the norm is a Krull valuation (i.e., the inequality in (c) is an equality). Unless otherwise specified, we allow this definition to include the case of a trivial norm.
\end{defn}

\begin{remark} \label{R:ultrametric field}
In Definition~\ref{D:ultrametric field}, 
the second condition is needed because one can modify the norm on $F$ without changing the norm topology, in such a way that the resulting norm is not itself a Krull valuation, e.g., by taking the supremum of the norms corresponding to two different reifications of the same underlying valuation. (Compare \cite[Remark~8.7]{kedlaya-noetherian}.)
\end{remark}

\begin{remark} \label{R:norm on f-adic ring}
Any f-adic ring $A$ can be viewed as a nonarchimedean seminormed ring (topologized using the seminorm topology).
For example, 
let $A_0$ be a ring of definition, let $I$ be a finitely generated ideal of definition of $A_0$, pick $c \in (0,1)$,
and define $\left| \bullet \right|: A \to [0, +\infty)$ as follows.
\begin{itemize}
\item
For $a \in A_0$, set $|a| = c^{-n}$ for $n$ the smallest nonnegative integer such that $a \notin I^{n+1}$
if such an integer exists; otherwise, set $|a| = 0$.
\item
For $a \notin A_0$, set $|a| = c^n$ for $n$ the smallest positive integer such that
$a I^n \subseteq A_0$. Such an integer must exist because $A_0$ is open in $A$.
\end{itemize}
Beware that the equivalence class of this norm is not uniquely determined by the topology of $A$ (because of the possibility of varying $c$ and $I$); in particular, this construction does not define a functor from f-adic rings to nonarchimedean seminormed rings.

In the other direction, for $A$ a nonarchimedean seminormed ring viewed as an LT ring using the seminorm topology, it is not immediate that $A$ is an f-adic ring; the difficulty is to find an ideal of definition which is finitely generated.
One case where this is possible is when $A$ contains a topologically nilpotent unit $x$
(i.e., $A$ is \emph{Tate}),
by taking $A_0$ to be the subring of $x \in A$ for which $\left| x \right| \leq 1$
and $I$ to be the ideal $(x^n)$ for $n$ large enough so that $x^n \in A_0$; consequently, any such $A$ is a Tate f-adic ring. In particular, if
$A$ is a nonzero Banach algebra over an ultrametric field $F$ with nontrivial norm, 
any $x \in F$ with $0 <  |x| < 1$ is a topologically nilpotent unit.
\end{remark}

As remarked above, an f-adic ring cannot be viewed as a nonarchimedean seminormed ring in a canonical way.  However, we have the following result.
\begin{lemma} \label{L:field norm to topology}
Let $R$ be a Banach ring which is Tate.
Then the forgetful functor from Banach rings over $R$ to 
complete f-adic rings $A$ equipped with continuous maps $R \to A$ is an equivalence of categories.
\end{lemma}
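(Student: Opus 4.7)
The plan is to establish the equivalence by producing an essential inverse $\Psi$. First, the forgetful functor $\Phi$, sending a Banach ring $B$ over $R$ to its topological ring equipped with the continuous structure map from $R$, lands in the target category: for $\pi \in R$ a topologically nilpotent unit, its image in $B$ is a topologically nilpotent unit by continuity, so $B$ is Tate as a seminormed ring, hence by the last paragraph of Remark~\ref{R:norm on f-adic ring} a complete Tate f-adic ring.

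For essential surjectivity, given a complete f-adic ring $A$ with continuous $\rho\colon R \to A$, I construct a compatible Banach structure. Continuity ensures $\pi_A := \rho(\pi)$ is a topologically nilpotent unit of $A$, so $A$ is Tate. Choose a ring of definition $A_0 \subseteq A$ containing $\pi_A$ and having $(\pi_A)$ as an ideal of definition (enlarging any initial choice by adjoining $\pi_A$ and adjusting the ideal appropriately), and apply the construction in Remark~\ref{R:norm on f-adic ring} with $I = (\pi_A)$ and $c := |\pi|_R$. This yields a nonarchimedean norm whose topology is the f-adic topology, satisfying $|\pi_A^k|_A = c^k$ for all $k \in \ZZ$; completeness follows from completeness of the f-adic structure, and boundedness of $\rho$ from the uniform scaling $|\pi_A^k|_A = |\pi|_R^k$ combined with the continuity estimate at $0$.

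For full faithfulness, bounded homomorphisms are automatically continuous, so I show that any continuous $R$-algebra homomorphism $g\colon B_1 \to B_2$ between Banach rings over $R$ is bounded. An open-mapping argument for Banach modules over $R$ lets us replace the given norms by the equivalent gauge norms from the previous step, so that $|\pi_i|_{B_i} = c = |\pi|_R$ and $|\pi_i^{-k}|_{B_i} = c^{-k}$ for $\pi_i$ the image of $\pi$ in $B_i$. Continuity yields $\delta > 0$ with $|a|_{B_1} \leq \delta \Rightarrow |g(a)|_{B_2} \leq 1$. For nonzero $a \in B_1$, pick $k \in \ZZ$ with $c^k |a|_{B_1} \in (c\delta, \delta]$; then by submultiplicativity $|\pi_1^k a|_{B_1} \leq \delta$, so $|g(\pi_1^k a)|_{B_2} \leq 1$, and since $g(\pi_1^k a) = \pi_2^k g(a)$, we obtain $|g(a)|_{B_2} \leq c^{-k} < |a|_{B_1}/(c\delta)$, the required linear estimate with $C = 1/(c\delta)$. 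The main obstacle is this estimate: the linear bound rests on the geometric scaling of powers of $\pi$, which depends on calibrating the norms on $B_1, B_2$ via the common topologically nilpotent unit $\pi \in R$ supplied by the Tate hypothesis.
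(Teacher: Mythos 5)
Your proposal is correct and follows essentially the same route as the paper, whose proof is simply: essential surjectivity via the norm construction of Remark~\ref{R:norm on f-adic ring}, and full faithfulness as a consequence of the Banach open mapping theorem. You have merely expanded the details — using the open mapping theorem to calibrate the given norms against the gauge norms built from $\pi$, and then running the standard scaling argument to convert continuity into a linear bound — which is exactly the intended argument.
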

\begin{proof}
By Remark~\ref{R:norm on f-adic ring}, the functor is essentially surjective; full faithfulness is a consequence of the Banach open mapping theorem (see \cite{henkel}).
\end{proof}

\begin{defn}
For $A \to B, A \to C$ two bounded homomorphisms of nonarchimedean seminormed rings, we view the tensor product $B \otimes_A C$ as a nonarchimedean seminormed ring by equipping it with the \emph{tensor product seminorm}: the value of the seminorm on $x \in B \otimes_A C$ is the infimum of $\max_i \{\left| y_i \right| \left| z_i \right|\}$ over all presentations $x = \sum_i y_i \otimes z_i$.
\end{defn}

\begin{lemma} \label{L:tensor product nonzero}
Let $F \to E$ be a bounded homomorphism of ultrametric fields. Then for any nonzero Banach algebra $A$ over $F$, $A \otimes_E F$ is nonzero and Hausdorff.
\end{lemma}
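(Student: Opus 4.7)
Plan. First, a bounded homomorphism $f: F \to E$ of ultrametric fields is automatically isometric: for $x \in F^\times$, applying the boundedness inequality to $x^n$ yields $|f(x)|_E^n \leq c|x|_F^n$, and taking $n$-th roots as $n \to \infty$ gives $|f(x)|_E \leq |x|_F$; the reverse inequality follows by applying the same bound to $x^{-1}$, using that $E$ is Krull-valued. For the nonzero claim, $E$ is free as an $F$-module (pick any $F$-basis containing $1$), so $A \otimes_F E$ decomposes as a direct sum of copies of $A$ and is nonzero whenever $A$ is.

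For Hausdorff, let $x \in A \otimes_F E$ be nonzero. Write $x = \sum_{i=1}^n a_i \otimes b_i$ with $b_1, \ldots, b_n \in E$ linearly independent over $F$ and some $a_i \neq 0$, and let $V = Fb_1 + \cdots + Fb_n \subset E$. The key tool is the standard non-archimedean Gram--Schmidt procedure: for any $\epsilon > 0$, every finite-dimensional $F$-subspace of $E$ admits an \emph{$\epsilon$-orthogonal basis} $\{e_j\}$ satisfying $|\sum_j \lambda_j e_j|_E \geq (1-\epsilon) \max_j |\lambda_j|_F |e_j|_E$, and any such basis of a subspace extends (with possibly worse constant) to one of any finite-dimensional superspace.

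Given any presentation $x = \sum_k \alpha_k \otimes \beta_k$ in $A \otimes_F E$, let $W = V + \sum_k F\beta_k$ and equip $W$ with an $\epsilon$-orthogonal basis extending one on $V$. The associated $F$-linear projection $\pi: W \to V$ has operator norm at most $(1-\epsilon)^{-1}$, so $x = \sum_k \alpha_k \otimes \pi(\beta_k)$ is a presentation in $A \otimes_F V$ with $\max_k |\alpha_k|_A |\pi(\beta_k)|_E \leq (1-\epsilon)^{-1} \max_k |\alpha_k|_A |\beta_k|_E$. Taking infima shows that the tensor seminorm on $A \otimes_F E$ dominates $(1-\epsilon)$ times that on $A \otimes_F V$. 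On $A \otimes_F V$, a direct computation in the $\epsilon$-orthogonal basis of $V$ yields the lower bound $(1-\epsilon) \max_j |a_j'|_A |e_j|_E$, where $a_j'$ are the coordinates of $x$ in this basis; at least one $a_j'$ is nonzero since $x \neq 0$. Letting $\epsilon \to 0$ gives a strictly positive lower bound on the tensor seminorm of $x$, establishing Hausdorffness. The main technical obstacle is the existence and extendability of $\epsilon$-orthogonal bases without assuming $F$ spherically complete, but this is standard, since each Gram--Schmidt step need only approximate rather than attain the infimum of $|v - v'|_E$ over $v'$ in a given subspace.
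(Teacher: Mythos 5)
Your proof is correct; note only that the statement as printed has a typo ($A \otimes_E F$ should read $A \otimes_F E$, as you assumed), and that a single fixed $\epsilon<1$ already yields a positive lower bound, so the final limit $\epsilon \to 0$ is unnecessary. The paper's proof is a bare citation of \cite[Lemma~2.2.9]{kedlaya-liu1}, and your argument --- compressing an arbitrary presentation onto a fixed finite-dimensional subspace $V \subseteq E$ by a projection of norm close to $1$ built from $\epsilon$-orthogonal ($\alpha$-cartesian) bases as in \cite[\S 2.6]{BGR}, with the approximation constant chosen per superspace $W$ so that the loss over $\dim W - \dim V$ Gram--Schmidt steps stays controlled --- is exactly the standard technique behind that citation, so you have in effect supplied the omitted details.
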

\begin{proof}
See \cite[Lemma~2.2.9]{kedlaya-liu1}.
\end{proof}

\begin{defn}
Let $\alpha$ be an $\RR$-valued semivaluation on $A$. We may then extend $\alpha$ to an $\RR$-valued Krull valuation on
$\Frac(A/\ker(\alpha))$. Completing with respect to this extension yields an ultrametric field, denoted $\calH(\alpha)$. Note that $\alpha$ can be recovered as the restriction along the natural map $A \to \calH(\alpha)$ of the valuation on $\calH(\alpha)$.
\end{defn}

For the remainder of \S\ref{sec:gelfand}, let $A$ be a nonarchimedean normed ring.
\begin{defn}
The \emph{Gel'fand spectrum} of $A$ is the set $\calM(A)$ of $\RR$-valued semivaluations on $A$ which are bounded above by $\left| \bullet \right|$,
equipped with the evaluation topology. The inclusion $\calM(A) \hookrightarrow {\RR}^A$
is a homeomorphism of $\calM(A)$ onto a compact subspace of ${\RR}^A$
\cite[Theorem~1.2.1]{berkovich1}.
\end{defn}

\begin{theorem} \label{T:Gel'fand spectrum}
The space $\calM(A)$ is nonempty if $\{0\}$ is not dense in $A$. Moreover,
\[
|a|_{\mathrm{sp}} := \lim_{n \to \infty} |a^n|^{1/n} = \sup\{\alpha(a): \alpha \in \calM(A)\} \qquad (a \in A);
\]
that is, the spectral seminorm equals the supremum seminorm.
\end{theorem}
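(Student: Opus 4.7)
The plan is to prove both claims simultaneously through a Zorn's-lemma argument on bounded power-multiplicative seminorms, following Berkovich. First, the limit defining $|a|_{\mathrm{sp}}$ exists because submultiplicativity makes $n \mapsto \log |a^n|$ subadditive, so Fekete's lemma gives $|a|_{\mathrm{sp}} = \lim_n |a^n|^{1/n} = \inf_n |a^n|^{1/n}$; a routine check (using the binomial expansion together with the nonarchimedean inequality) shows that $|\cdot|_{\mathrm{sp}}$ is itself a bounded nonarchimedean power-multiplicative seminorm on $A$. The inequality $\sup_\alpha \alpha(a) \leq |a|_{\mathrm{sp}}$ is then immediate from $\alpha(a)^n = \alpha(a^n) \leq |a^n|$. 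It suffices, for any fixed $a$ with $r := |a|_{\mathrm{sp}}$, to produce some $\alpha \in \calM(A)$ with $\alpha(a) = r$; the nonemptiness statement follows by specializing to $a = 1$, since the hypothesis that $\{0\}$ is not dense in $A$ forces $|1| > 0$, hence $|1|_{\mathrm{sp}} = 1$.

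The key tool is the following localization operation. Given a bounded nonarchimedean power-multiplicative seminorm $\nu$ on $A$ and an element $b$ with $\nu(b) > 0$, submultiplicativity yields $\nu(b^{m+n} c) \leq \nu(b^m) \nu(b^n c) \leq \nu(b)^m \nu(b^n c)$, so the sequence $\nu(b^n c)/\nu(b)^n$ is nonincreasing in $n$, and one sets
\[
\nu^{(b)}(c) := \lim_{n \to \infty} \frac{\nu(b^n c)}{\nu(b)^n}
\]
(taking $\nu^{(b)} = \nu$ when $\nu(b) = 0$, in which case $\nu(bc) \leq \nu(b) \nu(c) = 0$ already makes $\nu$ multiplicative at $b$). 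A direct computation shows $\nu^{(b)}$ is a bounded nonarchimedean power-multiplicative seminorm with $\nu^{(b)} \leq \nu$ and $\nu^{(b)}(b) = \nu(b)$, and is multiplicative at $b$ via an index shift; power-multiplicativity passes through the limit using the identity $\nu(b^n c)^m = \nu(b^{nm} c^m)$.

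With this in hand, I apply Zorn's lemma to the poset $P_a$ of bounded nonarchimedean power-multiplicative seminorms $\nu \leq |\cdot|$ which are multiplicative at $a$ and satisfy $\nu(a) = r$. This poset is nonempty, since $(|\cdot|_{\mathrm{sp}})^{(a)}$ lies in it, and chain-complete: for a chain $(\nu_i)$ the pointwise infimum $\nu_\ast$ again lies in $P_a$, where the nonarchimedean triangle inequality is verified by using the chain property to select a single index simultaneously approximating the infima on both summands, while power-multiplicativity, multiplicativity at $a$, and the value $\nu_\ast(a) = r$ pass through the infimum directly. Hence $P_a$ admits a minimal element $\nu_0$, which I claim is fully multiplicative: for any $b \in A$, the seminorm $\nu_0^{(b)} \leq \nu_0$ lies in $P_a$ because $\nu_0^{(b)}(ac) = \lim_n \nu_0(a) \nu_0(b^n c)/\nu_0(b)^n = \nu_0(a) \nu_0^{(b)}(c)$ preserves multiplicativity at $a$, and the same shift yields $\nu_0^{(b)}(a) = \nu_0(a) = r$; minimality then forces $\nu_0^{(b)} = \nu_0$, which exhibits $\nu_0$ as multiplicative at $b$. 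Since $b$ is arbitrary, $\nu_0 \in \calM(A)$ with $\nu_0(a) = r$, so the supremum is attained as a maximum. The main obstacle lies in the bookkeeping required to verify that the three structural conditions defining $P_a$ (power-multiplicativity, multiplicativity at $a$, and the pinned value $\nu(a) = r$) are all preserved under both the localization operation and the formation of chain infima; once these are in place, the Zorn argument closes quickly.
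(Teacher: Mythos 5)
Your proof is correct. The paper itself offers no argument here: it simply cites Berkovich (Theorems 1.2.1 and 1.3.1 of his book), so any self-contained proof is ``a different route'' in the formal sense, but what you have written is essentially the standard nonarchimedean argument: Fekete for existence of the limit, the easy inequality $\alpha(a)^n = \alpha(a^n) \leq |a^n|$, and then Zorn's lemma on bounded power-multiplicative seminorms together with the localization $\nu^{(b)}(c) = \lim_n \nu(b^n c)/\nu(b)^n$ to manufacture a multiplicative seminorm with a pinned value at $a$. All the verifications you sketch do go through: monotonicity of $\nu(b^nc)/\nu(b)^n$ uses power-multiplicativity of $\nu$ at $b$; power-multiplicativity of $\nu^{(b)}$ follows from the subsequence trick $\nu(b^{nm}c^m) = \nu(b^nc)^m$; the chain infimum is handled correctly by using the total order to pick one index at a time; and minimality plus $\nu_0^{(b)} \in P_a$ forces full multiplicativity. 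Two points worth recording explicitly. First, when $r = |a|_{\mathrm{sp}} = 0$ the minimal element of $P_a$ may be the zero function, which is not a semivaluation (it fails $\nu(1)=1$); but in that case the desired equality already follows from nonemptiness (your $a=1$ case) together with the easy inequality, so nothing is lost --- just say so rather than claiming the Zorn step produces a point of $\calM(A)$ for every $a$. Second, your argument nowhere uses completeness, which is a genuine advantage over quoting Berkovich verbatim: the theorem in this paper is stated for normed (not Banach) rings, and Berkovich's proof of the spectral radius formula in the general Banach-ring setting goes through resolvent/analytic techniques to accommodate archimedean norms, whereas your argument exploits the nonarchimedean hypothesis to stay entirely algebraic.
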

\begin{proof}
See \cite[Theorem~1.2.1, Theorem~1.3.1]{berkovich1}.
\end{proof}
\begin{cor} \label{C:units}
An ideal $I$ of $A$  contains $1$ in its closure if and only if for each $\alpha \in \calM(A)$, there exists $a \in I$ with $\alpha(a) > 0$.
\end{cor}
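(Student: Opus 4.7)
The plan is to treat the two implications separately, both reducing to Theorem~\ref{T:Gel'fand spectrum}.

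For the forward direction, assume $1 \in \overline{I}$ and pick a sequence $a_n \in I$ with $a_n \to 1$. For any $\alpha \in \calM(A)$, the bound $\alpha \leq \left|\bullet\right|$ gives $\alpha(a_n - 1) \leq |a_n - 1| \to 0$. Applying the nonarchimedean inequality to $1 = a_n + (1 - a_n)$ yields $1 = \alpha(1) \leq \max\{\alpha(a_n),\alpha(1-a_n)\}$, so as soon as $\alpha(1-a_n) < 1$ we must have $\alpha(a_n) = 1 > 0$. Thus some $a = a_n \in I$ witnesses $\alpha(a) > 0$.

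For the converse, I would argue the contrapositive: assume $1 \notin \overline{I}$ and produce a single $\alpha \in \calM(A)$ vanishing identically on $I$. Pass to the quotient $B = A/\overline{I}$ equipped with the residue seminorm $|\overline{x}| = \inf_{y \in \overline{I}} |x+y|$, which is nonarchimedean and bounded above by $\left|\bullet\right|$ on $A$. Since $\overline{I}$ is closed and misses $1$, the distance from $1$ to $\overline{I}$ is positive, so $|\overline{1}| > 0$ and $\{0\}$ is not dense in $B$. Theorem~\ref{T:Gel'fand spectrum} then produces some $\beta \in \calM(B)$, and pulling back along the quotient map $A \to B$ gives an $\alpha \in \calM(A)$ (the bound $\alpha \leq \left|\bullet\right|$ follows from the corresponding bound for $\beta$ together with $|\overline{x}| \leq |x|$) such that $\alpha(a) = \beta(0) = 0$ for every $a \in I$, contradicting the hypothesis.

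No step here looks genuinely difficult; the entire content is that Theorem~\ref{T:Gel'fand spectrum} produces enough bounded seminorms on any Banach ring in which $\{0\}$ is not dense, together with the routine verification that the quotient seminorm behaves well under the nonarchimedean hypothesis. The only point requiring care is keeping track of the bound $\alpha \leq \left|\bullet\right|$ when pulling back seminorms through the quotient, which is immediate from the definition of the residue seminorm.
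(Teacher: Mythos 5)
Your proof is correct and matches the paper's: the paper likewise disposes of the nontrivial direction by observing that the quotient seminorm on $A/I$ is nonzero and invoking Theorem~\ref{T:Gel'fand spectrum} to produce an $\alpha \in \calM(A)$ vanishing on $I$ (your extra step of passing to the closed ideal $\overline{I}$ to get a genuine norm is a harmless refinement). The forward direction, which the paper leaves implicit, is handled correctly by your ultrametric argument using $\alpha(1)=1$.
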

\begin{proof}
If $I$ does not contain 1 in its closure, then the quotient seminorm on $A/I$ is nonzero, so
Theorem~\ref{T:Gel'fand spectrum} applies to produce $\alpha \in \calM(A)$ whose restriction to $I$ is zero.
\end{proof}

\begin{defn} \label{D:rational subspace gelfand}
A \emph{rational subspace} of $\calM(A)$
is one of the form
\begin{equation} \label{eq:rational subspace gelfand}
U = \{\alpha \in \calM(A): \alpha(f_i) \leq q_i \alpha(f_0) \neq 0 \quad (i=1,\dots,n)\}
\end{equation}
for some $f_0, \dots,f_n \in A$ such that $f_1,\dots,f_n$ generate the unit ideal
and some $q_1, \dots, q_n > 0$.
If it is possible to take $q_1 = \cdots = q_n = 1$,
we call the resulting set a \emph{strictly rational subspace} of $\calM(A)$.

As in Definition~\ref{D:rational subspace LT}, 
the intersection of two (strictly) rational subspaces is (strictly) rational: 
taking $q_0 = r_0 = 1$, we have
\begin{gather*}
\{\alpha \in \calM(A): \alpha(f_i) \leq q_i \alpha(f_0) \neq 0 \quad (i=0,\dots,n)\} \\
\bigcap
\{\alpha \in \calM(A): \alpha(g_j) \leq r_j \alpha(g_0) \neq 0 \quad (j=0,\dots,m)\} \\
=
\{\alpha \in \calM(A): \alpha(f_i g_j) \leq q_i r_j \alpha(f_0 g_0) \neq 0 \quad (i=0,\dots,n; j=0,\dots,m)\}.
\end{gather*}
As in Definition~\ref{D:Tate f-adic ring},
by Corollary~\ref{C:units} the space \eqref{eq:rational subspace gelfand} can also be written as
\[
\{\alpha \in \calM(A): \alpha(f_i) \leq q_i \alpha(f_0) \quad (i=1,\dots,n)\}.
\]
Hence any rational subspace of $\calM(A)$ is closed.
\end{defn}

\begin{remark} \label{R:rational subspace definition}
Note that in Definition~\ref{D:rational subspace gelfand}, we require that $f_1,\dots,f_n$ generate the unit ideal, not merely an open ideal. This means that in case $A$ is f-adic
(which is itself not automatic; see Remark~\ref{R:norm on f-adic ring}), 
there is a natural map $\calM(A) \to \Spv(A)$ mapping each $\RR$-valued semivaluation to its equivalence class, but the preimage of a rational subspace of $\Spv(A)$ is not necessarily a rational subspace of $\calM(A)$
unless $A$ is Tate (see Definition~\ref{D:Tate f-adic ring}).
\end{remark}

\begin{remark} \label{R:rational subspace nearby generators}
With notation as in Definition~\ref{D:rational subspace gelfand},
note that by compactness,
\[
c = \inf\{\alpha(f_0): \alpha \in U\} > 0.
\]
Choose $h_1,\dots,h_n \in A$ such that $f_1 h_1 + \dots + f_n h_n = 1$.
Then for any $f'_0,\dots,f'_n$ such that
\[
\left|f'_0 - f_0\right| < c, \qquad
\left|f'_i - f_i\right| < \min\{q_i c, |h_i|^{-1}\} \quad (i=1,\dots,n),
\]
we have $|f'_1 h_1 + \cdots + f'_n h_n - 1| < 1$, so
$f'_1,\dots,f'_n$ again generate the unit ideal in $A$, and
\[
U = \{\alpha \in \calM(A): \alpha(f'_i) \leq q_i \alpha(f'_0) \neq 0 \quad (i=1,\dots,n)\}.
\]
(Compare \cite[Lemma~3.10]{huber1} and \cite[Remark~2.4.7]{kedlaya-liu1}.)
\end{remark}

\section{Reified valuations}
\label{sec:reified valuations}

In order to bring the valuation-theoretic and norm-theoretic viewpoints into alignment, 
and to give an explicit relationship between the two in the case of affinoid algebras (Theorems~\ref{T:affinoid patch dense1} and~\ref{T:affinoid patch dense2}),
we describe
a variation on the theory of valuations in which scaling ambiguities are eliminated.
Much of the resulting analysis runs parallel to the analysis in \cite{huber1} cited above,
although with some key differences due to the change in the definition of rational subspaces (see Remark~\ref{R:rational subspace definition}).

\begin{defn}
Let $\RR^+$ denote the multiplicative monoid of positive real numbers.
A \emph{reified value group} is a value group $\Gamma$ equipped with an order-preserving homomorphism
$r: \RR^+ \to \Gamma$.

Let $A$ be a ring. A \emph{reified semivaluation} on $A$ is a semivaluation $v: A \to \Gamma_0$ for $\Gamma$ a reified value group. Given a semivaluation $v: A \to \Gamma_0$,
we will refer to the extra data of an order-preserving homomorphism
$r: \RR^+ \to \Gamma$ as a \emph{reification} of $v$.

For $v$ a reified semivaluation, let $\Gamma_v$ be the subgroup of $\Gamma$ generated by $\RR^+$ and the nonzero images of $A$, viewed as a reified value group.
Two reified semivaluations $v_1, v_2$ on $A$ are \emph{equivalent} if 
there exists an isomorphism $i: \Gamma_{v_1} \cong \Gamma_{v_2}$ of reified value groups 
(which we also view as an isomorphism $i: \Gamma_{v_1,0} \cong \Gamma_{v_2,0}$)
such that $i \circ v_1 = v_2$. 
\end{defn}

\begin{defn} 
Let $A$ be a ring. The \emph{reified valuative spectrum} of $A$,
denoted $\Sprv(A)$, is the set of equivalence classes of reified semivaluations on $A$,
equipped with the topology
generated by sets of the form
\begin{equation} \label{eq:reified valuative spectrum basis}
\{v \in \Sprv(A): v(a) \leq q v(b) \neq 0\} \qquad (a,b \in A; q \in \RR^+).
\end{equation}
Again, if we let $\calB$ be the Boolean algebra generated by the basic open sets as in \eqref{eq:reified valuative spectrum basis}, then $\calB$ is also generated by the sets of the form
\begin{equation} \label{eq:reified valuative patch basis}
\{v \in \Sprv(A): v(a) \leq q v(b)\} \qquad (a,b \in A; q \in \RR^+).
\end{equation}
\end{defn}

\begin{remark} \label{R:reified lift}
There is a natural projection $\Sprv(A) \to \Spv(A)$ forgetting reifications,
which is surjective for trivial reasons: given any semivaluation $v: A \to \Gamma_0$, 
we may form an equivalent semivaluation by enlarging the value group to $\Gamma \times \RR^+$ (or $\RR^+ \times \Gamma$) ordered lexicographically.
For a more refined statement along the same lines, see Lemma~\ref{L:lift reification}.
\end{remark}

The analogue of the fact that the equivalence class of a valuation is determined by its order relation is the following.
\begin{lemma} \label{L:reified order relation} 
Let $A$ be a ring.
Define a map from $\Sprv(A)$ to $\{0,1\}^{A \times A \times \RR^+}$ by the formula
\[
v \mapsto (v_{a,b,q}), \qquad 
v_{a,b,q} = \begin{cases} 1 & \mbox{if $v(a) \geq qv(b)$,} \\ 0 & \mbox{if $v(a) < qv(b)$}. \end{cases}
\]
Then this map is injective and its image is cut out by the following closed conditions
(writing $a,b,c,d$ for arbitrary elements of $A$ and $q,r$ for arbitrary elements  of $\RR^+$):
\begin{itemize}
\item[(i)]
$v_{a,a,1} = 1$;
\item[(ii)]
if $v_{a,b,q} = 0$, then $v_{b,a,1/q} = 1$;
\item[(iii)]
if $v_{a,b,q} = v_{b,c,r} = 1$, then $v_{a,c,qr} = 1$;
\item[(iv)]
if $v_{a,b,q} = v_{c,d,r} = 1$, then $v_{ac,bd,qr} = 1$;
\item[(v)]
$v_{0,1,q} = 0$;
\item[(vi)]
if $q > 1$, then $v_{1,1,q} = 0$;
\item[(vii)]
if $v_{a,b,q} = v_{a,c,q} = 1$, then $v_{a,b+c,q} = 1$;
\item[(viii)]
if $v_{ac,bc,q} = 1$ and $v_{0,c,1} = 0$, then $v_{a,b,q} = 1$;
\item[(ix)] 
$v_{a,0,q} = 1$. (This is a consequence of (i), (ii), (iv), (v).)
\end{itemize}
\end{lemma}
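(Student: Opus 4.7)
The plan is to handle closedness, necessity of the conditions, injectivity of the map, and surjectivity onto the locus cut out by (i)--(ix). Closedness is immediate: each condition is a Boolean constraint on finitely many of the coordinates $v_{a,b,q}$, hence cuts out a clopen subset of $\{0,1\}^{A \times A \times \RR^+}$, and the intersection over all parameter choices is closed. Necessity is a direct check: (vii) is the non-archimedean inequality, (iv) combines multiplicativity of $v$ with monotonicity of multiplication in $\Gamma$, (viii) is cancellativity of $\Gamma \setminus \{0\}$, (ii) is trichotomy of the total order, and (v)--(vi) encode $v(0) = 0$ together with strict order-preservation of the reification.

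For injectivity, given two reified semivaluations $v_1, v_2$ producing the same tuple, define $i \colon \Gamma_{v_1} \to \Gamma_{v_2}$ on generators by $r_1(q) \mapsto r_2(q)$ and $v_1(a) \mapsto v_2(a)$ for $a \notin \ker v_1$. Any equality or inequality among products of generators can, after clearing negative exponents using the multiplicativity of $v_1$ on $A \setminus \ker v_1$, be rewritten as an assertion of the form $v_1(c) \le (p/q) v_1(d)$ for some $c, d \in A$ and $p, q \in \RR^+$, i.e.\ as $v_{d,c,q/p}$ being $1$ or $0$; by hypothesis the same relation holds for $v_2$, so $i$ is a well-defined, bijective, order-preserving isomorphism of reified value groups intertwining $v_1$ and $v_2$.

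The substantive step is surjectivity. Given a tuple satisfying (i)--(ix), set $\frakp := \{a \in A : v_{0,a,1} = 1\}$; conditions (iv), (vii), (ix) make $\frakp$ an ideal, and (viii) applied to $(0 \cdot b, a \cdot b)$ for $ab \in \frakp$, $b \notin \frakp$ makes it prime. On $(A \setminus \frakp) \times \RR^+$ define $(a,p) \sim (b,q)$ iff $v_{a,b,q/p} = v_{b,a,p/q} = 1$; by (i)--(iii) this is an equivalence relation, by (iv) componentwise multiplication descends, and by (viii) the resulting commutative monoid is cancellative. Let $\Gamma$ be its group of fractions, with total order $[a,p] \le [b,q]$ iff $v_{b,a,p/q} = 1$ (well-defined and total by (i)--(iii) together with (ii); compatible with multiplication by (iv)) and reification $r(q) := [1,q]$ (strictly order-preserving by (vi), (ii), (i), which together yield $v_{1,1,t} = 1$ iff $t \le 1$). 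Setting $v(a) := [a,1]$ for $a \notin \frakp$ and $v(a) := 0$ otherwise produces a reified semivaluation: multiplicativity is built in, the non-archimedean inequality uses (vii), and the agreement of the associated tuple with the input is verified case-by-case using (iii) and (ix) when one of $a, b$ lies in $\frakp$.

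The main obstacle is the surjectivity step, particularly the primality of $\frakp$ via (viii), the verification that $\sim$ is a congruence descending to a cancellative monoid with a well-defined total order, and the consistent extension of the order to the group of fractions (where total orderedness of $\Gamma$ plays a subtle role, for instance in forcing $v_{-1,1,1} = 1$ via (iv) applied to $v_{-1,-1,1} = 1$ together with (ii)). Once the ordered reified value group is in place, the semivaluation axioms for $v$ and the comparison of tuples are essentially mechanical.
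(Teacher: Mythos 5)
Your proof is correct and follows essentially the same route as the paper: closedness and necessity are routine, and the real content is reconstructing the kernel $\frakp$, the totally ordered value group as a quotient, the reification, and the semivaluation from a tuple satisfying (i)--(ix), exactly as the paper does. The only cosmetic difference is that you build the value group as the group of fractions of the cancellative monoid of pairs $(a,q)$ with $a \notin \frakp$, whereas the paper works directly with triples $(a,b,q)$ in $A \times (A \setminus \frakp) \times \RR^+$ (representing $q\,v(a)/v(b)$), whose quotient is already a group with zero.
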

\begin{proof}
Each condition is evidently closed and satisfied on the image of $\Sprv(A)$. 
Conversely, suppose that the tuple $(v_{a,b,q})$ belongs to the image.
We reconstruct the corresponding reified valuation $v$ as follows.

We first reconstruct the kernel of $v$. Put $\frakp_v := \{a \in A: v_{0,a,1} = 1\}$; this is an ideal (by (i), (iv), (vii)) which is proper (by (v)) and prime (by (viii)).

We next reconstruct the reified divisibility relation.
Put $S := A \times (A \setminus \frakp_v) \times \RR^+$. We define a binary relation $\leq$ on $S$ by declaring that 
\[
(a,b,q) \leq (c,d,r) \Leftrightarrow v_{bc,ad,q/r} = 1.
\]
By (i), the relation $\leq$ is reflexive.
We next check that for $(a,b,q) \in S$,
\begin{align} \label{eq:zero condition}
a \in \frakp_v &\Leftrightarrow (a,b,q) \leq (c,d,r) \mbox{ for all } (c,d,r) \in S \\
&\Leftrightarrow (a,b) \leq (c,d,r) \mbox{ for some } (c,d,r) \in S
\mbox{ with } c \in \frakp_v.
\nonumber
\end{align} 
On one hand, if $a \in \frakp_v$,
then $ad \in \frakp_v$ and so $v_{0,ad,1} = 1$; by (ix), $v_{bc,0,q/r} = 1$;
by (iii), we have $(a,b,q) \leq (c,d,r)$. On the other hand, if
$(a,b,q) \leq (c,d,r)$ and $c \in \frakp_v$, then $v_{bc,ad,q/r} = 1$
and $bc \in \frakp_v$, so by (iii), $v_{0,ad,q/r} = 1$.
By (ix), $v_{0,1,r/q} = 1$; by (iv), $ad \in \frakp_v$. Since $\frakp_v$ is prime, $a \in \frakp_v$.

To check that $\leq$ is transitive, we assume $(a,b,q) \leq (c,d,r) \leq (e,f,s)$ and distinguish two cases. If $c \in \frakp_v$, then $a \in \frakp_v$ by \eqref{eq:zero condition}
and so $(a,b,q) \leq (e,f,s)$. If $c \notin \frakp_v$, then $v_{bc,ad,q/r} = 1$
and $v_{de,cf,r/s} = 1$, so by (iv), $v_{bcde,acdf,q/s} = 1$.
Since $cd \notin \frakp_v$, by (viii) we have $(a,b,q) \leq (e,f,s)$.

We next reconstruct the underlying reified value group.
Define an equivalence relation equating $(a,b,q), (c,d,r) \in S$ whenever
$(a,b,q) \leq (c,d,r)$ and $(c,d,r) \leq (a,b,q)$. 
Let $\Gamma_{v,0}$ be the set of equivalence classes.
Let $0 \in \Gamma_{v,0}$ be the class of $(0,1,1)$; by \eqref{eq:zero condition}, this consists of those $(a,b,q)$ with $a \in \frakp_v$. 
Equip $S$ with the binary operation $\cdot$ given by
\[
(a,b,q) \cdot (c,d,r) \mapsto (ac,bd,qr);
\]
for this operation, $S$ is a commutative monoid with identity element $(1,1,1)$. By (iv), $\cdot$ is monotonic with respect to $\leq$; it thus induces a monoid structure on $S$. For $(a,b,q) \in S$ with $a \notin \frakp_v$, we have $ab \notin \frakp_v$ and so $(b,a,1/q) \in S$; by (i), $(a,b,q)$ and $(b,a,1/q)$ define inverse classes in $\Gamma_{v,0}$. If we set $\Gamma_v = \Gamma_{v,0} \setminus \{0\}$ and define the map $r: \RR^+ \to \Gamma_{v}$ taking $q$ to the class of $(1,1,q)$, it follows that $\Gamma_v$ is a reified value group with identity element the class of $(1,1,1)$ and associated pointed commutative monoid  $\Gamma_{v,0}$.

To conclude, let $v: A \to \Gamma_{v,0}$ be the function taking $a$ to the class of $(a,1,1)$.
By (vii) and (iv), $v$ is a reified semivaluation whose image in 
$A \times A \times \RR^+$ is the tuple $(v_{a,b,q})$;
by (vi), $v$ is unique for this property.
\end{proof}

This gives rise to  the following analogue of Lemma~\ref{L:valuative is spectral}, with a similar proof.
\begin{lemma} \label{L:reified is spectral}
For any ring $A$, the space $\Sprv(A)$ is spectral, and sets of the form 
\eqref{eq:reified valuative spectrum basis} are quasicompact and open.
\end{lemma}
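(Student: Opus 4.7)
The plan is to mirror the proof of Lemma~\ref{L:valuative is spectral}, using Lemma~\ref{L:reified order relation} as the replacement for the elementary analysis of the order relation of a Krull valuation.

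First I would verify that $\Sprv(A)$ is $T_0$: by Lemma~\ref{L:reified order relation}, the equivalence class of a reified semivaluation is determined by the tuple $(v_{a,b,q})_{a,b\in A,q\in\RR^+}$, so two distinct points of $\Sprv(A)$ are separated by some set of the form \eqref{eq:reified valuative patch basis}, and in particular by a member of $\calB$. Since each such set is a Boolean combination of sets of the form \eqref{eq:reified valuative spectrum basis} (using conditions (i) and (ii) of Lemma~\ref{L:reified order relation} to rewrite $v(a) \leq qv(b)$ as the union of $v(a) \leq qv(b) \neq 0$ and $v(b) = 0$), the $T_0$ property for the given topology follows.

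Next I would equip $\{0,1\}$ with the discrete topology and $\{0,1\}^{A \times A \times \RR^+}$ with the product topology, which is compact by Tikhonov. The injection from Lemma~\ref{L:reified order relation} realizes $\Sprv(A)$ as the subspace cut out by the finitary closed conditions (i)--(ix), each of which involves only finitely many coordinates and so is closed in the product topology. Hence this subspace is a compact Hausdorff space, and in this subspace topology each generator of $\calB$ (of the form \eqref{eq:reified valuative spectrum basis} or \eqref{eq:reified valuative patch basis}) is closed-open, because the defining condition on a single coordinate is closed-open in $\{0,1\}^{A\times A \times \RR^+}$.

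Finally I would invoke Lemma~\ref{L:extract spectral}: the given topology on $\Sprv(A)$ is generated by the sets \eqref{eq:reified valuative spectrum basis}, which are a family $F$ of closed-open subsets of the compact space just constructed, and this generated topology is $T_0$ by the first step. The lemma then yields that $\Sprv(A)$ is spectral with the sets in $F$ being quasicompact and open, as required. No step is genuinely hard; the one that requires care is the bookkeeping to confirm that conditions (i)--(ix) really do cut out a closed subset and that $F$ separates points, but both reduce to direct inspection of Lemma~\ref{L:reified order relation}.
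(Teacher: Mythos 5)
Your proof is correct and follows essentially the same route as the paper's: establish that $\Sprv(A)$ is $T_0$ using the separating sets \eqref{eq:reified valuative patch basis}, identify $\Sprv(A)$ via Lemma~\ref{L:reified order relation} with a closed subspace of the compact space $\{0,1\}^{A \times A \times \RR^+}$ in which the sets \eqref{eq:reified valuative spectrum basis} are closed-open, and apply Lemma~\ref{L:extract spectral}. The only slip is in your parenthetical rewriting of $\{v : v(a) \leq q v(b)\}$: the second piece of the union should be $\{v : v(a) = 0\}$ rather than $\{v : v(b) = 0\}$ (if $v(b) = 0$ but $v(a) \neq 0$ the inequality fails), but this does not affect the argument since the fact that the two families generate the same Boolean algebra $\calB$ is already recorded in the definition of $\Sprv(A)$.
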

\begin{proof}
Since sets of the form \eqref{eq:reified valuative patch basis} clearly separate points, the space  $\Sprv(A)$ is $T_0$.
On the other hand, by Lemma~\ref{L:reified order relation}, we may identify $\Sprv(A)$
with a closed subspace of the compact space $\{0,1\}^{A \times A \times \RR^+}$ (for the discrete topology on $\{0,1\}$) in such a way that spaces of the form 
\eqref{eq:reified valuative spectrum basis} are closed-open. 
By Lemma~\ref{L:extract spectral}, $\Sprv(A)$ is spectral
and subsets of the form 
\eqref{eq:reified valuative spectrum basis} are quasicompact and open.
\end{proof}

\begin{lemma} \label{L:lift reification}
Let $\ell/k$ be an extension of fields. Then the map
\[
\Sprv(\ell) \to \Spv(\ell) \times_{\Spv(k)} \Sprv(k)
\]
is surjective (but typically not injective).
\end{lemma}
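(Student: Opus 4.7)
The plan is to reduce the statement to a purely group-theoretic amalgamation problem, and then solve it using divisible hulls and a Zorn's-lemma argument. I need to construct, for given $w \in \Spv(\ell)$ and $v \in \Sprv(k)$ with common image in $\Spv(k)$, a reified semivaluation $u \in \Sprv(\ell)$ lifting both.

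First I would fix representatives: write $\Gamma_w$ for the value group of $w$ and $(\Gamma_v, r \colon \RR^+ \to \Gamma_v)$ for the reified value group of $v$. The hypothesis furnishes an identification of $H := w(k^\times) \subseteq \Gamma_w$ with $v(k^\times) \subseteq \Gamma_v$ as totally ordered abelian groups, compatible with the valuation maps on $k$. It is enough to produce a totally ordered abelian group $\Gamma$ equipped with order-preserving embeddings $\Gamma_w \hookrightarrow \Gamma$ and $\Gamma_v \hookrightarrow \Gamma$ that agree on $H$: one then defines $u(a) := w(a)$ viewed in $\Gamma$, with reification $\RR^+ \xrightarrow{r} \Gamma_v \hookrightarrow \Gamma$. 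The restriction of $u$ to $k$ then has reified value group $\langle H, r(\RR^+) \rangle \cong \Gamma_v$, while its projection to $\Spv(\ell)$ is $w$, as required.

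To construct $\Gamma$, I would pass to divisible hulls $\hat\Gamma_w := \Gamma_w \otimes_\ZZ \QQ$, $\hat\Gamma_v := \Gamma_v \otimes_\ZZ \QQ$, and $\hat H := H \otimes_\ZZ \QQ$; each inherits a unique totally ordered $\QQ$-vector space structure extending the given order, and $\hat H$ embeds as an ordered subspace of both $\hat\Gamma_w$ and $\hat\Gamma_v$. I then form the $\QQ$-vector space pushout
\[
\hat\Gamma := (\hat\Gamma_w \oplus \hat\Gamma_v) / \{(h,-h) : h \in \hat H\},
\]
with quotient map $\pi$, and take $P := \{\pi(g,\gamma) : g \geq 0 \text{ in } \hat\Gamma_w,\ \gamma \geq 0 \text{ in } \hat\Gamma_v\}$. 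The set $P$ is closed under addition, and I claim $P \cap (-P) = \{0\}$: if $\pi(g_1,\gamma_1) = -\pi(g_2,\gamma_2)$ with all four components nonnegative, then $(g_1 + g_2, \gamma_1 + \gamma_2) = (h,-h)$ in $\hat\Gamma_w \oplus \hat\Gamma_v$ for some $h \in \hat H$; nonnegativity of $g_1 + g_2$ forces $h \geq 0$ in $\hat\Gamma_w$ (hence in $\hat H$), while nonnegativity of $\gamma_1 + \gamma_2$ forces $-h \geq 0$ in $\hat\Gamma_v$ (hence in $\hat H$), so $h = 0$ and each component vanishes.

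A standard Zorn's-lemma argument then extends $P$ to the positive cone of a total group order on the torsion-free abelian group $\hat\Gamma$, and the equality $P \cap (-P) = \{0\}$ guarantees that this total order restricts to the given orders on $\hat\Gamma_w$ and $\hat\Gamma_v$ (any element of $\hat\Gamma_w$ that is nonnegative in the original order lies in $P$, so cannot be negative in any extension of $P$). Taking $\Gamma := \hat\Gamma$ completes the construction. The main obstacle is the cone computation $P \cap (-P) = \{0\}$, which is precisely where the compatibility of the two embeddings of $\hat H$ with the ambient orders is crucially used; everything else rests on classical facts about divisible hulls of ordered abelian groups and extensions of compatible partial orders to total orders.
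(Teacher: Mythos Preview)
Your proof is correct and takes a genuinely different route from the paper's. The paper argues by reducing to the case $\ell = k(x)$, then splits into cases: if the inclusion $\Gamma_v \to \Gamma_{v_1}$ becomes an isomorphism after tensoring with $\QQ$ (in particular, if $x$ is algebraic over $k$), the reification extends uniquely; otherwise, after reducing further to $k$ algebraically closed, it shows $\Gamma_{v_1} \cong \Gamma_v \oplus \ZZ$ and explicitly places the new generator inside a lexicographic product $\Gamma_{\tilde v} \times \RR^+$. Your argument instead recasts the problem as an amalgamation of totally ordered abelian groups over a common ordered subgroup, passes to divisible hulls, and extends the natural partial cone on the pushout to a total order via Zorn's lemma.

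The trade-offs are as one would expect. Your approach is cleaner and uniform: no transfinite induction along generators of $\ell/k$, no case split on algebraic versus transcendental, and the key computation $P \cap (-P) = \{0\}$ isolates exactly where the compatibility hypothesis is used. The paper's approach, by contrast, is more constructive: in the transcendental step it pins down precisely where the new value sits relative to the real numbers (via the cut $(S_-, S_+)$ and the sign $\epsilon$), which is the kind of explicit information one wants when describing points of reified spectra concretely, as in the later examples in \S 9. One small remark on exposition: your sentence justifying that the extended total order restricts correctly to $\hat\Gamma_w$ and $\hat\Gamma_v$ is a bit terse; the real reason is that those orders are already \emph{total}, so for any $g \in \hat\Gamma_w$ either $\pi(g,0)$ or $-\pi(g,0)$ already lies in $P$, and the injectivity of $\hat\Gamma_w \to \hat\Gamma$ (which you should note follows from the pushout description) then forces agreement.
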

\begin{proof}
We may reduce to the case where $\ell = k(x)$ for some $x \in \ell$.
Let $v_1$ be a valuation on $\ell$ restricting to the valuation $v$ on $k$.
If the inclusion $\Gamma_v \to \Gamma_{v_1}$
induces an isomorphism $\Gamma_v \otimes_{\ZZ} \QQ \to \Gamma_{v_1} \otimes_{\ZZ} \QQ$,
then any reification of $v$ induces a unique reification of $v_1$. This already suffices to treat the case where $x$ is algebraic over $k$.

If $x$ is transcendental over $k$, using the previous paragraph we may reduce to the case where $k$ is itself algebraically closed and 
$\Gamma_{v_1} \neq \Gamma_v$ (the latter group being divisible).
Since $k$ is algebraically closed, there must exist $c \in k, d \in k^\times$ for which
$v_1(cx-d) < 1$ and $v_1(cx-d) \notin \Gamma_v$. For any $P = \sum_{n \geq 0} P_n T^n \in k[T]$, we have
\[
v_1(P(cx-d)) = \max_n \{v(P_n)  v_1(cx-d)^n\}
\]
since the nonzero terms in the maximum are pairwise distinct. It follows that as abstract groups we have
$\Gamma_{v_1} = \Gamma_v \oplus v_1(cx-d)^{\ZZ}$.

If $v$ is trivial, then the claim is equally trivial, so we may assume hereafter that $v$ is nontrivial. Let $\tilde{v}$ be a reification of $v$.
Let $S_-$ (resp.\ $S_+$) be the set of $t \in \RR$ for which there exists $y \in k$
such that $v_1(cx-d) > v_1(y)$ and $\tilde{v}(y) \geq t$ (resp.\ $v_1(cx-d) < v_1(y)$ and $\tilde{v}(y) \leq t$). These sets have the following properties.
\begin{itemize}
\item
The set $S_-$ is down-closed and contains 0.
\item
The set $S_+$ is up-closed and contains 1.
\item
The sets $S_-, S_+$ are disjoint.
\item
The supremum of $S_-$ equals the infimum of $S_+$. We denote the common value by $s$.
\end{itemize}
Note that it is possible to choose $\epsilon \in \{-1,0,1\}$ subject to the following conditions.
\begin{itemize}
\item
If $s \in S_-$, then $\epsilon \neq -1$.
\item
If $s \in S_+$, then $\epsilon \neq +1$.
\item
If $s \in \{0,1\}$, then $\epsilon \neq 0$.
\end{itemize}
Let $\Gamma$ be the lexicographic product $\Gamma_{\tilde{v}} \times \RR^+$ viewed as a reified value group via the reification on the first factor. We then obtain
an embedding of $\Gamma_{v_1}$ into $\Gamma$ taking $v_1(cx-d)$ to $(s, e^\epsilon)$, yielding a reification of $v_1$ as desired.
\end{proof}

We have the following analogue of Lemma~\ref{L:valuation product}.
\begin{lemma} \label{L:valuation product2}
Let $k_1/k, k_2/k$ be extensions of fields.
\begin{enumerate}
\item[(a)]
The maps $\Sprv(k_1) \to \Sprv(k), \Sprv(k_2) \to \Sprv(k)$ are surjective.
\item[(b)]
The map $\Sprv(k_1 \otimes_k k_2) \to \Sprv(k_1) \times_{\Sprv(k)} \Sprv(k_2)$ is surjective.
\end{enumerate}
\end{lemma}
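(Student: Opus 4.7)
For (a), combine the two lemmas already at hand. Given $v \in \Sprv(k)$ with underlying semivaluation $v_0 \in \Spv(k)$, first apply Lemma~\ref{L:valuation product}(a) to obtain some $v_{1,0} \in \Spv(k_1)$ restricting to $v_0$. Then apply Lemma~\ref{L:lift reification} to the extension $k_1/k$, taking $v$ as the given reification of the base: this yields $v_1 \in \Sprv(k_1)$ with underlying valuation $v_{1,0}$ and reified restriction $v_1|_k = v$, as required. The argument for $k_2$ is identical.

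For (b), proceed in two stages. First, by Lemma~\ref{L:valuation product}(b), choose $w_0 \in \Spv(k_1 \otimes_k k_2)$ whose restrictions to $k_1$ and $k_2$ are the underlying valuations $v_{1,0}$ and $v_{2,0}$, and set $L := \Frac((k_1 \otimes_k k_2)/\ker w_0)$, on which $w_0$ is a Krull valuation. The remaining task is to reify $w_0$ compatibly with both $v_1$ and $v_2$ at once. The natural approach is a patch-compactness argument: by Lemma~\ref{L:reified is spectral}, $\Sprv(k_1 \otimes_k k_2)$ is spectral and hence patch-compact, and the subset of its points restricting to $(v_1, v_2)$ is cut out by the clopen conditions $\{w : w(a) \leq q w(b)\}$ and their complements, with $a, b \in k_i$, $q \in \RR^+$, $i \in \{1,2\}$, and the sign chosen according to $v_i$. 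Non-emptiness of the full intersection then reduces to non-emptiness of every finite sub-intersection.

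Each finite sub-intersection involves only finitely many elements of $k_1, k_2$ and finitely many reals, so one may replace $k_1, k_2$ by finitely generated sub-extensions over $k$ containing them. On such finitely generated extensions, a reified valuation meeting the prescribed finite set of constraints is built inductively along a tower of simple sub-extensions, interleaving applications of Lemma~\ref{L:valuation product}(b) (to extend underlying valuations) with Lemma~\ref{L:lift reification} (to extend reifications), using the Dedekind-cut freedom inherent in the latter to match the constraints coming from both sides. Alternatively, and arguably more cleanly, one can mimic the model-theoretic approach of Remark~\ref{R:extend two valuations}, invoking quantifier elimination in a suitable theory of algebraically closed reified valued fields; patch-compactness then packages the finite-realization step.

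\textbf{The main obstacle} is this finite-realization step: one must verify that the two Dedekind cuts demanded by $v_1$ and $v_2$ on the reified value group of a putative $w$ can be rendered consistent. The subtlety is that an arbitrary choice of $w_0$ from Lemma~\ref{L:valuation product}(b) may already force a cross-cut obstruction (for example, $v_{1,0}(a) \neq v_{2,0}(b)$ in $\Gamma_{w_0}$ for elements whose reified images in $\Gamma_{v_1}$ and $\Gamma_{v_2}$ agree), so the freedom in choosing $w_0$ must be exploited in tandem with the freedom in Lemma~\ref{L:lift reification}; this is exactly the point at which the model-theoretic shortcut proves most useful.
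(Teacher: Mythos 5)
Part (a) is correct and is precisely the paper's route: lift the underlying valuation via Lemma~\ref{L:valuation product}(a), then lift the reification via Lemma~\ref{L:lift reification} applied to $k_i/k$.

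For part (b), the paper simply asserts that the claim is immediate from Lemma~\ref{L:valuation product} and Lemma~\ref{L:lift reification}, i.e., it takes the direct route of producing $w_0 \in \Spv(k_1 \otimes_k k_2)$ and then reifying. Your suspicion of this route is well founded: if $a \in k_1$ and $b \in k_2$ both take the exact value $r \in \RR^+$ (meaning $v_1(a)$ and $v_2(b)$ are the images of $r$ under the respective reifications), then any compatible reified $w$ forces $w_0(a \otimes 1) = w_0(1 \otimes b)$, and there do exist outputs of Lemma~\ref{L:valuation product}(b) violating this (e.g., two transcendentals assigned values differing by an infinitesimal over $\Gamma_v$). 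So the choices of $w_0$ and of its reification must be coordinated, and flagging this is a genuine contribution.

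However, your replacement argument does not actually close the gap. The patch-compactness reduction is sound: the locus of $w \in \Sprv(k_1 \otimes_k k_2)$ restricting to $(v_1,v_2)$ is an intersection of patch-clopen sets, so nonemptiness reduces to the finite case. But the finite-realization step --- which you yourself call ``the main obstacle'' --- is only described, not proved: the ``inductive interleaving'' of Lemmas~\ref{L:valuation product}(b) and~\ref{L:lift reification} along a tower of simple extensions is exactly where the cross-cut consistency must be arranged, and no argument is given for why it can be; the alternative via quantifier elimination for ACRVF invokes a result the paper only conjectures in Remark~\ref{R:ACVF}. As written this is a plan rather than a proof. One way to finish honestly, avoiding both the induction and the model theory: transport the reifications into genuine field elements via the group-algebra device used later in Lemma~\ref{L:valuation intersection}. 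Replace $k, k_1, k_2$ by $K = \Frac(k[\RR^+])$, $K_i = \Frac(k_i[\RR^+])$ equipped with the Gauss extensions of $v, v_1, v_2$ (these are Krull valuations because the associated graded rings are group algebras of a totally ordered group over a graded field, hence domains); apply Lemma~\ref{L:valuation product}(b) to the ordinary valued fields $K_1, K_2$ over $K$; then restrict the resulting valuation along $k_1 \otimes_k k_2 \to K_1 \otimes_K K_2$ and read off its reification from the values of the elements $[r]$. Compatibility with $v_1$ and $v_2$, including their reifications, is then automatic.
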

\begin{proof}
This is immediate from Lemma~\ref{L:valuation product} and
Lemma~\ref{L:lift reification}.
\end{proof}

For the remainder of \S \ref{sec:reified valuations}, fix a nonarchimedean normed ring $A$. We first cut down the space $\Sprv(A)$ by imposing some interaction between the seminorm on $A$ and the reifications.

\begin{defn}
For $r \in \RR^+$,
let $A^{\circ,r}$ (resp.\ $A^{\circ \circ,r}$) be the set of $a \in A$ such that the sequence $\{r^{-n} \left| a^n \right|\}_{n=1}^\infty$ is bounded (resp.\ converges to 0).
Note that $A^{\circ,1} = A^\circ$ and $A^{\circ \circ, 1} = A^{\circ \circ}$;
more generally, if $a \in A^{\circ,r}$, then $\left| a \right|_{\mathrm{sp}} \leq r$, but not conversely. By contrast, if $a \in A^{\circ \circ,r}$ then $\left| a \right|_{\mathrm{sp}} < r$ \emph{and conversely}: if $\left| a \right|_{\mathrm{sp}} < r$,
then there exist a positive integer $m$ and a value $c \in (0,1)$ such that $\left| a^m \right| \leq c^m r^m$,
so for all $n \geq 0$ we have
\[
r^{-n} \left| a^n \right| \leq c^{\lfloor n/m \rfloor} \max\{r^{-i} \left| a^i \right|: i=0,\dots,m-1\}
\]
and so $a \in A^{\circ \circ,r}$.
Following Temkin \cite{temkin-local2}, we define the graded ring
\[
\Gr A = \bigoplus_{r>0} \Gr^r A, \qquad \Gr^r A = A^{\circ,r}/A^{\circ \circ,r}.
\]
\end{defn}

\begin{defn}
A reified semivaluation $v: A \to \Gamma_0$ on $A$ is \emph{bounded}
if for all $r \in \RR^+$ and $a \in A^{\circ \circ,r}$, we have $v(a) \leq r$. 
Let $\Sprb(A)$ be the subspace of $\Sprv(A)$ consisting of the equivalence
classes of bounded reified semivaluations.
\end{defn}

The analogue of continuity for reified valuations is the following condition.
\begin{defn}
A reified semivaluation $v: A \to \Gamma_0$ on $A$ is \emph{commensurable}
if it bounded and for all $\gamma \in \Gamma_v$, there exists $r \in \RR^+$ such that $r \leq \gamma$.
Let $\Comm(A)$ be the subspace of $\Sprb(A)$ consisting of the equivalence classes of commensurable reified semivaluations. 
In case the underlying topological ring of $A$ is an f-adic ring, the map $\Sprv(A) \to \Spv(A)$ forgetting reifications induces a map $\Comm(A) \to \Cont(A)$.
\end{defn} 

This construction is designed to eliminate certain infinitesimals.
\begin{example} \label{exa:not commensurate}
Put $A = \QQ_p[T]$. Put $\Gamma = u^{\ZZ} \times \RR^+$ with the lexicographic ordering
and define the valuation $v: A \to \Gamma_0$ by setting 
\[
v\left(\sum_i a_i T^i \right) = \max_i \{ (u^{-i}, p^{-v_p(a_i)})\}.
\]
Then $v \in \Sprv(A, \left| \bullet \right|)$ but $v \notin \Comm(A)$.
\end{example}

\begin{defn} \label{D:real projection}
Since every $\RR$-valued semivaluation is commensurable,
there is a natural inclusion $\calM(A) \to \Comm(A)$.
There is also a natural map $\Sprb(A) \to \calM(A)$ taking $v \in \Sprb(A)$ to the map $\alpha$ defined as follows: 
for $a \in A$, $\alpha(a)$ is the infimum of all $r \in \RR^+$ for which $\alpha(a) \leq r$.
The composition $\calM(A) \to \Comm(A) \to \Sprb(A) \to \calM(A)$ is the identity map.
\end{defn}

\begin{defn}
For $v \in \Sprb(A)$ mapping to $\alpha \in \calM(A)$ as in Definition~\ref{D:real projection}, put $\calH(v) = \calH(\alpha)$.
We may then extend $v$ by continuity to a Krull valuation on $\calH(v)$,
from which we may recover $v$ by pullback along $A \to \calH(v)$.
\end{defn}

\begin{defn} \label{D:retract}
For $\Gamma$ a reified value group,
let $\overline{\Gamma}$ be the subgroup of $\Gamma$ consisting of those $\gamma \in 
\Gamma$ such that $r \leq \gamma \leq r^{-1}$ for some $r \in \RR^+$.
(The lower bound on $r$ is the one we will need in the construction; the upper bound is there to ensure that we get a subgroup.)
For $v \in \Sprb(A)$, the function $r(v): A \to \overline{\Gamma}_0$ defined by
\[
r(v)(a) = \begin{cases} v(a) & \mbox{if $a \in \overline{\Gamma}$,} \\
0 & \mbox{if $a \notin \overline{\Gamma}$,}
\end{cases}
\]
is a commensurable reified semivaluation. We thus obtain a map $r: \Sprb(A) \to \Comm(A)$ of which the inclusion $\Comm(A) \to \Sprb(A)$ is a section.
\end{defn}

\begin{defn} \label{D:rational subspace reified}
By analogy with $\calM(A)$ (but not with $\Spv(A)$; see Remark~\ref{R:rational subspace definition}), we define a \emph{rational subspace} of $\Sprv(A)$
to be a subset of the form
\begin{equation} \label{eq:rational subspace reified}
\{v \in \Sprv(A): v(f_i) \leq q_i v(f_0) \neq 0 \quad (i=1,\dots,n)\}
\end{equation}
for some $f_0, \dots,f_n \in A$ such that $f_1,\dots,f_n$ generate the unit ideal
and some $q_1, \dots, q_n > 0$. 
As in Definition~\ref{D:rational subspace gelfand}, we can rewrite \eqref{eq:rational subspace reified} as
\begin{equation} \label{eq:rational subspace reified2}
\{v \in \Sprv(A): v(f_i) \leq q_i v(f_0) \quad (i=1,\dots,n)\}.
\end{equation}
Any rational subspace is quasicompact (by Lemma~\ref{L:reified is spectral}) and open.
A \emph{rational subspace} of $\Sprb(A)$ or $\Comm(A)$ is the intersection
of said space with a rational subspace of 
$\Sprv(A)$. 
\end{defn}

\begin{lemma} \label{L:strong rational subspaces}
The rational subspaces of $\Comm(A)$ form a basis for the topology. 
\end{lemma}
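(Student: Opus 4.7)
The plan is to show that every subbasic open neighborhood of a point $v \in \Comm(A)$ contains a rational subspace neighborhood of $v$. Since the intersection of two rational subspaces of $\Sprv(A)$ is again rational, by the same product-of-parameters calculation as in Definition~\ref{D:rational subspace gelfand} (and using that if $f_1,\dots,f_n$ and $g_1,\dots,g_m$ each generate the unit ideal, then so do the products $f_i g_j$ for $i,j \geq 1$), this reduces the problem to handling a single subbasic open $U = \{v' \in \Sprv(A) : v'(a) \leq q v'(b) \neq 0\}$ that contains $v$.

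Given such $U$, the condition $v \in U$ forces $v(b) \neq 0$, whereupon the commensurability of $v$ produces some $r \in \RR^+$ with $r \leq v(b)$, equivalently $v(1) \leq (1/r) v(b)$. I would then exhibit the rational subspace of $\Sprv(A)$ cut out by the data $f_0 = b$, $f_1 = a$, $f_2 = 1$, $q_1 = q$, $q_2 = 1/r$. Because $1$ is among $f_1,\dots,f_n$, the unit-ideal hypothesis on the generators is automatic. By the choice of $r$, the point $v$ lies in this subspace, and the subspace is visibly contained in $U$; intersecting with $\Comm(A)$ gives a rational subspace of $\Comm(A)$ with the desired properties.

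I do not anticipate a serious obstacle, because commensurability is calibrated exactly to bridge the mismatch between $\Sprv$-style rational subspaces and $\Spv$-style rational subspaces pointed out in Remark~\ref{R:rational subspace definition}: the stringent requirement that $f_1,\dots,f_n$ generate the unit ideal (rather than merely an open ideal) is absorbed by inserting $1$ as one of the generators, while commensurability supplies precisely the comparison $v(1) \leq (1/r) v(b)$ needed to witness that $v$ sits in the constructed subspace. An analogous basis statement for $\Sprb(A)$ would fail without commensurability, essentially for the reason illustrated by Example~\ref{exa:not commensurate}, where $v(T)$ is infinitesimal with respect to $\RR^+$ and cannot be bounded below by any positive real.
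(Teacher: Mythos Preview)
Your proposal is correct and matches the paper's proof essentially verbatim: both use commensurability of $v$ to find $r \leq v(b)$, then exhibit the rational subspace with parameters $f_0 = b$, $f_1 = a$, $f_2 = 1$ and scale factors $q_1 = q$, $q_2 = 1/r$, and both invoke closure of rational subspaces under finite intersection to pass from subbasic opens to arbitrary basic opens.
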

\begin{proof}
Choose any $v \in \Comm(A)$. For any $a,b \in A$ and $q>0$,
if the set
\[
U = \{w \in \Comm(A): w(a) \leq q w(b) \neq 0\}
\]
contains $v$, then there exists $r \in \RR^+$ such that $r \leq v(b)$,
so
\[
\{w \in \Comm(A): w(a) \leq q w(b) \neq 0, w(1) \leq (1/r) w(b) \neq 0\}
\]
is a rational subspace of $\Comm(A)$ containing $v$ and contained in $U$.
Since the set of rational subspaces is closed under finite intersections, this proves the claim.
\end{proof}

\begin{lemma} \label{L:retract}
For $r: \Sprb(A) \to \Comm(A)$ as in
Definition~\ref{D:retract}, 
for $U$ a rational subspace of $\Sprb(A)$, we have $r^{-1}(U \cap \Comm(A)) = U$.
\end{lemma}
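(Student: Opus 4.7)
The plan is to use the reformulation \eqref{eq:rational subspace reified2} to write $U = \{v \in \Sprv(A) : v(f_i) \leq q_i v(f_0), \; i=1,\dots,n\}$ with $f_1,\dots,f_n$ generating the unit ideal, and to fix a presentation $1 = \sum_i g_i f_i$. Since $r(v) \in \Comm(A)$ automatically, the equality $r^{-1}(U \cap \Comm(A)) = U$ reduces to verifying that each defining inequality of $U$ is preserved in both directions under the retraction $v \mapsto r(v)$.

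First I would record a basic boundedness principle for $v \in \Sprb(A)$: for any $a \in A$, choosing a real $s > |a|$ gives $s^{-n}|a^n| \leq (|a|/s)^n \to 0$, so $a \in A^{\circ\circ,s}$ and thus $v(a) \leq s$. Combined with Definition~\ref{D:retract}, this forces $r(v)(a) = v(a)$ except when $v(a)$ is a nonzero infinitesimal (smaller than every element of $\RR^+ \subseteq \Gamma_v$), in which case $r(v)(a) = 0$.

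For the forward direction ($v \in U \Rightarrow r(v) \in U$), I would use the unit ideal hypothesis to bound $v(f_0)$ below by a positive real: from $1 = v(1) \leq \max_i v(g_i) v(f_i) \leq (\max_i q_i v(g_i)) v(f_0)$ combined with the real upper bounds on each $v(g_i)$ supplied by the previous paragraph, one obtains $v(f_0) \geq 1/R$ for some $R \in \RR^+$. Hence $v(f_0) \in \overline{\Gamma}$ and $r(v)(f_0) = v(f_0) \neq 0$; the remaining inequalities $r(v)(f_i) \leq q_i r(v)(f_0)$ then follow by splitting on whether $v(f_i)$ lies in $\overline{\Gamma}$ (in which case $r(v)(f_i) = v(f_i) \leq q_i v(f_0)$), is zero, or is a nonzero infinitesimal, with the last two cases giving $r(v)(f_i) = 0$ trivially.

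The reverse direction ($r(v) \in U \Rightarrow v \in U$) is the step I expect to require the most care. From $r(v)(f_0) \neq 0$ one immediately gets $v(f_0) = r(v)(f_0) \geq s$ for some $s \in \RR^+$. The only subtle case in verifying $v(f_i) \leq q_i v(f_0)$ is when $v(f_i)$ is a nonzero infinitesimal, since then $r(v)(f_i) = 0$ carries no direct information; but the real lower bound on $v(f_0)$ saves the day via $v(f_i) < q_i s \leq q_i v(f_0)$. The principal obstacle throughout is really keeping track of how the unit ideal hypothesis (as opposed to Huber's weaker openness condition; see Remark~\ref{R:rational subspace definition}) is exactly what rules out an infinitesimal $v(f_0)$, which would otherwise make the two sides disagree.
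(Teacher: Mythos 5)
Your proof is correct and follows essentially the same route as the paper: both directions come down to the observation that boundedness gives every $v(a)$ a real upper bound, so the retraction $r$ only kills nonzero infinitesimal values, and the reverse inclusion is handled by exactly the paper's case split on whether $v(f_i)$ admits a positive real lower bound. The only cosmetic difference is in the forward inclusion, where you derive the explicit bound $v(f_0)\geq 1/R$ from a presentation $1=\sum_i g_i f_i$, whereas the paper gets the same inclusion more quickly by noting that $r$ preserves the scaled order relations and using the form \eqref{eq:rational subspace reified2} of $U$.
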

\begin{proof}
Note that $r$ preserves order relations: if $v(a) \leq v(b)$, then
$r(v)(a) \leq r(v)(b)$. 
By expressing $U$ in the form \eqref{eq:rational subspace reified2}, we see that $r(U) \subseteq U \cap \Comm(A)$,
so $U \subseteq r^{-1}(U \cap \Comm(A))$. On the other hand, if $v \in \Sprb(A)$ satisfies
$r(v) \in U \cap \Comm(A)$, then $r(v)(f_0) \neq 0$, so $v(f_0) = r(v)(f_0)$. For $i > 0$, if there exists $s \in \RR^+$ such that $s \leq v(f_i)$,
then $r(v)(f_i) = v(f_i) \leq q_i r(v)(f_0) = q_i v (f_0)$; otherwise, for any $s \in \RR^+$
such that $s \leq r(v)(f_0)$, we have $v(f_i) < q_i s \leq q_i r(v)(f_0) = q_i v(f_0)$. It follows
that $v \in U$, so $r^{-1}(U \cap \Comm(A)) \subseteq U$.
\end{proof}

The following result is analogous to \cite[Proposition~2.6]{huber1}, although the proof is somewhat different.

\begin{lemma} \label{L:comm is spectral}
The spaces $\Sprb(A)$ and $\Comm(A)$ are spectral
and the map $r: \Sprb(A) \to \Comm(A)$ is spectral.
\end{lemma}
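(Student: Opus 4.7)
The plan is to treat $\Sprb(A)$ and $\Comm(A)$ in turn, leveraging the spectrality of $\Sprv(A)$ from Lemma~\ref{L:reified is spectral}. For $\Sprb(A)$, I would observe that the boundedness condition defining it can be written as
\[
\Sprb(A) = \bigcap_{r \in \RR^+} \bigcap_{a \in A^{\circ\circ,r}} \{v \in \Sprv(A) : v(a) \leq r v(1)\};
\]
each set on the right lies in the Boolean algebra $\calB$ of \eqref{eq:reified valuative patch basis}, hence is closed-open for the patch topology of $\Sprv(A)$. So $\Sprb(A)$ is patch-closed in $\Sprv(A)$, and Corollary~\ref{C:closed in spectral} gives its spectrality, with restrictions of rational subspaces of $\Sprv(A)$ still quasicompact and open.

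For $\Comm(A)$, commensurability is not visibly a patch-closed condition --- it conjoins an open requirement (``$v(a) \geq r$ for some $r \in \RR^+$'') to a closed one (``$v(a) = 0$'') for each $a \in A$ --- so Corollary~\ref{C:closed in spectral} cannot be applied directly. Instead I would import the spectral structure through the retraction $r$. By Lemma~\ref{L:retract}, $r$ pulls back each rational subspace of $\Comm(A)$ to the corresponding rational subspace of $\Sprb(A)$, which is quasicompact open; combined with Lemma~\ref{L:strong rational subspaces}, this shows that $r$ is continuous. Since $r$ is surjective (as a retraction), each rational subspace of $\Comm(A)$ is the continuous image of a quasicompact set and hence is quasicompact. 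These form a basis closed under finite intersection, and $\Comm(A)$ is $T_0$ as a subspace of $\Sprv(A)$, so $\Comm(A)$ is prespectral; any quasicompact open of $\Comm(A)$ is then a finite union of rational subspaces, so $r$ is moreover a spectral map.

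To upgrade $\Comm(A)$ from prespectral to spectral, I would apply Corollary~\ref{C:quasicompact is spectral}, reducing to the quasicompactness of its patch topology. Since $r$ is spectral, preimages under $r$ of the subbasic patch-open subsets of $\Comm(A)$ (spectral opens and complements of quasicompact opens) are patch-open in $\Sprb(A)$; thus $r$ is continuous for the patch topologies. As $\Sprb(A)$ is patch-compact (being spectral) and $r$ is surjective, $\Comm(A)$ is patch-compact, hence spectral. The main obstacle --- the non-patch-closedness of $\Comm(A)$ in $\Sprb(A)$ --- is exactly what forces the detour through the retraction $r$; the key content is that Lemma~\ref{L:retract} and Lemma~\ref{L:strong rational subspaces} together say the spectral structure of $\Comm(A)$ is visible on the basis of rational subspaces, even though the full subspace itself is not patch-closed.
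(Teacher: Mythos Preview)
Your proof is correct and follows essentially the same route as the paper: show $\Sprb(A)$ is patch-closed in $\Sprv(A)$ and hence spectral by Corollary~\ref{C:closed in spectral}; use Lemmas~\ref{L:strong rational subspaces} and~\ref{L:retract} to see that $r$ is continuous and pulls back rational subspaces to rational subspaces, making $\Comm(A)$ prespectral and $T_0$; then push patch-compactness forward along $r$ and apply Corollary~\ref{C:quasicompact is spectral}. The only cosmetic difference is ordering: you verify that $r$ is spectral directly (quasicompact opens being finite unions of rational subspaces) and deduce patch-continuity from that, whereas the paper first argues patch-continuity of $r$ via Lemma~\ref{L:retract} and only at the end invokes Corollary~\ref{C:continuous is spectral} to conclude that $r$ is spectral.
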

\begin{proof}
By Lemma~\ref{L:reified is spectral}, $\Sprv(A)$ is spectral.
Since $\Sprb(A)$ is closed in $\Sprv(A)$ for the patch topology, it is also spectral by Corollary~\ref{C:closed in spectral}. By the same reasoning, rational subspaces
of $\Sprb(A)$ are quasicompact.

By Lemma~\ref{L:strong rational subspaces} plus Lemma~\ref{L:retract}, $r: \Sprb(A) \to \Comm(A)$ is a continuous retraction and the inverse image of
any rational subspace is a rational subspace. Since rational subspaces of $\Sprb(A)$ are quasicompact, the same is true for $\Comm(A)$,  so $\Comm(A)$ is prespectral. It is also $T_0$, being a subspace of the $T_0$ space
$\Sprb(A)$.

Since rational subspaces of $\Comm(A)$ are quasicompact,
by Lemma~\ref{L:strong rational subspaces} the patch topology on $\Comm(A)$ is generated by rational subspaces and their complements.
By this remark plus Lemma~\ref{L:retract}, $r$ is continuous for the patch topologies, 
so $\Comm(A)$ is compact for the patch topology.
By Corollary~\ref{C:quasicompact is spectral}, $\Comm(A)$ is spectral.
Since $r$ is continuous for the patch topologies, Corollary~\ref{C:continuous is spectral}
implies that $r$ is spectral.
\end{proof}

\begin{remark}
In Huber's theory, including valuations which are not continuous would give rise to spaces which detect nontrivial blowups, i.e., analogues of the Riemann-Zariski spaces associated to schemes. For instance, for any two $f,g \in A$, some points at which both $f$ and $g$ vanish would be separated by assigning a nonzero but infinitesimal value to $f/g$. A similar effect would be achieved here by including reified valuations which are not commensurable; this point of view is taken in \cite{ducros-thuillier}.
\end{remark}

\section{Reified adic spectra}

We now define an intermediate construction between adic spectra and Gel'fand spectra, starting with an analogue of the definition of an affinoid f-adic ring.

\begin{defn}
By an \emph{affinoid seminormed ring}, we will mean a pair $(A^{\rhd}, A^{\Gr})$ in which $A^{\rhd}$
is a nonarchimedean seminormed ring and $A^{\Gr}$ is an integrally closed graded subring of $\Gr A^{\rhd}$. If $A^{\rhd}$ is separated and complete for its seminorm, we also call such a pair an \emph{affinoid Banach ring}.
For $r \in \RR^+$, write $A^{+,r}$ for the subring of $A^{\rhd,\circ,r}$ whose image in $\Gr^r A^{\rhd}$ belongs to $A^{\Gr,r}$.

A \emph{morphism} $(A^{\rhd}, A^{\Gr}) \to (B^{\rhd}, B^{\Gr})$ of affinoid seminormed rings is a bounded homomorphism $A^{\rhd} \to B^{\rhd}$ of nonarchimedean seminormed rings which induces a map $A^{\Gr} \to B^{\Gr}$.

For $(A^{\rhd}, A^{\Gr}) \to (B^{\rhd}, B^{\Gr})$, $(A^{\rhd}, A^{\Gr}) \to (C^{\rhd}, C^{\Gr})$ two morphisms of affinoid seminormed rings, define the \emph{tensor product}
$(B^{\rhd}, B^{\Gr}) \otimes_{(A^{\rhd}, A^{\Gr})} (C^{\rhd}, C^{\Gr})$ as the affinoid seminormed ring $(D^{\rhd}, D^{\Gr})$ with $D^{\rhd} = B^{\rhd} \otimes_{A^{\rhd}} C^{\rhd}$ (with the tensor product seminorm) and
$D^{\Gr}$ equal to the integral closure of the image of $B^{\Gr} \otimes_{A^{\Gr}} C^{\Gr}$ in $\Gr D^{\rhd}$.
\end{defn}

\begin{defn}
For $(A^{\rhd}, A^{\Gr})$ an affinoid seminormed ring, define the \emph{reified adic spectrum} (or for short the
\emph{readic spectrum}) of $(A^{\rhd}, A^{\Gr})$, denoted $\Spra(A^{\rhd}, A^{\Gr})$, as the subspace of
$\Comm(A^{\rhd})$ consisting of those reified valuations $v$ such that
for all $r \in \RR^+$ and $a \in A^{+,r}$, we have $v(a) \leq r$.
(Without this condition, we would only have $v(a) \leq r + \epsilon$ for all $\epsilon > 0$.)
A \emph{rational subspace} of $\Spra(A^{\rhd}, A^{\Gr})$ is the intersection 
with a rational subspace of $\Comm(A^{\rhd})$.
Restriction along a morphism $f: (A^{\rhd}, A^{\Gr}) \to (B^{\rhd}, B^{\Gr})$
of affinoid seminormed rings defines a continuous map
$f^*: \Spra(B^{\rhd}, B^{\Gr}) \to \Spra(A^{\rhd}, A^{\Gr})$.
\end{defn}

\begin{theorem} \label{T:readic spectrum is spectral}
For any affinoid seminormed ring $(A^{\rhd}, A^{\Gr})$, $\Spra(A^{\rhd}, A^{\Gr})$ is spectral
with a basis of quasicompact open subsets given by rational subspaces.
\end{theorem}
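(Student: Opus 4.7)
The plan is to realize $\Spra(A^\rhd, A^{\Gr})$ as a patch-closed subspace of $\Comm(A^\rhd)$ and then invoke Lemma~\ref{L:comm is spectral} together with Corollary~\ref{C:closed in spectral} to conclude that it is spectral. The analysis of rational subspaces is then a routine consequence of the way quasicompact opens of a spectral space behave under patch-closed restriction.

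For the patch-closedness, I would write
\[
\Spra(A^\rhd, A^{\Gr}) = \Comm(A^\rhd) \cap \bigcap_{r \in \RR^+} \bigcap_{a \in A^{+,r}} \{v : v(a) \leq r\}.
\]
Since $v(1)=1$ for every reified semivaluation, each set $\{v:v(a)\leq r\}$ coincides with $\{v : v(a) \leq r \cdot v(1)\}$, which is one of the generators of the Boolean algebra $\calB$ of Lemma~\ref{L:reified is spectral} (take $b=1$, $q=r$ in \eqref{eq:reified valuative patch basis}) and is therefore patch-closed in $\Sprv(A^\rhd)$, hence in $\Comm(A^\rhd)$. The displayed intersection is thus patch-closed, so Corollary~\ref{C:closed in spectral} shows that $\Spra(A^\rhd, A^{\Gr})$ is spectral.

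For the rational subspaces, a rational subspace of $\Spra(A^\rhd, A^{\Gr})$ is by definition $R \cap \Spra(A^\rhd, A^{\Gr})$ for some rational subspace $R$ of $\Comm(A^\rhd)$; by Lemma~\ref{L:comm is spectral}, $R$ is quasicompact open in $\Comm(A^\rhd)$, hence patch-clopen by the corollary to Theorem~\ref{T:spectral quasicompact intersections}. Intersecting with the patch-closed set $\Spra(A^\rhd, A^{\Gr})$ yields a subset which is patch-clopen and open for the spectral topology of $\Spra(A^\rhd, A^{\Gr})$, and is therefore quasicompact open there. To see these rational subspaces form a basis, I would take $v \in \Spra(A^\rhd, A^{\Gr})$ and an open neighborhood $U$, extend it to an open $\tilde U \subseteq \Comm(A^\rhd)$ with $\tilde U \cap \Spra(A^\rhd, A^{\Gr}) = U$, and invoke Lemma~\ref{L:strong rational subspaces} to produce a rational subspace $R$ of $\Comm(A^\rhd)$ with $v \in R \subseteq \tilde U$; then $R \cap \Spra(A^\rhd, A^{\Gr})$ is the required rational neighborhood.

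The only place where one must pay attention is in the patch-closedness step: the defining condition $v(a)\leq r$ of $\Spra(A^\rhd, A^{\Gr})$ compares $v(a)$ with an element of the reification rather than with $v(b)$ for some $b$ in the ring, and it is the trick $b=1$ in \eqref{eq:reified valuative patch basis} that exhibits it as a basic generator of $\calB$. Once this is noticed, no further substantive work is required, since the heavy lifting (that $\Sprv(A^\rhd)$, $\Sprb(A^\rhd)$ and $\Comm(A^\rhd)$ are all spectral, and that rational subspaces form a basis of $\Comm(A^\rhd)$) has already been accomplished in \S\ref{sec:reified valuations}.
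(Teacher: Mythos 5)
Your route is the same as the paper's: realize $\Spra(A^{\rhd},A^{\Gr})$ as a patch-closed subspace of $\Comm(A^{\rhd})$ and apply Lemma~\ref{L:comm is spectral} together with Corollary~\ref{C:closed in spectral}; the paper's proof is exactly this (and is terser, asserting the patch-closedness without justification). Your handling of the basis of rational subspaces via Lemma~\ref{L:strong rational subspaces} is also fine.

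One step of your justification points in the wrong direction, however. Corollary~\ref{C:closed in spectral} requires $\Spra(A^{\rhd},A^{\Gr})$ to be closed for the \emph{intrinsic} patch topology of $\Comm(A^{\rhd})$, the one generated by the quasicompact opens of $\Comm(A^{\rhd})$ and their complements. You instead show that each set $\{v: v(a)\leq r\}$ is patch-closed in $\Sprv(A^{\rhd})$ and write ``hence in $\Comm(A^{\rhd})$.'' But the intrinsic patch topology of $\Comm(A^{\rhd})$ is a priori \emph{coarser} than the topology induced on it by the patch topology of $\Sprv(A^{\rhd})$: every quasicompact open of $\Comm(A^{\rhd})$ is a finite union of rational subspaces, each the trace of a patch-clopen subset of $\Sprv(A^{\rhd})$, whereas the trace on $\Comm(A^{\rhd})$ of a quasicompact open of $\Sprv(A^{\rhd})$ need not be quasicompact (Lemma~\ref{L:strong rational subspaces} only expresses it as a possibly infinite union of rational subspaces). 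So closedness for the induced topology does not formally yield closedness for the intrinsic one. The repair is immediate: $\{v: v(a)\leq r\}$ is itself a rational subspace of $\Sprv(A^{\rhd})$, with parameters $f_0=1$, $f_1=a$, $f_2=1$ and scale factors $q_1=r$, $q_2=1$ (the padding by $f_2=1$ makes $f_1,f_2$ generate the unit ideal, as Definition~\ref{D:rational subspace reified} demands). Its trace on $\Comm(A^{\rhd})$ is therefore a rational subspace of $\Comm(A^{\rhd})$, hence quasicompact open, hence clopen for the intrinsic patch topology by the corollary to Theorem~\ref{T:spectral quasicompact intersections}. With that substitution your argument is complete and coincides with the paper's.
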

\begin{proof}
By Lemma~\ref{L:comm is spectral}, $\Comm(A^{\rhd})$ is spectral 
with a basis of quasicompact open subsets given by rational subspaces. Since
$\Spra(A^{\rhd}, A^{\Gr})$ is closed in $\Comm(A^{\rhd})$ for the patch topology, by
Corollary~\ref{C:closed in spectral} it is also spectral.
\end{proof}

Note that the map $\calM(A^{\rhd}) \to \Sprv(A^{\rhd})$ factors through $\Spra(A^{\rhd}, A^{\Gr})$. This has the following consequence.

\begin{lemma} \label{L:empty readic spectrum}
For any affinoid seminormed ring $(A^{\rhd}, A^{\Gr})$, 
the set $\Spra(A^{\rhd}, A^{\Gr})$ is empty if and only if $0$ is dense in $A^{\rhd}$.
(In particular, this condition does not depend on $A^{\Gr}$.)
\end{lemma}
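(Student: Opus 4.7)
The plan is to prove the two implications separately, making essential use of Theorem~\ref{T:Gel'fand spectrum} together with the factorization $\calM(A^{\rhd}) \to \Sprv(A^{\rhd})$ through $\Spra(A^{\rhd}, A^{\Gr})$ remarked immediately above the statement.

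For the direction ``$0$ dense in $A^{\rhd}$ implies $\Spra(A^{\rhd}, A^{\Gr}) = \emptyset$'': density of $0$ is equivalent to the seminorm on $A^{\rhd}$ being identically zero (the closure of $\{0\}$ in the seminorm topology is precisely $\{a : |a| = 0\}$, so that density forces $|1| = 0$ and then $|a| \leq |a| |1| = 0$ for all $a$). In that case $r^{-n}|1^n| = 0$ for every $r \in \RR^+$ and every $n$, so $1 \in A^{\rhd,\circ\circ,r}$ for all $r$. Any hypothetical $v \in \Spra(A^{\rhd}, A^{\Gr}) \subseteq \Sprb(A^{\rhd})$ is bounded, which forces $v(1) \leq r$ in $\Gamma_v$ for every $r \in \RR^+$. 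But $v(1) = 1$ by the semivaluation axioms, while the order-preserving reification $\RR^+ \to \Gamma_v$ sends any $r < 1$ to an element strictly below $1 \in \Gamma_v$; choosing any such $r$ gives the required contradiction.

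For the converse, suppose $0$ is not dense in $A^{\rhd}$. Theorem~\ref{T:Gel'fand spectrum} then produces $\alpha \in \calM(A^{\rhd})$. Regarded as a reified semivaluation with value group $\RR^+$ and identity reification, $\alpha$ is $\RR$-valued and hence automatically commensurable. I then verify that $\alpha$ lies in $\Spra(A^{\rhd}, A^{\Gr})$: for any $r \in \RR^+$ and any $a \in A^{+,r} \subseteq A^{\rhd,\circ,r}$, the definition of $A^{\rhd,\circ,r}$ gives $|a|_{\mathrm{sp}} \leq r$, and Theorem~\ref{T:Gel'fand spectrum} then yields $\alpha(a) \leq |a|_{\mathrm{sp}} \leq r$. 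The same estimate applied to $a \in A^{\rhd,\circ\circ,r}$, where in fact $|a|_{\mathrm{sp}} < r$, confirms the boundedness condition.

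No step in this argument is delicate: the essential nonemptiness content is entirely absorbed by Theorem~\ref{T:Gel'fand spectrum}, and the remaining checks amount to unfolding the definitions of $A^{\rhd,\circ,r}$, $A^{\rhd,\circ\circ,r}$, $A^{+,r}$, boundedness, and commensurability. The only point worth watching is the comparison ``$r < 1$ in $\RR^+$ implies its image is strictly below $1$ in $\Gamma_v$'' used in the first direction; this is the single place where the reification genuinely intervenes, and is also where the change from Huber's continuous valuations to commensurable reified valuations makes itself felt in the proof.
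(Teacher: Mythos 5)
Your proof is correct and follows the same route the paper intends: the paper's proof is literally ``immediate from Theorem~\ref{T:Gel'fand spectrum}'' together with the factorization of $\calM(A^{\rhd}) \to \Sprv(A^{\rhd})$ through $\Spra(A^{\rhd}, A^{\Gr})$ noted just above the lemma, and you have simply unwound the details (density of $0$ forcing the seminorm to vanish, hence $1 \in A^{\rhd,\circ\circ,r}$ for all $r$, versus producing a point of $\calM(A^{\rhd})$ and checking it satisfies the defining inequalities of $\Spra$). The one point you rightly flag --- that the reification sends $r<1$ strictly below $1 \in \Gamma_v$ --- is consistent with the paper's conventions, as witnessed by condition (vi) of Lemma~\ref{L:reified order relation}.
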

\begin{proof}
Immediate from Theorem~\ref{T:Gel'fand spectrum}.
\end{proof}

We have the following analogue of Theorem~\ref{T:tensor product to fiber product}.
\begin{theorem} \label{T:tensor product to fiber product readic}
For $(A^{\rhd}, A^{\Gr}) \to (B^{\rhd}, B^{\Gr})$, $(A^{\rhd}, A^{\Gr}) \to (C^{\rhd}, C^{\Gr})$ morphisms of affinoid seminormed rings
and $(D^{\rhd}, D^{\Gr}) = (B^{\rhd}, B^{\Gr}) \otimes_{(A^{\rhd}, A^{\Gr})} (C^{\rhd}, C^{\Gr})$, the map
\[
\Spra(D^{\rhd}, D^{\Gr}) \to \Spra(B^{\rhd}, B^{\Gr}) \times_{\Spra(A^{\rhd}, A^{\Gr})}
\Spra(C^{\rhd}, C^{\Gr})
\]
is surjective.
\end{theorem}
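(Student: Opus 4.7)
The plan adapts the proof of Theorem~\ref{T:tensor product to fiber product}, substituting Lemma~\ref{L:valuation product2} for Lemma~\ref{L:valuation product} and using the retract of Definition~\ref{D:retract} in place of Huber's ``identification of infinitesimals with $0$.''

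Given $v_1 \in \Spra(B^{\rhd}, B^{\Gr})$ and $v_2 \in \Spra(C^{\rhd}, C^{\Gr})$ both restricting to $v \in \Spra(A^{\rhd}, A^{\Gr})$, I first pass to the completed residue fields $\calH(v) \to \calH(v_1)$ and $\calH(v) \to \calH(v_2)$, extending the $v_i$ by continuity to reified Krull valuations. By Lemma~\ref{L:tensor product nonzero}, the ring $E := \calH(v_1) \otimes_{\calH(v)} \calH(v_2)$ is nonzero and Hausdorff, and Lemma~\ref{L:valuation product2}(b) yields $\tilde v_3 \in \Sprv(E)$ restricting to $v_1$ on $\calH(v_1)$ and $v_2$ on $\calH(v_2)$. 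Pulling back along the natural ring map $D^{\rhd} = B^{\rhd} \otimes_{A^{\rhd}} C^{\rhd} \to E$ produces $v'_3 \in \Sprv(D^{\rhd})$ restricting to $v_1$ and $v_2$.

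Next I modify $v'_3$ to land in $\Comm(D^{\rhd})$. The concern is that $\tilde v_3$, produced by an abstract valuation-ring argument, may carry non-real infinitesimal components that prevent $v'_3$ from being bounded. So I first verify (or arrange) boundedness: for $a \in D^{\rhd, \circ \circ, r}$, that $v_1, v_2$ are bounded combined with a judicious choice of the valuation ring in Lemma~\ref{L:valuation product2}(b) --- one dominating a subring of $E$ adapted to the tensor product seminorm rather than just the ring-theoretic structure --- should force $\tilde v_3(a) \leq r$. Once $v'_3 \in \Sprb(D^{\rhd})$, applying the retract $r \colon \Sprb(D^{\rhd}) \to \Comm(D^{\rhd})$ of Definition~\ref{D:retract} yields $v_3 \in \Comm(D^{\rhd})$; its restrictions to $B^{\rhd}, C^{\rhd}$ remain $v_1, v_2$ because the latter are already commensurable and are therefore fixed by the retract.

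It remains to check that $v_3 \in \Spra(D^{\rhd}, D^{\Gr})$. For $a \in D^{+, r}$, the description of $D^{\Gr}$ as the integral closure of the image of $B^{\Gr} \otimes_{A^{\Gr}} C^{\Gr}$ furnishes a graded monic relation $a^n + c_1 a^{n-1} + \cdots + c_n \in D^{\rhd, \circ \circ, r^n}$ whose coefficients $c_i$ come from products of elements in $B^{+, s}$ and $C^{+, t}$ with $st = r^i$. Then $v_3(c_i) \leq r^i$ by $v_1, v_2 \in \Spra$ and multiplicativity, while boundedness of $v_3$ bounds the right-hand side strictly below $r^n$; the standard ultrametric argument then forces $v_3(a) \leq r$. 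The main obstacle is the middle paragraph: passing from $v'_3 \in \Sprv(D^{\rhd})$ to a bounded refinement in $\Sprb(D^{\rhd})$. Huber's parallel step in Theorem~\ref{T:tensor product to fiber product} rests on a similar infinitesimal-killing modification, but the novelty here is that our retract operates only on $\Sprb$, so one must either sharpen the construction of $\tilde v_3$ to be bounded from the outset, or extend the retract to $\Sprv$ by handling the asymmetry between infinitesimally small and infinitely large elements in the reified value group.
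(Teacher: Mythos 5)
Your strategy is the paper's: produce a reified semivaluation on $D^{\rhd}$ restricting to $v_1, v_2$ via Lemma~\ref{L:valuation product2}, check boundedness and the $\Spra$ condition, and apply the retract of Definition~\ref{D:retract} to enforce commensurability (which fixes $v_1, v_2$, as you note). The integral-closure argument in your last paragraph is exactly what the paper compresses into ``by the construction of $D^{\Gr}$, it is automatic.''

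The one step you flag as the main obstacle --- boundedness of $v_3'$ --- is not an obstacle: no ``judicious choice'' of valuation ring and no extension of the retract to $\Sprv$ is needed, because \emph{any} $v_3' \in \Sprv(D^{\rhd})$ restricting to the bounded semivaluations $v_1, v_2$ is automatically bounded. Indeed, $d \in D^{\rhd,\circ\circ,r}$ is equivalent to $\left| d \right|_{\mathrm{sp}} < r$ for the tensor product seminorm, so $\left| d^m \right| < r^m$ for some $m$; choose a presentation $d^m = \sum_i b_i \otimes c_i$ with $\max_i \left| b_i \right| \left| c_i \right| < r^m$. For each $i$ pick reals $s_i > \left| b_i \right|_{\mathrm{sp}}$ and $t_i > \left| c_i \right|_{\mathrm{sp}}$ with $s_i t_i \leq r^m$ (possible since $\left| b_i \right|_{\mathrm{sp}} \left| c_i \right|_{\mathrm{sp}} < r^m$; if one of the spectral seminorms vanishes, take the corresponding factor arbitrarily small). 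Then $b_i \in B^{\rhd,\circ\circ,s_i}$ and $c_i \in C^{\rhd,\circ\circ,t_i}$, so boundedness of $v_1$ and $v_2$ gives $v_3'(b_i \otimes c_i) = v_1(b_i)\, v_2(c_i) \leq s_i t_i \leq r^m$, whence $v_3'(d)^m = v_3'(d^m) \leq r^m$ and therefore $v_3'(d) \leq r$. With boundedness in hand, your final paragraph (which also uses it to control the error term of the monic relation) goes through, and the proof is complete.
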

\begin{proof}
Given $v_1 \in \Spra(B^{\rhd}, B^{\Gr})$, $v_2 \in \Spra(C^{\rhd}, C^{\Gr})$
mapping to $v \in \Spra(A^{\rhd}, A^{\Gr})$, 
Lemma~\ref{L:valuation product2} produces $v \in \Sprv(D^{\rhd})$ restricting to
$v_1 \in \Spra(B^{\rhd})$ and to $v_2 \in \Sprv(C^{\rhd})$. 
By the construction of $D^{\Gr}$, it is automatic that for any $r \in \RR^+$ and
any $a \in D^{+,r}$, we have $v(a) \leq r$. In particular, we have $v \in \Sprb(D^{\rhd})$.
By contrast, it is not automatic that $v$ is commensurable,
but we may enforce this by applying the map $r$ of 
Definition~\ref{D:retract}.
\end{proof}

Recall that Definition~\ref{D:real projection} gives rise to a projection map $\Spra(A^{\rhd}, A^{\Gr}) \to \calM(A^{\rhd})$. The fibers of this map may be described as follows.
\begin{defn}
For $F$ an ultrametric field, a \emph{graded valuation ring} of $F$ is a graded subring $R$ of 
$\Gr F$ with the property that for each $r>0$ and each nonzero $a \in \Gr^r F$, either $a$ or $a^{-1}$ (or both) belongs to $R$. In particular, the graded piece $R^1$ of $R$ is a valuation ring in the residue field $\Gr^1 F$ of $F$.
\end{defn}

\begin{lemma}  \label{L:graded valuation rings}
Let $(F^{\rhd}, F^{\Gr})$ be an affinoid seminormed ring such that $F^{\rhd}$ is an ultrametric field. Then there is a natural bijection between $\Spra(F^{\rhd}, F^{\Gr})$ and the set of graded valuation rings of $F^{\rhd}$ containing $F^{\Gr}$.
\end{lemma}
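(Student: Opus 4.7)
The plan is to construct maps in both directions and verify they are mutually inverse.

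Given $v \in \Spra(F^{\rhd}, F^{\Gr})$, I define $R = \bigoplus_{r>0} R^r$ with $R^r \subseteq \Gr^r F^{\rhd}$ the image of $\{a \in F^{\rhd,\circ,r} : v(a) \leq r\}$. Boundedness of $v$ gives $v(b) \leq r$ for $b \in F^{\rhd,\circ\circ,r}$, so membership in $R^r$ depends only on the image $\bar a$, not on the chosen lift. Multiplicativity of $v$ makes $R$ a graded subring. Given a nonzero $\bar a \in \Gr^r F^{\rhd}$, any lift $a$ has $|a| = r$ since $F^{\rhd}$ is an ultrametric field; by totality of the order on $\Gamma_v$, either $v(a) \leq r$ (giving $\bar a \in R^r$), or $v(a) > r$, in which case $v(a^{-1}) < 1/r$ yields $\bar a^{-1} \in R^{1/r}$. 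Hence $R$ is a graded valuation ring. The inclusion $F^{\Gr} \subseteq R$ is precisely the defining condition $v(a) \leq r$ for $a \in F^{+,r}$.

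In the reverse direction, given a graded valuation ring $R$ containing $F^{\Gr}$, I build the tuple $(v_{a,b,q})$ of Lemma~\ref{L:reified order relation} by declaring $v_{a,b,q} = 1$ iff $b = 0$, or else $a,b \neq 0$ with either $|b| < |a|/q$, or $|b| = |a|/q$ and $\overline{b/a} \in R^{1/q}$. Most of the axioms (i)--(ix) follow immediately from multiplicativity of $|\cdot|$ and from $R$ being a graded subring; the graded valuation ring condition supplies (ii), and $R^1$ being a subring containing $1$ supplies (i). The principal obstacle is ultrametricity (axiom (vii)), which reduces to showing $v(b+c) \leq v(c)$ when $v(c) \geq v(b)$: the only delicate case is $|b| = |c|$ with $|b+c| = |c|$, where $\overline{(b+c)/c} = 1 + \overline{b/c}$, and one uses that $\overline{b/c} \in R^1$ (from $v(c) \geq v(b)$) together with $R^1$ being a subring of $\tilde F$ containing $1$ to conclude $1 + \overline{b/c} \in R^1$. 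Boundedness, commensurability, and the $\Spra$ condition for the resulting $v$ all follow directly from the two clauses of the definition.

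The round-trip checks are short. Starting from $R$, the decomposition $\{a \in F^{\rhd,\circ,r} : v(a) \leq r\} = \{a : |a| < r\} \cup \{a : |a| = r, \bar a \in R^r\}$ recovers $R^r$ exactly, since the first subset has zero image in $\Gr^r F^{\rhd}$. Starting from $v$, the equivalence $\overline{b/a} \in R^{1/q}$ iff $v(b/a) \leq 1/q$ iff $v(a) \geq q v(b)$ recovers the same tuple, using boundedness to ensure that all lifts of $\overline{b/a}$ give the same truth value for $v(\cdot) \leq 1/q$.
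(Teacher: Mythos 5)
Your proposal is correct and follows essentially the same route as the paper: the forward map sends $v$ to the graded ring of classes with $v(a)\leq r$ (with the same $v(a^{-1})<r^{-1}$ argument for the valuation-ring property), and the reverse map builds the tuple $(v_{a,b,q})$ from $R$ and invokes Lemma~\ref{L:reified order relation}. You in fact supply slightly more detail than the paper does, notably the verification of axiom (vii) via $1+\overline{b/c}\in R^1$ and the two round-trip checks.
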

\begin{proof}
Given $v \in \Spra(F^{\rhd}, F^{\Gr})$, we may construct a graded valuation ring $R_v$ of $F^{\rhd}$ containing $F^{\Gr}$ as follows: for $a \in F$ with $\left| a \right| \leq r$, the class of $a$ in $\Gr^r F^{\rhd}$ belongs to $R_v^r$ if and only if $v(a) \leq r$. (To see that $R_v$ is indeed a graded valuation ring, note that if $a \in F$ satisfies $\left| a \right| \leq r$ but the class of $a$ does not belong to $R_v^r$,
then $\left| a \right| = r$ and $v(a) > r$, so $\left| a^{-1} \right| = r^{-1}$ and 
$v(a^{-1}) < r^{-1}$.)

In the other direction, given a graded valuation ring $R$ of $F^{\rhd}$ containing $F^{\Gr}$, 
for $a,b \in F^{\rhd}$ and $q > 0$, put
\[
v_{a,b,q} = \begin{cases} 1 & \mbox{if $b=0$;} \\
1 & \mbox{if $\left| b \right| > 0, \left| a \right| > q\left| b \right|$;} \\
1 & \mbox{if $\left| b \right| > 0, \left| a \right| = q\left| b \right|$, and $\overline{b/a} \in R^{1/q}$;} \\
0 & \mbox{otherwise}. 
\end{cases}
\]
By Lemma~\ref{L:reified order relation}, there exists a unique reified semivaluation $v$ such that $v_{a,b,q} = 1$ if and only if $v(a) \geq q v(b)$.
\end{proof}

\begin{cor} \label{C:graded valuation rings}
For $A = (A^{\rhd}, A^{\Gr})$ an affinoid seminormed ring and $\alpha \in \calM(A^{\rhd})$,
the construction of Lemma~\ref{L:graded valuation rings} defines a bijection between
the fiber of $\alpha$ under the projection $\Spra(A) \to \calM(A^{\rhd})$ of
Definition~\ref{D:real projection} and the set of graded valuation rings of $\calH(\alpha)$ containing the image of $A^{\Gr}$.
\end{cor}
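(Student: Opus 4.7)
The plan is to reduce to Lemma~\ref{L:graded valuation rings} by identifying the fiber of $\Spra(A) \to \calM(A^{\rhd})$ over $\alpha$ with $\Spra(\calH(\alpha), F^{\Gr})$, where $F^{\Gr}$ denotes the integral closure in $\Gr \calH(\alpha)$ of the image of $A^{\Gr}$ along the natural graded map $\Gr A^{\rhd} \to \Gr \calH(\alpha)$. To see that the latter map exists, observe that for $a \in A^{\circ,r}$ one has $\alpha(a)^n = \alpha(a^n) \leq |a^n| \leq C r^n$, forcing $\alpha(a) \leq r$, and similarly $A^{\circ \circ,r}$ maps into $\calH(\alpha)^{\circ \circ,r}$; hence $(\calH(\alpha), F^{\Gr})$ is a well-defined affinoid Banach ring.

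Given $v$ in the fiber, commensurability and boundedness guarantee that $v$ extends uniquely by continuity to a reified Krull valuation $\tilde{v}$ on $\calH(v) = \calH(\alpha)$ whose real projection is the canonical norm on $\calH(\alpha)$. The condition $v(a) \leq r$ for $a \in A^{+,r}$ translates to the analogous inequality for the image of $a$ in $\calH(\alpha)$, and then propagates from the image of $A^{\Gr}$ to its integral closure $F^{\Gr}$ because the graded valuation ring of $\tilde{v}$ is automatically integrally closed in $\Gr \calH(\alpha)$; thus $\tilde{v} \in \Spra(\calH(\alpha), F^{\Gr})$. Conversely, any $\tilde{v} \in \Spra(\calH(\alpha), F^{\Gr})$ pulls back along $A^{\rhd} \to \calH(\alpha)$ to a point of $\Spra(A)$ in the fiber over $\alpha$, and these assignments are plainly mutually inverse.

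Applying Lemma~\ref{L:graded valuation rings} to $(\calH(\alpha), F^{\Gr})$ identifies $\Spra(\calH(\alpha), F^{\Gr})$ with the set of graded valuation rings of $\calH(\alpha)$ containing $F^{\Gr}$; this coincides with the set of graded valuation rings containing merely the image of $A^{\Gr}$, again by integral-closedness of graded valuation rings. The latter integral-closedness is a graded version of the classical argument: if $a \in \Gr^r \calH(\alpha) \setminus R^r$ were integral over the graded valuation ring $R$, then $a^{-1} \in R^{1/r}$, and the dominant term of the homogeneous integrality relation yields a contradiction under the reified valuation supplied by Lemma~\ref{L:graded valuation rings}.

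The principal technical point is the extension step in the second paragraph, namely verifying that the $\Spra$ condition on $A^{\rhd}$ relative to $A^{\Gr}$ matches the $\Spra$ condition on $\calH(\alpha)$ relative to the integral closure $F^{\Gr}$; I expect this to be routine once the integral-closedness of graded valuation rings in the third paragraph is in hand, but one must be careful that $F^{\Gr}$ does not spuriously strengthen the defining condition.
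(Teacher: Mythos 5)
Your argument is correct and is exactly the intended one: the paper states this corollary without proof, treating it as immediate from Lemma~\ref{L:graded valuation rings} together with the extension-by-continuity construction of $\calH(v)$, which is precisely the reduction you carry out. Your extra care about replacing the image of $A^{\Gr}$ by its integral closure (so that $(\calH(\alpha),F^{\Gr})$ is a legitimate affinoid seminormed ring) and noting that graded valuation rings are integrally closed is the right way to make the ``immediate'' step rigorous.
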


\begin{defn}
For $A= (A^{\rhd}, A^{\Gr})$ an affinoid seminormed ring and $v \in \Spra(A)$
restricting to $\alpha \in \calM(A^{\rhd})$,
let $(\calH(v), \calH(v)^{\Gr})$ be the
affinoid seminormed ring in which $\calH(v) = \calH(\alpha)$ and $\calH(v)^{\Gr}$
is the graded valuation ring of $\calH(\alpha)$ corresponding to $v$
via Corollary~\ref{C:graded valuation rings}.
\end{defn}

\begin{remark} \label{R:projection of spectra}
Let $(A^{\rhd}, A^{\Gr})$ be an affinoid seminormed ring
such that $A^{\rhd}$ is an f-adic ring (this is not automatic; see Remark~\ref{R:norm on f-adic ring}),
and identify $\Gr^1 A^{\rhd}$ with $A^\circ/A^{\circ \circ}$.
We may then form an affinoid f-adic ring $(A^{\rhd}, A^+)$ by taking $A^+ = A^{+,1}$.
For this convention, there is a natural projection $\Spra(A^{\rhd}, A^{\Gr}) \to \Spa(A^{\rhd}, A^+)$, but unless $A^{\rhd}$ is Tate,
this map is not spectral or even continuous
because the inverse image of a rational subspace need not be a rational subspace
(see Remark~\ref{R:rational subspace definition}).
\end{remark}

\begin{remark} \label{R:projection of spectra2}
Conversely, let $(A^{\rhd}, A^+)$ be an affinoid f-adic ring for which $A^{\rhd}$ has been equipped with the structure of a nonarchimedean seminormed ring (this is always possible but not canonical; see Remark~\ref{R:norm on f-adic ring}). 
We may then form an affinoid seminormed ring by viewing
$A^+/A^{\circ \circ}$ as a graded subring of $\Gr A^{\rhd}$ concentrated in
$\Gr^1 A^{\rhd}$. Note that applying Remark~\ref{R:projection of spectra} then recovers $A^+$.
\end{remark}

\begin{remark} \label{R:canonical reification}
In Remark~\ref{R:projection of spectra2}, we may apply Lemma~\ref{L:lift reification}
to see that the projection map
$\Spra(A^{\rhd}, A^+) \to \Spa(A^{\rhd}, A^+)$ is surjective;
however, this map does not in general admits a distinguished section. 
One case where this does occur is when $A^{\rhd}$ is a Banach algebra over an ultrametric field $F$ with norm group
$\RR^+$:
the canonical reification of the norm on $F$ fixes a reification on every semivaluation.
For a related argument, see the proof of Lemma~\ref{L:valuation intersection}.
%
\end{remark}

\section{The structure presheaf on a readic spectrum}
\label{subsec:structure presheaf}

With Huber's adic spaces as a model, we now introduce the structure presheaf on a reified adic spectrum and build the reified analogues of adic spaces.
This development parallels the corresponding foundations in Huber's theory, for which we follow \cite[\S 2.4]{kedlaya-liu1} and sources cited therein.
However, as indicated in Remark~\ref{R:analytic adic},
the role of the Tate condition is largely eliminated by the presence of reifications.

Throughout \S\ref{subsec:structure presheaf}, let $(A^{\rhd}, A^{\Gr})$ be an affinoid Banach ring, and unless otherwise specified put $X = \Spra(A^{\rhd}, A^{\Gr})$.
We begin with the construction of homomorphisms corresponding to rational subspaces, as in 
Definition~\ref{D:rational subspace adic}
and Lemma~\ref{L:rational subspace}.
\begin{defn} \label{D:Gauss norm}
For $n$ a nonnegative integer, the \emph{standard Tate algebra} $A^{\rhd}\{T_1,\dots,T_n\}$ in the variables $T_1,\dots,T_n$ is the completion of $A^{\rhd}[T_1,\dots,T_n]$ for the \emph{Gauss norm} 
\[
\sum_{i_1,\dots,i_n=0}^\infty a_{i_1,\dots,i_n} T_1^{i_1} \cdots T_n^{i_n} \mapsto
\max\{\left| a_{i_1,\dots,i_n} \right|\};
\]
note that this definition is compatible with Definition~\ref{D:rational subspace adic}.
More generally, for $r_1,\dots,r_n > 0$, the \emph{weighted Tate algebra} $A^{\rhd}\{T_1/r_1,\dots,T_n/r_n\}$
in the variables $T_1,\dots,T_n$ is the completion of $A^{\rhd}[T_1,\dots,T_n]$ for the \emph{weighted Gauss norm} 
\[
\sum_{i_1,\dots,i_n=0}^\infty a_{i_1,\dots,i_n} T_1^{i_1} \cdots T_n^{i_n} \mapsto
\max\{\left| a_{i_1,\dots,i_n} \right| r_1^{i_1} \cdots r_n^{i_n}\}.
\]
\end{defn}

\begin{lemma} \label{L:valuation intersection}
For $x \in A^{\rhd}$, and $r>0$, $x \in A^{+,r}$ if and only if 
$v(x) \leq r$ for all $v \in \Spra(A^{\rhd}, A^{\Gr})$.
\end{lemma}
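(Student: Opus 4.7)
The direction $(\Rightarrow)$ is immediate: $\Spra(A^{\rhd}, A^{\Gr})$ is carved out of $\Comm(A^{\rhd})$ precisely by the conditions $v(a) \leq s$ for all $s \in \RR^+$ and $a \in A^{+,s}$, so $v(x) \leq r$ follows directly from $x \in A^{+,r}$.

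For the reverse direction, I argue the contrapositive: assuming $x \notin A^{+,r}$, I exhibit some $v \in \Spra(A^{\rhd}, A^{\Gr})$ with $v(x) > r$. First note that the natural inclusion $\calM(A^{\rhd}) \hookrightarrow \Comm(A^{\rhd})$ of Definition~\ref{D:real projection} actually lands in $\Spra(A^{\rhd}, A^{\Gr})$, since any $\alpha \in \calM(A^{\rhd})$ satisfies $\alpha(a) \leq |a|_{\mathrm{sp}} \leq s$ for $a \in A^{+,s} \subseteq A^{\rhd, \circ, s}$, by Theorem~\ref{T:Gel'fand spectrum} and the fact that $a \in A^{\rhd, \circ, s}$ forces $|a|_{\mathrm{sp}} \leq s$. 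Hence the hypothesis yields $|x|_{\mathrm{sp}} \leq r$. The case $|x|_{\mathrm{sp}} < r$ puts $x \in A^{\rhd, \circ\circ, r}$, whence $\bar{x} = 0 \in A^{\Gr, r}$ and $x \in A^{+,r}$, contradicting the assumption. So $|x|_{\mathrm{sp}} = r$, and by compactness of $\calM(A^{\rhd})$ (Theorem~\ref{T:Gel'fand spectrum}) together with continuity of $\alpha \mapsto \alpha(x)$, there exists $\alpha_0 \in \calM(A^{\rhd})$ with $\alpha_0(x) = r$; the image $\bar{x}_{\alpha_0}$ in $\Gr^r \calH(\alpha_0)$ is nonzero.

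Now I invoke Corollary~\ref{C:graded valuation rings}: the fiber of $\Spra(A^{\rhd}, A^{\Gr}) \to \calM(A^{\rhd})$ over $\alpha_0$ is in bijection with graded valuation rings $R$ of $\calH(\alpha_0)$ containing the image of $A^{\Gr}$, and the corresponding $v_R$ satisfies $v_R(x) > r$ exactly when $\bar{x}_{\alpha_0} \notin R^r$. By the graded analogue of the identification of the integral closure as the intersection of valuation rings, such a separating $R$ exists unless $\bar{x}_{\alpha_0}$ is integral over the image of $A^{\Gr}$ in $\Gr \calH(\alpha_0)$. If no separating $R$ can be found at any $\alpha_0$ with $\alpha_0(x) = r$, then these local integralities patch (via Lemma~\ref{L:valuation product2} and the integral closedness of $A^{\Gr}$ in $\Gr A^{\rhd}$) to give integrality of $\bar{x}$ over $A^{\Gr}$ in $\Gr A^{\rhd}$ and hence $\bar{x} \in A^{\Gr, r}$; in parallel, the potential failure of $x \in A^{\rhd, \circ, r}$ is handled through a weighted Gauss-norm argument based on Definition~\ref{D:Gauss norm} applied to $A^{\rhd}\{T/r\}/(T-x)$, producing either a reified valuation lift (via Lemma~\ref{L:lift reification}) with $v(x)$ infinitesimally larger than $r$ or else the desired boundedness. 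Either way we contradict $x \notin A^{+,r}$.

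The main obstacle is the patching step in the preceding paragraph: upgrading the collection of local graded integralities at the $\alpha_0$ to a single global integrality in $\Gr A^{\rhd}$, and simultaneously securing the ancillary boundedness $x \in A^{\rhd, \circ, r}$. I expect this to lean on the spectrality of $\Spra \to \calM$ furnished by Lemma~\ref{L:comm is spectral}, the extension mechanism of Lemma~\ref{L:lift reification}, and a careful comparison of the weighted Tate algebra construction with the graded ring, in close analogy with Huber's treatment of the corresponding statement for continuous valuations in \cite{huber1}.
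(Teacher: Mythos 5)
Your forward direction and the reduction to the case $\left| x \right|_{\mathrm{sp}} = r$ are fine, but the reverse direction has two genuine gaps, and they are exactly the two hard points of the lemma. First, when $x \notin A^{\rhd,\circ,r}$ (the sequence $r^{-n}|x^n|$ is unbounded even though $\left|x\right|_{\mathrm{sp}} = r$), the class $\bar{x}$ does not exist in $\Gr^r A^{\rhd}$ at all, and no point of $\calM(A^{\rhd})$ can detect this failure since real-valued points only see the spectral seminorm; one genuinely needs a higher-rank reified valuation with $v(x)$ infinitesimally larger than $r$. Your ``weighted Gauss-norm argument based on Definition~\ref{D:Gauss norm}'' is a placeholder, not an argument. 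Second, even when $x \in A^{\rhd,\circ,r}$ but $\bar{x} \notin A^{\Gr,r}$, your claim that local integrality of $\bar{x}_{\alpha_0}$ over the image of $A^{\Gr}$ at every $\alpha_0$ with $\alpha_0(x)=r$ would ``patch'' to global integrality in $\Gr A^{\rhd}$ is unsubstantiated: there is no mechanism offered for bounding the degrees of the witnessing monic polynomials or for assembling them, and this global-from-local statement is essentially equivalent to the lemma itself. Corollary~\ref{C:graded valuation rings} describes the fibers of $\Spra(A) \to \calM(A^{\rhd})$ but gives no control over which graded valuation rings contain the image of the global ring $A^{\Gr}$.

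The paper's proof sidesteps both issues with a single reduction: let $B^{\rhd}$ be the completion of the group ring $A^{\rhd}[\RR^+]$ under the norm $\sum_s a_s[s] \mapsto \max\{|a_s|s\}$, and set $B^+ = \oplus_s A^{+,s}[s^{-1}]$, so that $(B^{\rhd}, B^+)$ is an affinoid f-adic ring and $x[r^{-1}] \notin B^+$ precisely encodes $x \notin A^{+,r}$ (both the failure of $r$-power-boundedness and the failure of the graded condition). Huber's Lemma~3.3 of \cite{huber1} then produces $w \in \Spa(B^{\rhd}, B^+)$ with $w(x[r^{-1}]) > 1$, and restricting $w$ to $A^{\rhd}$ with the reification $s \mapsto w([s])$ gives the desired $v$. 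If you want to salvage your approach, you would effectively have to reprove Huber's lemma in the graded setting; the group-ring trick converts the reified problem into the ordinary adic one where that lemma already exists.
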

\begin{proof}
It is clear that if $x \in A^{+,r}$, then
$v(x) \leq r$ for all $v \in \Spra(A^{\rhd}, A^{\Gr})$.
Conversely, suppose that $x \notin A^{+,r}$. 
Let $B^{\rhd}$ be the completion of the group ring $A^{\rhd}[\RR^+]$ for the norm
taking $\sum_{r \in \RR^+} a_r [r]$ to $\max\{|a_r|r\}$.
Extend $B^{\rhd}$ to an affinoid f-adic ring $(B^{\rhd}, B^+)$
with $B^+ = \oplus_{s \in \RR^+} A^{+,s} [s^{-1}]$.
By construction, $x[r^{-1}] \notin B^+$, so
we may apply \cite[Lemma~3.3]{huber1} to produce $w \in \Spa(B^{\rhd}, B^{+})$
such that $w(x[r^{-1}]) > 1$. Let $v: A^{\rhd} \to \Gamma_{w,0}$ be the restriction of $w$, viewed as a reified valuation for the map $s \mapsto w([s])$; then 
$v \in \Spra(A^{\rhd}, A^{\Gr})$ and $v(x) > r$, as desired.
\end{proof}

\begin{defn} \label{D:rational subspace readic}
Consider a rational subspace $U$ of $X$ defined by parameters $f_0,\dots,f_n \in A^{\rhd}$ and scale factors $q_1,\dots,q_n > 0$ as in \eqref{eq:rational subspace reified}.
Let $B^{\rhd}$ be the quotient of $A^{\rhd}\{T_1/q_1,\dots,T_n/q_n\}$ by the closure of the ideal $(f_0 T_1 - f_1, \dots, f_0 T_n - f_n)$.
Let $B^{\Gr}$ be the integral closure of the image of $A^{\Gr}[T_1,\dots,T_n]$
in $\Gr B^{\rhd}$ (placing $T_i$ in degree $q_i$).
We now have a morphism $(A^{\rhd}, A^{\Gr}) \to (B^{\rhd}, B^{\Gr})$ of affinoid seminormed rings;
by Lemma~\ref{L:rational subspace readic} below, this construction depends only on the original rational subspace $U$ and not on the defining parameters.
\end{defn}

We have the following analogue of Lemma~\ref{L:rational subspace}.
\begin{lemma} \label{L:rational subspace readic}
Retain notation as in Definition~\ref{D:rational subspace readic}.
\begin{enumerate}
\item[(a)]
The morphism $(A^{\rhd}, A^{\Gr}) \to (B^{\rhd}, B^{\Gr})$ is initial among morphisms
$(A^{\rhd}, A^{\Gr}) \to (C^{\rhd}, C^{\Gr})$ for which $C^{\rhd}$ is complete and the image of $\Spra(C^{\rhd}, C^{\Gr})$ in 
$X$ is contained in $U$. (We will say for short that this morphism \emph{represents} $U$; we also characterize such a morphism as a \emph{rational localization}.)
\item[(b)]
The induced map $\Spra(B^{\rhd}, B^{\Gr}) \to U$ is a homeomorphism. More precisely, the rational subspaces of $\Spra(B^{\rhd}, B^{\Gr})$ correspond to the rational subspaces of $X$ contained in $U$.
\end{enumerate}
\end{lemma}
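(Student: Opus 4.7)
The plan is to verify part (a) by checking the universal property directly; part (b) then follows by constructing the inverse map via (a) and analyzing rational subspaces.

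\emph{Unit property and uniqueness (part (a)).} My first step is to observe that $f_0$ is already a unit in $B^{\rhd}$: since $f_1,\dots,f_n$ generate the unit ideal in $A^{\rhd}$, I may write $1 = \sum_i h_i f_i$, and the relations $f_0 T_i = f_i$ in $B^{\rhd}$ then give $f_0 \cdot \sum_i h_i T_i = 1$. Thus $T_i = f_i/f_0$ in $B^{\rhd}$, and $T_i \in B^{+,q_i}$ by Lemma~\ref{L:valuation intersection}. Any morphism $\psi\colon (B^{\rhd}, B^{\Gr}) \to (C^{\rhd}, C^{\Gr})$ factoring a given $\phi\colon (A^{\rhd}, A^{\Gr}) \to (C^{\rhd}, C^{\Gr})$ must therefore send $T_i$ to $\phi(f_i)/\phi(f_0)$; this determines $\psi$ on a dense subring, forcing uniqueness.

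\emph{Existence and the main obstacle.} Given $\phi$ with $\phi^*\Spra(C^{\rhd}, C^{\Gr}) \subseteq U$, the crux is to show $\phi(f_0)$ is a unit in $C^{\rhd}$---this is the step where the valuation-theoretic hypothesis must be upgraded to honest invertibility in the Banach ring, and I expect it to be the main obstacle. I plan to argue: the hypothesis gives $v(\phi(f_0)) \neq 0$ for all $v \in \Spra(C^{\rhd}, C^{\Gr})$, and by commensurability the projection $\alpha \in \calM(C^{\rhd})$ of Definition~\ref{D:real projection} satisfies $\alpha(\phi(f_0)) > 0$. Since the fibers of $\Spra(C^{\rhd}, C^{\Gr}) \to \calM(C^{\rhd})$ are nonempty (Corollary~\ref{C:graded valuation rings}, as $\Gr\calH(\alpha)$ is itself a graded valuation ring), this holds for every $\alpha$. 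Corollary~\ref{C:units} then places $1$ in the norm closure of $(\phi(f_0))$, so I can choose $z \in C^{\rhd}$ with $\left|1 - \phi(f_0)z\right|_C < 1$; the nonarchimedean Neumann series then makes $\phi(f_0)z$, and hence $\phi(f_0)$, a unit. Lemma~\ref{L:valuation intersection} applied in $C^{\rhd}$ now gives $\phi(f_i)/\phi(f_0) \in C^{+,q_i}$, so the polynomial map $A^{\rhd}[T_1,\dots,T_n] \to C^{\rhd}$ sending $T_i \mapsto \phi(f_i)/\phi(f_0)$ extends continuously to the weighted Tate algebra $A^{\rhd}\{T_1/q_1,\dots,T_n/q_n\}$ and descends through the closed ideal to the required $\psi\colon B^{\rhd} \to C^{\rhd}$; the integral closure property defining $C^{\Gr}$ then forces $\psi(B^{\Gr}) \subseteq C^{\Gr}$.

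\emph{Part (b).} The induced map $\Spra(B^{\rhd}, B^{\Gr}) \to X$ factors through $U$ because $f_0$ is a unit in $B^{\rhd}$ and $T_i = f_i/f_0 \in B^{+,q_i}$. Injectivity and surjectivity onto $U$ follow from (a) applied to $(C^{\rhd}, C^{\Gr}) = (\calH(v), \calH(v)^{\Gr})$ for each $v \in U$, which produces a unique lift of $v$ to $\Spra(B^{\rhd}, B^{\Gr})$. For the correspondence of rational subspaces, I would take a rational subspace of $\Spra(B^{\rhd}, B^{\Gr})$ defined by parameters $g_0,\dots,g_m \in B^{\rhd}$ with scale factors, approximate each $g_j$ by a polynomial expression in the $T_i$ via Remark~\ref{R:rational subspace nearby generators}, and clear powers of $f_0$ to re-express the defining data in terms of parameters from $A^{\rhd}$, exhibiting the subspace as the intersection of $U$ with a rational subspace of $X$. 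The reverse direction is immediate by pullback; since rational subspaces form a basis of quasicompact opens on both sides, this identification upgrades the continuous bijection to a homeomorphism.
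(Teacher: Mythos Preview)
Your approach matches the paper's almost exactly: use Corollary~\ref{C:units} and Lemma~\ref{L:valuation intersection} for (a), and for (b) use $(\calH(v),\calH(v)^{\Gr})$ for bijectivity and Remark~\ref{R:rational subspace nearby generators} plus clearing powers of $f_0$ for the rational-subspace correspondence.

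There is, however, one genuine gap in your part~(b). After approximating the $g_j$ by polynomials in the $T_i$ and clearing a power of $f_0$, you obtain parameters $g'_0,\dots,g'_m \in A^{\rhd}$, but there is no reason for $g'_1,\dots,g'_m$ to generate the \emph{unit ideal} in $A^{\rhd}$; they only did so in $B^{\rhd}$, where $f_0$ is a unit. Without that, the inequalities $v(g'_i) \leq r_i v(g'_0)$ do not cut out a rational subspace of $X$ according to Definition~\ref{D:rational subspace reified}. The paper repairs this by exploiting compactness of $\calM(B^{\rhd})$: since $\alpha(g_0)\neq 0$ for every $\alpha\in\calM(B^{\rhd})$, one has $c=\inf_{\alpha}\alpha(g_0)>0$, and adjoining the extra parameter $g_{m+1}=1$ with scale factor $r_{m+1}=c^{-1}$ forces the new parameter list to generate the unit ideal in $A^{\rhd}$ while leaving the subspace unchanged on $U$. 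You should insert this step before concluding that the resulting set is a rational subspace of $X$.
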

\begin{proof}
We follow the proofs of \cite[Proposition~1.3, Lemma~1.5]{huber2}.
Let $h: (A^{\rhd}, A^{\Gr}) \to (C^{\rhd}, C^{\Gr})$ be a morphism as in (a).
Then $v(h(f_0)) > 0$ for all $v \in \Spra(C^{\rhd}, C^{\Gr})$.
By Corollary~\ref{C:units}, we have $h(f_0) \in (C^{\rhd})^\times$.
For $i=1,\dots,n$, we have $v(h(f_i)/h(f_0)) \leq q_i$ for $v \in \Spra(C^{\rhd}, C^{\Gr})$.
By Lemma~\ref{L:valuation intersection},
we have $f_i/f_0 \in C^{+,q_i}$; we thus deduce (a).

To check (b), note the map $\Spra(B^{\rhd}, B^{\Gr}) \to U$ is injective
because any $v \in \Spra(B^{\rhd}, B^{\Gr})$ is uniquely determined by its
restriction to the image of $A^{\rhd}[T_1,\dots,T_n]$; it is surjective by (a)
applied to the map $(A^{\rhd}, A^{\Gr}) \to (\calH(v), \calH(v)^{\Gr})$
for each $v \in U$.
It is clear that every rational subspace of $X$ contained in $U$ pulls back to a
rational subspace of $\Spra(B^{\rhd}, B^{\Gr})$.
Conversely, given a rational subspace $V$ of $\Spra(B^{\rhd}, B^{\Gr})$ defined by some parameters $g_0,\dots,g_m \in B^{\rhd}$ and some scale factors $r_1,\dots,r_m > 0$, 
by Remark~\ref{R:rational subspace nearby generators} we may choose the $g_i$ to be in 
the image of $A^{\rhd}[T_1,\dots,T_n]$. By multiplying through by a suitable power of $f_0$, we obtain parameters in $A^{\rhd}$ itself, but these parameters need not generate the unit ideal in $A^{\rhd}$. However, since $\alpha(g_0) \neq 0$ for all $\alpha \in \calM(B^{\rhd})$, by compactness we can find $c>0$ such that $\alpha(g_0) \geq c$ for all $\alpha \in \calM(B^{\rhd})$. Put $g_{m+1} = 1$ and $r_{m+1} = c^{-1}$; then
the parameters $g_0,\dots,g_{m+1}$ and scale factors $r_1,\dots,r_{m+1}$ define a rational subspace of $X$ whose intersection with $U$ corresponds to $V$.
Since the intersection of two rational subspaces is again a rational subspace,
this completes the proof of (b).
\end{proof}

\begin{lemma} \label{L:covering faithful}
Let $\{(A^{\rhd}, A^{\Gr}) \to (B_i^{\rhd}, B_i^{\Gr})\}_i$ be the morphisms representing a finite cover $\frakU = \{U_i\}_i$ of $X$ by rational subspaces.
Then the image of $\Spec(\oplus_i B^{\rhd}_i) \to \Spec(A^{\rhd})$ contains $\Maxspec(A^{\rhd})$.
\end{lemma}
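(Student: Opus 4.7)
The strategy is to realize every maximal ideal $\frakm \subset A^{\rhd}$ as the kernel of some reified valuation $v \in X = \Spra(A^\rhd, A^\Gr)$. Once $v$ is produced, the covering hypothesis yields $v \in U_i$ for some $i$, and Lemma~\ref{L:rational subspace readic}(b) lifts $v$ to a point $\tilde v \in \Spra(B_i^\rhd, B_i^\Gr)$; the supporting prime of $\tilde v$ in $B_i^\rhd$ then contracts along $A^\rhd \to B_i^\rhd$ to $\frakm$, as required.

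The first substep is to verify that $\frakm$ is closed in $A^\rhd$. The key tool is that in any Banach ring, any $a \in A^\rhd$ with $|1 - a| < 1$ is a unit, because $\sum_{n \geq 0}(1-a)^n$ converges and inverts $a$. Hence no proper ideal contains elements arbitrarily close to $1$, so $\overline{\frakm}$ is proper; by maximality of $\frakm$ we conclude $\overline{\frakm} = \frakm$. The residue field $K := A^\rhd/\frakm$ then inherits a genuine quotient norm making it a nonzero nonarchimedean Banach ring, and Theorem~\ref{T:Gel'fand spectrum} furnishes some $\alpha \in \calM(K)$. Composing with $A^\rhd \twoheadrightarrow K$ gives $\alpha' \in \calM(A^\rhd)$ with $\ker \alpha' = \frakm$, since $K$ is a field.

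I then promote $\alpha'$ to a point of $X$. The inclusion $\calM(A^\rhd) \hookrightarrow \Comm(A^\rhd)$ of Definition~\ref{D:real projection} lands inside $\Spra(A^\rhd, A^\Gr)$: for any $a \in A^{+,r}$ one has $|a|_{\mathrm{sp}} \leq r$ directly from the definition of $A^{\circ,r}$, so $\alpha'(a) \leq r$ by Theorem~\ref{T:Gel'fand spectrum}, which is exactly the inequality cutting out $\Spra(A^\rhd, A^\Gr)$ inside $\Comm(A^\rhd)$. Setting $v := \alpha'$ and choosing $i$ with $v \in U_i$, Lemma~\ref{L:rational subspace readic}(b) (which asserts that $\Spra(B_i^\rhd, B_i^\Gr) \to U_i$ is a homeomorphism) provides $\tilde v \in \Spra(B_i^\rhd, B_i^\Gr)$ with $\calH(\tilde v) = \calH(v)$. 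The kernel of the induced homomorphism $B_i^\rhd \to \calH(v)$ is then a prime of $B_i^\rhd$ whose contraction to $A^\rhd$ equals $\ker v = \frakm$.

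The main obstacle I anticipate is the very first substep: the assertion that every maximal ideal of an abstract nonarchimedean Banach ring is closed. This is the only place where completeness of $A^\rhd$ enters essentially; once past it, the rest is a formal concatenation of Theorem~\ref{T:Gel'fand spectrum}, the inclusion $\calM(A^\rhd) \hookrightarrow \Spra(A^\rhd, A^\Gr)$, and the surjectivity half of Lemma~\ref{L:rational subspace readic}(b).
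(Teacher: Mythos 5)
Your proof is correct and follows essentially the same route as the paper: produce a point of $\calM(A^{\rhd})$ with kernel $\frakm$ via Theorem~\ref{T:Gel'fand spectrum}, note that it lies in $\Spra(A^{\rhd},A^{\Gr})$, and lift it into some $\Spra(B_i^{\rhd},B_i^{\Gr})$ using the covering and Lemma~\ref{L:rational subspace readic}(b). The only difference is cosmetic: the paper obtains the first step by citing Corollary~\ref{C:units} (whose proof is exactly your closed-maximal-ideal/quotient-seminorm argument), whereas you unpack it directly.
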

\begin{proof}
For each $\frakm \in \Maxspec(A)$, apply Corollary~\ref{C:units} to construct some $\alpha \in \calM(A)$ for which $\frakm = \ker(A \to \calH(\alpha))$. For some $i$, $\alpha$ extends to $\beta \in \calM(B_i)$, and $\ker(\beta)$ is a prime ideal of $B_i$ lifting $\frakm$.
\end{proof}

\begin{defn}
The \emph{structure presheaf} $\calO$ on $X$ assigns to each open subset $U$ the inverse limit of $B^{\rhd}$ over all homomorphisms $(A^{\rhd}, A^{\Gr}) \to (B^{\rhd}, B^{\Gr})$ representing rational subspaces of $X$ contained in $U$.
We say that $(A^{\rhd}, A^{\Gr})$ is \emph{sheafy} if $\calO$ is a sheaf; in this case,
$X$ is a locally ringed space by Lemma~\ref{L:henselian local ring} below.
\end{defn}

\begin{lemma} \label{L:henselian local ring}
For $v \in X$, the stalk $\calO_v$ is a henselian local ring whose residue field is dense in $\calH(v)$.
\end{lemma}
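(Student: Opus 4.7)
The plan is to identify the maximal ideal of $\calO_v$ as the kernel of a natural map $\calO_v \to \calH(v)$, obtain density of the residue field in $\calH(v)$ by inheriting density from $A^{\rhd}$, and verify Hensel's lemma via a Newton iteration in a sufficiently small representing Banach ring, following the strategy of \cite[Proposition~1.6]{huber2}. Recall that $\calO_v$ is the filtered colimit of the Banach rings $B^{\rhd}$ as $(A^{\rhd}, A^{\Gr}) \to (B^{\rhd}, B^{\Gr})$ ranges over rational localizations representing rational subspaces $U$ of $X$ containing $v$; by Lemma~\ref{L:rational subspace readic}(a) applied to the morphism $(A^{\rhd}, A^{\Gr}) \to (\calH(v), \calH(v)^{\Gr})$, each such $B^{\rhd}$ maps canonically to $\calH(v)$, giving a well-defined map $\pi\colon \calO_v \to \calH(v)$.

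I would first show that $\calO_v$ is local with maximal ideal $\frakm_v := \ker \pi$. Any $f \in \frakm_v$ is a non-unit, so the content is to show that $f \in B^{\rhd}$ with $v(f) > 0$ becomes a unit in $\calO_v$. By commensurability of $v$ there exists $r \in \RR^+$ with $v(f) \geq r$; the rational subspace of $\Spra(B^{\rhd}, B^{\Gr})$ defined by $w(1) \leq r^{-1} w(f)$ (parameters $f_0 = f$, $f_1 = 1$, scale $q_1 = r^{-1}$) is then a neighborhood of $v$ whose representing Banach ring contains $f^{-1}$. This identifies the residue field $\calO_v/\frakm_v$ with the image of $\pi$. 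Since that image contains the image of $A^{\rhd}$, which is dense in $\calH(v)$ by construction of the latter as the completion of $\Frac(A^{\rhd}/\ker v)$, the residue field is dense in $\calH(v)$.

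The principal obstacle is henselianness. Given a monic $P(T) \in \calO_v[T]$ whose reduction $\overline{P} \in (\calO_v/\frakm_v)[T]$ factors as $\overline{g}\,\overline{h}$ with $\overline{g},\overline{h}$ coprime monic polynomials, I would pick monic lifts $g,h \in \calO_v[T]$ of the same degrees together with Bezout lifts $u,w \in \calO_v[T]$ of a relation $\overline{u}\,\overline{g} + \overline{w}\,\overline{h} = 1$, so that $P - gh$ and $ug + wh - 1$ have all coefficients in $\frakm_v$. The key additional observation is that for any finite collection $c_1,\dots,c_N \in \frakm_v$ and any $\epsilon > 0$, the set $\{w \in X : w(c_j) \leq \epsilon \text{ for all } j\}$ is a rational neighborhood of $v$, with parameters $f_0 = 1$, $f_j = c_j$ (scale $\epsilon$) for $j = 1,\dots,N$, and an extra $f_{N+1} = 1$ (scale $1$) adjoined to satisfy the unit-ideal condition; its representing Banach ring, a quotient of $B^{\rhd}\{T_1/\epsilon,\dots,T_N/\epsilon\}/(T_j - c_j)$, has $c_j$ of Gauss norm at most $\epsilon$ for each $j$. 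Passing to such a small neighborhood, the coefficients of $P - gh$ and $ug + wh - 1$ can be made arbitrarily small in norm, and a standard Newton iteration then converges in the complete Banach ring $B^{\rhd}$ to produce monic $g',h'$ of the prescribed degrees with $P = g'h'$, yielding Hensel's lemma in $\calO_v$. The delicate point is to verify that the iteration preserves the prescribed degrees and converges uniformly in the Banach norm, handled as in the analogous argument of Huber.
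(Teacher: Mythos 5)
Your argument is correct and is essentially the paper's: the proof in the text simply cites \cite[Corollary~2.3.7]{kedlaya-liu1} for locality and \cite[Lemma~2.2.3(a)]{kedlaya-liu1} for henselianity, and what you write unpacks exactly those arguments (locality by inverting any $f$ with $v(f) \neq 0$ via commensurability, henselianity by Newton iteration after shrinking to a rational neighborhood on which finitely many chosen elements of $\frakm_v$ acquire norm at most $\epsilon$). One small correction: the image of $A^{\rhd}$ in $\calH(v)$ is \emph{not} dense in general (only $\Frac(A^{\rhd}/\ker \alpha)$ is, by the definition of $\calH(\alpha)$ as the completion of that fraction field); but since the residue field is a field containing the image of $A^{\rhd}$, and every element of $A^{\rhd}$ with nonzero valuation is already invertible in $\calO_v$ by your locality argument, the residue field contains $\Frac(A^{\rhd}/\ker \alpha)$ and density follows.
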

\begin{proof}
The local property follows from \cite[Corollary~2.3.7]{kedlaya-liu1}.
The henselian property follows from \cite[Lemma~2.2.3(a)]{kedlaya-liu1}.
\end{proof}

As in classical rigid geometry, most of our arguments about the structure presheaf involve a reduction to certain special types of coverings.
\begin{defn} \label{D:standard Laurent}
For $f_1,\dots,f_n \in A^{\rhd}$ generating the unit ideal and $q_1,\dots,q_n > 0$, the \emph{standard rational covering} of $X$ generated by $f_1,\dots,f_n$ with scale factors $q_1,\dots,q_n$ is the covering of $X$ by the rational subspaces
\[
U_i = \{v \in X: q_j v(f_j) \leq q_i v(f_i) \quad (j=1,\dots,n)\} \qquad (i=1,\dots,n).
\]
For $f_1,\dots,f_n \in A$ arbitrary and $q_1,\dots,q_n > 0$, the \emph{standard Laurent covering} generated by $f_1,\dots,f_n$ with scale factors $q_1,\dots,q_n$ is the covering by the rational subspaces
\[
S_e = \bigcap_{i=1}^n S_{i,e_i} \qquad (e = (e_1,\dots,e_n) \in \{-, +\}^n),
\]
where
\[
S_{i,-} =\{v \in X: v(f_i) \leq q_i\},
\quad
S_{i,+} =\{v \in X: v(f_i) \geq q_i\}.
\]
A standard Laurent covering with $n=1$ is also called a
\emph{simple Laurent covering}. 
\end{defn}

\begin{lemma} \label{L:rational and Laurent coverings}
The following statements hold.
\begin{enumerate}
\item[(a)]
Any finite covering of $X$ by rational subspaces can be refined by a standard rational covering. 
\item[(b)]
For any standard rational covering $\frakU$ of $X$, there exists a standard Laurent covering $\frakV$ of $X$ such that for each $V = \Spra(B^{\rhd},B^{\Gr}) \in \frakV$, the restriction of $\frakU$ to $V$ (omitting empty intersections) is a standard rational covering generated by units in $B^{\rhd}$.
\item[(c)]
Any standard rational covering of $X$ generated by units can be refined by a standard Laurent covering generated by units.
\end{enumerate}
\end{lemma}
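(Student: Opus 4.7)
The plan is to adapt the classical refinement arguments from Tate's rigid analytic geometry (cf.\ \cite[\S 8.2.2]{BGR}) and Huber's adic spaces (cf.\ \cite[\S 2.4]{kedlaya-liu1}), carrying along the reified scale factors. Part (b) is the only one where the reified setting differs substantively from the adic one: compactness of $\calM(A^{\rhd})$ replaces the Tate hypothesis as the mechanism for forcing generators to become units after a Laurent refinement.

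For (a), the construction combines the parameters of the $U_\ell$ into the generators of a single standard rational covering of $X$ via a product construction. Writing $\frakU = \{U_\ell\}_{\ell=1}^m$ with $U_\ell$ defined by parameters $f_{\ell,0},\dots,f_{\ell,n_\ell}$ and scale factors $q_{\ell,1},\dots,q_{\ell,n_\ell}$ (with the convention $q_{\ell,0} = 1$), form products $g_I = \prod_\ell f_{\ell,i_\ell}$ for tuples $I \in \prod_\ell\{0,\dots,n_\ell\}$, with scale factors $Q_I = \prod_\ell q_{\ell,i_\ell}^{-1}$. These generate the unit ideal because the ideal they span contains $\prod_\ell (f_{\ell,1},\dots,f_{\ell,n_\ell}) = (1)$. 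A direct comparison of the defining inequalities shows that the piece $W_I$ of the resulting standard rational covering satisfies $W_I \subseteq U_\ell$ whenever $i_\ell = 0$. The subtlety is handling pieces $W_I$ with all $i_\ell > 0$: these are ``tied'' pieces where the relevant inequalities become equalities, and they must be absorbed into pieces with some $i_\ell = 0$ by iterating the refinement or inducting on $m$, using the covering hypothesis to conclude.

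For (b), apply Corollary~\ref{C:units} and compactness of $\calM(A^{\rhd})$ to the generators $f_1,\dots,f_n$ of the standard rational covering $\frakU$: the continuous function $\alpha \mapsto \max_j q_j \alpha(f_j)$ attains a positive minimum on $\calM(A^{\rhd})$, so we may choose $c > 0$ strictly below this minimum, so that (via Definition~\ref{D:real projection}) $\max_j q_j v(f_j) > c$ for every $v \in X$. Let $\frakV$ be the standard Laurent covering generated by $f_1,\dots,f_n$ with scale factors $c/q_1,\dots,c/q_n$, and consider a Laurent piece $V = S_e$ with associated Banach ring $B^{\rhd}$. Three observations combine to give the result: (i) whenever $e_j = +$, $v(f_j) \geq c/q_j$ on $V$, so via Corollary~\ref{C:units} applied inside $B^{\rhd}$ together with a geometric-series argument in the Banach ring, $f_j$ is a unit in $B^{\rhd}$; (ii) $U_i \cap V$ nonempty forces $q_i v(f_i) > c$, hence $e_i = +$, because on $U_i$ the maximum of $q_j v(f_j)$ is realized at $j = i$; (iii) for $j$ with $e_j = -$, the inequality $q_j v(f_j) \leq q_i v(f_i)$ from the definition of $U_i$ is automatic on $V$, since $q_j v(f_j) \leq c \leq q_i v(f_i)$. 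Thus the restriction of $\frakU$ to $V$ (after omitting empty intersections) is the standard rational covering of $V$ generated by the units $\{f_j : e_j = +\}$ with scale factors $\{q_j : e_j = +\}$.

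For (c), given units $u_1,\dots,u_n$ with scale factors $q_1,\dots,q_n$, take $\frakV$ to be the standard Laurent covering generated by the units $\{u_i u_j^{-1}\}_{i \neq j}$ with scale factors $\{q_j/q_i\}_{i \neq j}$. Each piece $V$ is cut out by sign conditions $q_i v(u_i) \leq q_j v(u_j)$ (sign $-$) or the reverse (sign $+$) for each ordered pair $(i,j)$. For nonempty $V$, pick $v \in V$ and $j_0$ with $v \in U_{j_0}$; then the inequalities $q_i v(u_i) \leq q_{j_0} v(u_{j_0})$ for all $i \neq j_0$ are among the sign conditions defining $V$, hence hold throughout $V$, giving $V \subseteq U_{j_0}$. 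The main obstacle is part (a), where the careful handling of the tied pieces requires a classical argument by induction; parts (b) and (c) are essentially mechanical once the compactness-based constant of (b) is in hand.
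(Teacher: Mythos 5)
Parts (b) and (c) are correct and essentially identical to the paper's arguments: for (b) the paper also uses Corollary~\ref{C:units} and compactness of $\calM(A^{\rhd})$ to get the positive constant $c$ and then checks the same three points; for (c) it uses the same pairwise-comparison Laurent covering. (In (c), note that membership of a single $v \in V \cap U_{j_0}$ does not by itself force the sign conditions of $V$ when some of the inequalities are equalities at $v$; one should instead take $j_0$ maximal for the preorder generated by the sign vector, which exists since every pair is comparable. The paper elides this too.)

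The genuine gap is in (a). Taking as parameters \emph{all} products $g_I = \prod_\ell f_{\ell,i_\ell}$ does not produce a refinement: a piece $W_I$ with every $i_\ell > 0$ can simultaneously contain points of $U_1 \setminus U_2$ and of $U_2 \setminus U_1$, hence be contained in no $U_\ell$, and there is no way to ``absorb'' it afterwards --- a refinement must have \emph{each} of its pieces inside some $U_\ell$, and deleting the offending pieces leaves something that is no longer a standard rational covering. ``Iterating the refinement or inducting on $m$'' is not an argument. Note also that your observation that the full product set generates the unit ideal never uses the hypothesis that the $U_\ell$ cover $X$, which is a sign that the covering hypothesis has not been put to work. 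The fix, which is the content of \cite[Lemma~8.2.2/2]{BGR} and of the paper's proof, is to shrink the parameter set \emph{before} forming the covering: let $S'$ consist only of those products $s_1\cdots s_m$ with $s_\ell = f_{\ell,0}$ for at least one $\ell$. The covering hypothesis then enters twice. First, $S'$ still generates the unit ideal: for $\alpha \in \calM(A^{\rhd})$ choose $\ell_0$ with $\alpha \in U_{\ell_0}$, take $s_{\ell_0} = f_{\ell_0,0}$, and for each other $\ell$ take any element of $S_\ell$ not vanishing at $\alpha$. Second, for $t \in S'$ the piece $W_t$ of the standard rational covering indexed by $S'$ is unchanged upon adjoining the omitted products $S \setminus S'$ as extra parameters, because any $v \in W_t$ lies in some $U_{\ell_0}$ and hence each omitted product is dominated at $v$ by the product obtained by swapping its $\ell_0$-th factor for $f_{\ell_0,0}$, which does lie in $S'$. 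Only after this enlargement are the comparison inequalities needed for your cancellation computation (showing $W_t \subseteq U_\ell$ whenever $s_\ell = f_{\ell,0}$) actually among the defining conditions of the piece; as written, your containment is justified only when the comparison index again lies in the chosen parameter set.
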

\begin{proof}
To prove (a), we follow \cite[Lemma~8.2.2/2]{BGR}. 
We start with a finite covering
of $X$ by rational subspaces $U_1,\dots,U_n$, where $U_i$ is generated by the parameter set $S_i = \{f_{i0}, f_{i1},\dots,f_{in_i}\}$ with corresponding scale factors
$q_{i1}, \dots, q_{in_i}$. Let $S$ be the set of products of the form $s_1\cdots s_n$ where $s_i \in S_i$ for all $i$. Let $S'$ be the subset of $S$ consisting of products $s_1 \cdots s_n$ for which $s_i = f_{i0}$ for at least one $i$. 
Note that $S'$ generates the unit ideal: for any $v \in X$, for each $i$ we can find $s_i \in S_i$ not vanishing at $v$, taking $s_i = f_{i0}$ for any $i$ for which $v \in U_i$. Thus the parameter set $S'$ can be used to define a standard rational covering;
we do so by taking the scale factor associated to $f_{1j_1} \cdots f_{n j_n}$
to be $q_{1j_1} \cdots q_{nj_n}$.
To see that this refines the original covering, note that the rational subspace with first parameter $s_1\cdots s_n$ does not change if we add $S \setminus S'$ to the set of parameters (again because the $U_i$ form a covering), which makes it clear that this subspace is contained in $U_i$ for any index $i$ for which $s_i = f_{i0}$ (because we now have parameters obtained from $s_1,\dots,s_n$ by replacing $s_i$ with each of the other elements of $S_i$).

To prove (b), we follow \cite[Lemma~8.2.2/3]{BGR}.
Let $\frakU$ be the standard rational covering defined by the parameters $f_1,\dots,f_n$ with scale factors $q_1,\dots,q_n$.
Since $f_1,\dots,f_n$ generate the unit ideal, by Corollary~\ref{C:units}
the quantity
\[
c = \inf\{\max_i\{q_i \alpha(f_i)\}: \alpha \in \calM(A^{\rhd})\}
\]
is positive. In this case, the standard Laurent covering $\frakV$ defined by $f_1,\dots, f_n$ with scale factors $c/2, \dots, c/2$ has the desired property: on the subspace where $q_j \left| f_j \right| \leq c/2$ for $j=1,\dots,s$
and $q_i \left| f_i \right| \geq c/2$ for $i=s+1,\dots,n$, the restriction of $\frakU$ is the standard rational covering generated by $f_{s+1},\dots,f_n$ with scale factors $q_{s+1},\dots,q_n$ plus some empty intersections.

To prove (c), we follow \cite[Lemma~8.2.2/4]{BGR}. Consider the standard rational covering generated by the units $f_1,\dots,f_n$ with scale factors $q_1,\dots,q_n$. This cover is refined by the standard Laurent covering generated by $f_i f_j^{-1}$ with scale factors $q_i/q_j$ for $1 \leq i < j \leq n$, by an elementary combinatorics argument (any total ordering on a finite set has a maximal element). 
\end{proof}

This yields the following reduction argument,
analogous to \cite[Lemma~2.4.19]{kedlaya-liu1}.

\begin{lemma} \label{L:Tate reduction}
Let $\calP$ be a property of finite coverings of rational subspaces of $X$ by rational subspaces.
Suppose that $\calP$ satisfies the following condition.
\begin{enumerate}
\item[(a)]
The property $\calP$ is local: if it holds for a refinement of a given covering, it also holds for the original covering.
\item[(b)]
The property $\calP$ is transitive: if it holds for a covering $\{V_i\}_i$ of $U$ and for some coverings $\{W_{ij}\}_j$ of $V_i$, then it holds for the composite covering
$\{W_{ij}\}_{i,j}$ of $U$.
\item[(c)]
The property $\calP$ holds for any simple Laurent covering.
\end{enumerate}
Then the property $\calP$ holds for any finite covering of a rational subspace
of $X$ by rational subspaces.
\end{lemma}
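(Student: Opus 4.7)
The plan is to reduce an arbitrary finite covering of a rational subspace $U \subseteq X$ by rational subspaces to iterated simple Laurent coverings, invoking (a) and (b) to compose and refine along the way. The argument goes in three stages.

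First, I would absorb the rational subspace $U$ into the ambient space. By Lemma~\ref{L:rational subspace readic}(b), $U = \Spra(B^{\rhd}, B^{\Gr})$ for an affinoid Banach ring $(B^{\rhd}, B^{\Gr})$, and rational subspaces of $U$ agree with rational subspaces of $X$ contained in $U$. So all of Lemma~\ref{L:rational and Laurent coverings}, as well as the constructions of standard rational and standard Laurent coverings, apply verbatim inside $U$; in particular, simple Laurent coverings of $U$ are covered by hypothesis (c). This reduces matters to the case $U = X$.

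Next, I would refine the given covering to a standard Laurent covering, possibly after passing through intermediate layers. Given $\frakU$, Lemma~\ref{L:rational and Laurent coverings}(a) produces a standard rational refinement $\frakU'$, and locality (a) says it suffices to check $\calP$ for $\frakU'$. Applying Lemma~\ref{L:rational and Laurent coverings}(b), choose a standard Laurent covering $\frakV = \{V_j\}$ of $X$ such that for each $V_j$ the restriction $\frakU'|_{V_j}$ is a standard rational covering by units. By Lemma~\ref{L:rational and Laurent coverings}(c) that restriction is refined by a standard Laurent covering $\frakW_j$ of $V_j$. Combining locality (a) with transitivity (b), the validity of $\calP$ for $\frakU'$ (as a covering of $X$) follows from its validity for the standard Laurent covering $\frakV$ of $X$, together with its validity for each standard Laurent covering $\frakW_j$ of $V_j$.

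Finally, I would reduce standard Laurent coverings to simple Laurent coverings by induction on the number of parameters. A standard Laurent covering of a rational subspace generated by $f_1,\dots,f_n$ with scale factors $q_1,\dots,q_n$ is literally the common refinement of the $n$ simple Laurent coverings generated by $(f_i, q_i)$; equivalently, it is obtained by first taking the simple Laurent covering attached to $(f_1, q_1)$ and then, on each of the two resulting rational subspaces, taking the simple Laurent covering attached to $(f_2, q_2)$, and so on. Iterating transitivity (b) $n-1$ times and invoking (c) at each step shows $\calP$ for every standard Laurent covering of a rational subspace. Combined with the previous paragraph, this proves $\calP$ for $\frakU$.

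The only step that requires genuine care, rather than bookkeeping, is the confirmation that the various restriction and refinement operations remain within the class of coverings to which $\calP$ applies; this is ensured by Lemma~\ref{L:rational subspace readic}(b), which guarantees that rational subspaces of rational subspaces are again rational subspaces of $X$. Once that is in hand, the reduction is purely formal.
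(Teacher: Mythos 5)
Your proposal is correct and is essentially the paper's own argument: the paper simply defers to \cite[Proposition~2.4.20]{kedlaya-liu1}, and the three-stage reduction you describe (localize to $U$ via Lemma~\ref{L:rational subspace readic}(b), funnel through Lemma~\ref{L:rational and Laurent coverings}(a)--(c) using locality and transitivity, then decompose standard Laurent coverings into iterated simple Laurent coverings) is exactly that argument.
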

\begin{proof}
This follows from Lemma~\ref{L:rational and Laurent coverings} as in 
\cite[Proposition~2.4.20]{kedlaya-liu1}.
\end{proof}

This yields the following criterion for sheafiness and acyclicity.
\begin{lemma} \label{L:acyclicity template}
Let $\calF$ be a presheaf of abelian groups on $X$ such that for every rational subspace $U$ of $X$ and every simple Laurent covering $V_1, V_2$ of $U$, we have 
\begin{equation} \label{eq:acyclicity template}
\check{H}^i(U, \calF; \{V_1,V_2\}) = \begin{cases} \calF(U) & \mbox{if $i=0$,} \\ 0 & \mbox{if $i=1$}.
\end{cases}
\end{equation}
Then for every rational subspace $U$ of $X$ and every finite covering $\frakV$ of $U$ by rational subspaces,
\[
H^i(U, \calF) = \check{H}^i(U, \calF; \frakV) = \begin{cases} \calF(U) & \mbox{if $i=0$,} \\ 0 & \mbox{if $i>0$}. \end{cases}
\]
\end{lemma}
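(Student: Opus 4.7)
The plan is to apply Lemma \ref{L:Tate reduction} to the property $\calP(U, \frakV)$ that the augmented alternating \v{C}ech complex
\[
0 \to \calF(U) \to \check{C}^\bullet(\frakV, \calF)
\]
is exact, equivalently that $\check{H}^0(U, \calF; \frakV) = \calF(U)$ and $\check{H}^i(U, \calF; \frakV) = 0$ for $i \geq 1$. Condition (c) of Lemma \ref{L:Tate reduction} is immediate: for a simple Laurent cover $\{V_1, V_2\}$, the alternating \v{C}ech complex has only two nontrivial terms, in degrees $0$ and $1$, so the hypothesis \eqref{eq:acyclicity template} yields $\calP$ on the nose.

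For condition (b), consider a cover $\{V_i\}$ of $U$ and covers $\{W_{ij}\}_j$ of each $V_i$ all satisfying $\calP$ (together with $\calP$ for their restrictions to intersections, which is maintained as an inductive invariant). Form the \v{C}ech bicomplex whose $(p,q)$-entry is the product over $(i_0 < \cdots < i_p;\, j_0, \ldots, j_p)$ of $\calF(W_{i_0 j_0} \cap \cdots \cap W_{i_p j_p})$, adapted to the composite cover $\{W_{ij}\}_{i,j}$. Collapsing the vertical direction using $\calP$ on the restrictions of the covers $\{W_{ij}\}_j$ to each intersection $V_{i_0} \cap \cdots \cap V_{i_p}$ reduces the $E_1$-page to the \v{C}ech complex of $\{V_i\}$; invoking $\calP$ for $\{V_i\}$ then identifies the total cohomology with the desired acyclic picture for $\{W_{ij}\}_{i,j}$.

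Condition (a) is handled by a mirror bicomplex argument. If $\frakW$ refines $\frakV$ and $\calP$ holds for $\frakW$, consider the bicomplex $D^{p,q} = \prod \calF(V_{i_0\cdots i_p} \cap W_{j_0\cdots j_q})$; collapsing vertically using $\calP$ on the restrictions $\frakW|_{V_{i_0\cdots i_p}}$ identifies the total cohomology with $\check{H}^\ast(U, \calF; \frakV)$, while collapsing horizontally (using $\calP$ on $\frakW$ and its restrictions) identifies it with $\calF(U)$ in degree $0$ and $0$ above. The main obstacle will be structuring the induction along the refinement cascade of Lemma \ref{L:rational and Laurent coverings} (simple Laurent $\to$ standard Laurent $\to$ standard rational generated by units $\to$ standard rational $\to$ arbitrary finite rational), so that at each stage $\calP$ is known not only for the ambient covers but also for the appropriate restrictions to intersections; that bookkeeping is the real content of the proof, and with it in hand Lemma \ref{L:Tate reduction} yields the \v{C}ech vanishing.

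Finally, to upgrade \v{C}ech to sheaf cohomology, invoke a Cartan-style criterion: the rational subspaces form a basis for the topology of $X$ by Theorem \ref{T:readic spectrum is spectral}, and the preceding step gives \v{C}ech acyclicity on every finite rational cover of every rational subspace. The \v{C}ech-to-derived spectral sequence $\check{H}^p(U, \underline{H}^q(\calF)) \Rightarrow H^{p+q}(U, \calF)$ thus degenerates, yielding $H^i(U, \calF) = \check{H}^i(U, \calF; \frakV)$ for any finite rational cover $\frakV$ of $U$, which together with the previous paragraphs completes the claim.
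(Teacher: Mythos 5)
Your proposal is correct and follows essentially the same route as the paper, which simply invokes Lemma~\ref{L:Tate reduction} as in \cite[Proposition~2.4.21]{kedlaya-liu1}. The ``bookkeeping'' you flag when verifying condition (a) is most cleanly absorbed by taking the property $\calP$ of a covering $\frakV$ of $U$ to assert exactness of the augmented alternating \v{C}ech complex of the restricted covering $\frakV|_{W}$ for \emph{every} rational subspace $W \subseteq U$ (not just $W = U$); since restrictions of simple Laurent coverings are again simple Laurent coverings, conditions (a)--(c) of Lemma~\ref{L:Tate reduction} then hold verbatim via your two bicomplex arguments, and no tracking along the refinement cascade is required.
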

\begin{proof}
This follows from Lemma~\ref{L:Tate reduction} as in
\cite[Proposition~2.4.21]{kedlaya-liu1}.
\end{proof}

Using this criterion, we may see that sheafiness implies acyclicity, by analogy with 
\cite[Theorem~2.4.23]{kedlaya-liu1}.
\begin{lemma} \label{L:H1 for simple Laurent covering}
Let $S_-, S_+$ be the simple Laurent covering of $X$ defined by some $f \in A^{\rhd}$ and some $q>0$. Let 
\[
(A^{\rhd}, A^{\Gr}) \to (B_1^{\rhd}, B_1^{\Gr}), (A^{\rhd},A^{\Gr}) \to (B_2^{\rhd}, B_2^{\rhd}), (A^{\rhd},A^{\Gr}) \to (B_{12}^{\rhd}, B_{12}^{\Gr})
\]
be the morphisms representing the rational subspaces $S_-, S_+, S_- \cap S_+$ of $X$.
Then the map $B_1^{\rhd} \oplus B_2^{\rhd} \to B_{12}^{\rhd}$ taking $(b_1, b_2)$ to $b_1 - b_2$ is surjective.
\end{lemma}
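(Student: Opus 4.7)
\medskip

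The plan is to identify $B_{12}^\rhd$ as a ring of convergent Laurent series in $f$ over $A^\rhd$, then split any element as its non-negative and negative Laurent parts, which lift canonically to $B_1^\rhd$ and $B_2^\rhd$.

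First I would unwind the three rings explicitly via Definition~\ref{D:rational subspace readic}. Writing $S_-$ with parameters $f_0 = 1, f_1 = f$ and scale factor $q$, and $S_+$ with parameters $f_0 = f, f_1 = 1$ and scale factor $q^{-1}$, we get
\[
B_1^\rhd = A^\rhd\{T/q\}/(T - f)^{\wedge}, \qquad B_2^\rhd = A^\rhd\{U/q^{-1}\}/(fU - 1)^{\wedge}.
\]
Using Lemma~\ref{L:rational subspace readic}(a) (so $B_{12}^\rhd$ is initial among complete Banach $A^\rhd$-algebras whose readic spectrum lands in $S_- \cap S_+$), I would verify that $B_{12}^\rhd$ coincides as a Banach $A^\rhd$-algebra with the completion of $A^\rhd[T, U]/(T - f, fU - 1)$ under the weighted Gauss norm of weights $q, q^{-1}$. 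In particular, $f$ becomes invertible with $|f|_{12} \leq q$ and $|f^{-1}|_{12} \leq q^{-1}$, and every element of $B_{12}^\rhd$ admits a (non-unique) representation as a convergent Laurent series $x = \sum_{n \in \mathbb{Z}} a_n f^n$ with $a_n \in A^\rhd$ and $|a_n| q^n \to 0$ as $|n| \to \infty$.

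Given such a representation of $x$, I would then set
\[
b_1 = \sum_{n \geq 0} a_n f^n, \qquad b_2 = - \sum_{m > 0} a_{-m} f^{-m}.
\]
The condition $|a_n| q^n \to 0$ as $n \to +\infty$ makes $\sum_{n \geq 0} a_n T^n$ a Cauchy sum in $A^\rhd\{T/q\}$, so $b_1$ comes from a well-defined element of $B_1^\rhd$. Similarly, $|a_{-m}| q^{-m} \to 0$ as $m \to +\infty$ makes $-\sum_{m > 0} a_{-m} U^m$ converge in $A^\rhd\{U/q^{-1}\}$ and descend to $B_2^\rhd$. By construction $b_1 - b_2 = x$ in $B_{12}^\rhd$, which gives the required surjectivity.

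The only nontrivial step is the assertion that every element of $B_{12}^\rhd$ admits such a Laurent-series representation with the claimed convergence rate. This amounts to showing that modulo the closed ideal $(T - f, fU - 1)$ in $A^\rhd\{T/q, U/q^{-1}\}$, any monomial $a T^i U^j$ is represented by $a f^{i-j}$ together with remainders of strictly smaller Gauss norm, so that iterating this reduction and passing to the limit produces a Laurent series in $f$ with the inherited convergence bounds. This is precisely the kind of weighted Tate-algebra manipulation made available by Definition~\ref{D:Gauss norm}, and its Tate-field analogue is carried out in \cite[\S 2.4]{kedlaya-liu1}; the present reified setting requires only that we keep the weights $q^n$ attached throughout. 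Once this identification is in hand, the splitting argument above is formal and completes the proof.
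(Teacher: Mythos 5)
Your proposal is correct and follows essentially the same route as the paper: the paper lifts an element of $B_{12}^{\rhd}$ to $\sum_{i,j} a_{ij}T^iU^j \in A^{\rhd}\{T/q, U/q^{-1}\}$ and regroups coefficients along $i-j=n$ to obtain exactly the Laurent-series decomposition you describe, then sends the nonnegative part to $B_1^{\rhd}$ and the negative part to $B_2^{\rhd}$. The only cosmetic difference is in your ``nontrivial step'': the reduction of $aT^iU^j$ to $af^{i-j}$ modulo the ideal is exact and norm-preserving (no remainders of strictly smaller Gauss norm arise), so the iteration you envision is simply the regrouping of a convergent double series.
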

\begin{proof}
By Lemma~\ref{L:rational subspace readic}, we obtain strict surjections
\[
A^{\rhd}\{T/q\} \to B_1^{\rhd}, \quad A^{\rhd}\{U/q\} \to B_2^{\rhd}, \quad A^{\rhd}\{T/q,U/q^{-1}\} \to B_{12}^{\rhd}
\]
taking $T$ to $f$ and $U$ to $f^{-1}$. In particular, any $b \in B_{12}^{\rhd}$ can be lifted to some $\sum_{i,j=0}^\infty a_{ij} T^i U^j \in A^{\rhd}\{T/q,U/q^{-1}\}$. 
Let $a'_n$ be the sum of $a_{ij}$ over all $i,j \geq 0$ with $i-j=n$; note that this sum converges in $A^{\rhd}$. Let $b_1$ be the image of $\sum_{n=0}^\infty a'_n T^n$ in $B_1^{\rhd}$.
Let $b_2$ be the image of $-\sum_{n=1}^\infty a'_{-n} U^n$ in $B_2^{\rhd}$. Then
$(b_1, b_2) \in B_1^{\rhd} \oplus B_2^{\rhd}$ maps to $b \in B_{12}^{\rhd}$, proving the desired exactness.
\end{proof}

\begin{theorem} \label{T:Tate sheaf property for structure sheaf}
Suppose that $(A^{\rhd}, A^{\Gr})$ is sheafy. Then for every finite covering $\frakU$ of $X$ by rational subspaces,
\[
H^i(X, \calO) = \check{H}^i(X, \calO; \frakU) = 
\begin{cases} A^{\rhd} & \mbox{if $i=0$,} \\ 0 & \mbox{if $i>0$.} \end{cases}
\]
\end{theorem}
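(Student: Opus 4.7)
The plan is to reduce the theorem to Lemma~\ref{L:acyclicity template} applied to $\calF = \calO$. That lemma yields precisely the desired conclusion for every finite covering of any rational subspace (in particular of $X$ itself) by rational subspaces, provided its hypothesis~\eqref{eq:acyclicity template} is verified for every simple Laurent covering $\{V_1,V_2\}$ of every rational subspace $U \subseteq X$.

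To check the hypothesis, fix such a $U$. By Lemma~\ref{L:rational subspace readic}, I may write $U = \Spra(B^{\rhd}, B^{\Gr})$ for a suitable affinoid Banach ring $(B^{\rhd},B^{\Gr})$, and the rational subspaces of $U$ are exactly the rational subspaces of $X$ contained in $U$. Consequently, the structure presheaf of $(B^{\rhd},B^{\Gr})$ coincides with $\calO|_{U}$, and since $\calO$ is a sheaf on $X$ by hypothesis, $\calO|_U$ is a sheaf and therefore $(B^{\rhd},B^{\Gr})$ is itself sheafy. Let
\[
(B^{\rhd},B^{\Gr}) \to (B_1^{\rhd}, B_1^{\Gr}),\ (B^{\rhd},B^{\Gr}) \to (B_2^{\rhd}, B_2^{\Gr}),\ (B^{\rhd},B^{\Gr}) \to (B_{12}^{\rhd}, B_{12}^{\Gr})
\]
be the morphisms representing $V_1$, $V_2$, and $V_1 \cap V_2$. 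The relevant Čech complex is
\[
0 \longrightarrow B^{\rhd} \longrightarrow B_1^{\rhd} \oplus B_2^{\rhd} \longrightarrow B_{12}^{\rhd} \longrightarrow 0,
\]
and I must show it is exact. Exactness at the first two positions is precisely the sheaf condition for $\calO|_U$ on the covering $\{V_1,V_2\}$, which holds because $(B^{\rhd},B^{\Gr})$ is sheafy and $\calO(U) = B^{\rhd}$. Exactness at the last position, i.e., the surjectivity of $B_1^{\rhd} \oplus B_2^{\rhd} \to B_{12}^{\rhd}$, is exactly the content of Lemma~\ref{L:H1 for simple Laurent covering} applied to $(B^{\rhd}, B^{\Gr})$. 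Together these verify~\eqref{eq:acyclicity template}.

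Having checked the hypothesis of Lemma~\ref{L:acyclicity template}, I specialize its conclusion to $\calF = \calO$ and $U = X$ to obtain the theorem. The main technical obstacle, surjectivity on 2-fold overlaps for a simple Laurent covering, was already dispatched in Lemma~\ref{L:H1 for simple Laurent covering} via a direct decomposition of Laurent series in the weighted Tate algebras; the remaining work here is formal assembly, together with the propagation of sheafiness from $X$ to arbitrary rational subspaces of $X$ via Lemma~\ref{L:rational subspace readic}.
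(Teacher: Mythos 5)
Your proposal is correct and follows the same route as the paper: reduce via Lemma~\ref{L:acyclicity template} to simple Laurent coverings, observe that sheafiness propagates to rational subspaces (which you justify slightly more explicitly via Lemma~\ref{L:rational subspace readic}), and then obtain exactness at the first two terms of the \v{C}ech complex from the sheaf axiom and at the last term from Lemma~\ref{L:H1 for simple Laurent covering}. No gaps.
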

\begin{proof}
By Lemma~\ref{L:acyclicity template}, it suffices to check \v{C}ech-acyclicity for simple Laurent coverings. Since the sheafy condition propagates to rational subspaces, we may as well consider only simple Laurent coverings of $X$ itself. In the notation of Lemma~\ref{L:H1 for simple Laurent covering}, the sequence
\[
0 \to A^{\rhd} \to B_1^{\rhd} \oplus B_2^{\rhd} \to B_{12}^{\rhd} \to 0
\]
is exact at $B_{12}^{\rhd}$; by the sheafy hypothesis, it is also exact at $A^{\rhd}$ and $B_1^{\rhd} \oplus B_2^{\rhd}$.
\end{proof}

We may also establish a weak analogue of Kiehl's theorem on coherent sheaves,
by analogy with \cite[Theorem~2.7.7]{kedlaya-liu1}.
\begin{theorem} \label{T:weak Kiehl}
Suppose that $(A^{\rhd}, A^{\Gr})$ is sheafy. Then the global sections functor induces an equivalence of categories between $\calO$-modules which are locally free of finite rank and finite projective $A^{\rhd}$-modules.
\end{theorem}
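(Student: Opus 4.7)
The argument will parallel the proof of \cite[Theorem~2.7.7]{kedlaya-liu1} given the groundwork already in place. The quasi-inverse functors will be $M \mapsto \tilde{M}$, where $\tilde{M}$ assigns to a rational subspace $U = \Spra(B^{\rhd},B^{\Gr})$ the $B^{\rhd}$-module $M \otimes_{A^{\rhd}} B^{\rhd}$, and $\calF \mapsto \calF(X)$. The first reduction is to verify that $\tilde{M}$ is a sheaf when $M$ is finite projective: this holds when $M = (A^{\rhd})^n$ by Theorem~\ref{T:Tate sheaf property for structure sheaf} applied componentwise, and then descends to direct summands because the sheaf property is preserved by taking summands inside an ambient presheaf of modules.

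Next I would show $\tilde{M}$ is locally free of finite rank. For each $v \in X$, choose a minimal set of generators of the fiber $M \otimes_{A^{\rhd}} \calH(v)$; by Lemma~\ref{L:henselian local ring} these lift to elements of the stalk $\tilde{M}_v$, and Nakayama's lemma in the henselian local ring $\calO_v$ shows they generate $\tilde{M}_v$ as an $\calO_v$-module. Spreading out from the stalk, one obtains a rational neighborhood $U$ of $v$ and a surjection $\calO|_U^n \to \tilde{M}|_U$; on the other hand, by projectivity of $M$ this surjection admits a section over $A^{\rhd}$ whose base extension over $B^{\rhd}$ provides the inverse, making $\tilde{M}|_U$ free of rank $n$ after possibly shrinking $U$ further. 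Together with the sheaf property, this shows $\tilde{M}$ is locally free of finite rank, and the adjunction $\tilde{M}(X) = M$ is automatic because $\calO(X) = A^{\rhd}$.

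The main obstacle is the reverse direction: given a locally free $\calO$-module $\calF$, producing a finite projective $A^{\rhd}$-module $M = \calF(X)$ with $\tilde{M} \cong \calF$. By hypothesis there is a finite covering of $X$ by rational subspaces $\{U_i\}$ on which $\calF$ is free, which by Lemma~\ref{L:rational and Laurent coverings} may be refined by a standard rational covering and then by a standard Laurent covering. Using Lemma~\ref{L:Tate reduction} in the style of \cite[Proposition~2.4.20]{kedlaya-liu1}, the problem reduces to the glueing assertion for simple Laurent coverings $\{S_-, S_+\}$: given free modules $N_\pm$ over $B_\pm^{\rhd}$ together with an isomorphism of their base extensions to $B_{12}^{\rhd}$, construct a finite projective $M$ over $A^{\rhd}$ recovering this data. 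The transition isomorphism corresponds to an invertible matrix $U$ over $B_{12}^{\rhd}$; Lemma~\ref{L:H1 for simple Laurent covering} provides an additive decomposition $U - I = U_+ - U_-$ with $U_\pm$ defined over $B_\pm^{\rhd}$, and an iterative improvement of this decomposition (as in the proof of \cite[Theorem~2.7.7]{kedlaya-liu1}) eventually produces a multiplicative factorization $U = V_+ V_-^{-1}$ with $V_\pm \in \mathrm{GL}_n(B_\pm^{\rhd})$, allowing the local bases to be adjusted so as to glue. The resulting $M$ is finitely presented, and its finite projectivity is checked by reversing the construction of $\tilde{M}$: the natural map $\tilde{M} \to \calF$ is an isomorphism locally, and flatness of $M$ at each stalk follows from freeness on the covering, so $M$ is finite projective by a standard criterion.
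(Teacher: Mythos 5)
Your proposal follows the same route as the paper: reduce via Lemma~\ref{L:Tate reduction} to a simple Laurent covering, observe that sheafiness together with Lemma~\ref{L:H1 for simple Laurent covering} (through Theorem~\ref{T:Tate sheaf property for structure sheaf}) makes \eqref{eq:glueing square} a glueing square, and then glue finite projective modules over that square; the paper simply cites \cite[Proposition~2.7.5]{kedlaya-liu1} for this last step, whereas you inline its proof (the matrix factorization, which as in \emph{loc.\ cit.} requires the additional approximation step to handle transition matrices not close to the identity) and also spell out the routine quasi-inverse direction that the paper leaves implicit. This is correct and essentially the paper's argument.
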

\begin{proof}
We may use Lemma~\ref{L:Tate reduction} to reduce to the case where notation is as in Lemma~\ref{L:H1 for simple Laurent covering} and one is given a sheaf whose restrictions to $S_-, S_+, S_- \cap S_+$ correspond to finite projective modules $M_1, M_2, M_{12}$ over $B_1^{\rhd}, B_2^{\rhd}, B_{12}^{\rhd}$, respectively. In this case, 
by Theorem~\ref{T:Tate sheaf property for structure sheaf},
the diagram
\begin{equation} \label{eq:glueing square}
\xymatrix{
A^{\rhd} \ar[r] \ar[d]& B_1^{\rhd} \ar[d] \\
B_2^{\rhd} \ar[r] & B_{12}^{\rhd}
}
\end{equation}
forms a \emph{glueing square} in the sense of \cite[Definition~2.7.3]{kedlaya-liu1}.
We may thus appeal to
\cite[Proposition~2.7.5]{kedlaya-liu1} to deduce that the modules $M_1, M_2, M_{12}$ arise as base extensions of a finite projective module $M$ over $A^{\rhd}$.
\end{proof}

\begin{defn}
A \emph{locally reified valuation-ringed space}, or \emph{locally rv-ringed space} for short, is a locally ringed space $(X, \calO_X)$ equipped with the additional data of, for each $x \in X$, a reified valuation $v_x$ on the local ring $\calO_{X,x}$. A morphism of locally rv-ringed spaces $f: X \to Y$ is a morphism of locally ringed spaces
with the property that for each $x \in X$ mapping to $y \in Y$, the restriction of $v_x$ along the map $\calO_{Y,y} \to \calO_{X,x}$ is equal to $v_y$ as a reified valuation.
\end{defn}

\begin{defn}
Any locally rv-ringed space of the form $\Spra(A^{\rhd}, A^{\Gr})$ for some sheafy $(A^{\rhd}, A^{\Gr})$ is called an \emph{affinoid reified adic space}. 
For such a space, we recover $A^{\rhd}$ as the ring of global sections;
by Lemma~\ref{L:valuation intersection}, we may recover $A^{\Gr}$ from the reified valuations on local rings.

A locally v-ringed space which is covered by open subspaces which are affinoid reified adic spaces is called a \emph{reified adic space}. We suggest to abbreviate \emph{reified adic space} to \emph{readic space} or \emph{$\RR$-adic space}.
As in Remark~\ref{R:preadic}, one can formally define a ``space'' associated to a nonsheafy $(A^{\rhd}, A^{\Gr})$ using a functor of points approach, by analogy with  \cite[\S 8.2]{kedlaya-liu1}. 
\end{defn}

\section{On the sheafy condition}
\label{subsec:noetherian}

By analogy with Theorem~\ref{T:sheafy conditions},
we identify two important classes of sheafy affinoid seminormed rings. In the analogue of the strongly noetherian case, we also get a more precise analogue of Kiehl's characterization on coherent sheaves on affinoid spaces. We begin with the analogue of the stably uniform condition.

\begin{defn}
We say that a Banach ring $A$ is \emph{uniform} if its norm is equivalent to its spectral seminorm. (An equivalent condition is that there exists $c>0$ such that for all $a \in A^{\rhd}$, $\left| a^2 \right| \geq c \left| a \right|^2$.) We say that an affinoid Banach ring $(A^{\rhd}, A^{\Gr})$ is \emph{really stably uniform} if for any homomorphism $(A^{\rhd}, A^{\Gr}) \to (B^{\rhd}, B^{\Gr})$ representing a rational subspace of $X$, $B^{\rhd}$ is uniform. See \cite{buzzard-verberkmoes} for some exotic examples related to these conditions (e.g., for uniform rings which are not really stably uniform).
\end{defn}

\begin{lemma} \label{L:uniform ideal closure}
Let $A$ be a uniform Banach ring. For any $f \in A$, the ideals
\[
(T-f) \subseteq A^{\rhd}\{T/q\},
(1-fU) \subseteq A^{\rhd}\{U/q^{-1}\},
(T-f) \subseteq \frac{A^{\rhd}\{T/q,U/q^{-1}\}}{(TU-1)} 
\]
are closed.
\end{lemma}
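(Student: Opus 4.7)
The three ideals describe the presentations of the Banach rings attached to a simple Laurent covering of $\Spra(A^{\rhd}, A^{\Gr})$---the two halves and their intersection---and the assertion is that uniformness of $A^{\rhd}$ renders these presentations automatically closed, with no explicit closure needed. My strategy is to produce, in each case, a bounded $A^{\rhd}$-linear \emph{division with remainder} exhibiting the ambient weighted Tate algebra as a direct sum of the ideal and a closed complement; the ideal then arises as the kernel of a bounded projection, hence is closed.

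For $(T-f) \subseteq A^{\rhd}\{T/q\}$, the division is classical. Given $g(T) = \sum_n a_n T^n$, I would set
\[
r := \sum_{n \geq 0} a_n f^n, \qquad h_i := \sum_{n > i} a_n f^{n-1-i}, \qquad h(T) := \sum_{i \geq 0} h_i T^i,
\]
so that formally $g = (T-f)h + r$. Uniformness enters via $|f^n| \leq c\,|f|_{\mathrm{sp}}^n$ for a fixed $c$; combined with $|a_n|\,q^n \to 0$ this ensures convergence of $r$ and each $h_i$ in $A^{\rhd}$ and gives a uniform bound $|h| \leq C|g|$ in the regime $|f|_{\mathrm{sp}} \leq q$. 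In that regime $(T-f)$ is identified with the kernel of the bounded map $g \mapsto r$ and the closure is immediate. To cover the opposite regime $|f|_{\mathrm{sp}} > q$, I would observe that on any clopen piece of $\calM(A^{\rhd})$ on which $v(f)$ is bounded away from $q$ from above, $f$ becomes a unit with $|f^{-1}|_{\mathrm{sp}} < 1/q$ (via Corollary~\ref{C:units} together with compactness of $\calM$), so the geometric series $-f^{-1}\sum_{n \geq 0}(T/f)^n$ converges in $A^{\rhd}\{T/q\}$ by uniformness and explicitly inverts $T-f$; on that piece $(T-f)$ is already the full ring. A clopen decomposition of $\calM(A^{\rhd})$ by the behavior of $f$ relative to $q$, lifted to $A^{\rhd}$ through its spectral structure, then assembles the two regimes into a global bounded splitting.

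For $(1-fU) \subseteq A^{\rhd}\{U/q^{-1}\}$ I would run the dual argument: the evaluation $g \mapsto g(f^{-1})$ plays the role of the remainder in the regime $|f|_{\mathrm{sp}} \geq q$ (where $f$ is a unit with $|f^{-1}|_{\mathrm{sp}} \leq 1/q$), and the complementary regime is handled by inverting $1-fU$ via a geometric series in $fU$. The third ideal reduces to the second: in $A^{\rhd}\{T/q, U/q^{-1}\}/(TU-1)$ the element $T$ is a unit with $T^{-1} = U$, so $T - f = -T(fU - 1)$, identifying the ideals $(T-f)$ and $(1-fU)$ up to a unit multiple. The principal obstacle I anticipate is gluing the two regimes when $|f|_{\mathrm{sp}}$ straddles $q$ across $\calM(A^{\rhd})$: a clopen decomposition of $\calM(A^{\rhd})$ need not lift to a direct product decomposition of $A^{\rhd}$ itself. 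I expect to either prove a uniform convergence estimate valid without the case split, or invoke the tensoring trick of Lemma~\ref{L:valuation intersection} to pass to the Tate setting $A^{\rhd}[\RR^+]^\wedge$ where Huber's existing closure results apply, then descend back to $A^{\rhd}$.
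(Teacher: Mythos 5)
There is a genuine gap, and it sits exactly where you flag it: the regime in which $\alpha(f)$ straddles $q$ as $\alpha$ ranges over $\calM(A^{\rhd})$. The sets $\{\alpha: \alpha(f)\leq q\}$ and $\{\alpha:\alpha(f)\geq q\}$ are closed but not open and overlap along $\{\alpha(f)=q\}$, so the clopen decomposition you propose does not exist; and neither fallback repairs this. A global bounded division-with-remainder cannot exist in general, because the would-be remainder $\sum_n a_n f^n$ genuinely diverges once $\left|f\right|_{\mathrm{sp}}>q$ (and $(T-f)$ need not be a complemented submodule). Passing to the Tate ring $A^{\rhd}[\RR^+]^{\wedge}$ does not help either: Huber's closure results require a strongly noetherian hypothesis, and for a general uniform Tate ring the closedness of these three ideals is precisely the content of the lemma (it is \cite[Lemma~2.8.8]{kedlaya-liu1}, which the paper simply cites), so that route is circular. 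A smaller issue: your reduction of the third ideal to the second only identifies $(T-f)$ with $(1-fU)$ \emph{inside the Laurent ring}, which is not the ring $A^{\rhd}\{U/q^{-1}\}$ of the second assertion, so the Laurent case still needs its own argument.

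The missing idea is to abandon the search for a splitting and instead show that multiplication by $T-f$ is \emph{bounded below}, via a pointwise estimate over the Gel'fand spectrum. Normalize the norm on $A^{\rhd}$ to be power-multiplicative (uniformity permits this up to equivalence, and closedness depends only on the topology). Write $(T-f)h=\sum_i E_i T^i$ with $E_i=h_{i-1}-fh_i$ and suppose $\max_i |E_i|q^i=\epsilon$. Fix $\alpha\in\calM(A^{\rhd})$. If $\alpha(f)\leq q$, telescope forward: $h_j=\sum_{k>j}f^{k-j-1}E_k+\lim_N f^N h_{j+N}$ in $\calH(\alpha)$, where the limit term vanishes because $\alpha(f^N h_{j+N})\leq q^{-j}\cdot q^{j+N}\left|h_{j+N}\right|\to 0$; this gives $\alpha(h_j)q^j\leq\epsilon/q$. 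If $\alpha(f)\geq q$, then $f$ is invertible in $\calH(\alpha)$ and the backward recursion $h_i=f^{-1}(h_{i-1}-E_i)$ starting from $h_{-1}=0$ gives the same bound. Taking the supremum over $\alpha$ and invoking Theorem~\ref{T:Gel'fand spectrum} yields $|(T-f)h|\geq q|h|$, so multiplication by $T-f$ is a bi-bounded injection; its image is therefore complete, hence closed. The other two ideals are handled by the same two-regime pointwise telescoping (for the Laurent ring one telescopes in both directions). Your treatment of the uniform regime $\left|f\right|_{\mathrm{sp}}\leq q$ is correct but is subsumed by this argument; the point is that the case distinction must be made point by point on $\calM(A^{\rhd})$, where it costs nothing, rather than globally on $A^{\rhd}$, where it is impossible.
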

\begin{proof}
As in \cite[Lemma~2.8.8]{kedlaya-liu1}.
\end{proof}

By analogy with Theorem~\ref{T:sheafy conditions}(b), we have the following.
\begin{theorem} \label{T:stably uniform sheafy}
If $(A^{\rhd}, A^{\Gr})$ is really stably uniform, then it is sheafy.
\end{theorem}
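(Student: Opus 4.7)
The plan is to invoke Lemma~\ref{L:acyclicity template} to reduce the sheafiness claim (together with vanishing of higher \v{C}ech cohomology) to checking a single exact sequence for every simple Laurent covering of a rational subspace $U$ of $X$. Since the class of rational localizations is closed under composition, really stable uniformity descends to every ring representing a rational subspace, so one may as well verify the condition only for simple Laurent coverings $\{S_-, S_+\}$ of $X$ itself, defined by some $f \in A^{\rhd}$ and some $q > 0$.

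With the notation of Lemma~\ref{L:H1 for simple Laurent covering}, Lemma~\ref{L:rational subspace readic} together with Lemma~\ref{L:uniform ideal closure} gives explicit Banach presentations
\[
B_1^{\rhd} = A^{\rhd}\{T/q\}/(T-f), \quad B_2^{\rhd} = A^{\rhd}\{U/q^{-1}\}/(1-fU),
\]
and $B_{12}^{\rhd} = A^{\rhd}\{T/q,U/q^{-1}\}/(TU-1,T-f)$, each again uniform by the really stably uniform hypothesis. The rightmost surjectivity in the candidate exact sequence
\[
0 \to A^{\rhd} \to B_1^{\rhd} \oplus B_2^{\rhd} \to B_{12}^{\rhd} \to 0
\]
is already Lemma~\ref{L:H1 for simple Laurent covering}. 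Injectivity at $A^{\rhd}$ is straightforward: an element killed in both $B_i^{\rhd}$ vanishes at every point of $\Spra(A^{\rhd},A^{\Gr})$ and hence at every $\alpha \in \calM(A^{\rhd})$, so its spectral seminorm vanishes; uniformity of $A^{\rhd}$ then forces it to be zero.

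The substantive step is exactness at $B_1^{\rhd} \oplus B_2^{\rhd}$, and I would follow the template of \cite[Theorem~2.8.10]{kedlaya-liu1} (the reified translation of the Buzzard--Verberkmoes argument). Given $(b_1,b_2)$ in the kernel, lift $b_1$ and $b_2$ to Laurent series $\sum_{n \geq 0} a_n T^n$ and $\sum_{n \geq 1} c_n U^n$ in the weighted Tate algebras and measure the failure of $b_1$ to already lie in $A^{\rhd}$. The vanishing of $b_1 - b_2$ in $B_{12}^{\rhd}$ produces a relation of the form $\sum a_n T^n + \sum c_n U^n \in (T-f, TU-1)$, which one uses to rewrite the discrepancy as a sum in $(T-f)$ plus a controlled error; absorbing the error back into $a_0$ and iterating gives a Cauchy sequence of approximations in $A^{\rhd}$ whose limit agrees with $b_1$ in $B_1^{\rhd}$ and with $b_2$ in $B_2^{\rhd}$.

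The main obstacle is keeping this iteration norm-convergent: the estimates needed at each step are not supplied by uniformity of $A^{\rhd}$ alone, because after one round of rewriting one is working inside a rational localization, and it is precisely the really \emph{stably} uniform hypothesis that provides the spectral-seminorm bound needed to start the next round. The weighted scale factors $q, q^{-1}$ enter the bookkeeping but change nothing essential, since they rescale the norms on the Tate algebras in a uniform way and the convergence criterion is purely multiplicative. Once the iteration terminates, the common limit provides the required preimage in $A^{\rhd}$, completing the proof.
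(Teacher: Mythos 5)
Your overall skeleton matches the paper's: the reduction via Lemma~\ref{L:acyclicity template} to simple Laurent coverings (with the correct observation that the ``stably'' in really stably uniform is what lets you pass from coverings of an arbitrary rational subspace $U$ to coverings of $X$ itself), the surjectivity onto $B_{12}^{\rhd}$ via Lemma~\ref{L:H1 for simple Laurent covering}, and the injectivity at $A^{\rhd}$ via Theorem~\ref{T:Gel'fand spectrum} plus uniformity are all exactly the paper's steps.

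The gap is in the exactness at $B_1^{\rhd} \oplus B_2^{\rhd}$, which is the whole content of the theorem. You describe an iterative approximation, concede that its norm-convergence is ``the main obstacle,'' and then resolve that obstacle by appealing to stable uniformity of rational localizations ``to start the next round.'' This does not work: once you are down to a single simple Laurent covering of $X$, the argument never leaves the weighted Tate algebras $A^{\rhd}\{T/q\}$, $A^{\rhd}\{U/q^{-1}\}$, $A^{\rhd}\{T/q,U/q^{-1}\}/(TU-1)$ over the fixed ring $A^{\rhd}$; no further rational localization occurs, so stable uniformity supplies no new estimate at that stage --- its role is already exhausted in the reduction step. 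The input that actually closes the argument is Lemma~\ref{L:uniform ideal closure}: the ideals $(T-f) \subseteq A^{\rhd}\{T/q\}$, $(1-fU) \subseteq A^{\rhd}\{U/q^{-1}\}$, and $(T-f) \subseteq A^{\rhd}\{T/q,U/q^{-1}\}/(TU-1)$ are \emph{closed} when $A^{\rhd}$ is uniform. You cite that lemma only to identify the presentations of $B_1^{\rhd}, B_2^{\rhd}, B_{12}^{\rhd}$, but its real job is to make the three columns of a $3\times 3$ commutative diagram exact; the top two rows are exact by the same Laurent-series splitting used in Lemma~\ref{L:H1 for simple Laurent covering}, and a single formal diagram chase then gives exactness of the bottom row at $B_1^{\rhd} \oplus B_2^{\rhd}$. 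In particular there is no infinite iteration and no Cauchy sequence: one lifts $(b_1,b_2)$, corrects once by an element of $(T-f)A^{\rhd}\{T/q\} \oplus (1-fU)A^{\rhd}\{U/q^{-1}\}$, and lands in $A^{\rhd}$. All the hard analysis (and the only place uniformity of $A^{\rhd}$ enters beyond the injectivity step) is concentrated in the proof of the closedness of those ideals, which your sketch leaves unaddressed.
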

\begin{proof}
By Lemma~\ref{L:acyclicity template} and Lemma~\ref{L:H1 for simple Laurent covering},
it suffices to check that with notation as in Lemma~\ref{L:H1 for simple Laurent covering}, the sequence 
\[
0 \to A^{\rhd} \to B_1^{\rhd} \oplus B_2^{\rhd} \to B_{12}^{\rhd}
\]
is exact. We first check exactness at $A^{\rhd}$. By Theorem~\ref{T:Gel'fand spectrum},
for $a \in A^{\rhd}$,
\[
\left| a \right|_{A^{\rhd}, \mathrm{sp}} = \sup\{\alpha(a): \alpha \in \calM(A^{\rhd}) \} = \sup\{\alpha(a): \alpha \in \calM(B_1^{\rhd}) \cup \calM(B_2^{\rhd}) \}.
\]
In particular, if $a \in \ker(A^{\rhd} \to B_1^{\rhd} \oplus B_2^{\rhd})$, then
then $a$ has zero spectral seminorm; however, since $A$ is uniform by hypothesis,
this forces $a=0$.

We check exactness at $B_1^{\rhd} \oplus B_2^{\rhd}$
following \cite[\S 8.2.3]{BGR}. In the commutative diagram
\[
\xymatrix@C=15pt{
& & 0 \ar[d] & 0 \ar[d] & \\
& 0 \ar[r] \ar[d] & (T-f)A^{\rhd}\left\{\frac{T}{q}\right\} \oplus (1-fU)A^{\rhd}\left\{\frac{U}{q^{-1}} \right\} \ar[r] \ar[d] & (T-f) \frac{A^{\rhd}\left\{\frac{T}{q}, \frac{U}{q^{-1}}\right\}}{(TU-1)} \ar[d]\ar[r] & 0 \\
0 \ar[r] & A^{\rhd} \ar[r] \ar@{=}[d] & A^{\rhd}\left\{\frac{T}{q} \right\} \oplus A^{\rhd}\left\{\frac{U}{q^{-1}}\right\} \ar[r] \ar[d] & \frac{A^{\rhd}\left\{\frac{T}{q}, \frac{U}{q^{-1}}\right\}}{(TU-1)} \ar[r] \ar[d] & 0 \\
0 \ar[r] & A^{\rhd} \ar[r] \ar[d] & B_1^{\rhd} \oplus B_2^{\rhd} \ar[r] \ar[d] & B_{12}^{\rhd} \ar[r] \ar[d] & 0 \\
& 0 & 0 & 0 &
}
\]
the first two rows are clearly exact,
while the columns are exact by Lemma~\ref{L:uniform ideal closure}. By diagram chasing, we obtain exactness of the third row at $B_1^{\rhd} \oplus B_2^{\rhd}$.
\end{proof}

We next turn to the analogue of the strongly noetherian condition, where we can carry out a more thorough adaptation of Huber's constructions.
\begin{defn}
We say that a Banach ring $A$ is \emph{really strongly noetherian} if $A^{\rhd}\{T_1/r_1,\dots,T_n/r_n\}$ is noetherian for all $n \geq 0$ and all $r_1,\dots,r_n > 0$. We say that an affinoid Banach ring $(A^{\rhd}, A^{\Gr})$ is \emph{really strongly noetherian} if $A^{\rhd}$ is really strongly noetherian;
this implies that for every morphism
$(A^{\rhd}, A^{\Gr}) \to (B^{\rhd}, B^{\Gr})$ representing a rational subspace of $\Spra(A^{\rhd}, A^{\Gr})$, $B^{\rhd}$ is really strongly noetherian. 
\end{defn}

\begin{example} \label{exa:affinoid algebras}
Any ultrametric field is really strongly noetherian by 
\cite[Proposition~2.1.3]{berkovich1}, as then is any Berkovich affinoid algebra over an ultrametric field (see Definition~\ref{D:affinoid algebras}).
\end{example}

\begin{example}
Let $F$ be a ultrametric field with nontrivial norm which is perfect of characteristic $p$.
Let $W(F)$ be the ring of $p$-typical Witt vectors of $F$, which may be viewed as the unique $p$-adically separated and complete ring whose reduction modulo $p$ is $F$.
Each $x \in W(F)$ can be written uniquely as $\sum_{n=0}^\infty p^n [\overline{x}_n]$
with $\overline{x}_n \in F$ and brackets denoting Teichm\"uller lifts.
The set of $x \in W(F)$ for which $\lim_{n \to \infty} p^{-n} \left| \overline{x}_n \right| = 0$ is then a really strongly noetherian Banach ring for the norm $x \mapsto
\max_n \{p^{-n} \left| \overline{x}_n \right| \}$; see
\cite[Theorem~3.2]{kedlaya-noetherian}.
\end{example}

We mention one further class of examples.
\begin{theorem} \label{T:trivial to noetherian}
Let $A$ be a noetherian ring equipped with the trivial norm. Then $A$ is really strongly noetherian.
\end{theorem}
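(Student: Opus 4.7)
The plan is to proceed by a case analysis on the weights, exploiting the fact that the trivial norm on $A$ takes only values in $\{0,1\}$, which gives a concrete description of the completion.

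\textbf{Uniform cases.} If all $r_i \geq 1$, then every nonzero monomial $T^I$ has Gauss norm $r^I \geq 1$, and since $|a_I| \in \{0,1\}$, convergence of $\sum_I a_I T^I$ in the Gauss norm forces $a_I = 0$ for all but finitely many $I$. Thus $A\{T_1/r_1,\dots,T_n/r_n\} = A[T_1, \dots, T_n]$, noetherian by Hilbert's basis theorem. If all $r_i < 1$, then $r^I \to 0$ as $|I| \to \infty$ automatically, so every formal series converges; the completion is $A[[T_1, \dots, T_n]]$, noetherian as a power series ring over a noetherian ring. The case $r_i = 1$ for some indices is absorbed into the first case: those variables behave polynomially, and the completion with respect to the remaining weights commutes with adjoining polynomial variables, so Hilbert reduces us to the case where each $r_i$ is either $>1$ or $<1$.

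\textbf{Mixed case: reduction.} Using the iterated completion identity $A\{T_1/r_1, \dots, T_n/r_n\} = A\{T_1/r_1, \dots, T_{n-1}/r_{n-1}\}\{T_n/r_n\}$ together with Hilbert's theorem, the plan is to reduce to the two-variable case with $r_1 > 1 > r_2 > 0$. Set $\mu := \log r_1 / \log(1/r_2) > 0$.

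\textbf{Rationally commensurable case.} When $\mu = q/p$ is rational in lowest terms, set $U = T_1^p T_2^q$, which has $|U| = 1$. I would then establish the isomorphism
\[
A\{T_1/r_1, T_2/r_2\} \cong A[U][[T_2]][T_1]/(T_1^p T_2^q - U),
\]
noetherian as a quotient of the noetherian ring $A[U][[T_2]][T_1]$. This would be done by expressing each element of the weighted Tate algebra in a normal form $\sum_{c} T_1^c \tilde{P}_c(U) + \sum_{j \geq 1} T_2^j \tilde{Q}_j(U)$ with $\tilde{P}_c, \tilde{Q}_j \in A[U]$, separating monomials by the sign of $pl - qa$ (where $a,l$ are the $T_1, T_2$-exponents). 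The convergence condition in the weighted Tate algebra translates precisely to polynomiality of each $\tilde{P}_c$ and $\tilde{Q}_j$ and finiteness of the collection $\{c : \tilde{P}_c \neq 0\}$.

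\textbf{Main obstacle: the irrational case.} When $\mu$ is irrational, no monomial has norm exactly $1$, and the identification above fails as stated. Natural alternative constructions, such as $A[[U, T_2]][T_1]/(T_1 T_2^k - U)$ for an integer $k > \mu$ (making $|U|<1$), produce only a proper subring of the weighted Tate algebra: elements whose $T_1$-degree in the coefficient of $T_2^l$ grows linearly in $l$ near the maximal admissible rate $\mu l$ cannot be realized as images of elements with finite $T_1$-degree. A resolution would likely involve either an approximation by rational weights together with a careful limit argument, or a direct Weierstrass-style argument adapted to this setting, exploiting the specific structure of the Gauss-norm completion under the trivial norm on $A$.
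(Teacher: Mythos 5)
Your proposal has a genuine, and self-acknowledged, gap: the mixed-weight case with irrational $\mu$ is exactly the hard case, and it is not resolved. The strategy of identifying $A\{T_1/r_1,\dots,T_n/r_n\}$ with a classical noetherian ring (polynomial ring, power series ring, or a quotient of $A[U][[T_2]][T_1]$) is fundamentally tied to the weights being multiplicatively commensurable, and no limit or approximation argument is supplied for the incommensurable case. Two further points weaken the reduction itself. First, the iterated completion identity $A\{T_1/r_1,\dots,T_n/r_n\} = A\{T_1/r_1,\dots,T_{n-1}/r_{n-1}\}\{T_n/r_n\}$ does not feed back into the theorem inductively, because the inner ring is no longer trivially normed (e.g.\ $|T_1| = r_1 \neq 1$), so it is not clear how the general mixed case in $n$ variables reduces to two variables over $A$. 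Second, even in the rational case the proposed normal form does not account for all monomials: with $p=1$, $q=2$, $U = T_1 T_2^2$, the monomial $T_1 T_2$ is neither of the form $T_1^c U^d$ nor $T_2^j U^d$, so the asserted decomposition $\sum_c T_1^c \tilde P_c(U) + \sum_{j \geq 1} T_2^j \tilde Q_j(U)$ as stated is incomplete (though the target ring $A[U][[T_2]][T_1]/(T_1^pT_2^q - U)$ may still be correct after reworking the normal form).

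The paper avoids all case analysis on the weights by running a Gr\"obner basis argument directly in the weighted Tate algebra. The key observation is that, for the trivial norm, any nonzero $x = \sum_I x_I T^I$ has only finitely many nonzero terms with $r^I$ above any positive threshold, so the maximum of $r^I$ over nonzero terms is attained on a finite set; breaking ties with the graded lexicographic order $\preceq$ on $\ZZ_{\geq 0}^n$ yields a well-defined leading index and leading coefficient. For an ideal $J$, the ideals $L_I \subseteq A$ of leading coefficients are monotone for the componentwise order $\leq$, and the combination of Dickson's lemma (well-quasi-ordering of $\leq$) with the noetherianity of $A$ shows that only finitely many indices $I$ see a jump in $L_I$; finitely many elements of $J$ realizing generators of these $L_I$ then generate $J$ via the division algorithm. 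This is genuinely different from, and more robust than, your approach: it treats rational and irrational weight ratios uniformly and requires no explicit description of the completion. If you want to salvage your route, you would need to supply the limit argument you allude to for irrational $\mu$, which is likely to be at least as much work as the leading-term argument.
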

\begin{proof}
As in \cite[Theorem~3.2]{kedlaya-noetherian}, we use a Gr\"obner basis construction.
Choose $r_1, \dots, r_n > 0$.
Equip $\ZZ_{\geq 0}^n$ with the componentwise partial order $\leq$, and with
the graded lexicographic total ordering $\preceq$.
Since $\leq$ is a well-quasi-ordering (every sequence contains an infinite ascending subsequence) and $\preceq$ refines $\leq$, $\preceq$ is a well-ordering. 
For $x = \sum_I x_I T^I \in A\{T_1/r_1,\dots,T_n/r_n\}$ nonzero, 
consider those indices $I$ for which $x_I \neq 0$ and $r_1^{i_1} \cdots r_n^{i_n}$ is maximized, then identify the greatest such index with respect to $\preceq$;
we define the \emph{leading index} and \emph{leading coefficient}
of $x$ to be the resulting values of $I$ and $x_I$, respectively.

For $J$ an ideal of $A\{T_1/r_1,\dots,T_n/r_n\}$ and $I \in \ZZ_{\geq 0}^n$,
let $L_I$ be the ideal of $A$ consisting of 0 plus the leading coefficients of all elements of $J$
with leading index $I$. For $I_1 \leq I_2$ we have $L_{I_1} \subseteq L_{I_2}$.
Let $S$ be the set of indices $I$ for which $L_I \neq L_{I'}$ for any $I' < I$;
this set is finite by
the well-quasi-ordering property of $\leq$ and the noetherian property of $A$.
For each $I \in S$, let $G_I$ be a set of elements of $J$ realizing each 
leading coefficient in some finite set of generators of $L_I$. We may then present each element $x \in J$ as a linear combination of elements of $\cup_{I \in S} G_I$ by repeatedly applying the usual division algorithm as long as $x \neq 0$: identify the leading index of $x$ as a multiple of some element $I$ of $S$, then kill off the leading coefficient of $x$ by subtracting off a suitable monomial linear combination of elements of $G_{I}$.
\end{proof}
\begin{cor}
Let $A$ be the ring $\ZZ((z))$ equipped with the $z$-adic norm (for any normalization).
Then $A$ is really strongly noetherian.
\end{cor}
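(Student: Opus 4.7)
The plan is to realize $\ZZ((z))$ with its $z$-adic norm as a quotient of a Tate algebra over $\ZZ$ equipped with the trivial norm, and conclude by Theorem~\ref{T:trivial to noetherian} together with the fact that any quotient of a noetherian ring is noetherian.

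Fix a normalization $|z| = c$ with $c \in (0,1)$. For any $n \geq 0$ and $r_1, \dots, r_n > 0$, Theorem~\ref{T:trivial to noetherian} applied to $\ZZ$ with the trivial norm shows that
\[
C_n := \ZZ\{z/c,\, z'/c^{-1},\, T_1/r_1, \dots, T_n/r_n\}
\]
is noetherian.

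I claim there is a surjective bounded ring homomorphism
\[
\phi_n \colon C_n \longrightarrow \ZZ((z))\{T_1/r_1, \dots, T_n/r_n\}
\]
sending $z' \mapsto z^{-1}$ and fixing $z$ and the $T_i$. The weighted Gauss norm of a polynomial $\sum a_{i,j,I}\, z^i z'^j T^I$ in $C_n$ equals $\max\{c^{i-j} r^I : a_{i,j,I} \neq 0\}$, which dominates the weighted Gauss norm of its image in $\ZZ((z))[T_1,\dots,T_n]$ (since the $z$-adic norm of the coefficient of $T^I$ in the image is bounded by $\max c^{i-j}$ over nonzero contributing terms). Hence $\phi_n$ extends by continuity to the completions. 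For surjectivity, given $f = \sum_I b_I T^I$ in the target, expand each Laurent coefficient as $b_I = \sum_k a_{I,k} z^k$ with $a_{I,k} \in \ZZ$, and lift to
\[
\tilde f := \sum_{I,k} a_{I,k}\, z^{\max(k,0)}\, z'^{\max(-k,0)}\, T^I.
\]
Each term has norm $c^k r^I$ in $C_n$, matching the norm of the corresponding term of $f$. For any $\epsilon > 0$, the condition $c^k r^I \geq \epsilon$ with $a_{I,k} \neq 0$ forces $k \geq v_z(b_I)$, hence $c^{v_z(b_I)} r^I \geq \epsilon$; by convergence of $f$ this occurs for only finitely many $I$, and for each such $I$ the bound on $c^k$ restricts $k$ to a finite range. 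Thus $\tilde f$ converges in $C_n$ and by construction $\phi_n(\tilde f) = f$.

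Since $C_n$ is noetherian and $\ZZ((z))\{T_1/r_1, \dots, T_n/r_n\}$ is a ring-theoretic quotient of $C_n$, the latter is noetherian. As this holds for every $n$ and every choice of $r_i$, $\ZZ((z))$ with the $z$-adic norm is really strongly noetherian. The main technical point is the convergence verification for the lift $\tilde f$; once it is in place, the rest is formal.
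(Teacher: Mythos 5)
Your proof is correct and follows essentially the same route as the paper, which simply writes $A \cong \ZZ\{T/q, U/q^{-1}\}/(TU-1)$ with $q = |z|$ and invokes Theorem~\ref{T:trivial to noetherian}. You have merely made explicit the boundedness and surjectivity (including the convergence check for the lift) that the paper's one-line identification leaves implicit.
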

\begin{proof}
For $q = \left| z \right| \in (0,1)$, we have $A \cong 
\ZZ\{T/q, U/q^{-1}\}/(TU-1)$, which is really strongly noetherian by Theorem~\ref{T:trivial to noetherian}.
\end{proof}

One important consequence of the really strongly noetherian condition is that it allows topological considerations to be omitted from many algebraic constructions involving finitely generated modules.
\begin{lemma} \label{L:noetherian}
Let $A$ be a really strongly noetherian Banach ring.
\begin{enumerate}
\item[(a)]
Every ideal in $A$ is closed.
\item[(b)]
Every finite $A$-module is complete under the quotient topology induced by some (and hence any) surjection from a finite free module.
\item[(c)]
Every morphism between finite $A$-modules, topologized as in (b), is strict.
\end{enumerate}
\end{lemma}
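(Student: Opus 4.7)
The plan is to prove the three parts in order, with (a) being the technical heart and (b), (c) following by standard Banach Open Mapping Theorem arguments (as invoked in the proof of Lemma~\ref{L:field norm to topology}).

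For (a), let $I \subseteq A$ be an ideal. The $n = 0$ case of the really strongly noetherian hypothesis gives that $A$ is noetherian, so $I = (f_1,\dots,f_n)$ is finitely generated. To show $I$ is closed, I would fix $r_i \geq |f_i|$ and consider the continuous $A$-algebra surjection $\phi\colon R := A\{T_1/r_1,\dots,T_n/r_n\} \to A$ sending $T_i \mapsto f_i$. By the Banach Open Mapping Theorem, $\phi$ is strict, so $A \cong R/\ker(\phi)$ as Banach rings, and closed ideals in $A$ correspond to closed ideals of $R$ containing $\ker(\phi)$. Thus (a) for $A$ reduces to showing that the specific finitely generated ideal $\phi^{-1}(I) = \ker(\phi) + (T_1,\dots,T_n) \subseteq R$ is closed. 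The main obstacle is this closedness assertion in the Tate algebra, which is not obviously easier than the original problem. To break the apparent circularity, I would use the extra geometric information provided by Tate algebras: writing any element of $R$ that lies in the closure of $\phi^{-1}(I)$ as a Cauchy limit of explicit polynomial expressions, the really strongly noetherian hypothesis permits controlling the norms of coefficients in the presentation of the limit (analogously to the Gr\"obner-basis reasoning in the proof of Theorem~\ref{T:trivial to noetherian}), producing the desired exact presentation. Alternatively, one may proceed by induction on $n$, using that $R = A\{T_1/r_1,\dots,T_{n-1}/r_{n-1}\}\{T_n/r_n\}$ and that the inner ring is also really strongly noetherian, to reduce to a one-variable analysis.

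For (b), fix a finite $A$-module $M$ with surjection $\psi\colon A^n \twoheadrightarrow M$. The kernel $\ker(\psi)$ is a finitely generated submodule of $A^n$ by noetherianity; it is closed by an extension of (a) to submodules of finite free modules, which I would prove by induction on the rank $n$ using the coordinate projections $A^n \to A$ and $A^n \to A^{n-1}$ together with the established part (a) (closedness for ideals in $A$). Then $M \cong A^n/\ker(\psi)$ inherits a Banach $A$-module structure from the quotient topology, which is automatically complete. Independence of the choice of presentation follows from the Banach Open Mapping Theorem: given any two quotient presentations $A^n \twoheadrightarrow M$ and $A^{n'} \twoheadrightarrow M$, one lifts the identity on $M$ to a continuous $A$-linear map between the presentations, producing a continuous $A$-linear bijection between the resulting Banach structures on $M$, which is automatically a topological isomorphism.

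For (c), any $A$-linear morphism $f\colon M \to N$ between finite $A$-modules equipped with the topologies from (b) is continuous, as can be checked by lifting $f$ through a finite free presentation of $M$. Its image $f(M)$ is a finite $A$-submodule of $N$ (using noetherianity), so by (b) carries its own quotient Banach structure as a quotient of $M$ via $M/\ker(f) \to f(M)$. The continuous $A$-linear bijection $M/\ker(f) \to f(M) \hookrightarrow N$ is a topological embedding by the Banach Open Mapping Theorem applied to the two Banach structures on $f(M)$, establishing strictness of $f$.
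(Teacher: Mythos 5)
There is a genuine gap, and it sits exactly at the point this lemma is designed to be delicate. You invoke the Banach open mapping theorem repeatedly for $A$ itself and for maps of finite $A$-modules, but the open mapping theorem in this setting (the Henkel reference cited in Lemma~\ref{L:field norm to topology}) requires a null sequence of units, i.e., essentially the Tate condition. A really strongly noetherian Banach ring need not be Tate: by Theorem~\ref{T:trivial to noetherian}, any noetherian ring with the trivial norm qualifies, and such a ring has no topologically nilpotent unit. So the strictness of $\phi\colon A\{T_1/r_1,\dots,T_n/r_n\}\to A$ in your part (a), and every open-mapping step in your parts (b) and (c), is unjustified as stated. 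The paper's proof is organized precisely to dodge this: it first proves the statements when $A$ \emph{is} Tate (where the open mapping theorem applies), and then for general $A$ it base-changes along $A\to B=A\{T/r,U/r^{-1}\}/(TU-1)$ with $r\in(0,1)$ --- which is always Tate, since $T$ becomes a topologically nilpotent unit --- and descends using the fact that $A\to B$ is split as a map of $A$-modules by the constant-coefficient projection. Some such device is indispensable; without it the lemma does not reduce to classical Banach theory.

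Even granting the Tate case, your argument for (a) does not close. Reducing closedness of $I\subseteq A$ to closedness of $\phi^{-1}(I)$ in a Tate algebra is, as you concede, the same problem, and the proposed fix via ``Gr\"obner-basis norm control'' is not a worked-out argument: the Gr\"obner computation in Theorem~\ref{T:trivial to noetherian} establishes noetherianity of weighted Tate algebras over \emph{trivially normed} rings and gives no control of norms in presentations over a general base. The standard (and the paper's) route in the Tate case is different: one first shows that a normed $A$-module $M$ whose completion $\widehat{M}$ is finitely generated satisfies $M=\widehat{M}$, by choosing a surjection $A^n\to\widehat{M}$, using the open mapping theorem to see it is strict, and applying Nakayama's lemma in the form of \cite[Lemma~1.2.4/6]{BGR}. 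Applying this to a submodule $M$ of a finite free module $F$ (whose closure in $F$ is its completion and is finitely generated by noetherianity) yields that $M$ is closed; (a), (b), (c) then all follow. I would suggest restructuring your proof around these two ingredients: the Nakayama argument in the Tate case, and the split base change to $A\{T/r,U/r^{-1}\}/(TU-1)$ in general.
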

\begin{proof}
Suppose first that $A$ is Tate. 
We first observe that if $M$ is a normed $A$-module whose completion $\widehat{M}$ is finitely generated, then $M = \widehat{M}$. This is proved as in \cite[Proposition~3.7.3/2]{BGR}: choose an $A$-linear surjection $f: A^n \to \widehat{M}$, apply
the Banach open mapping theorem for $A$ (see \cite{henkel}) to deduce that $f$ is strict,
then conclude by Nakamaya's lemma in the form of \cite[Lemma~1.2.4/6]{BGR}.

We now check that for any finite free $A$-module $F$, any submodule $M$ of $F$ is complete.
To wit, choose any $r \in (0,1)$ and put $B = A\{T/r,U/r^{-1}\}/(TU-1)$; then $B$ is necessarily Tate. By the previous paragraph, the image of $M \otimes_A B$ in $F \otimes_A B$ is closed. In particular, $M \otimes_A B$ and $\widehat{M} \otimes_A B$ have the same image in $F \otimes_A B$; since the map $A \to B$ of $A$-modules is split by the constant coefficient map, this implies that $M = \widehat{M}$.

By taking $F = A$ in the previous paragraph, we deduce (a). To check (b), let $A^n \to M$ be a surjection of $A$-modules and apply the previous paragraph to $\ker(A^n \to M)$.
To check (c), we may again extend scalars from $A$ to $B$ and apply the Banach open mapping theorem.
\end{proof}

As a consequence, we obtain some results on flatness of certain ring homomorphisms.
\begin{cor} \label{C:series ring flat}
Let $A$ be a really strongly noetherian Banach ring. Then for all $n \geq 0$ and
$r_1,\dots,r_n > 0$, the morphism 
\[
A[T_1,\dots,T_n] \to A\{T_1/r_1,\dots,T_n/r_n\}
\]
of rings is flat.
\end{cor}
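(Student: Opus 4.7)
The plan is to reduce flatness to an equality of syzygy modules and then prove this using the noetherian completeness properties of $S := A\{T_1/r_1,\ldots,T_n/r_n\}$ itself. First I would observe that $S$ is itself really strongly noetherian: for any $s_1, \ldots, s_m > 0$, the weighted Tate algebra $S\{U_1/s_1,\ldots,U_m/s_m\}$ is canonically isomorphic to $A\{T_1/r_1,\ldots,T_n/r_n, U_1/s_1, \ldots, U_m/s_m\}$, which is noetherian by hypothesis. Hence Lemma~\ref{L:noetherian} applies to $S$ directly, yielding that every finitely generated $S$-submodule of any $S^k$ is closed, and that every morphism of finite $S$-modules is strict for the canonical topologies.

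Since $R := A[T_1,\ldots,T_n]$ is noetherian (by Hilbert's basis theorem, noting that $A$ itself is noetherian as the $n=0$ case of the really strongly noetherian hypothesis), flatness of $R \to S$ is equivalent to the vanishing of $\mathrm{Tor}_1^R(S, R/I)$ for every finitely generated ideal $I = (f_1,\ldots,f_m) \subseteq R$. Letting $\mathrm{Syz}_R \subseteq R^m$ and $\mathrm{Syz}_S \subseteq S^m$ denote the modules of syzygies of $(f_1,\ldots,f_m)$ over $R$ and $S$ respectively, this is equivalent to the equality $\mathrm{Syz}_R \cdot S = \mathrm{Syz}_S$. The containment $\subseteq$ is automatic; the reverse reduces, since $\mathrm{Syz}_R \cdot S$ is a finitely generated, hence closed, $S$-submodule of $S^m$ containing the dense subset $\mathrm{Syz}_R$, to showing that every $S$-syzygy can be approximated in $S^m$ by a polynomial syzygy.

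For the approximation, I would run a Banach-style iterative correction. Given $(g_i) \in \mathrm{Syz}_S$, choose polynomials $p_i^{(0)} \in R$ with $g_i - p_i^{(0)}$ small in the Gauss norm; the defect $\epsilon^{(0)} := \sum p_i^{(0)} f_i$ is then small in $S$-norm. The $S$-linear surjection $(a_i) \mapsto \sum a_i f_i$ from $S^m$ onto $I \cdot S$ is strict by Lemma~\ref{L:noetherian}(c) applied to $S$, so $\epsilon^{(0)}$ lifts to some $(h_i^{(0)}) \in S^m$ of controlled size. Approximating each $h_i^{(0)}$ by a polynomial $q_i^{(0)} \in R$ with error much smaller than $\epsilon^{(0)}$, the corrected tuple $p_i^{(1)} := p_i^{(0)} - q_i^{(0)}$ has syzygy defect geometrically smaller than $\epsilon^{(0)}$. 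Iterating yields a Cauchy sequence in $R^m$ whose limit in $S^m$ is an $S$-syzygy close to $(g_i)$, exhibiting $(g_i)$ as an element of the closure of $\mathrm{Syz}_R$, namely $\mathrm{Syz}_R \cdot S$.

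The main obstacle is verifying that the Newton-style iteration genuinely converges to $(g_i)$ itself (rather than to some nearby $S$-syzygy), so that the resulting limit in $\mathrm{Syz}_R \cdot S$ really equals $(g_i)$. This requires careful bookkeeping of the accumulated approximation errors using the sharp strictness constants provided by Lemma~\ref{L:noetherian}(c) for $S$; the geometric decay of the syzygy defects must be arranged to dominate the accumulated polynomial-approximation errors. An alternative and likely cleaner approach would be to extract from the proof of Lemma~\ref{L:noetherian} an Artin-Rees-type stability statement for the weighted Gauss-norm filtration on $R$, reducing flatness directly to exactness of the completion functor on finite $R$-modules.
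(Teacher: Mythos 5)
There is a genuine gap at the final step of your approximation argument. Your iteration produces polynomial tuples $p^{(k)} \in R^m$ that are only \emph{approximate} syzygies: each has a nonzero defect $\epsilon^{(k)} = \sum_i p_i^{(k)} f_i$. The limit $P = \lim_k p^{(k)}$ is indeed an exact element of $\mathrm{Syz}_S$ close to $(g_i)$, but the fact that $P$ is a limit of elements of $R^m$ tells you nothing: $R^m$ is dense in $S^m$, so \emph{every} element of $S^m$ is such a limit. What you need is that $P$ (or $(g_i)$ itself) lies in the closure of $\mathrm{Syz}_R$, i.e., is a limit of \emph{genuine} polynomial syzygies. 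To arrange that, you would have to correct $p^{(k)}$ by an exact polynomial lift of the defect: since $\epsilon^{(k)} = \sum_i p_i^{(k)} f_i$ does lie in the ideal $I \subseteq R$, polynomial lifts exist, but you need one of norm comparable to $\|\epsilon^{(k)}\|$ rather than to $\|p^{(k)}\|$. Lemma~\ref{L:noetherian}(c) supplies norm-controlled lifts only in $S^m$, because it is an open mapping statement for complete finite $S$-modules; the map $R^m \to I$ is a map of incomplete normed $R$-modules, and no open mapping theorem applies. Producing norm-controlled polynomial representations of elements of $I$ is essentially an Artin--Rees-type statement for the Gauss-norm filtration on $R$ --- which, as your last sentence tacitly concedes, is of the same depth as the flatness being proved --- so the argument is circular exactly where it matters. (The obstacle you flag, convergence to $(g_i)$ itself, is not the real issue; membership of the limit in $\mathrm{Syz}_R \cdot S$ is.)

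For comparison, the paper avoids syzygy approximation entirely. It reduces by induction to $n=1$, first proves that $A \to A\{T/r\}$ is flat by identifying $M \otimes_A A\{T/r\}$ with $M\{T/r\}$ for finite $A$-modules $M$ (using completeness and strictness from Lemma~\ref{L:noetherian}) and observing that $M \mapsto M\{T/r\}$ is exact, and then invokes the fiberwise flatness criterion: since $A \to A[T]$ and $A \to A\{T/r\}$ are both flat, flatness of $A[T] \to A\{T/r\}$ reduces to flatness of $k[T] \to k\{T/r\}$ for each residue field $k = A/\frakm$, where it follows from torsion-freeness over a principal ideal domain. If you want to salvage a direct approach, the fiberwise reduction (or your proposed alternative via exactness of the completion functor on finite $R$-modules, which would require an Artin--Rees input you have not supplied) is the missing ingredient.
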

\begin{proof}
By induction, we reduce to the case $n=1$ and put $T = T_1, r = r_1$.
To handle this case, we follow \cite[Lemma~1.7.6]{huber-book}.

We first prove that $A \to A\{T/r\}$ is flat.
For $M$ a finite $A$-module, by Lemma~\ref{L:noetherian}, $M$ is complete for its natural topology and any finite presentation of $M$ is strict.
We may thus identify
\[
M \otimes_A A\{T/r\} \cong M \widehat{\otimes}_A A\{T/r\} \cong M\{T/r\},
\]
where $M\{T/r\}$ denotes the set of formal sums $\sum_{i=0}^\infty m_i T^i$ with $m_i \in M$ such that for some (hence any) norm on $M$ induced by a presentation, 
$\lim_{i \to \infty} \left| m_i \right| r^{i} = 0$.
For any short exact sequence $0 \to M \to N \to P \to 0$, it is clear that
\[
0 \to M\{T/r\} \to N\{T/r\} \to P\{T/r\} \to 0
\]
is exact; this proves that $A \to A\{T/r\}$ is flat.

Since $A \to A[T]$ and $A \to A\{T/r\}$ are both flat, 
by standard commutative algebra
(see for instance \cite[0.10.2.5]{ega3-1}),
the verification that $A[T] \to A\{T/r\}$ is flat reduces
to showing that for each $\frakm \in \Maxspec(A)$, for $k = A/\frakm$, the morphism
$A[T] \otimes_A k \to A\{T/r\} \otimes_A k$ is flat.
By the previous paragraph, the target of this map may be identified with $k\{T/r\}$, which as a module over the principal ideal domain $k[T]$ is torsion-free (since it embeds into $k \llbracket T \rrbracket$) and hence flat.
\end{proof}

\begin{cor} \label{C:noetherian flat1}
Let $(A^{\rhd}, A^{\Gr})$ be a really strongly noetherian affinoid Banach ring.
\begin{enumerate}
\item[(a)]
For any rational localization $(A^{\rhd}, A^{\Gr}) \to (B^{\rhd}, B^{\Gr})$,
the map $A^{\rhd} \to B^{\rhd}$ is flat.
\item[(b)]
Let $\{(A^{\rhd}, A^{\Gr}) \to (B_i^{\rhd}, B_i^{\Gr})\}_i$ be the morphisms representing a finite cover $\frakU = \{U_i\}_i$ of $X$ by rational subspaces.
Then the morphism $A^{\rhd} \to \oplus_i B_i^{\rhd}$ is faithfully flat.
\end{enumerate}
\end{cor}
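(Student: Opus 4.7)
For (a), the plan is to combine the explicit Tate algebra presentation of the rational localization with the flatness results from Corollary~\ref{C:series ring flat}. By Definition~\ref{D:rational subspace readic}, $B^{\rhd}$ is the quotient of $C := A^{\rhd}\{T_1/q_1,\dots,T_n/q_n\}$ by the closure of $J = (f_0T_1 - f_1,\dots,f_0T_n - f_n)$. Since $C$ is really strongly noetherian, Lemma~\ref{L:noetherian}(a) gives $J = \overline{J}$, so that $B^{\rhd} = C/J$ on the nose, with no completion step to worry about.

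First I would observe that the composite $A^{\rhd} \to A^{\rhd}[T_1,\dots,T_n] \to C$ is flat: the first arrow is a free extension, and the second is flat by Corollary~\ref{C:series ring flat}. To pass to the quotient $C/J$, I plan to follow the strategy of Huber \cite[Lemma~1.7.6]{huber-book}. Since $f_0$ becomes a unit in $B^{\rhd}$ (as noted in the proof of Lemma~\ref{L:rational subspace readic}(a)), choose $q_0 \geq \|f_0^{-1}\|_{B^{\rhd}}$ and rewrite $B^{\rhd}$ as the quotient of the larger Tate algebra $\tilde C := A^{\rhd}\{T_0/q_0, T_1/q_1, \dots, T_n/q_n\}$ by the ideal generated by the sequence $(f_0T_0 - 1, f_0T_1 - f_1, \dots, f_0T_n - f_n)$. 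The next step is to verify that this is a regular sequence in $\tilde C$ by coefficient comparison, and moreover that each successive quotient admits a flat description as a localization of an intermediate Tate algebra: quotienting by $f_0T_0 - 1$ adjoins $f_0^{-1}$, and each subsequent relation $f_0T_i - f_i = f_0(T_i - f_i/f_0)$ substitutes away the variable $T_i$ without introducing torsion. Chaining these flat stages together yields flatness of $B^{\rhd}$ over $A^{\rhd}$.

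For (b), part (a) gives flatness of each $A^{\rhd} \to B_i^{\rhd}$, so the direct sum map $A^{\rhd} \to \oplus_i B_i^{\rhd}$ is also flat. To upgrade to faithful flatness, it suffices to show that $\Spec(\oplus_i B_i^{\rhd}) \to \Spec(A^{\rhd})$ is surjective. By Lemma~\ref{L:covering faithful}, the image already contains $\Maxspec(A^{\rhd})$; combined with flatness (and hence the going-down property), and the fact that every prime of $A^{\rhd}$ is contained in some maximal ideal, the image must be all of $\Spec(A^{\rhd})$.

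The main obstacle will be the regular-sequence verification in (a): one has to show carefully that the naive coefficient-comparison argument survives passage to the weighted Gauss-norm completion, so that killing each relation in $\tilde C$ really does produce a flat $A^{\rhd}$-algebra at each intermediate stage. Once this is in place, the rest of (a) and all of (b) are standard commutative algebra.
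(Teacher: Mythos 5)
Your list of ingredients is the right one (Corollary~\ref{C:series ring flat}, Lemma~\ref{L:noetherian}, Lemma~\ref{L:covering faithful}), and your treatment of (b) matches the paper's: flatness from (a) plus the fact that the image of $\Spec(\oplus_i B_i^{\rhd})$ contains $\Maxspec(A^{\rhd})$ gives faithful flatness by standard commutative algebra. But the step you flag as ``the main obstacle'' in (a) --- the regular-sequence verification in $\tilde{C}$ --- is both a detour and a logical gap. A quotient of a flat $A^{\rhd}$-algebra by a regular sequence need not be flat over $A^{\rhd}$ (already $k[x][T]/(xT)$ fails over $k[x]$, even though $xT$ is a nonzerodivisor), so Koszul-regularity of $(f_0T_0-1, f_0T_1-f_1,\dots)$ buys you nothing by itself; the flatness must come from identifying the quotient with a base change of something already known to be flat, and your ``substitute away the variable $T_i$ without introducing torsion'' stages are exactly where that identification is left unproved.

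The paper's argument avoids all of this with a single base change, and the reason it works is a feature of the reified setting that your extra variable $T_0$ is trying to recreate by hand: since $f_1,\dots,f_n$ generate the \emph{unit} ideal (not merely an open one, as in Huber's setting), $f_0$ is already invertible in the polynomial quotient $A^{\rhd}[T_1,\dots,T_n]/(f_0T_1-f_1,\dots,f_0T_n-f_n)$ --- writing $\sum_i h_i f_i = 1$ gives $f_0 \sum_i h_i T_i = 1$ there --- and one checks that this quotient is precisely $A^{\rhd}[f_0^{-1}]$. Base-changing the flat map $A^{\rhd}[T_1,\dots,T_n] \to A^{\rhd}\{T_1/q_1,\dots,T_n/q_n\}$ of Corollary~\ref{C:series ring flat} along the projection onto this quotient shows that $A^{\rhd}[f_0^{-1}] \to B^{\rhd}$ is flat (using Lemma~\ref{L:noetherian} to see that the ideal is already closed, as you correctly note), and composing with the ordinary localization $A^{\rhd} \to A^{\rhd}[f_0^{-1}]$ finishes (a). So: drop $T_0$ and the regular sequence, and replace ``kill the relations one at a time'' by ``base change the flat map of Corollary~\ref{C:series ring flat} along the quotient by $J$.''
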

\begin{proof}
To prove (a),
choose a presentation of $B^{\rhd}$ as in 
Definition~\ref{D:rational subspace readic}.
By Corollary~\ref{C:series ring flat},
the map 
\[
A^{\rhd}[T_1,\dots,T_n] \to A^{\rhd}\{T_1/q_1,\dots,T_n/q_n\}
\]
is flat, as then is the map
\[
A^{\rhd}[f_0^{-1}] \cong \frac{A^{\rhd}[T_1,\dots,T_n]}{(f_0 T_1 - f_1, \dots, f_0 T_n - f_n)}
\to 
\frac{A^{\rhd}\{T_1/q_1,\dots,T_n/q_n\}}{(f_0 T_1 - f_1, \dots, f_0 T_n - f_n)}
\cong B_i^{\rhd}
\]
(applying Lemma~\ref{L:noetherian} to obtain the last isomorphism).
Since the ordinary localization $A^{\rhd} \to A^{\rhd}[f_0^{-1}]$ is flat, so then is $A^{\rhd} \to B_i^{\rhd}$. This proves (a), from which (b) follows by invoking Lemma~\ref{L:covering faithful} and some standard commutative algebra
(see for instance \cite[Tag 00HQ]{stacks-project}).
\end{proof}

\begin{cor} \label{C:noetherian injective}
Let $(A^{\rhd}, A^{\Gr})$ be a really strongly noetherian affinoid Banach ring.
Then for every rational subspace $U$ of $X$ and every finite covering $\frakV$ of $U$ by rational subspaces, the maps
\[
\calF(U) \to \check{H}^0(U, \calO; \frakV), \quad
\calF(U) \to H^0(U, \calO)
\]
are injective. (They will be shown to be bijective in Theorem~\ref{T:very strongly noetherian}.)
\end{cor}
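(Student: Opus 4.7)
The plan is to leverage the faithful flatness result from Corollary~\ref{C:noetherian flat1}(b) and deduce both injectivity claims as essentially formal consequences. The substantive algebra is already packaged in that corollary; what remains is Čech-theoretic bookkeeping, so I do not expect a real obstacle.

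First I would reduce to the case $U = X$. By Lemma~\ref{L:rational subspace readic}, any rational subspace $U$ is represented by a rational localization $(A^{\rhd}, A^{\Gr}) \to (B^{\rhd}, B^{\Gr})$ with $\calO(U) = B^{\rhd}$, and the rational subspaces of $U$ coincide bijectively with those of $\Spra(B^{\rhd}, B^{\Gr})$. Since $B^{\rhd}$ inherits the really strongly noetherian property by the very definition in \S\ref{subsec:noetherian}, we may replace $(A^{\rhd}, A^{\Gr})$ with $(B^{\rhd}, B^{\Gr})$ and assume $U = X$.

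Next, for a finite cover $\frakV = \{V_i\}_i$ of $X$ by rational subspaces, represented by morphisms $(A^{\rhd}, A^{\Gr}) \to (B_i^{\rhd}, B_i^{\Gr})$, the Čech group is by construction
\[
\check{H}^0(X, \calO; \frakV) = \ker\Bigl(\bigoplus_i B_i^{\rhd} \rightrightarrows \bigoplus_{i,j} B_{ij}^{\rhd}\Bigr),
\]
and the canonical map $\calO(X) = A^{\rhd} \to \check{H}^0(X, \calO; \frakV)$ factors through $A^{\rhd} \to \bigoplus_i B_i^{\rhd}$. By Corollary~\ref{C:noetherian flat1}(b), this latter map is faithfully flat, hence injective, which gives the first claim.

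For the second, I would invoke that $H^0(U, \calO)$ equals the global sections of the sheafification of $\calO$, computable as the filtered colimit $\varinjlim_{\frakV} \check{H}^0(U, \calO; \frakV)$ taken over refinements of finite covers of $U$ by rational subspaces (these being cofinal among all finite covers by Lemma~\ref{L:rational and Laurent coverings}(a)). Since a filtered colimit of injections of abelian groups is injective, the uniform injectivity through each $\check{H}^0(U, \calO; \frakV)$ propagates to injectivity of $\calO(U) \to H^0(U, \calO)$. No real difficulty is expected beyond this colimit argument; the substantive work has already been absorbed into Corollary~\ref{C:noetherian flat1}(b) and ultimately Corollary~\ref{C:series ring flat}.
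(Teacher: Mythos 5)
Your proof is correct and follows the same route as the paper, whose entire proof reads ``Immediate from Corollary~\ref{C:noetherian flat1}'': the reduction to $U=X$ via Lemma~\ref{L:rational subspace readic}, the factorization of $A^{\rhd}\to\check{H}^0$ through the faithfully flat (hence injective) map $A^{\rhd}\to\oplus_i B_i^{\rhd}$, and the passage to $H^0$ by a filtered colimit over finite rational covers are exactly the details the paper leaves implicit. No issues.
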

\begin{proof}
Immediate from Corollary~\ref{C:noetherian flat1}.
\end{proof}

We mention also a refinement of Corollary~\ref{C:noetherian flat1}, which gives a stronger result but has a somewhat mysterious extra hypothesis.

\begin{lemma} \label{L:noetherian flat1}
Let $(A^{\rhd}, A^{\Gr})$ be a really strongly noetherian affinoid Banach ring.
Let $(A^{\rhd}, A^{\Gr}) \to (B^{\rhd}, B^{\Gr})$ be a rational localization.
Suppose that $\frakm \in \Maxspec(A^{\rhd})$ has the property
that $A^{\rhd}/\frakm \cong \calH(\beta)$ for some $\beta \in \calM(B^{\rhd})$.
Then for every positive integer $n$,
the map $A^{\rhd}/\frakm^n \to B^{\rhd}/\frakm^n B^{\rhd}$ is an isomorphism.
\end{lemma}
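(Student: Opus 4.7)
The plan is to prove the base case $j = 1$ by direct computation with the Tate algebra presentation of $B^{\rhd}$, and then bootstrap to general $j$ via the flatness of $A^{\rhd} \to B^{\rhd}$ guaranteed by Corollary~\ref{C:noetherian flat1}(a).

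For the base case, I would choose parameters $f_0, f_1, \dots, f_s \in A^{\rhd}$ and scale factors $q_1, \dots, q_s > 0$ representing the rational localization, so that Definition~\ref{D:rational subspace readic} presents $B^{\rhd}$ as the (closed-ideal) quotient of $A^{\rhd}\{T_1/q_1, \dots, T_s/q_s\}$ by $(f_0 T_i - f_i)_{i=1}^{s}$. Tensoring over $A^{\rhd}$ with the finite module $A^{\rhd}/\frakm$ and invoking the analysis in the proof of Corollary~\ref{C:series ring flat} (together with Lemma~\ref{L:noetherian} to identify algebraic and completed tensor products, and to ensure that all ideals in Tate algebras over $A^{\rhd}$ are already closed) identifies $B^{\rhd}/\frakm B^{\rhd}$ with $\calH(\beta)\{T_1/q_1, \dots, T_s/q_s\}/(\bar f_0 T_i - \bar f_i)_{i=1}^{s}$. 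Because $\beta \in \calM(B^{\rhd})$ lies in a rational subspace where $\beta(f_0) \neq 0$ and $\beta(f_i) \leq q_i \beta(f_0)$ for each $i \geq 1$, the image $\bar f_0$ is a unit in $\calH(\beta)$ and $\left| \bar f_i/\bar f_0 \right| \leq q_i$. Evaluation at $T_i = \bar f_i/\bar f_0$ therefore yields an isomorphism from the right-hand side to $\calH(\beta) = A^{\rhd}/\frakm$, settling the base case.

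For the inductive step with $j \geq 2$, I would tensor the short exact sequence $0 \to \frakm^{j-1}/\frakm^j \to A^{\rhd}/\frakm^j \to A^{\rhd}/\frakm^{j-1} \to 0$ with $B^{\rhd}$ over $A^{\rhd}$. Flatness preserves exactness and gives $\frakm^k B^{\rhd} \cong \frakm^k \otimes_{A^{\rhd}} B^{\rhd}$ for each $k$, producing an exact sequence $0 \to \frakm^{j-1} B^{\rhd}/\frakm^j B^{\rhd} \to B^{\rhd}/\frakm^j B^{\rhd} \to B^{\rhd}/\frakm^{j-1} B^{\rhd} \to 0$. The leftmost term rewrites as $(\frakm^{j-1}/\frakm^j) \otimes_{A^{\rhd}/\frakm} (B^{\rhd}/\frakm B^{\rhd}) \cong \frakm^{j-1}/\frakm^j$ by the base case, while the rightmost map is an isomorphism by induction; the five-lemma then forces $A^{\rhd}/\frakm^j \to B^{\rhd}/\frakm^j B^{\rhd}$ to be an isomorphism as well.

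I expect the main obstacle to be in the base case, specifically in verifying that the reduction modulo $\frakm$ commutes with the Tate algebra construction and the closed-ideal quotient. This bookkeeping depends crucially on the really strongly noetherian hypothesis: Lemma~\ref{L:noetherian}(a) closes every ideal in the Tate algebra so that the algebraic and topological quotients coincide, while Lemma~\ref{L:noetherian}(b)--(c) lets us replace completed tensor products with ordinary ones whenever one factor is a finite $A^{\rhd}$-module. Once this identification is in hand, the rest of the argument is formal homological algebra.
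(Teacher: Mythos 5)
Your argument is correct, but it takes a genuinely different route from the paper's. The paper follows \cite[Proposition~7.2.2/1]{BGR}: since semivaluations kill nilpotents, the readic spectrum of $A^{\rhd}/\frakm^n$ coincides with that of its reduction $\calH(\beta)$, which lies in the rational subspace represented by $(B^{\rhd}, B^{\Gr})$; the universal property of Lemma~\ref{L:rational subspace readic}(a) therefore yields a factorization $B^{\rhd} \to A^{\rhd}/\frakm^n$ of the projection for \emph{every} $n$ at once, and both surjectivity and injectivity of $A^{\rhd}/\frakm^n \to B^{\rhd}/\frakm^n B^{\rhd}$ follow formally from this retraction --- no presentation of $B^{\rhd}$ and no flatness are needed. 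Your proof instead computes the case $n=1$ by hand from the Tate algebra presentation (this is sound: Lemma~\ref{L:noetherian}(a) makes the defining ideal closed, the identification $M \otimes_{A^{\rhd}} A^{\rhd}\{T/q\} \cong M\{T/q\}$ from the proof of Corollary~\ref{C:series ring flat} applies to $M = A^{\rhd}/\frakm$, and Weierstrass division in $\calH(\beta)\{T_i/q_i\}$ shows that evaluation at $T_i = \overline{f}_i/\overline{f}_0$ has kernel exactly $(\overline{f}_0 T_i - \overline{f}_i)$, using $\beta(f_0) \neq 0$ and $\beta(f_i) \leq q_i \beta(f_0)$), and then bootstraps by tensoring the filtration of $A^{\rhd}/\frakm^n$ against the flat map of Corollary~\ref{C:noetherian flat1}(a). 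There is no circularity, since that corollary is established earlier and independently of the present lemma; still, it is worth noting that Remark~\ref{R:noetherian flat1} contemplates deducing flatness \emph{from} this lemma, which is only coherent because the paper's proof avoids flatness. Your approach buys an explicit description of the fibre $B^{\rhd}/\frakm B^{\rhd}$; the paper's buys brevity and logical independence from the flatness machinery.
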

\begin{proof}
We follow \cite[Proposition~7.2.2/1]{BGR}.
By Lemma~\ref{L:noetherian}, the ideal $\frakm^n$ is closed; we may thus form 
the commutative diagram
\[
\xymatrix{
A^{\rhd} \ar[r] \ar[d] & B^{\rhd} \ar[d] \ar@{-->}[ld] \\
A^{\rhd}/\frakm^n \ar[r] & B^{\rhd}/\frakm^n B^{\rhd}
}
\]
of Banach rings. 
The dashed arrow exists and is unique for $n=1$ by hypothesis, and hence for all $n$ by
 the universal property of rational localizations.
Consequently, surjectivity of $B^{\rhd} \to B^{\rhd}/\frakm^n B^{\rhd}$ implies surjectivity of $A^{\rhd}/\frakm^n \to B^{\rhd}/\frakm^n B^{\rhd}$. On the other hand,
surjectivity of $A^{\rhd} \to A^{\rhd}/\frakm^n$ implies the surjectivity of $B^{\rhd} \to A^{\rhd}/\frakm^n$;
since $\ker(B^{\rhd} \to B^{\rhd}/\frakm^n B^{\rhd}) = \frakm^n B^{\rhd}$ is contained in the kernel of $B^{\rhd} \to A^{\rhd}/\frakm^n$ (it being generated by elements of said kernel), it follows that $A^{\rhd}/\frakm^n \to B^{\rhd}/\frakm^n B^{\rhd}$ is also injective.
\end{proof}

\begin{remark} \label{R:noetherian flat1}
For a given pair $(A^{\rhd}, A^{\Gr})$,
one can deduce Corollary~\ref{C:noetherian flat1}(a) 
from Lemma~\ref{L:noetherian flat1} if for every maximal ideal $\frakm$ of $A^{\rhd}$, the Banach ring $A^{\rhd}/\frakm$ is (isomorphic to) an ultrametric field; that is, its norm is equivalent to a multiplicative norm. 
This holds for classical affinoid algebras
(see Definition~\ref{D:affinoid algebras}),
but in light of Remark~\ref{R:ultrametric field} it is unclear to what extent it should occur more generally.
\end{remark}

We end up with the following analogue of Theorem~\ref{T:sheafy conditions}(a).
\begin{theorem} \label{T:very strongly noetherian}
Let $(A^{\rhd}, A^{\Gr})$ be a really strongly noetherian affinoid Banach ring.
Then $(A^{\rhd}, A^{\Gr})$ is sheafy.
\end{theorem}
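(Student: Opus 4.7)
By Lemma~\ref{L:acyclicity template}, it suffices to verify that for every simple Laurent covering $\{V_1, V_2\}$ of every rational subspace $U$ of $X$, the augmented \v{C}ech sequence is exact. Since the really strongly noetherian property is inherited by rational localizations, this reduces at once to the case $U = X$. With notation as in Lemma~\ref{L:H1 for simple Laurent covering}, we must then show exactness of
\[
0 \to A^{\rhd} \to B_1^{\rhd} \oplus B_2^{\rhd} \to B_{12}^{\rhd} \to 0.
\]
Surjectivity at $B_{12}^{\rhd}$ is Lemma~\ref{L:H1 for simple Laurent covering}, and injectivity at $A^{\rhd}$ (already recorded in Corollary~\ref{C:noetherian injective}) follows from Corollary~\ref{C:noetherian flat1}(b), which asserts that $A^{\rhd} \to B_1^{\rhd} \oplus B_2^{\rhd}$ is faithfully flat.

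For exactness at $B_1^{\rhd} \oplus B_2^{\rhd}$ I would follow Huber's proof of \cite[Theorem~2.2(b)]{huber2}, adapted to the reified setting. Using the explicit presentations $B_1^{\rhd} = A^{\rhd}\{T/q\}/(T-f)$, $B_2^{\rhd} = A^{\rhd}\{U/q^{-1}\}/(1-fU)$, and $B_{12}^{\rhd} = A^{\rhd}\{T/q, U/q^{-1}\}/(T-f, 1-fU)$ coming from Definition~\ref{D:rational subspace readic}, the task reduces to the following. Given $(b_1, b_2)$ in the kernel, lift to $\tilde b_1 \in A^{\rhd}\{T/q\}$ and $\tilde b_2 \in A^{\rhd}\{U/q^{-1}\}$; then the image of $\tilde b_1 - \tilde b_2$ in the Tate algebra $A^{\rhd}\{T/q, U/q^{-1}\}$ lies in the ideal $(T-f, 1-fU)$, and iteratively subtracting multiples of $T-f$ from $\tilde b_1$ (to eliminate positive powers of $T$) and multiples of $1-fU$ from $\tilde b_2$ (to eliminate positive powers of $U$) should produce a common constant in $A^{\rhd}$, in the spirit of the splitting trick used in the proof of Lemma~\ref{L:H1 for simple Laurent covering}. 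Lemma~\ref{L:noetherian} is indispensable here: it ensures that the ideals $(T-f)$ and $(1-fU)$ are closed, that the quotient norms on $B_1^{\rhd}$, $B_2^{\rhd}$, $B_{12}^{\rhd}$ are well-behaved, and that the iterative subtractions converge inside the relevant Tate algebras.

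The main obstacle is controlling this iterative clearing procedure. Unlike in Lemma~\ref{L:H1 for simple Laurent covering}, where the differential acts inside a free Tate algebra and the decomposition is essentially into positive and negative degrees, here the couplings $T=f$ and $1=fU$ mix the bidegrees in $T$ and $U$, and one must carefully track convergence across both series expansions. In the pure Tate case one can normalize using a topologically nilpotent unit, and in the stably uniform case (Theorem~\ref{T:stably uniform sheafy}) one bypasses explicit decomposition via spectral norm estimates; with only noetherianness, the iteration must be controlled using the closedness and completeness inputs from Lemma~\ref{L:noetherian}, supplemented as needed by the flatness of $A^{\rhd}[T] \to A^{\rhd}\{T/r\}$ from Corollary~\ref{C:series ring flat} to swap between algebraic and topological reasoning.
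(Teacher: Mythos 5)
Your reduction to simple Laurent coverings via Lemma~\ref{L:acyclicity template}, the observation that the really strongly noetherian property passes to rational localizations, and your treatment of the two endpoints of the sequence (surjectivity at $B_{12}^{\rhd}$ from Lemma~\ref{L:H1 for simple Laurent covering}, injectivity at $A^{\rhd}$ from Corollary~\ref{C:noetherian injective}) all match the paper exactly. The problem is the middle term. You describe an iterative clearing procedure modelled on Huber's strongly noetherian argument, and then you yourself flag that controlling its convergence is ``the main obstacle'' without resolving it. As written, that is a genuine gap: the step where the proof could fail is precisely the one you leave open.

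The paper closes this step without any iteration, by running the proof of Theorem~\ref{T:stably uniform sheafy} (i.e., the argument of \cite[\S 8.2.3]{BGR}) with Lemma~\ref{L:noetherian}(a) substituted for Lemma~\ref{L:uniform ideal closure}. Concretely, one forms the three-row commutative diagram whose middle row is
\[
0 \to A^{\rhd} \to A^{\rhd}\{T/q\} \oplus A^{\rhd}\{U/q^{-1}\} \to A^{\rhd}\{T/q,U/q^{-1}\}/(TU-1) \to 0
\]
and whose top row consists of the principal ideals $(T-f)$ and $(1-fU)$ in these rings. The top two rows are exact by the \emph{one-shot} decomposition of a Laurent series into its nonnegative and negative parts --- the same computation as in Lemma~\ref{L:H1 for simple Laurent covering}, applied once, with no iteration to control. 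The columns are exact because the ideals $(T-f)$, $(1-fU)$, and $(T-f) \bmod (TU-1)$ are closed, which in your setting is exactly Lemma~\ref{L:noetherian}(a). A diagram chase then yields exactness at $B_1^{\rhd} \oplus B_2^{\rhd}$. So the ingredients you assembled (the splitting trick and the closedness of the ideals) are the right ones; what is missing is the realization that they should be combined in a diagram chase rather than fed into an iterative approximation, which sidesteps the convergence issue you could not resolve. The appeal to Corollary~\ref{C:series ring flat} in your last paragraph is not needed for this step.
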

\begin{proof}
By Definition~\ref{D:rational subspace readic} and Lemma~\ref{L:rational subspace readic}, the really strongly noetherian property propagates to rational subspaces.
We may thus follow the proof of Theorem~\ref{T:stably uniform sheafy}
after replacing
Theorem~\ref{T:Gel'fand spectrum} with Corollary~\ref{C:noetherian injective}
and Lemma~\ref{L:uniform ideal closure} with
Lemma~\ref{L:noetherian}.
\end{proof}

We may also upgrade Theorem~\ref{T:weak Kiehl} to obtain an extension of Kiehl's theorem on coherent sheaves (see Remark~\ref{R:Kiehl} and Remark~\ref{R:no Kiehl}).
\begin{theorem} \label{T:stably noetherian Kiehl}
Let $(A^{\rhd}, A^{\Gr})$ be a really strongly noetherian affinoid Banach ring.
Then the global sections functor induces an equivalence of categories between coherent $\calO$-modules
and finite $A^{\rhd}$-modules.
\end{theorem}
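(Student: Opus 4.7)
The plan is to follow the classical Kiehl strategy, promoting Theorem~\ref{T:weak Kiehl} from finite projective modules to arbitrary finite modules by exploiting Lemma~\ref{L:noetherian} and Corollary~\ref{C:noetherian flat1}. For a finite $A^{\rhd}$-module $M$, define $\widetilde{M}$ as the presheaf sending a rational subspace $U$ represented by $(A^{\rhd}, A^{\Gr}) \to (B^{\rhd}, B^{\Gr})$ to $M \otimes_{A^{\rhd}} B^{\rhd}$, and arbitrary open $U$ to the inverse limit over enclosed rational subspaces. The goal is to check that $M \mapsto \widetilde{M}$ is a quasi-inverse to the global sections functor.

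First I would verify that $\widetilde{M}$ is a sheaf of $\calO$-modules with vanishing higher \v{C}ech cohomology on rational subspaces. By Lemma~\ref{L:acyclicity template} and Lemma~\ref{L:Tate reduction}, this reduces to exactness of
\[
0 \to M \to (M \otimes_{A^{\rhd}} B_1^{\rhd}) \oplus (M \otimes_{A^{\rhd}} B_2^{\rhd}) \to M \otimes_{A^{\rhd}} B_{12}^{\rhd} \to 0
\]
for every simple Laurent covering. One obtains this by tensoring the exact sequence of Theorem~\ref{T:Tate sheaf property for structure sheaf} with $M$ over $A^{\rhd}$: flatness of $A^{\rhd} \to B_1^{\rhd} \oplus B_2^{\rhd}$ (Corollary~\ref{C:noetherian flat1}(a)) preserves exactness in positive degrees, while faithful flatness (Corollary~\ref{C:noetherian flat1}(b)) preserves injectivity at the left.

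For essential surjectivity, let $\calF$ be a coherent $\calO$-module. Choose a finite covering of $X$ by rational subspaces $\{U_i\}$ on which $\calF$ is the sheafification of a finite module over the corresponding $B_i^{\rhd}$; by Lemma~\ref{L:Tate reduction} refine to a simple Laurent cover, and induct down to the two-piece case handled by a glueing square \eqref{eq:glueing square}. By Theorem~\ref{T:Tate sheaf property for structure sheaf} combined with Corollary~\ref{C:noetherian flat1}, this is a flat glueing square of noetherian Banach rings, so one may invoke the finite-module version of \cite[Proposition~2.7.5]{kedlaya-liu1}: given finite modules $M_1, M_2, M_{12}$ over $B_1^{\rhd}, B_2^{\rhd}, B_{12}^{\rhd}$ with compatible isomorphisms, $M := \ker(M_1 \oplus M_2 \to M_{12})$ is a finite $A^{\rhd}$-module satisfying $M \otimes_{A^{\rhd}} B_i^{\rhd} \cong M_i$. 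Full faithfulness then follows from the sheaf property combined with Lemma~\ref{L:noetherian}(c), which ensures that all morphisms of finite modules are automatically strict and thus determined by their values on global sections.

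The main obstacle is the verification of the glueing step for finite (rather than finite projective) modules. Concretely, one must show that $M = \ker(M_1 \oplus M_2 \to M_{12})$ is finitely generated over $A^{\rhd}$ and that the natural map $M \otimes_{A^{\rhd}} B_i^{\rhd} \to M_i$ is an isomorphism. Here the really strongly noetherian hypothesis is indispensable: it ensures via Lemma~\ref{L:noetherian} that the relevant kernels and cokernels are well-behaved topologically, while flatness from Corollary~\ref{C:noetherian flat1} controls the base-change behaviour. Once this is in place, one may conclude by faithfully flat descent, using Corollary~\ref{C:noetherian flat1}(b) to reduce questions about $M$ to questions about the $M_i$, which are finite by hypothesis.
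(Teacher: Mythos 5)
The first half of your proposal (acyclicity of $\widetilde{M}$ via flatness, and full faithfulness via faithfully flat descent along $A^{\rhd} \to B_1^{\rhd} \oplus B_2^{\rhd}$) is sound and consistent with the paper's framework. The problem is in essential surjectivity, which is where all the work lies, and where your argument has a genuine gap. You invoke ``the finite-module version of \cite[Proposition~2.7.5]{kedlaya-liu1}'' as if it were available; it is not. That proposition applies to finite \emph{projective} modules (this is exactly what powers Theorem~\ref{T:weak Kiehl}), and the assertion that for arbitrary finite modules $M_1, M_2, M_{12}$ the kernel $M = \ker(M_1 \oplus M_2 \to M_{12})$ is finite over $A^{\rhd}$ with $M \otimes_{A^{\rhd}} B_i^{\rhd} \cong M_i$ is precisely the content of the theorem you are trying to prove, not a tool you can cite. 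The paper is explicit about this: from \cite[Lemma~2.7.4]{kedlaya-liu1} one only gets exactness of $0 \to M \to M_1 \oplus M_2 \to M_{12} \to 0$ and \emph{surjectivity} of the base-change maps $M \otimes_{A^{\rhd}} B_*^{\rhd} \to M_*$, and it ``does not immediately follow that these maps are injective or that $M$ is finitely generated.''

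Your closing appeal to faithfully flat descent does not repair this, because it is circular: descending finite generation of $M$ from the $M_i$ requires already knowing $M \otimes_{A^{\rhd}} B_i^{\rhd} \cong M_i$, which is the missing isomorphism. The paper's actual argument is the classical Kiehl bootstrap, which your proposal omits. First, surjectivity of $M \otimes_{A^{\rhd}} B_*^{\rhd} \to M_*$ shows that $\calF$ is generated by finitely many global sections, giving a surjection $\calO^{\oplus n} \to \calF$. Its kernel $\calG$ has sections $N_*$ over $S_-, S_+, S_- \cap S_+$ that are \emph{finite} modules by the really strongly noetherian hypothesis, and whose base-change maps $N_i \otimes_{B_i^{\rhd}} B_{12}^{\rhd} \to N_{12}$ are now \emph{isomorphisms} by flatness (Corollary~\ref{C:noetherian flat1}) applied to the defining exact sequences. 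Running the global-generation argument a second time on $\calG$ yields a presentation $\calO^{\oplus m} \to \calO^{\oplus n} \to \calF \to 0$, and only then does right-exactness of the tensor product identify $\calF$ with the sheaf attached to a finite $A^{\rhd}$-module. Without this two-step presentation argument your proof does not go through.
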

\begin{proof}
Again, we may use Lemma~\ref{L:Tate reduction} to reduce to the case where notation is as in Lemma~\ref{L:H1 for simple Laurent covering} and  and one is given a sheaf $\calF$ whose restrictions to $S_-, S_+, S_- \cap S_+$ correspond to finite modules $M_1, M_2, M_{12}$ over $B_1^{\rhd}, B_2^{\rhd}, B_{12}^{\rhd}$, respectively.
By Theorem~\ref{T:very strongly noetherian}, $(A^{\rhd}, A^{\Gr})$ is sheafy, so
the diagram \eqref{eq:glueing square} is again a glueing square in the sense of \cite[Definition~2.7.3]{kedlaya-liu1}.
Let $M$ be the kernel of the map $M_1 \oplus M_2 \to M_{12}$ given by $(m_1, m_2) \mapsto m_1 - m_2$. By \cite[Lemma~2.7.4]{kedlaya-liu1},  the sequence 
\[
0 \to M \to M_1 \oplus M_2 \to M_{12} \to 0
\]
is exact and the induced maps $M \otimes_{A^{\rhd}} B_*^{\rhd} \to M_*$ for $* = 1,2,12$ are surjective; however, it does not immediately follow that these maps are injective or that $M$ is finitely generated.

However, we do know that the sheaf $\calF$ is globally finitely generated (by some finitely generated submodule of $M$),
so we may choose a surjection $\calO^{\oplus n} \to \calF$ of $\calO$-modules.
Let $\calG$ be the kernel of this surjection and put $N_1 = \calG(S_-), N_2 = \calG(S_+), N_{12} = \calG(S_- \cap S_+)$. By definition, the sequences
\[
0 \to N_* \to B_*^{\rhd \oplus n} \to M_* \to 0\qquad (* = 1,2,12)
\]
are exact; by the really strongly noetherian hypothesis, $N_*$ is a finitely generated $B_*^{\rhd}$-module. By Corollary~\ref{C:noetherian flat1},
the induced maps $N_i \otimes_{B_i^{\rhd}} B_{12}^{\rhd} \to N_{12}$ are isomorphisms.
We may thus repeat the previous argument to see that $\calG$ is globally finitely generated; that is, there exists an exact sequence of the form
\[
\calO^{\oplus m} \to \calO^{\oplus n} \to \calF \to 0.
\]
We may now take global sections to obtain a finite $A^{\rhd}$-module 
$\coker(A^{\rhd \oplus m} \to A^{\rhd \oplus n})$ whose associated sheaf is isomorphic (by the right exactness of tensor products) to $\calF$.
\end{proof}

\begin{remark} \label{R:Kiehl}
In the case of a Tate affinoid algebra over an ultrametric field (see 
Definition~\ref{D:has enough seminorms2}), Theorem~\ref{T:stably noetherian Kiehl} specializes to Kiehl's original glueing theorem for coherent sheaves 
\cite[Theorem~9.4.3/3]{BGR}, modulo the comparison of Grothendieck topologies (Theorem~\ref{T:affinoid patch dense1}). The case of a Berkovich affinoid algebra reduces to the case of a Tate affinoid algebra using the technique of Lemma~\ref{L:valuation intersection}, again modulo comparison of topologies (Theorem~\ref{T:affinoid patch dense2}).
\end{remark}

\begin{remark} \label{R:no Kiehl}
The analogue of Theorem~\ref{T:stably noetherian Kiehl} for affinoid f-adic rings 
would state that for $(A^{\rhd}, A^+)$ an affinoid f-adic ring such that $A^{\rhd}$ is strongly noetherian, the global sections functor induces an equivalence of categories between coherent $\calO$-modules on
$\Spa(A^{\rhd}, A^+)$ and finite $A^{\rhd}$-modules.
The status of this statement is unclear to us; the special case where $A^{\rhd}$ is Tate will be treated in an upcoming sequel to \cite{kedlaya-liu1}, while the special case where $A^{\rhd}$ is really strongly noetherian follows from
Theorem~\ref{T:stably noetherian Kiehl}. Some additional results in this direction can be found in \cite{fujiwara-kato}.
\end{remark}

\section{Comparison of Grothendieck topologies}
\label{subsec:comparison}

We now study the relationship between Gel'fand spectra and readic spectra, following the study of the relationship between rigid and adic spaces made by Huber \cite[\S 4]{huber1} and van der Put and Schneider \cite{vanderput-schneider}.

\begin{defn} 
For $A$ a nonarchimedean normed ring, 
define the \emph{strictly special G-topology} (resp. the \emph{special G-topology}) on
$\calM(A)$ by taking the admissible open subsets to be the finite unions of strictly rational subspaces
(resp.\ rational subspaces) of $\calM(A)$ and taking the admissible coverings to be the finite set-theoretic coverings. Both G-topologies are prespectral;
the special G-topology is also $T_0$. 
\end{defn}

\begin{defn} \label{D:has enough seminorms1}
For $A = (A^{\rhd}, A^{\Gr})$ an affinoid normed ring,
let $i: \calM(A^{\rhd}) \to \Spra(A)$ be the natural inclusion obtained by viewing each real seminorm as a reified semivaluation. This map is continuous for the special G-topology  on $\calM(A^{\rhd})$, but not the natural topology.
\end{defn}

\begin{defn} \label{D:has enough seminorms2}
For $A = (A^{\rhd}, A^{+})$ an affinoid f-adic ring,
view $A^{\rhd}$ as a nonarchimedean normed ring via Remark~\ref{R:norm on f-adic ring}.
Let $j: \calM(A^{\rhd}) \to \Spa(A)$ be the natural map obtained by viewing each real seminorm as a semivaluation. 
If $A^{\rhd}$ is Tate (but not necessarily otherwise; see Remark~\ref{R:rational subspace definition}), this map is continuous for the strictly special G-topology  on $\calM(A^{\rhd})$, but not the natural topology.
\end{defn}

In general, the maps $i,j$ do not hit enough points of $\Spra(A)$ or $\Spa(A)$ to make it possible to recover the structure of these spaces from $\calM(A^{\rhd})$. One crucial exception is the case of classical affinoid algebras.
For the remainder of \S\ref{subsec:comparison}, let $F$ be an ultrametric field.
\begin{defn} \label{D:affinoid algebras}
A \emph{Tate affinoid algebra} over $F$ is a Banach algebra $A$ over $F$ which can be realized as a topological quotient of $F\{T_1,\dots,T_n\}$ for some $n$.
If the norm on $F$ is nontrivial, then every maximal ideal of $A$ has residue field finite over $F$ \cite[Corollary~6.1.2/3]{BGR}, so we obtain a natural inclusion $\Maxspec(A) \to \calM(A)$.

A \emph{Berkovich affinoid algebra} over $F$ is a Banach algebra over $F$ which can be realized as a topological quotient of $F\{T_1/r_1,\dots,T_n/r_n\}$ for some $n$ and some $r_1,\dots,r_n > 0$.
\end{defn}

\begin{theorem} \label{T:affinoid patch dense1}
Assume that the norm on $F$ is nontrivial, and let $A$ be a reduced Tate affinoid algebra over $F$.
\begin{enumerate}
\item[(a)]
The Banach ring $A$ is uniform.
\item[(b)]
For any homomorphism $(A, A^\circ) \to (B, B^+)$ representing a rational subspace of $\Spa(A,A^\circ)$, we have $B^+ = B^\circ$.
\item[(c)]
The image of the composition $\Maxspec(A) \to \calM(A) \stackrel{j}{\to} \Spa(A,A^\circ)$
is dense for the patch topology.
\item[(d)]
Equip $\calM(A)$ with the strictly special G-topology. Equip $\Maxspec(A)$ with the subspace topology from $\calM(A)$. Then the images of $\Maxspec(A)$ and $\calM(A)$ under the functor $\catPrespec \to \catSpec$ of Corollary~\ref{C:prespec to spec} may be naturally identified with each other and with $\Spa(A, A^{\circ})$.
\end{enumerate}
\end{theorem}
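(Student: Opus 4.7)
The plan is to handle the four parts in sequence, each leveraging classical facts about Tate affinoid algebras over a nontrivially valued ultrametric field. For part (a), I would invoke the standard structure theorem: by \cite[\S 6.2.4]{BGR}, the supremum seminorm on a reduced Tate affinoid algebra $A$ is a norm equivalent to the residue norm coming from any admissible surjection $F\{T_1,\dots,T_n\} \twoheadrightarrow A$. Combined with Theorem~\ref{T:Gel'fand spectrum} (identifying spectral and supremum seminorms), this shows the given norm on $A$ is equivalent to the spectral seminorm, so $A$ is uniform.

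For part (b), I would first argue that $B$ is again a reduced Tate affinoid algebra over $F$: Tate-ness is immediate from the presentation in Definition~\ref{D:rational subspace adic}, while reducedness can be checked via injectivity of $B \to \prod_{v \in \Spa(B, B^+)} \calH(v)$, which propagates from the analogous statement for $A$ using Lemma~\ref{L:rational subspace}. Applying part (a) to $B$ then gives $B^\circ = \{b \in B : \left|b\right|_{\mathrm{sp}} \leq 1\}$. The inclusion $B^+ \subseteq B^\circ$ is automatic from the definition, and for the reverse inclusion I would invoke the classical identification \cite[\S 6.3.4]{BGR} of $B^\circ$ as the integral closure in $B$ of the image of $F^\circ\{T_1,\dots,T_n\}$ through the defining surjection, which equals $B^+$ by construction.

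For part (c), which I expect to be the main obstacle, the plan is to show that every nonempty patch-open subset $W$ of $\Spa(A, A^\circ)$ contains a classical point. A base for the patch topology is given by sets $W = U \setminus (U_1 \cup \cdots \cup U_m)$ where $U$ and the $U_i$ are rational subspaces. Using Lemma~\ref{L:rational subspace} together with part (b) to represent $U$ as $\Spa(B, B^\circ)$ for a reduced Tate affinoid $B$, the goal becomes to exhibit a maximal ideal of $B$ whose associated valuation avoids the $U_i$. I would establish this by descending to the Gel'fand spectrum: the composition $\Maxspec(B) \to \calM(B) \to \Spa(B, B^\circ)$ has dense image in $\calM(B)$ (Berkovich density of rigid points, using nontriviality of the norm on $F$), and each preimage in $\calM(B)$ of $U_i$ is a proper closed subset; a union of such subsets fails to cover $\calM(B)$ since, via the surjectivity of $\calM(B) \to \Spa(B, B^\circ)$ onto rational subspaces combined with Theorem~\ref{T:Gel'fand spectrum}, it would otherwise force $U_1 \cup \cdots \cup U_m$ to cover all of $\Spa(B, B^\circ)$.

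For part (d), once (c) is in hand, I would invoke the uniqueness criterion in Remark~\ref{R:adjunction}(a). The strictly rational subspaces of $\calM(A)$ correspond under the map $j$ of Definition~\ref{D:has enough seminorms2} to rational subspaces of $\Spa(A, A^\circ)$, which form a basis of quasicompact opens (Theorem~\ref{T:adic spectrum is spectral}); combined with patch-density of the image from (c), the prespectral-to-spectral adjunction of Corollary~\ref{C:prespec to spec} applied to $\calM(A)$ (equipped with the strictly special G-topology) must produce $\Spa(A, A^\circ)$. The same conclusion for $\Maxspec(A)$ then follows from the classical density of $\Maxspec(A)$ in $\calM(A)$, which ensures that $\Maxspec(A)$ and $\calM(A)$ have the same spectralification.
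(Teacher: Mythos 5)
Parts (a), (b), and (d) are essentially fine and track the paper's route: (a) is the classical result \cite[Theorem~6.2.4/1]{BGR}; (b) is the standard argument behind \cite[Lemma~2.5.9]{kedlaya-liu1} (though note that reducedness of the rational localization $B$, which you need before applying (a) to $B$, is itself a nontrivial classical fact and your proposed verification via injectivity into $\prod_v \calH(v)$ quietly presupposes uniformity of $B$); and (d) is exactly the paper's argument via Remark~\ref{R:adjunction}(a). The problem is part (c), which you correctly identify as the main obstacle but do not actually overcome.

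The circular step is this: to show that $\bigcup_i j^{-1}(U_i) \neq \calM(B)$ you assert that equality ``would force $U_1\cup\cdots\cup U_m$ to cover all of $\Spa(B,B^\circ)$,'' justified by surjectivity of $j$ onto rational subspaces plus Theorem~\ref{T:Gel'fand spectrum}. But the implication ``a finite family of rational subspaces whose preimages cover $\calM(B)$ must cover $\Spa(B,B^\circ)$'' \emph{is} the hard content of (c), in a different guise: the complement $\Spa(B,B^\circ)\setminus\bigcup_i U_i$ is only a patch-open (constructible) set, not a rational subspace, so knowing that every nonempty rational subspace meets $j(\calM(B))$ tells you nothing about whether this complement does. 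Nor does a generization argument rescue this: a point $v\in U\setminus\bigcup_i U_i$ has a rank-one maximal generization $\tilde v\in j(\calM(B))$, and openness of the $U_i$ forces membership to propagate from $v$ to $\tilde v$, not the other way, so $\tilde v$ may well lie in some $U_i$ (this is exactly what happens for the type~5 points of Example~\ref{exa:disc1}). The genuine proofs of this step require substantive input: Huber proves it via quantifier elimination in ACVF (\cite[Theorem~4.1, Lemma~4.3]{huber1}), and Kedlaya--Liu via a careful analysis of rational coverings (\cite[Lemma~2.5.11, Corollary~2.5.13]{kedlaya-liu1}). The paper simply cites these sources for (c); your proposal needs either to do the same or to supply one of these arguments.
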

\begin{proof}
For (a), see \cite[Theorem 6.2.4/1]{BGR}.
For (b), see \cite[Lemma~2.5.9]{kedlaya-liu1}.
For (c), see \cite[Corollary~2.5.13]{kedlaya-liu1} or \cite[Corollary~4.2]{huber1}.
For (d), one may either see \cite[Corollary~4.4, Corollary~4.5]{huber1} or argue as follows.
Note that by construction, the map $\Maxspec(A) \to \Spa(A,A^{\circ})$ is spectral and $\Maxspec(A)$ admits a basis of quasicompact open subsets
each of which is the inverse image of a quasicompact open subset of $\Spa(A, A^{\circ})$.
By (c) and Remark~\ref{R:adjunction}, we obtain a natural isomorphism $\Spec(D(\Maxspec(A))) \cong \Spa(A, A^{\circ})$;
by similar reasoning, we obtain the isomorphism
$\Spec(D(\calM(A))) \cong \Spa(A, A^{\circ})$.
\end{proof}


\begin{theorem} \label{T:affinoid patch dense2}
Let $A$ be a reduced Berkovich affinoid algebra over $F$.
\begin{enumerate}
\item[(a)]
The Banach ring $A$ is uniform.
\item[(b)]
For any homomorphism $(A, \Gr A) \to (B, B^{\Gr})$ representing a rational subspace of $\Spra(A,\Gr A)$, we have $B^{\Gr} = \Gr B$.
\item[(c)]
The images of $i: \calM(A) \to \Spra(A,\Gr A)$
and $j: \calM(A) \to \Spa(A, A^\circ)$ are dense for the patch topologies.
\item[(d)]
Equip $\calM(A)$ with the strictly special G-topology. 
Then the image of  $\calM(A)$ under the functor $\catPrespec \to \catSpec$ of Corollary~\ref{C:prespec to spec} may be naturally identified with $\Spa(A, A^\circ)$.
\item[(e)]
Equip $\calM(A)$ with the special G-topology. 
Then the image of  $\calM(A)$ under the functor $\catPrespec \to \catSpec$ of Corollary~\ref{C:prespec to spec} may be naturally identified with $\Spra(A, \Gr A)$.
\end{enumerate}
\end{theorem}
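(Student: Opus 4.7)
The plan is to reduce each assertion to Theorem~\ref{T:affinoid patch dense1} via a base change to a Tate affinoid algebra. Choose an ultrametric field extension $F'/F$ such that $r_1,\dots,r_n \in \left|F'^\times\right|$ (for instance, by iteratively adjoining formal symbols of the requisite norms and completing, possibly after an auxiliary extension to preserve reducedness); then $A' := A \widehat{\otimes}_F F'$ is a Tate affinoid algebra over $F'$ (via a change of variables $T_i = t_i S_i$ for $\left|t_i\right| = r_i$). Lemma~\ref{L:tensor product nonzero} and Theorem~\ref{T:tensor product to fiber product readic} provide compatible surjections $\calM(A') \to \calM(A)$, $\Spra(A', \Gr A') \to \Spra(A, \Gr A)$, and $\Spa(A', A'^\circ) \to \Spa(A, A^\circ)$, under which $i$ and $j$ are compatible. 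For (a), Theorem~\ref{T:affinoid patch dense1}(a) applied to $A'$ yields uniformity of $A'$; this descends to $A$ because the spectral seminorm of $A$ equals the restriction of that of $A'$ (via Theorem~\ref{T:Gel'fand spectrum}). For (b), the rational localization $(A, \Gr A) \to (B, B^{\Gr})$ base-changes to a rational localization $(A', \Gr A') \to (B', B'^{\Gr})$ satisfying $B'^{\Gr} = \Gr B'$ by Theorem~\ref{T:affinoid patch dense1}(b); faithful flatness of $F \to F'$ combined with the compatibility of the formation of $\Gr$ with bounded base extension then yields $B^{\Gr} = \Gr B$.

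For (c), patch density of $j(\calM(A))$ in $\Spa(A, A^\circ)$ reduces to Theorem~\ref{T:affinoid patch dense1}(c) by lifting a nonempty basic constructible set to $\Spa(A', A'^\circ)$ and projecting a resulting point of $\Maxspec(A') \subseteq \calM(A')$ down to $\calM(A)$. For patch density of $i(\calM(A))$ in $\Spra(A, \Gr A)$, any nonempty basic constructible set has the form $U \setminus (V_1 \cup \cdots \cup V_k)$ with $U$ and the $V_j$ rational; replacing $A$ by the rational localization $(B, B^{\Gr})$ representing $U$ via Lemma~\ref{L:rational subspace readic}, the claim becomes $\calM(B) \not\subseteq W_1 \cup \cdots \cup W_k$, where $W_j = V_j \cap U \subseteq \Spra(B, B^{\Gr})$ is rational. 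If this failed, then $\{W_j\}$ would cover $\calM(B)$ by Berkovich rational subspaces, and combining uniformity of $B$ (from (a)) with Theorem~\ref{T:stably uniform sheafy}, Theorem~\ref{T:Tate sheaf property for structure sheaf}, and Berkovich's acyclicity theorem for the G-topology on $\calM(B)$ from \cite{berkovich1} would force the $W_j$ to cover $\Spra(B, B^{\Gr})$ set-theoretically, contradicting $U \setminus \bigcup_j V_j \neq \emptyset$.

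Parts (d) and (e) then follow from (c) via Remark~\ref{R:adjunction}(a): in each case, the natural map from $\calM(A)$ (equipped with the appropriate G-topology) to the target spectral space has patch-dense image by (c), and the preimages of quasicompact open subsets are precisely the (strictly) rational subspaces of $\calM(A)$, which form a basis of the relevant G-topology. I expect the main obstacle to be the final step of (c) at the readic level: unlike in the $\Spa$ case, the fibers of $\Spra(A, \Gr A) \to \calM(A)$ are indexed by graded valuation rings (Corollary~\ref{C:graded valuation rings}) and encode data invisible to real seminorms, so transporting a Berkovich cover of $\calM(B)$ to a set-theoretic cover of $\Spra(B, B^{\Gr})$ requires careful use of the sheafiness and acyclicity inputs invoked above, together with the bijective correspondence between rational subspaces and rational localizations from Lemma~\ref{L:rational subspace readic}.
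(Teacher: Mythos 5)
Your overall architecture (base change to the Tate case for (a)--(b), patch density for (c), then Remark~\ref{R:adjunction} for (d)--(e)) matches the paper for (b), (d), (e), and your treatment of those parts is essentially correct. For (a) the paper simply cites Berkovich's uniformity theorem for reduced affinoid algebras \cite[Proposition~2.1.4(ii)]{berkovich1}; your descent from $A' = A \widehat{\otimes}_F F'$ is a workable alternative but leaves a loose end, since Theorem~\ref{T:affinoid patch dense1}(a) requires $A'$ to be \emph{reduced}, and reducedness of an affinoid algebra need not survive base field extension when $F$ is imperfect; "possibly after an auxiliary extension" does not resolve this, so if you take this route you must either prove the requisite geometric reducedness or argue around it.

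The genuine gap is in the $\Spra$ half of (c). Your entire argument funnels into the claim that if finitely many rational subspaces $W_1,\dots,W_k$ of $\Spra(B, B^{\Gr})$ cover $i(\calM(B))$, then they cover $\Spra(B,B^{\Gr})$ set-theoretically, and you assert this follows from uniformity, sheafiness (Theorems~\ref{T:stably uniform sheafy} and~\ref{T:Tate sheaf property for structure sheaf}) and Berkovich's acyclicity theorem. That inference is not justified: the \v{C}ech complex attached to $\{W_j\}$ depends only on the representing rings $C_j$, which are the same whether one views the $W_j$ inside $\calM(B)$ or inside $\Spra(B,B^{\Gr})$, so exactness of that complex carries no information about whether a commensurable reified valuation $v$ lying over some $\alpha \in \calM(B)$ with $\alpha(g_i) = r_i\alpha(g_0)$ (the tie case, where $v(g_i)$ may exceed $r_i v(g_0)$ infinitesimally) is captured by one of the $W_j$. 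Indeed the logical order in the literature is the reverse: one first proves the point-set covering statement and then uses it to compare acyclicity for the two topologies. This covering statement is the real content of the theorem. The paper closes it by reducing, via base extension along $E = F\{T/r, U/r^{-1}\}/(TU-1)$ and \cite[Lemma~2.2.9]{kedlaya-liu1}, to the case of \emph{strictly} rational subspaces, and then invoking \cite[Corollary~2.5.13]{kedlaya-liu1}, whose proof rests on an approximation argument in the spirit of Remark~\ref{R:rational subspace nearby generators} (equivalently, quantifier elimination in ACVF; see Remark~\ref{R:ACVF}) rather than on Tate acyclicity. You correctly identify this step as "the main obstacle," but the proposal does not actually overcome it.
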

\begin{proof}
For (a), see \cite[Proposition~2.1.4(ii)]{berkovich1}.
To prove (b), we first observe that if $A$ is a topological quotient of $F\{T_1/r_1,\dots,T_n/r_n\}$ for some $r_1,\dots,r_n > 0$ in the divisible closure of $\left| F^\times \right|$, then $A$ is a Tate affinoid algebra.
We next observe that if $r>0$ is not in the divisible closure of $\left| F^\times \right|$, then $E = F\{T/r, U/r^{-1}\}/(TU-1)$ is again an ultrametric field
and $\Gr E = (\Gr F)[\overline{T}^{\pm}]$ with $\overline{T}$ placed in degree $r$. Now consider a homomorphism as in (b),
and put $A_E = A \widehat{\otimes}_F E$ and $B_E = B \widehat{\otimes}_F E$.
Then on one hand, $\Gr A_E = (\Gr A) \otimes_{\Gr F} \Gr E$. On the other hand, if we put
$B_E^{\Gr} = B^{\Gr} \otimes_{\Gr F} \Gr E$, then $B_E^{\Gr}$ is integrally closed,
$(A_E, \Gr A_E) \to (B_E, B_E^{\Gr})$ again represents a rational subspace of
$\Spra(A_E, \Gr A_E)$ (described by the same parameters), and
$B_E^{\Gr} = \Gr B_E$ if and only if $B^{\Gr} = \Gr B$.
We may thus reduce (b) to Theorem~\ref{T:affinoid patch dense1}(b).

To prove (c), we treat only the case of $i$, the case of $j$ being similar (and easier).
It suffices to check that for $V \subseteq U$ an inclusion of rational subspaces of $\Spra(A, \Gr A)$ with $i^{-1}(U) = i^{-1}(V)$, we must have $U = V$.
Let $(A, \Gr A) \to (B, B^{\Gr}) \to (C, C^{\Gr})$ be the representing homomorphisms.
By (b), $B^{\Gr} = \Gr B$ and $C^{\Gr} = \Gr C$, so it suffices to check that $B=C$.
However, for any ultrametric field $E$ containing $F$, by \cite[Lemma~2.2.9]{kedlaya-liu1} we can check that $B \to C$ is an isomorphism by checking that $B \widehat{\otimes}_F E \to C \widehat{\otimes}_F E$ is an isomorphism. As in (b), we may thus reduce to the case where $U, V$ are strictly rational subspaces; in this case we may appeal directly to \cite[Corollary~2.5.13]{kedlaya-liu1} to conclude.

The proofs of (d) and (e) are similar, so we omit the former. To prove (e),
note that $i$ is spectral and $\calM(A)$ admits a basis of quasicompact open subsets for the special G-topology
each of which is the inverse image of a quasicompact open subspace of $\Spra(A, \Gr A)$.
By (c), $i$ also has dense image under the patch topology on $\Spra(A, \Gr A)$. By Remark~\ref{R:adjunction}, we obtain a natural isomorphism $\Spec(D(\calM(A))) \cong \Spra(A, \Gr A)$,
as desired.
\end{proof}

\begin{remark} \label{R:embedding}
The map $j$ is injective when $F$ has nontrivial norm (see Remark~\ref{R:canonical reification}), but may not be injective when the norm on $F$ is trivial (see Example~\ref{exa:disc2}).
\end{remark}

\begin{remark}
The conclusion of Theorem~\ref{T:affinoid patch dense1}(b) holds also for affinoid subdomains; see \cite[Proposition~2.5.14(a)]{kedlaya-liu1}. One may similarly extend
Theorem~\ref{T:affinoid patch dense2}(b) to affinoid subdomains; we omit futher details.
\end{remark}

\begin{remark} \label{R:ACVF}
As in Remark~\ref{R:extend two valuations}, the arguments found in \cite{huber1}
in the direction of Theorem~\ref{T:affinoid patch dense1} rely on elimination of quantifiers in the first-order 
theory of algebraically closed valued fields (ACVF); see especially the proof of
\cite[Theorem~4.1]{huber1}. 
One can take a similar approach to Theorem~\ref{T:affinoid patch dense2} by establishing elimination of quantifiers in the theory of algebraically closed reified valued fields (which we propose to call ACRVF); this should follow easily from the corresponding result for ACVF since one is simply adding one constant to the language corresponding to the image of each positive real number in the value group.
(A distinct but possibly related theory is the theory ACV$^2$F of \cite[\S 8]{hrushovski-loeser}.) On the other hand, it is also possible to deduce 
Theorem~\ref{T:affinoid patch dense2} directly from elimination of quantifers in ACVF, by making a base extension from $F$ to a suitably large overfield as in the proof of Lemma~\ref{L:valuation intersection}; this approach is the one taken in
\cite{ducros-thuillier}.
\end{remark}

\section{Closed unit discs}

We illustrate the previous discussion by making all of the constructions explicit in
a simple but instructive case. The reader may find it useful to contrast this situation with the corresponding picture in the case of adic spectra
\cite[Example~2.20]{scholze1}.

\begin{example} \label{exa:disc1}
Let $K$ be an algebraically closed ultrametric field with residue field $k$,
and equip $A = K[T]$ with the Gauss norm.
The structure of $\calM(A)$ is well-known; it is a contractible space which is an inverse limit
of finite trees. 
A detailed treatment can be found in \cite[Chapter~1]{baker-rumely};
here, we only mention that each point of $\calM(A)$ is of exactly one of the following types.
(This labeling is due to Berkovich \cite[Proposition~1.4.4]{berkovich1}.)
\begin{enumerate}
\item[1.]
A semivaluation factoring through $A/\frakm$ for some maximal ideal $\frakm$.
\item[2.]
The $\rho$-Gauss valuation on $K[T-z]$ for some $z \in K$ and some $\rho \in (0,1] \cap |K^\times|$. (This includes the Gauss valuation, for which $\rho=1$; the corresponding point is called the \emph{Gauss point} or \emph{maximal point}.)
\item[3.]
The $\rho$-Gauss valuation on $K[T-z]$ for some $z \in K$ and some $\rho \in (0,1] \setminus |K^\times|$.
\item[4.]
None of the above. Any such valuation can be interpreted as the infimum of the supremum valuations
over a decreasing sequence of closed discs with empty intersection.
\end{enumerate}
For any choice of $A^+$, the natural map $\calM(A) \to \Spa(A,A^+)$ is injective
(see Remark~\ref{R:embedding}).
The points not in the image form a fifth type.
\begin{enumerate}
\item[5.]
A valuation of rank 2 which specializes a point of type 2.
\end{enumerate}
To describe these points more explicitly, choose $x \in \calM(A)$ of type 2 for some particular $z, \rho$. The residue field $k_x$ of $\calH(x)$ can then be identified with
$k(\overline{T})$ with $\overline{T}$ being the class of $(T-z)/\lambda$ for some 
$\lambda \in K$ with $\left| \lambda \right| = \rho$. This defines an identification of $k_x$ with the function field of $\mathbb{P}^1_k$, but this identification can be modified by changing the choices of $z, \lambda$; consequently, only the point at infinity on $\mathbb{P}^1_k$ is distinguished. What we can do canonically is to identify the finite places of $k_x$ with the branches of $\calM(A)$ below $x$; each such place then defines a discrete valuation on $k_x$, which we may compose with $x$ to form a reified valuation of rank 2
specializing $x$. 
(For example, the branch of $\calM(A)$ at $x$ containing the type 1 point defined by the ideal $(T-z)$
corresponds to a specialization of $x$ in which the valuation of $(T-z)/\lambda$ changes from being equal to 1 to being infinitesimally smaller than 1.)
If $x$ is not the Gauss point, then the infinite place of $k_x$ corresponds to the branch of $\calM(A)$ above $x$, and we similarly obtain one more type 5 point specializing $x$.
By contrast, if $x$ is the Gauss point, one gets additional points of type 5 specializing $x$
if and only if $A^+ \neq A^\circ$; see Example~\ref{exa:upper branch} for a typical example.

For any choice of $A^{\Gr}$ whose $r=1$ component equals $A^+/A^{\circ\circ}$,
there is also a natural map $\Spa(A,A^+) \to \Spra(A,A^{\Gr})$. The complement of the image of this map
consists of points of a sixth type.
\begin{enumerate}
\item[6.]
A valuation of rank 2 which specializes a point of type 3.
\end{enumerate}
For $x$ of type 3, there are exactly two points of type 6 specializing to $x$,
corresponding to the branches of $\calM(A)$ above and below $x$.
To wit, if $x$ is defined by some $z,\rho$, then $T-z$ has valuation equal to $\rho$ according to $x$, but infinitesimally larger or smaller than $\rho$ according to the specializations.
\end{example}


\begin{remark} \label{R:no density in disc example}
The difference between $\Spra(A,A^{\Gr})$ and the rational subspace
$\{v \in \Spra(A,A^{\Gr}): v(T) \leq 1\}$ consists only of those points of type 5 specializing the Gauss point not corresponding to branches below $x$.
In particular, $\calM(A)$ does not meet this difference. 
\end{remark}

\begin{example} \label{exa:upper branch}
In Example~\ref{exa:disc1}, the ring $\Gr A$ may be identified with the polynomial ring
$(\Gr K)[T]$ with $T$ placed in degree $r=1$. Under this identification, take $A^{\Gr} = \Gr K \subset \Gr A$. Let $\Gamma$ be the reified value group of the valuation on $K$, and let $\Gamma'$ be the lexicographic product $\Gamma \times \RR$ equipped with the reification inherited from $\Gamma$. Then $\Spra(A, A^{\Gr})$ contains a unique reified valuation $v$ with values in $\Gamma'$ extending the valuation on $K$ and sending $T$ to $(0, 1)$; this is a type 5 point of $\Spra(A, A^{\Gr})$ not corresponding to a branch below the Gauss point and not satisfying $v(T) \leq 1$.
\end{example}

\begin{example}  \label{exa:disc2}
Keep notation from Example~\ref{exa:disc1}, but now with the trivial norm on $K$.
In this case, the structure of $\calM(A)$ is simpler: the tree consists
of branches corresponding to elements of $K$, meeting at the Gauss point.
The lower endpoints of each branch is of type 1; the Gauss point is of type 2; other points are of type 3.
There is again an embedding $\calM(A) \to \Spra(A,A^{\Gr})$; 
the complement of its image consists of points of types 5 (specializing the Gauss point) and 6 (two for each type 3 point).
However, one cannot fit $\Spa(A,A^{\circ})$ in between; it arises
from $\Spra(A,A^{\Gr})$ by removing the type 6 points, then collapsing
the interior of each branch to a point.
\end{example}

\section{Perfectoid algebras and their spectra}
\label{sec:perfectoid}

To conclude, we quickly redevelop the theory of perfectoid algebras in the context of reified adic spectra, following \cite{kedlaya-liu1}. (See \cite{scholze1} and \cite{gabber-ramero-arxiv} for other treatments.) Throughout \S\ref{sec:perfectoid}, fix a prime number $p$.

\begin{defn}
By a \emph{perfect uniform affinoid Banach algebra over $\FF_p$}, we will mean an affinoid seminormed ring $(R^{\rhd}, R^{\Gr})$ such that $R^{\rhd}$ is a perfect (i.e., the Frobenius map is bijective) uniform Banach algebra over $\FF_p$ (viewed as an ultrametric field using the trivial norm). Note that this forces $R^{\Gr}$ to also be perfect.
Note also that $R^{\rhd}$ cannot be both perfect and noetherian unless it is a finite direct sum of perfect fields.
\end{defn}

\begin{theorem} \label{T:perfect uniform}
Let $(R^{\rhd}, R^{\Gr})$ be a perfect uniform affinoid Banach algebra over $\FF_p$.
Let $f: (R^{\rhd}, R^{\Gr}) \to (S^{\rhd}, S^{\Gr})$ be a morphism of affinoid seminormed rings satisfying one of the following conditions.
\begin{enumerate}
\item[(a)]
The morphism $f$ represents a rational subspace of $\Spra(S^{\rhd}, S^{\Gr})$.
\item[(b)]
The homomorphism $R^{\rhd} \to S^{\rhd}$ is finite \'etale
and $S^{\Gr}$ is the integral closure of $R^{\Gr}$ in $\Gr S^{\rhd}$.  
\end{enumerate}
Then $(S^{\rhd}, S^{\Gr})$ is also a perfect uniform affinoid Banach algebra over $\FF_p$.
In particular, $(R^{\rhd}, R^{\Gr})$ is really stably uniform, hence sheafy by
Theorem~\ref{T:stably uniform sheafy}.
\end{theorem}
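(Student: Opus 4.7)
My plan is to verify separately in cases (a) and (b) that $S^{\rhd}$ is perfect and uniform; perfectness of $S^{\Gr}$ is automatic in both cases (integral closure of perfect in perfect is perfect, and the Tate-algebra generators added in case (a) acquire $p$-power roots), and the really stably uniform conclusion then follows from transitivity (a composition of rational localizations is a rational localization by Lemma~\ref{L:rational subspace readic}(b)). Sheafiness is then immediate from Theorem~\ref{T:stably uniform sheafy}.

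For case (a), the difficulty is that the Tate algebra $R^{\rhd}\{T_1/q_1, \dots, T_n/q_n\}$ appearing in Definition~\ref{D:rational subspace readic} is not itself perfect. I would circumvent this by constructing a perfect enlargement: let $\tilde{T}$ be the completion of $R^{\rhd}[T_i^{p^{-k}} : 1 \leq i \leq n,\ k \geq 0]$ under the weighted Gauss norm assigning $T_i^{p^{-k}}$ the weight $q_i^{p^{-k}}$. Since $R^{\rhd}$ is perfect, for each $k$ the element $f_0^{p^{-k}} T_i^{p^{-k}} - f_i^{p^{-k}} \in \tilde{T}$ makes sense, and the completed quotient $\tilde{S}$ of $\tilde{T}$ by the closed ideal generated by all of these is a perfect Banach algebra equipped with a canonical morphism from $(R^{\rhd}, R^{\Gr})$. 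Two applications of the universal property (Lemma~\ref{L:rational subspace readic}(a)) — once to $\tilde{S}$ and once to $S^{\rhd}$ — identify $\tilde{S} \cong S^{\rhd}$. Uniformity then follows because the weighted Gauss norm on $\tilde{T}$ is power-multiplicative, and by Theorem~\ref{T:Gel'fand spectrum} the quotient norm on $S^{\rhd}$ is equivalent to its spectral seminorm, which inherits power-multiplicativity up to a bounded factor.

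For case (b), perfectness of $S^{\rhd}$ is quick: because $\varphi_R$ is an isomorphism on $R^{\rhd}$, the Frobenius pullback $R^{\rhd} \otimes_{R^{\rhd}, \varphi_R} S^{\rhd}$ is a finite étale $R^{\rhd}$-algebra canonically isomorphic to $S^{\rhd}$, and comparing with the relative Frobenius forces $\varphi_S$ to be bijective. The principal obstacle is uniformity. My approach is to equip $S^{\rhd}$ with the quotient norm coming from a finite presentation as a projective $R^{\rhd}$-module, then argue that this norm is equivalent to the spectral seminorm $\left| \bullet \right|_{\mathrm{sp}}$. One direction is tautological. For the other, I would use the trace form of the finite étale extension, which is perfect because $R^{\rhd} \to S^{\rhd}$ is étale, to bound the norm of a general element of $S^{\rhd}$ by a constant multiple of $\sup\{\beta(\bullet) : \beta \in \calM(S^{\rhd})\}$; combining this with Theorem~\ref{T:Gel'fand spectrum} yields uniformity. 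Reducedness of $S^{\rhd}$ (needed so that $\left| \bullet \right|_{\mathrm{sp}}$ is a norm rather than a seminorm) follows from reducedness of $R^{\rhd}$ together with étaleness. The trace-pairing step is the technical core, and is the place where the integral-closure condition on $S^{\Gr}$ in the statement becomes relevant, as it is precisely this condition that ensures the spectral norm on $S^{\rhd}$ interacts correctly with the graded datum.
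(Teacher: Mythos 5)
The paper's own proof is a two-line citation to \cite[Proposition~3.1.7]{kedlaya-liu1} for (a) and \cite[Theorem~3.1.15]{kedlaya-liu1} for (b), and your reconstruction follows the same strategy as those arguments (perfectified Tate algebra for rational localizations; trace form for finite \'etale extensions). Your treatment of (b) is essentially correct: perfectness via the relative Frobenius, and uniformity by writing $x = \sum_i \mathrm{Tr}(x e_i^\vee) e_i$ for a dual basis and bounding $\left|\mathrm{Tr}(z)\right|$ by the spectral seminorm of $z$ using uniformity of $R^{\rhd}$ and Theorem~\ref{T:Gel'fand spectrum} fiberwise. (Your closing remark that the integral-closure condition on $S^{\Gr}$ is what makes the trace pairing work is off the mark --- that condition merely makes $(S^{\rhd},S^{\Gr})$ a legitimate affinoid seminormed ring and plays no role in the norm estimate --- but this is cosmetic.)

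Case (a), however, has a genuine gap at the uniformity step. You assert that uniformity of $\tilde S$ ``follows because the weighted Gauss norm on $\tilde T$ is power-multiplicative'' and that ``by Theorem~\ref{T:Gel'fand spectrum} the quotient norm on $S^{\rhd}$ is equivalent to its spectral seminorm.'' Power-multiplicativity does \emph{not} pass to quotient norms --- if it did, every rational localization of every uniform Banach ring would be uniform and the whole notion of (really) stable uniformity would be vacuous --- and the equivalence of the quotient norm with the spectral seminorm is precisely the \emph{definition} of uniformity, not something Theorem~\ref{T:Gel'fand spectrum} supplies; as written the argument is circular. The correct mechanism, and the reason your construction of $\tilde S$ is nonetheless the right one, is that the ideal you quotient by contains all $p$-power roots of its generators, hence is stable under both $\varphi$ and $\varphi^{-1}$ on the perfect Tate algebra; since $\left| y^p \right| = \left| y \right|^p$ on $\tilde T$, passing to infima over lifts gives $\left| \bar x^p \right|_{\mathrm{quot}} = \left| \bar x \right|_{\mathrm{quot}}^p$ exactly, so the quotient norm equals its own spectral seminorm. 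A second, smaller gap: ``two applications of the universal property'' does not immediately identify $\tilde S$ with $S^{\rhd}$, because an arbitrary complete target $(C^{\rhd}, C^{\Gr})$ with spectrum in $U$ need not contain $p$-power roots of the $T_i$, so $\tilde S$ does not visibly satisfy the universal property of Lemma~\ref{L:rational subspace readic}(a). The fix is to observe that the required roots are $f_i^{p^{-k}}/f_0^{p^{-k}}$, which exist in \emph{every} such $C^{\rhd}$ because $R^{\rhd}$ is perfect and $f_0$ becomes invertible (and satisfy the needed bound $v(f_i^{p^{-k}}/f_0^{p^{-k}}) \leq q_i^{p^{-k}}$ by extracting $p^k$-th roots in the value group, then invoking Lemma~\ref{L:valuation intersection}). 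With these two repairs your argument for (a) matches the one in \cite{kedlaya-liu1}.
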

\begin{proof}
Part (a) is proved as in \cite[Proposition~3.1.7]{kedlaya-liu1}.
Part (b) is a consequence of \cite[Theorem~3.1.15]{kedlaya-liu1}.
\end{proof}

\begin{defn}
A uniform affinoid Banach algebra $(A^{\rhd}, A^{\Gr})$ over $\QQ_p$ is \emph{perfectoid} if 
for all $r \in \RR^+$ and $x \in A^{+, r}$,
there exists $y \in A^{\rhd, \circ, r^{1/p}}$ such that
$x-y^p \in A^{+,r/p}$. Note that this forces $y \in A^{+,r^{1/p}}$
because $A^{\Gr}$ is integrally closed.
\end{defn}

\begin{lemma} \label{L:perfectoid reduction}
A uniform affinoid Banach algebra $(A^{\rhd}, A^{\Gr})$ over $\QQ_p$ is perfectoid if 
and only if it satisfies the following conditions.
\begin{enumerate}
\item[(a)] The Frobenius map on $A^{\rhd,\circ}/(p)$ is surjective.
\item[(b)] There exists $x \in A^{\rhd,\circ}$ with $x^p - p \in p^2 A^{\rhd,\circ}$.
\end{enumerate}
In particular, the perfectoid condition depends only on $A^{\rhd}$ and is consistent with the definition in \cite{kedlaya-liu1}.
\end{lemma}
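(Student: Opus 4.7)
My plan splits into the two directions of the equivalence and a comparison with the Kedlaya--Liu formulation. For $(\Rightarrow)$, I assume $(A^{\rhd}, A^{\Gr})$ is perfectoid. Since the pair is an affinoid Banach algebra over $\QQ_p$, the graded subring $A^{\Gr}$ contains the image of $\Gr \QQ_p$, so in particular $p \in A^{+,\left|p\right|}$. Applying the defining condition at $r = \left|p\right|$ and $x = p$ yields $\pi \in A^{\rhd,\circ}$ with $p - \pi^p \in A^{+,\left|p\right|^2}$; uniformity together with the invertibility of $p$ in $A^{\rhd}$ gives $p - \pi^p \in p^2 A^{\rhd,\circ}$, which is (b). In particular, $\pi^p = p u$ with $u \in 1 + p A^{\rhd,\circ} \subseteq (A^{\rhd,\circ})^{\times}$, so $\pi$ is a topologically nilpotent unit in $A^{\rhd}$ with $\left|\pi\right| = \left|p\right|^{1/p}$. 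For (a), given $a \in A^{\rhd,\circ}$, I observe that $a\pi$ has spectral radius strictly below $1$, so $a\pi \in A^{+,1}$ independently of $A^{\Gr}$. Applying the perfectoid condition at $r=1$ produces $y \in A^{\rhd,\circ}$ with $a\pi - y^p \in pA^{\rhd,\circ}$; combined with $pA^{\rhd,\circ} = \pi^p u A^{\rhd,\circ} \subseteq \pi A^{\rhd,\circ}$, this forces $y^p \in \pi A^{\rhd,\circ}$. The desired $b$ is then produced by a Hensel-style iteration that refines this approximation step by step, using that $A^{\rhd,\circ}/(p)$ is an $\FF_p$-algebra in which Frobenius is additive, so successive $p$-th-root corrections can be combined into a single element.

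For $(\Leftarrow)$, I assume (a) and (b), letting $\pi$ denote the element from (b). Given $r > 0$ and $x \in A^{+,r}$, I choose the integer $m$ with $\left|p\right|^{m+1} < r \leq \left|p\right|^{m}$. Then $x/\pi^{pm}$ has spectral radius at most $r/\left|p\right|^m \in (\left|p\right|, 1]$ and hence lies in $A^{\rhd,\circ}$. Applying (a) yields $b \in A^{\rhd,\circ}$ with $x/\pi^{pm} - b^p \in p A^{\rhd,\circ}$; setting $y := \pi^m b$, I obtain the required bound $\left|y\right|_{\mathrm{sp}} \leq r^{1/p}$ from the a priori estimate $\left|b\right|^p_{\mathrm{sp}} \leq \max(r/\left|p\right|^m, \left|p\right|) = r/\left|p\right|^m$, while $x - y^p = \pi^{pm}(x/\pi^{pm} - b^p)$ has spectral radius at most $\left|p\right|^{m+1}$. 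For $r = \left|p\right|^m$ this equals $r/p$ exactly; for intermediate $r$ a gap bounded by a factor less than $p$ remains, which I close by iterating the construction on the residual $x - y^p$ (whose spectral radius is strictly below $r$) and assembling the partial approximations using Frobenius-additivity modulo $p$. The refinement of $A^{\rhd,\circ,r/p}$-membership to $A^{+,r/p}$-membership is automatic once the residual spectral radius drops strictly below $r/p$; the boundary case is handled by integral closure of $A^{\Gr}$.

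Conditions (a) and (b) are statements about $A^{\rhd}$ alone, so the equivalence above shows that the perfectoid condition depends only on $A^{\rhd}$. Consistency with Kedlaya--Liu's definition---a topologically nilpotent unit $\varpi$ with $\varpi^p \mid p$ and Frobenius surjective on $A^{\rhd,\circ}/(\varpi)$---is then checked directly: (b) provides such a $\varpi$, and (a) implies Frobenius surjective modulo the larger ideal $(\varpi) \supseteq (p)$; conversely, a K--L datum $\varpi^p = pv$ with $v \in (A^{\rhd,\circ})^{\times}$ can be adjusted by setting $\varpi' := \varpi w$, where $w \in A^{\rhd,\circ}$ satisfies $w^p \equiv v^{-1} \pmod p$ (existing by Frobenius surjectivity), yielding $(\varpi')^p \equiv p \pmod{p^2}$, hence (b). The main technical obstacle lies in the iterative refinements in both directions: upgrading Frobenius surjectivity on $A^+/(p)$ (which is the direct output of the perfectoid condition) to $A^{\rhd,\circ}/(p)$ in the forward direction, and closing the factor-of-$p$ gap in the reverse direction when $r$ is not a power of $\left|p\right|$, both of which rely on the additivity of Frobenius modulo $p$ to glue successive approximations.
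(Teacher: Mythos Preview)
The paper's own proof is a one-line citation to \cite[Proposition~3.6.2(d)]{kedlaya-liu1}, so your attempt to write out the argument in full is already more informative. Your overall strategy matches what one finds there: extract the approximate $p$-th root $\pi$ of $p$ from the defining condition, then play the perfectoid condition against Frobenius-additivity modulo $p$ to pass back and forth between the graded formulation and conditions (a), (b). The direction $(\Leftarrow)$ and the comparison with the Kedlaya--Liu definition are essentially correct, and your two-step iteration does close the gap for all $r$ (including the boundary case $r = |p|^m$: the strict inequality $|x - (y+y_1)^p| < r/p$ that the cross-term estimate produces makes the $A^{+,r/p}$ membership automatic, so your separate appeal to integral closure there is neither needed nor obviously sufficient on its own).

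The genuine gap is in your derivation of (a) in the $(\Rightarrow)$ direction. Your setup produces $y$ with $a\pi - y^p \in pA^{\rhd,\circ}$, and you correctly note $y^p \in \pi A^{\rhd,\circ}$; but from here there is no direct route to a $b$ with $a - b^p \in pA^{\rhd,\circ}$. ``Undoing'' the factor of $\pi$ would require dividing $y$ by an element of spectral norm $|\pi|^{1/p}$, and the natural candidate (an approximate $p$-th root $\pi_1$ of $\pi$, which does exist by a further application of the perfectoid condition, using that $\pi \in A^{+,|\pi|}$ by integral closure) is never introduced in your argument; without it your iteration only reaches $a \equiv b^p \pmod{\pi^{p-1}}$, not $\pmod{p}$. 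A cleaner fix bypasses the multiplication by $\pi$ entirely: for $a \in A^{\rhd,\circ}$, choose any $r$ with $1 < r < p$. Since $|a|_{\mathrm{sp}} \le 1 < r$, the class of $a$ in $\Gr^{r} A^{\rhd}$ is zero, so $a \in A^{+,r}$ regardless of $A^{\Gr}$. Apply the perfectoid condition at this $r$ to get $y \in A^{\rhd,\circ}$ with $a - y^p \in A^{+,r/p}$; since $r/p < 1$, one more application (now at $r/p$) gives $y_1$ with $|(a - y^p) - y_1^p|_{\mathrm{sp}} \le r/p^2 < |p|$, after which your Frobenius-additivity argument combines the two approximations into $b = y + y_1$. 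This is the iteration you had in mind, but started from a value of $r$ that avoids the need to divide.
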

\begin{proof}
As in \cite[Proposition~3.6.2(d)]{kedlaya-liu1}.
\end{proof}

\begin{defn} \label{D:perfectoid to char p}
Given a uniform affinoid Banach algebra $(A^{\rhd}, A^{\Gr})$ over $\QQ_p$, we may construct a perfect uniform affinoid Banach algebra $(R^{\rhd}, R^{\Gr})$ over $\FF_p$ as follows.
Define the underlying multiplicative monoid $R^{\rhd}$ to be the inverse limit of $A^{\rhd}$ under the $p$-power map. We define the addition on $R^{\rhd}$ by the formula
\[
(x_n)_n + (y_n)_n = \left( \lim_{m \to \infty} (x_{m+n} + y_{m+n})^{p^m} \right)_n.
\]
One checks easily that this gives $R^{\rhd}$ the structure of a perfect uniform Banach algebra over $\FF_p$ with respect to the norm $\left| (x_n)_n \right| = \left| x_n \right|$. Similarly, define the underlying multiplicative monoid $R^{\Gr}$ to be the inverse limit of $A^{\Gr}$ under the $p$-power map; again, one checks easily that this gives
$(R^{\rhd}, R^{\Gr})$ the structure of a perfect uniform affinoid Banach algebra over $\FF_p$.
\end{defn}

\begin{defn} \label{D:perfectoid to char 0}
Let $(R^{\rhd}, R^{\Gr})$ be a perfect uniform affinoid Banach algebra over $\FF_p$.
An element $z = \sum_{n=0}^\infty p^n [\overline{z}_n] \in W(R^{+})$ is \emph{primitive of degree $1$} if the following conditions hold:
\[
\overline{z}_0 \in R^{\rhd, \times} \cap R^{+,1/p},  \quad
\overline{z}_0^{-1} \in R^{+,p},  \quad
\overline{z}_1 \in (R^{+})^\times.
\]
In this case, we can form a uniform affinoid Banach algebra $(A^{\rhd}, A^{\Gr})$ over $\QQ_p$
by setting $A^{\rhd} = W(R^+)[[\overline{z}]^{-1}]/(z)$ and taking $A^{\Gr,r}$ to be the image of $R^{+,r}$ under the composition of the Teichm\"uller map $R^{\rhd} \to W(R^+)[[\overline{z}]^{-1}]$ with the projection to $\Gr A^{\rhd}$. 

Conversely, with notation as in Definition~\ref{D:perfectoid to char p},
the map $\theta: W(R^+) \to A^+$ induced by the multiplicative map $R^+ \to A^+$ taking
$(x_n)_n$ to $x_0$ is surjective and its kernel is principal with a generator which is primitive of degree 1 \cite[Lemma~3.6.3]{kedlaya-liu1}.
\end{defn}

\begin{theorem} \label{T:perfectoid correspondence}
The constructions of Definitions~\ref{D:perfectoid to char p} and~\ref{D:perfectoid to char 0} define quasi-inverse functors which give equivalences of categories between the category of perfectoid uniform affinoid Banach algebras $A$ over $\QQ_p$ and pairs
$(R, I)$ where $R = (R^{\rhd}, R^{\Gr})$ is a perfect uniform affinoid Banach algebra over $\FF_p$ and $I$ is an ideal of $W(R^+)$ generated by an element which is primitive of degree $1$.
\end{theorem}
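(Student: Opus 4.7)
The plan is to reduce as much as possible to \cite[Theorem~3.6.5]{kedlaya-liu1}, which establishes the analogous equivalence at the level of the underlying Banach ring data $(A^{\rhd}, A^+) \leftrightarrow (R^{\rhd}, R^+, I)$ (the $r=1$ integral pieces), and then to extend the correspondence to the full graded rings $A^{\Gr}$ and $R^{\Gr}$ over all real levels $r > 0$. The essential point driving the extension is that the Teichm\"uller lift $R^{\rhd} \to W(R^+)[[\overline z]^{-1}]$ is norm-preserving, hence descends to a bijection of graded pieces $R^{\Gr, r} \cong A^{\Gr, r}$ for each $r > 0$ compatibly with the graded structures on both sides.

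First I would verify well-definedness of each functor on objects. For Definition~\ref{D:perfectoid to char p}, \cite[Proposition~3.6.2]{kedlaya-liu1} (applicable via Lemma~\ref{L:perfectoid reduction}) shows that $R^{\rhd}$ is a perfect uniform Banach $\FF_p$-algebra; the graded ring $R^{\Gr}$ then arises directly from the inverse-limit description of $R^{\rhd}$, with integral closedness in $\Gr R^{\rhd}$ preserved under inverse limits of integrally closed subrings. For Definition~\ref{D:perfectoid to char 0}, the primitivity conditions imply that $p$ and $[\overline z_0]$ generate the same ideal in $A^+$ (since $z = [\overline z_0] + p[\overline z_1] + \cdots$ with $[\overline z_1] \in (R^+)^\times$), so $A^+/(p) \cong R^+/(\overline z_0)$ inherits surjective Frobenius from the perfectness of $R^+$; combining this with Lemma~\ref{L:perfectoid reduction} and \cite[Theorem~3.6.5]{kedlaya-liu1} establishes that $A^{\rhd}$ is perfectoid. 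The graded piece $A^{\Gr, r}$ is integrally closed in $\Gr^r A^{\rhd}$ by construction, being the image of the already integrally closed $R^{+, r}$ under a norm-preserving map.

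Next I would show the two compositions are naturally isomorphic to the identity. Starting from a perfectoid $A$, tilting yields $(R^{\rhd}, R^{\Gr})$ together with the map $\theta: W(R^+) \to A^+$; by \cite[Lemma~3.6.3]{kedlaya-liu1}, $\ker \theta$ is principal with a generator $z$ primitive of degree 1, and applying Definition~\ref{D:perfectoid to char 0} to $(R, (z))$ tautologically recovers $A^{\rhd}$, while the norm-preserving Teichm\"uller map recovers $A^{\Gr}$ level by level. Conversely, starting from $(R, I = (z))$ and untilting to $A$, the identification $A^+/(p) \cong R^+/(\overline z_0)$ together with the perfectness of $R^+$ realizes the $p$-power inverse limit of $A^+$ as $R^+$ itself, and the uniqueness up to units of a primitive-degree-1 generator of the principal ideal $\ker \theta$ identifies it with $I$. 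Functoriality on morphisms then follows from the functoriality of the Witt vector construction and the tilting functor. The main obstacle beyond what is handled in \cite{kedlaya-liu1} is the bookkeeping of the graded data at levels $r \neq 1$: one must verify that the identification $R^{\Gr, r} \leftrightarrow A^{\Gr, r}$ via Teichm\"uller lifting preserves the integrally closed subring structure for every $r$ and is functorial in morphisms. This reduces to the compatibility of Frobenius on the graded piece $\Gr^r A^{\rhd}/p$ with a suitable quotient of $R^+$, which collapses the relevant inverse system back to $R^{\Gr, r}$ once the appropriate matching of degrees under the $p$-power map is made explicit.
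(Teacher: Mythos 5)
Your proposal is correct and follows essentially the same route as the paper, whose entire proof is the citation ``As in \cite[Theorem~3.6.5]{kedlaya-liu1}'': reduce to the Kedlaya--Liu equivalence for the underlying data $(A^{\rhd},A^{+})\leftrightarrow (R^{\rhd},R^{+},I)$ and then match the graded pieces at all levels $r>0$ via the multiplicative Teichm\"uller lift, using the perfectoid condition to see that Frobenius is surjective on each $\Gr^r$ so that the inverse limit defining $R^{\Gr,r}$ collapses onto $A^{\Gr,r}$. You have in effect written out the graded bookkeeping that the paper's one-line proof leaves implicit.
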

\begin{proof}
As in \cite[Theorem~3.6.5]{kedlaya-liu1}.
\end{proof}

\begin{defn}
Suppose that $A$ and $(R,I)$ correspond as in Theorem~\ref{T:perfectoid correspondence}.
Then $A$ is an ultrametric field if and only if $R$ is; consequently, in general we obtain a natural bijection $\Spra(A) \to \Spra(R)$.
\end{defn}

\begin{theorem} \label{T:perfectoid correspondence2}
Suppose that $A$ and $(R,I)$ correspond as in Theorem~\ref{T:perfectoid correspondence}.
\begin{enumerate}
\item[(a)]
The map $\Spra(A) \to \Spra(R)$ is a homeomorphism. 
\item[(b)]
For $U \subseteq \Spra(A)$ and $V \subseteq \Spra(R)$ which correspond, $U$ is a rational subspace if and only if $V$ is.
\item[(c)]
With notation as in (b), let $A \to B$ and $R \to S$ be the morphisms representing $U$ and $V$. Then $B$ is a perfectoid uniform affinoid Banach algebra over $\QQ_p$, $S$ is a perfect uniform affinoid Banach algebra over $\FF_p$, and $B$ and $(S, I \cdot W(S^+))$ correspond as in Theorem~\ref{T:perfectoid correspondence}.
\end{enumerate}
\end{theorem}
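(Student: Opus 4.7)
My plan is to handle (a), (b), (c) in sequence, leveraging the correspondence of Theorem~\ref{T:perfectoid correspondence} and the stability of the perfect uniform property under rational localization (Theorem~\ref{T:perfect uniform}). For (a), the set-theoretic bijection $\Spra(A) \to \Spra(R)$ is already in hand, so it suffices to promote it to a homeomorphism. Both sides are spectral by Theorem~\ref{T:readic spectrum is spectral}, so by Corollary~\ref{C:continuous is spectral} I need only verify continuity for the patch topologies together with compatibility with the defining subbases. Using the identity $v_R(x) = v_A(\theta([x]))$ for $x \in R^{\rhd}$, with $v_A$ on $A$ corresponding to $v_R$ on $R$, the preimage of a basic open of $\Spra(R)$ pulls back cleanly; continuity in the other direction drops out of the approximation statement used for (b) below.

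The key technical input for (b) is a density lemma: for every $f \in A^{\rhd}$ and every $\epsilon > 0$, there exists $g \in R^{\rhd}$ such that $\left|f - \theta([g])\right| < \epsilon$. This follows from the Witt-vector presentation $A^+ \cong W(R^+)[[\overline{z}]^{-1}]/(z)$ from Definition~\ref{D:perfectoid to char 0}: truncating a Witt-vector lift of $f$ produces the required approximation for $f \in A^+$, and inverting the Teichm\"uller lift $[\overline{z}]$ extends this to arbitrary $f \in A^{\rhd}$. Given a rational subspace $U \subseteq \Spra(A)$ defined by parameters $f_0,\dots,f_n$ and scale factors $q_1,\dots,q_n$, Remark~\ref{R:rational subspace nearby generators} lets me replace each $f_i$ by a sufficiently close Teichm\"uller representative $\theta([g_i])$ without changing $U$ (the Bezout relation certifying that $f_1,\dots,f_n$ generate the unit ideal is stable under small perturbations, and the compactness bound on $v(f_0)$ on $U$ prevents loss of the condition $v(f_0)\neq 0$). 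The analogous inequalities on the tilted side cut out a rational subspace $V \subseteq \Spra(R)$ matched with $U$ by the bijection of (a); the reverse direction is symmetric.

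For (c), Theorem~\ref{T:perfect uniform}(a) ensures that $S$ is perfect uniform, so Theorem~\ref{T:perfectoid correspondence} produces a perfectoid uniform affinoid Banach algebra $B'$ over $\QQ_p$ from $(S, I \cdot W(S^+))$, equipped with a canonical map $A \to B'$. By parts (a) and (b), the induced map $\Spra(B') \to \Spra(A)$ has image exactly $U$, so the universal property of rational localizations (Lemma~\ref{L:rational subspace readic}(a)) gives a canonical morphism $B \to B'$. Conversely, applying the tilting functor of Definition~\ref{D:perfectoid to char p} to $A \to B$ yields a morphism $R \to S''$ whose image in $\Spra(R)$ is the rational subspace $V$; the universal property of rational localizations in $\Spra(R)$ identifies $S''$ with $S$, and untilting via Theorem~\ref{T:perfectoid correspondence} identifies $B'$ with $B$.

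The main obstacle is making the approximation argument in (b) rigorous enough to simultaneously preserve the unit-ideal condition on $f_1,\dots,f_n$ and the open condition $v(f_0)\neq 0$ on $U$, especially since the tilt map $g \mapsto \theta([g])$ is only multiplicative (not additive) and Teichm\"uller lifts are a rather rigid class of elements. Once this density-plus-perturbation argument is in place, the rest of the proof is a formal chase through the functoriality and universal properties already developed in Theorem~\ref{T:perfectoid correspondence} and Lemma~\ref{L:rational subspace readic}.
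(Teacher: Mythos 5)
Your overall architecture --- prove an approximation statement, use Remark~\ref{R:rational subspace nearby generators} to replace the parameters of a rational subspace by elements of the form $\theta([g])$, then deduce (a) and (c) by a formal chase through the universal property of rational localizations --- is the same as in the argument the paper defers to (\cite[Theorem~3.6.14]{kedlaya-liu1}). However, your key technical input is false as stated, and this is a genuine gap. The set $\{\theta([g]) : g \in R^{\rhd}\}$ is the image of the multiplicative section $g \mapsto \theta([g])$ and is \emph{not} dense in $A^{\rhd}$: writing $f = \theta\left(\sum_n p^n [x_n]\right)$ with $x_n \in R^{+}$ and truncating gives $\left| f - \theta([x_0]) \right| \leq \left| p \right|$ and in general no better, since the partial sums $\theta([x_0] + p[x_1] + \cdots)$ are not themselves Teichm\"uller lifts and cannot be iterated into a better approximation. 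Already for $A = \QQ_p(\mu_{p^\infty})^{\wedge}$ a general element of $A^{\circ}$ cannot be approximated to within less than $\left| p \right|$ by elements admitting compatible systems of $p$-power roots. Since the tolerance demanded by Remark~\ref{R:rational subspace nearby generators} is $\min\{c, q_i c, \left| h_i \right|^{-1}\}$ with $c = \inf\{\alpha(f_0) : \alpha \in U\}$ arbitrarily small, a uniform-norm density claim cannot carry the argument even in principle.

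What is actually needed is the \emph{relative, pointwise} approximation lemma of Kedlaya--Liu and Scholze: for $f \in A^{\circ}$, $c \geq 0$, $\epsilon > 0$ there exists $g \in R^{+}$ such that $\left| f(x) - \theta([g])(x) \right| \leq \left| p \right|^{1-\epsilon} \max\{\left| f(x) \right|, \left| p \right|^c\}$ at every point $x$. The error is small relative to $\left| f(x) \right|$ wherever that value is not too small, which is exactly what is required for the rational subspace to be unchanged even though the new parameters are not close to the old ones in the Banach norm. Its proof is a successive-approximation argument using the perfectoid hypothesis (surjectivity of Frobenius on $A^{\circ}/(p)$) in an essential way; it does not follow from the Witt-vector presentation alone. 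You correctly flagged this step as the main obstacle, but the remedy is to replace the density claim by the relative version, not to shore it up. A secondary point in (c): you should not apply the tilting functor to $A \to B$ before knowing that $B$ is perfectoid, which is part of what is being proved; the standard route is to construct the candidate $B'$ from $(S, I \cdot W(S^{+}))$, verify via the approximation lemma that $A \to B'$ satisfies the universal property of Lemma~\ref{L:rational subspace readic}(a) for $U$, and conclude $B \cong B'$.
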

\begin{proof}
As in \cite[Theorem~3.6.14]{kedlaya-liu1}.
\end{proof}

\begin{theorem} \label{T:perfectoid correspondence3}
Suppose that $A$ and $(R,I)$ correspond as in Theorem~\ref{T:perfectoid correspondence}.
\begin{enumerate}
\item[(a)]
Let $B^{\rhd}$ be a finite \'etale $A^{\rhd}$-algebra viewed as a uniform Banach algebra
(see \cite[Proposition~2.8.16]{kedlaya-liu1}) and let $B^{\Gr}$ be the integral closure of $A^{\Gr}$ in $\Gr A^{\rhd}$. Then $B = (B^{\rhd}, B^{\Gr})$ is again a perfectoid uniform affinoid Banach algebra over $\QQ_p$.
\item[(b)]
The functors of Theorem~\ref{T:perfectoid correspondence} induce equivalences of categories of objects $B$ as in (a) and pairs $(S, I W(S^+))$ where $R \to S$ is a morphism as in Theorem~\ref{T:perfect uniform}(b) (so $S^{\rhd}$ is a finite \'etale $R^{\rhd}$-algebra).
\end{enumerate}
\end{theorem}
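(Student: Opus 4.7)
The plan is to deduce both statements from the corresponding results in \cite{kedlaya-liu1} via the tilting correspondence of Theorem~\ref{T:perfectoid correspondence}, by verifying that the additional graded data $A^{\Gr}, B^{\Gr}, R^{\Gr}, S^{\Gr}$ behaves correctly under all the constructions.

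For (a), I would first use Lemma~\ref{L:perfectoid reduction} to reduce the perfectoid condition to: $B^{\rhd}$ is uniform, Frobenius on $B^{\rhd,\circ}/(p)$ is surjective, and there exists $x \in B^{\rhd,\circ}$ with $x^p - p \in p^2 B^{\rhd,\circ}$. Uniformity is given as part of the hypothesis (and also follows from \cite[Proposition~2.8.16]{kedlaya-liu1}, which is already cited in the statement). The last condition is inherited from $A^{\rhd}$ by taking the image of the same element $x \in A^{\rhd,\circ}$. The surjectivity of Frobenius mod $p$ is the essential content, and I would obtain it by appealing to the tilting/almost purity results of \cite[\S 3.6]{kedlaya-liu1}: the finite étale $A^{\rhd}$-algebra $B^{\rhd}$ corresponds, under the tilting functor applied to $A^{\rhd}$, to a finite étale $R^{\rhd}$-algebra $S^{\rhd}$, and by Theorem~\ref{T:perfect uniform}(b) this $(S^{\rhd}, S^{\Gr})$ (with $S^{\Gr}$ the integral closure of $R^{\Gr}$ in $\Gr S^{\rhd}$) is again a perfect uniform affinoid Banach algebra over $\FF_p$. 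Untilting via Definition~\ref{D:perfectoid to char 0} applied to $(S, I \cdot W(S^+))$ then recovers $B^{\rhd}$ as a perfectoid ring, and in particular gives the required surjectivity of Frobenius modulo $p$.

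It remains to verify that the graded data match: the integral closure $B^{\Gr}$ of $A^{\Gr}$ in $\Gr B^{\rhd}$ agrees with the graded piece produced by the untilting of $(S^{\rhd}, S^{\Gr})$ via Definition~\ref{D:perfectoid to char 0}. Concretely, for each $r$ one checks that the map $S^{+,r} \to A^{+,r} \to B^{+,r}$ induced by $\theta$ identifies $S^{\Gr,r}$ with $B^{\Gr,r}$, using that $S^{\Gr}$ was defined as the integral closure of $R^{\Gr}$ and $\theta$ is compatible with the graded structures on both sides (as in the construction behind \cite[Theorem~3.6.14]{kedlaya-liu1}). This is the step I expect to require the most bookkeeping, but no new idea beyond unwinding definitions, because the integral closure operation and the Teichmüller-type lift commute with base change along $\theta$.

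For (b), the equivalence of categories is now formal: Theorem~\ref{T:perfectoid correspondence} already gives a bijective correspondence $A \leftrightarrow (R,I)$, and Theorem~\ref{T:perfectoid correspondence2}(c) shows that rational localizations of $A$ correspond to rational localizations of $R$ (together with $I$ extended). By part (a) (applied in both directions), the tilting functor carries finite étale perfectoid extensions of $A$ to finite étale perfect uniform extensions of $R$ with integrally closed graded part, and the untilting functor (via Definition~\ref{D:perfectoid to char 0}) is a quasi-inverse. Full faithfulness of the tilting functor on these categories is inherited from the underlying equivalence for the $\rhd$-parts proved in \cite[\S 3.6]{kedlaya-liu1}, since the graded parts on both sides are uniquely determined as integral closures of $A^{\Gr}$ and $R^{\Gr}$ respectively. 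The main obstacle here is therefore the same graded-compatibility check as in (a); once that is in hand the equivalence is immediate.
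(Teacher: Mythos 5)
Your proposal is correct and follows essentially the same route as the paper, whose entire proof is the citation ``this follows from \cite[Theorem~3.6.21]{kedlaya-liu1} and Lemma~\ref{L:perfectoid reduction}, without any further arguments required''; your argument simply unwinds why that citation suffices. The only difference is one of emphasis: you flag the matching of the graded data under tilting as the main bookkeeping step, whereas the paper treats it as automatic (the tilting construction identifies $\Gr R^{\rhd}$ with $\Gr A^{\rhd}$ graded piece by graded piece, so integral closures of $R^{\Gr}$ and $A^{\Gr}$ correspond with no further work).
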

\begin{proof}
This follows from \cite[Theorem~3.6.21]{kedlaya-liu1} and Lemma~\ref{L:perfectoid reduction}, without any further arguments required.
\end{proof}

\begin{theorem}
Let $A \to B$, $A \to C$ be morphisms of perfectoid uniform affinoid Banach algebras.
Let $(R, I)$ be the pair corresponding to $A$ via Theorem~\ref{T:perfectoid correspondence}, then apply the correspondence to $A \to B$, $A \to C$ to obtain morphisms $R \to S, R \to T$. Then $B \widehat{\otimes}_A C$ is again perfectoid and the map
$A \to B \widehat{\otimes}_A C$ corresponds via Theorem~\ref{T:perfectoid correspondence} to the map $R \to S \widehat{\otimes}_R T$; moreover, the tensor product norm on $B \otimes_A C$ induced by the spectral norms on $B$ and $C$ coincides with the spectral norm.
\end{theorem}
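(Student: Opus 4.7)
The plan is to transport the assertion across the tilting correspondence of Theorem~\ref{T:perfectoid correspondence} and verify it first in characteristic $p$, where Frobenius makes the tensor product seminorm automatically power-multiplicative.

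First I would handle the characteristic $p$ analogue: if $R \to S$ and $R \to T$ are morphisms of perfect uniform affinoid Banach algebras over $\FF_p$ and $R' := S \widehat\otimes_R T$ is equipped with the integral closure of the image of $S^{\Gr} \otimes_{R^{\Gr}} T^{\Gr}$ as its graded subring, then $R'$ is again a perfect uniform affinoid Banach algebra. Perfection of the algebraic tensor product is immediate from perfection of $R, S, T$ and additivity of Frobenius in characteristic $p$. The key assertion is that the tensor product seminorm already equals the spectral seminorm: for any representation $x = \sum_i y_i \otimes z_i$, submultiplicativity gives $|x^p|_\otimes \leq |x|_\otimes^p$, while existence of $p$-th roots in $S$ and $T$ (via perfection plus the identity $|y^{1/p}| = |y|^{1/p}$ valid in any uniform perfect Banach $\FF_p$-algebra) together with $(\sum_i y_i^{1/p} \otimes z_i^{1/p})^p = \sum_i y_i \otimes z_i$ yields the reverse estimate $|x^{1/p}|_\otimes \leq |x|_\otimes^{1/p}$. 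Iterating for all powers of $p$ forces $|x^{p^n}|_\otimes = |x|_\otimes^{p^n}$, so the seminorm is power-multiplicative and coincides with its spectral seminorm; hence the completion is uniform.

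Next, fix a generator $z \in W(R^+)$ of $I$; its image in $W(R'^+)$ remains primitive of degree $1$, the unit conditions on $\bar z_0$ and $\bar z_1$ being preserved because they already hold after transfer along $W(R^+) \to W(S^+)$ and $W(R^+) \to W(T^+)$. Applying Definition~\ref{D:perfectoid to char 0} to $(R', (z))$ produces a perfectoid uniform affinoid Banach algebra $A'$ over $\QQ_p$ together with natural $A$-algebra morphisms $B \to A'$ and $C \to A'$, whence a bounded map $\varphi \colon B \widehat\otimes_A C \to A'$. To show $\varphi$ is an isomorphism of Banach algebras, I would reduce modulo $p$ using the identifications $A^+/p \cong R^+/\bar z_0^p$ (etc.) from \cite[Lemma~3.6.3]{kedlaya-liu1}; the resulting map in characteristic $p$ is precisely the isomorphism of completed tensor products established in the first step. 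Since both sides of $\varphi$ are $p$-adically complete as Witt quotients and $\varphi$ is compatible with the primitive generator, bijectivity lifts from the mod-$p$ reduction. For the norm comparison, the isomorphism in characteristic $p$ is isometric for the tensor product (equivalently spectral) seminorm, and the tilting construction preserves spectral seminorms by \cite[Proposition~3.6.2]{kedlaya-liu1}; combined with the inequality $|\cdot|_\otimes \geq |\cdot|_{\mathrm{sp}}$ on $B \otimes_A C$, this forces equality, showing that the tensor product norm on $B \otimes_A C$ coincides with the spectral norm and that $B \widehat\otimes_A C$ is uniform.

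The main obstacle is precisely this norm comparison: a priori the tensor product norm on $B \otimes_A C$ is merely submultiplicative, and perfectoid algebras in characteristic $0$ carry no direct Frobenius to exploit. Forcing the tensor product norm to coincide with the spectral norm is what necessitates the detour through characteristic $p$, where perfection makes Frobenius an isometry for the tensor product seminorm. Once this is verified, the perfectoid property of $B \widehat\otimes_A C$ follows from Lemma~\ref{L:perfectoid reduction}, and the identification of its tilt with $S \widehat\otimes_R T$ (together with the ideal $I \cdot W((S \widehat\otimes_R T)^+)$) is automatic from $\varphi$ and Theorem~\ref{T:perfectoid correspondence}.
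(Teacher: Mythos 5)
Your characteristic-$p$ computation is sound (perfection of $S \otimes_R T$ plus the Frobenius trick does show the tensor product seminorm there is power-multiplicative, hence equal to the spectral seminorm), but the descent back to characteristic $0$ has a genuine gap, and it is precisely the gap the paper is at pains to avoid: the proof opens by noting that the argument of \cite[Proposition~3.6.11]{kedlaya-liu1} --- which is essentially your tilt-and-untilt strategy --- ``is incomplete, so a corrected argument is needed.'' Two steps fail as written. First, to show your bounded map $\varphi\colon B \widehat{\otimes}_A C \to A'$ is an isomorphism by ``reduction mod $p$,'' you must identify the reduction of the source, i.e., you must control the unit ball of the \emph{tensor product} norm on $B \widehat{\otimes}_A C$ well enough to realize it as a Witt quotient; but that control is exactly the norm comparison you are trying to prove, so the argument is circular. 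Second, the claim that the characteristic-$p$ isometry plus ``tilting preserves spectral seminorms'' forces $|\cdot|_\otimes \leq |\cdot|_{\mathrm{sp}}$ in characteristic $0$ is a non sequitur: the tilting correspondence is multiplicative but not additive, so a presentation $x = \sum_i y_i \otimes z_i$ in $B \otimes_A C$ does not transfer to one in $S \otimes_R T$ (nor conversely --- $\theta$ applied to the Teichm\"uller lift of a sum of simple tensors is not the corresponding sum of products), and no upper bound on the tensor product norm in characteristic $0$ follows from the characteristic-$p$ statement.

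The paper's proof takes a different route that avoids tilting the tensor product directly. One first checks both claims when $B$ and $C$ are weighted Tate algebras $A\{T_i/\rho_i\}$, $A\{T'_j/\rho'_j\}$: there the completed tensor product is again a weighted Tate algebra over $A$ and the tensor product norm is the weighted Gauss norm, which is known to equal the spectral norm (\cite[Example~3.6.6]{kedlaya-liu1}). In general one presents $B$ and $C$ as quotients of such Tate algebras and invokes \cite[Remark~3.1.6, Lemma~3.3.9]{kedlaya-liu1} to see that these surjections are \emph{almost optimal} --- the quotient norm induced by the spectral norm upstairs coincides with the spectral norm downstairs --- which is what allows both the identification of the tilt and the norm comparison to pass from the free case to the general case. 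To salvage your approach you would need to supply the missing comparison between the tensor product norm on $B \otimes_A C$ and the image of $W((S\widehat{\otimes}_R T)^+)$ under $\theta$; that is exactly the content the almost-optimal-quotient argument replaces.
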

\begin{proof}
This is the analogue of \cite[Proposition~3.6.11]{kedlaya-liu1}, but the proof of that statement is incomplete, so a corrected argument is needed.
Note first that as in \cite[Example~3.6.6]{kedlaya-liu1}, both claims hold in the case
\[
B = A\{T_1/\rho_1,\dots,T_n/\rho_n\}, \quad C = A\{T'_1/\rho'_1,\dots,T'_{n'}/\rho'_{n'}\}
\]
with
\[
S = R\{T_1/\rho_1,\dots,T_n/\rho_n\}, \quad T = R\{T'_1/\rho'_1,\dots,T'_{n'}/\rho'_{n'}\}.
\]
We may thus reduce the general case to the case where $A \to B$, $A \to C$ factor through surjections $B' \to B, C' \to C$. Using \cite[Remark~3.1.6, Lemma~3.3.9]{kedlaya-liu1}, we see that each of these surjections is \emph{almost optimal}: the quotient norm induced by the spectral norm on the source coincides with the spectral norm on the target. We thus deduce both claims in the general case.
\end{proof}

\begin{defn}
A \emph{reified perfectoid space} is a reified adic space over $\QQ_p$ which is covered by the readic spectra of perfectoid affinoid Banach algebras over $\QQ_p$.
Using Theorem~\ref{T:perfectoid correspondence}, Theorem~\ref{T:perfectoid correspondence2}, and Theorem~\ref{T:perfectoid correspondence3}, we may construct a ``tilting'' correspondence between reified perfectoid spaces and perfect uniform readic spaces over $\FF_p$, which induces homeomorphisms of underlying topological spaces and of \'etale topoi.
\end{defn}

\end{document}